\documentclass[11pt,reqno]{amsproc}

%\DeclareSymbolFont

\usepackage{amsmath}
\usepackage{amssymb}
\usepackage{amssymb}
\usepackage{verbatim}

\pagestyle{plain}

% Some commands to format text

%\parskip=6pt
%\addtolength{\textwidth}{2cm}
%\addtolength{\textheight}{3.5cm}

%%%%\input math.def % General Math key sequences %%%%%%%%%%%%%%%%%%%

%\def\alm{\allowmathbreak}

% Фонты

  % навешивание кружка НАД символом

% Упрощения

%\def\blacksquare{\hbox{\vrule width 5pt height 5pt depth 0pt}}

%    Б у к в ы

\def\a{\alpha}

\def\f{\varphi} 
\def\g{\gamma}
\def\s{\sigma}
\def\l{\lambda}
\def\vk{\varkappa}
\def\bold#1{\mbox{\bf #1}}

\def\N{\bold N}
\def\Z{\bold Z}

\def\Q{\bold Q}
\def\R{\bold R}
\def\C{\bold C}

% В ы p а ж е н и я, используемые в фоpмулах

\def\chr{ \operatorname{\rm char\,}}

\def\aut#1{\operatorname{Aut}(#1)}

 % Большой \Im занят.

\def\id{\mbox{\rm id}}

\def\codim{ \operatorname{codim}}

\def\Theory{\mbox{\rm Th}}

%\def\gde{\mbox{\rm\ где }}
%\def\esli{\mbox{\rm\ если }}
%\def\rusi{\mbox{\rm\ и }}
%\def\rusin{\mbox{\rm\ в }}
%\def\ili{\mbox{\rm\ или }}

% Ф у н к ц и и

\def\inv{^{-1}} % для обратных ...
\def\av#1{\overline {#1}}

% Бинарные операции

%  С т p у к т у p ы

\def\str#1{\langle #1 \rangle}

\def\gl#1{\operatorname{ GL}(#1)}

\def\rl#1{\operatorname{ RL}(#1)}
\def\Gl#1{\mbox{\rm $\Gamma$L}(#1)}

\def\pgl#1{\operatorname{ PGL}(#1)}

\def\pGl#1{\mbox{\rm P$\Gamma$L}(#1)}

\def\End#1{\operatorname{End}(#1)}

    %  Указание на тип

\def\tspl#1{\mbox{\rm SL}}
\def\tGl#1{\mbox{\rm $\Gamma$L}}

\def\tpgl#1{\mbox{\rm PGL}}
\def\tpspl#1{\mbox{\rm PSL}}

\def\tEnd#1{\mbox{\rm End}}

%  Кортежи

% Западный стандарт кортежей.

	 %%%%%%%%%%%%%%% ...столбцы

\def\avx{\overline{x} }
\def\avy{\overline{y} }
\def\avz{\overline{z} }

%%%%%%%%%%%%%%%%%%%%%%%%%%%%%%%%%%%%%%%%%

% Логика

%%%%%%%%%%%%%%%%%%%%%%%%%%%%%%%%%%%%%%%%%

% Производная в духе Лейбница

% Частные производные

% Полезные вещи

 % Иногда нужно побольше, иногда - поменьше.

 % Предполагается, что это нормальная дробь
 % Предполагается, что это нормальная дробь

%

 % Наш, русский бином!

%\newenvironment{proof}[1]{ {\it Доказательство.} #1}{$\blacksquare$}

 % разбиение формулы

%\def\Proof{ {\it Доказательство.}  }

 % чуть пошире!

% Посвящается \Chi Хорстману!

%\CurriePar  % Этот стиль хорош во всех математических текстах.

%%%%%%%%%%%%%%%%%%%% Конструкции общего вида %%%%%%%%%%%%%%%%%%%%%%%

%%%%%%%%%%%%%%%%%%%%     Матричные вещи   %%%%%%%%%%%%%%%%%%%%%%%%%%

%%%%%%%%%%%%%%%%%%%  Переопределения %%%%%%%%%%%%%%%%%%%%%%%%%%%%%%%

\renewcommand{\le}{\leqslant}
\renewcommand{\ge}{\geqslant}

%%%%%%%%%%%%%%%%%%%%%%%% Разное %%%%%%%%%%%%%%%%%%%%%%%%%%%%%%%%%%%%

%%%%\input hels.def % For-this-paper-only Math Key Sequences  %%%%%%%%%%%

%\theorembodyfont{\it}

\newtheorem{Th}{\bf Theorem}[section]
\newtheorem{Cor}[Th]{\bf Corollary}
\newtheorem{Lem}[Th]{\bf Lemma}
\newtheorem{Prop}[Th]{\bf Proposition}
\newtheorem{Clm}[Th]{\bf Claim}

\newtheorem{clm}{\bf Claim}

\numberwithin{equation}{section}
\renewcommand{\theequation}{\thesection.\arabic{equation}}

%%%%%%%%%%%%%%%%%%%%%%%%%%%%%%%%%%%%%%%%%%%%%%%%%%%%%%%%%%%%%%%%%%%%%%%%%%%%

%%%%%%%%%%%%%%%%%%%%%%%%%%%%%%%%%%%%%%%%%%%%%%%%%%%%%%%%%%%%%%%%%%%%%%%%%%%%

\def\Cov{\mbox{\it Cov}}
\def\Fix{\operatorname{ Fix}}
\def\Rng{\operatorname{ Rng}}
\def\codim{\operatorname{ codim}\,}
\def\dcd{\operatorname{ dcd}\,}

\def\LII{\mbox{\bf L}_2}
\def\Mon{\operatorname{Mon}}
\def\cL{{\mathcal L}}
\def\TH{\operatorname{\rm Th}}

%%%%%%%%%%%%%%%%%%%%%%%%%%%%%%%%%%%%%%%%%%%%%%%%%%%%%%%%

\def\avs{\overline{\sigma}}
\def\avpi{\overline{\pi}}

\def\avx{\overline{x}}
\def\avy{\overline{y}}
\def\avz{\overline{z}}

\def\avX{\overline{X}}
\def\avA{\overline{A}}

%  Структуры

\def\rl#1{\text{\rm RL}(#1)}
\def\pEnd#1{\text{\rm PEnd}(#1)}

\def\ex#1{\text{\rm E}_{\mathfrak X}}

\def\Dvkom{D^{\vk}_{< \omega}}

			 % каллиграфические структуры

\def\cP{{\mathcal P}}
\def\cE{{\mathcal E}}
\def\cV{{\mathcal V}}
\def\cD{{\mathcal D}}
\def\cPV{{\mathcal P}{\mathcal V}}
\def\cVD{{\mathcal V}{\mathcal D}}
\def\cEPV{{\mathcal E}{\mathcal P}{\mathcal V}}
\def\cPG{{\mathcal P}{\mathcal G}}

\def\cA{{\mathcal A}}
\def\cB{{\mathcal B}}

\def\cM{{\mathcal M}}
\def\cN{{\mathcal N}}

\def\fX{{\mathfrak X}}

\def\fL{{\mathfrak L}}

% Переопределения

\renewcommand{\le}{\leqslant}
\renewcommand{\ge}{\geqslant}
\renewcommand{\wedge}{\, \& \,}

%%%%%%%%%%%%%%%%%%%%%%%%%%%%%%%%%%%%%%%%%%%%%%%%%%%%%%%%%%%%%%%%%%%%%%%%%%%%

%%%%%%%%%%%%%%%%%%%%%%%%%%%%%%%%%%%%%%%%%%%%%%%%%%%%%%%%%%%%%%%%%%%%%%%%%%%%

\def\Cov{\mbox{\it Cov}}
\def\Fix{\operatorname{ Fix}}
\def\Rng{\operatorname{ Rng}}
\def\codim{\operatorname{ codim}\,}
\def\dcd{\operatorname{ dcd}\,}

\def\LII{\mbox{\bf L}_2}
\def\Mon{\operatorname{Mon}}
\def\cL{{\mathcal L}}
\def\TH{\operatorname{\rm Th}}

%%%%%%%%%%%%%%%%%%%%%%%%%%%%%%%%%%%%%%%%%%%%%%%%%%%%%%%%

\def\avs{\overline{\sigma}}
\def\avpi{\overline{\pi}}

\def\avx{\overline{x}}
\def\avy{\overline{y}}
\def\avz{\overline{z}}

\def\avX{\overline{X}}
\def\avA{\overline{A}}

%  Структуры

\def\rl#1{\text{\rm RL}(#1)}
\def\pEnd#1{\text{\rm PEnd}(#1)}

\def\ex#1{\text{\rm E}_{\mathfrak X}}

\def\Dvkom{D^{\vk}_{< \omega}}

			 % каллиграфические структуры

\def\cP{{\mathcal P}}
\def\cE{{\mathcal E}}
\def\cV{{\mathcal V}}
\def\cD{{\mathcal D}}
\def\cPV{{\mathcal P}{\mathcal V}}
\def\cVD{{\mathcal V}{\mathcal D}}
\def\cEPV{{\mathcal E}{\mathcal P}{\mathcal V}}
\def\cPG{{\mathcal P}{\mathcal G}}

\def\cA{{\mathcal A}}
\def\cB{{\mathcal B}}

\def\cM{{\mathcal M}}
\def\cN{{\mathcal N}}

\def\fX{{\mathfrak X}}

\def\fL{{\mathfrak L}}

% Переопределения

\renewcommand{\le}{\leqslant}
\renewcommand{\ge}{\geqslant}
\renewcommand{\wedge}{\, \& \,}

%\includeonly{int,ch1,ch2,ch3,ch4,ch5,refs}

%
\begin{document}

\tableofcontents

\title[Classical groups: elementary equivalence]{Elementary equivalence \\
of infinite-dimensional classical groups}
\thanks{Supported in part by the Russian Foundation of Fundamental Research
Grant 96-01-00456}
\author{Vladimir Tolstykh}
\address{Kemerovo State University, Department of Mathematics,
Krasnaja, 6, 650043, Kemerovo, Russia}
\email{vlad\symbol{64}corvette.kuzb-fin.ru}

\begin{abstract}
{\rm Let $D$ be a division ring such that the number
of conjugacy classes in the multiplicative
group $D^*$ is equal to the power of $D^*.$
Suppose that $H(V)$ is the group $\gl V$ or $\pgl V,$ where
$V$ is a vector space of infinite dimension $\vk$
over $D.$ } We prove, in particular,
that, uniformly in $\vk$ and $D,$ the first order
theory of $H(V)$ is mutually syntactically interpretable
with the theory of the two-sorted structure $\str{\vk,D}$
(whose only relations are the division ring operations
on $D$) in the second order logic with quantification
over arbitrary relations of power $\le \vk.$ A certain
analogue of this results is proved for the groups
$\Gl V$ and $\pGl V.$ These results imply criteria
of elementary equivalence for infinite-dimensional
classical groups of types $H=\Gamma$L, P$\Gamma$L,
GL, PGL over division rings, and solve, for these
groups, a problem posed by Felgner. It follows
from the criteria that if $H(V_1) \equiv H(V_2)$ then
$\vk_1$ and $\vk_2$ are second order equivalent
as sets.
\end{abstract}

\maketitle

%\tableofcontents

In the present paper we deal with the problem
to what extent the first order theory of an infinite-dimensional
classical group over a division ring determines
the dimension of the group and the ring.

In the case of finite dimension, for many types of classical groups,
the problem can be easily reduced to the problem when two classical groups
of the same type are isomorphic. Indeed, by the Keisler--Shelah theorem,
structures $\mathcal M$ and $\mathcal N$ are elementarily equivalent iff,
for some ultrafilter $F,$ the ultrapowers
${\mathcal M}^F$ and ${\mathcal N}^F$ are isomorphic.
The following Isomorphism Theorem is known
\cite{HO'M}.
For
$H= \text{GL, SL, PGL, PSL}$
and any division rings $D_1, D_2,$
if $n_1,n_2\ge 3$ then
$$H(n_1,D_1)\simeq H(n_2,D_2)\text{\ if and only if\ }
n_1=n_2, \text{\ and\ }
D_1\simeq D_2
\text{\ or\ }
D_1\simeq D_2^{\rm op}.$$
For
$H= \rm{GL},\,\rm{SL},\,\rm{PGL}$,
the same holds even for $n_1,n_2\ge 2.$
(For
$H=\rm{PSL}$, in the case of dimension 2 there are some
exceptional isomorphisms.)
Taking into account
$H(n,D)^F\simeq H(n,D^F),$ we have that,
for $H, n_1, n_2$ satisfying the conditions of
the Isomorphism Theorem,
$$H(n_1,D_1)\equiv H(n_2,D_2)\text{\ if and only if\ }
n_1=n_2,\text{\ and\ }
D_1\equiv D_2
\text{\ or\ }
D_1\equiv D_2^{\rm op}.$$
Maltsev \cite{Malt} proved the latter result in the special case
of groups over fields of characteristic 0; his proof was based on an
interpretation of the field $D$ in the group $H(n,D).$

In \cite{DS} Felgner suggested to study the problem
of elementary equivalence for infinite-dimensional
general linear groups and other classical groups over
fields.  In the present paper we solve Felgner's
problem for infinite-dimensional groups of types GL,
PGL, $\Gamma$L, P$\Gamma$L for a wide class of
division rings.

In a more general setting, the subject of the paper can be
described as a study of the expressive power of the first order logic
for infinite-dimensional classical groups and related
structures. The similar problem was considered in many
papers, in particular, in the papers \cite{Sh1,Sh2} by
Shelah on infinite symmetric groups, in his paper
\cite{Sh3} devoted to endomorphism semi-groups of free
algebras, in a series of papers on automorphism groups
of Boolean algebras by Rubin and Shelah (e.g.
\cite{RSh}), in the paper \cite{MRRS} by Magidor,
Rosental, Rubin and Srour on lattices of
closed subsets of Steinitz exchange systems.

According to \cite{Sh3,BSh}, one can measure the expressive power
of a first order theory by the richness of the
fragment of set theory interpretable in it.
In  \cite{Sh1,Sh3,MRRS} this idea has been realized
in the following way. With every
structure $\cM$ from a given class of structures
a structure $\cM^*$ is associated, so that the elementary equivalence of
structures $\cM$ and $\cN$  from the class
implies the $\mathcal L$-equivalence of
$\cM^*$ and $\cN^*$ in a certain logic $\mathcal L.$
The structures of the form $\cM^*$ are chosen to be `algebra-free'
as much as possible, and the logic $\mathcal L$ is chosen to be as
`strong' as possible. A nice illustration
of this method is given by a following version
of Theorems 1.6 and 3.1 from \cite{MRRS}:
if ${\mathcal K}=\str{K,+,\cdot}$ is an uncountable algebraically closed field
then the full second order theory of the set $K$
is syntactically interpretable (uniformly in $\mathcal K$) in the first order
theory of the lattice $L({\mathcal K})$ of algebraically closed subfields
of $\mathcal K.$ Hence
for any uncountable algebraically closed fields ${\mathcal K}_1$ and
${\mathcal K}_2,$ the elementary equivalence of
the lattices $L({\mathcal K_1})$ and
$L({\mathcal K_2})$ implies the equivalence of the sets $K_1$ and $K_2$
in the full second order logic.

The paper is also concerned with the question
when set theory is interpretable in the automorphism
groups of algebras which are free in a variety $\mbox{\bf V}$ and
of infinite rank. This question, as Shelah notes
in his paper \cite{Sh3}, is natural in view of the following result obtained
in that paper: set theory is interpretable in the endomorphism
semi-group of a free $\mbox{\bf V}$-algebra which is of `large' infinite
rank. The answer in the automorphism group case
essentially depends on the variety $\mbox{\bf V}$ (for
example, one cannot interpret set theory in any
infinite symmetric group -- the automorphism
group of an algebra in empty language \cite{Sh1,Sh2}).

Let $V$ be a vector space of infinite
dimension $\vk$ over a division ring $D.$ The present paper
can be divided into three parts.  The aim of the first
part (Sections~1--5) is to interpret the
projective space of $V$ (that is,
${\mathcal P}=\str{P(V),\subseteq},$
the lattice of subspaces of $V$)
in the group $\pgl V$
(Theorem \ref{PrjSpaceInPrjGroup}). The
assumption $\vk\ge\aleph_0$
is essential for the proof. As one of the key points of the proof we
show the $\varnothing$-definability of the set of involutions of the first
kind in $\pgl V$
(that is, involutions induced by involutions in $\gl V$).
This solves the problem of group-theoretic characterization
of involutions of the first kind in $\pgl V$ posed by Rickart
\cite{Ri3} and enables us to describe
isomorphisms of infinite-dimensional groups
of types $\Gamma$L, P$\Gamma$L, GL, and PGL
by classical methods. This description modulo the mentioned
problem was known since the early fifties, but has been justified
only in 1977 by O'Meara \cite{O'Mea}
who used non-classical techniques.

In the second part of the paper
(Sections~6--8) we show that, uniformly in $\vk$ and $D,$
$$
\TH(\Gl V) \ge \TH(\pGl V) \ge \TH(\pgl V) \ge \TH(\gl V)
$$
(Theorem \ref{Gl>pGl>pgl>gl}).  Here $\ge$ means
`syntactically interprets' (for the definition, see
Section \ref{0.12}).  Moreover, we prove that
$\TH(\pGl V) \ge \TH(\Gl V)$ (Theorem \ref{PH<>H}).
Thus, the logical power does not drop under the
transition to the projective image. Since the group
$\pgl V$ is obviously interpretable in the group $\gl
V,$ we can reconstruct the projective space in all the
groups $\Gl V,$ $\pGl V,$ $\pgl V,$ and $\gl V.$

Let $\lambda$ be an infinite cardinal. We denote (in the
manner of Shelah \cite{Sh1}) by $\LII(\lambda)$ the second order
logic, which allows to quantify over arbitrary
relations of power $<\lambda.$ The monadic
fragment of this logic, $\Mon(\lambda),$ allows
to quantify over arbitrary subsets of power $<\lambda.$ We
denote by $\str{\vk,D}$ the two-sorted structure, whose first sort
is the cardinal $\vk,$ the second one is the division
ring $D,$ and the only relations of this structure are the
standard ring operations on $D.$ Consider also the
two-sorted structure $\str{V,D},$ by combining the abelian
group of $V$ and the division ring $D$ with their basic
relations and the ternary relation for the
 action of $D$ on $V.$

{\rm Let $D$ be division ring such that

\qquad \parbox{10cm}{the number of conjugacy classes
of the multiplicative group
$D^*$ is equal to the power of
$D^*.$} \hfill $(*)$
}

In the third part of the paper (Sections~
\ref{3.1}--\ref{3.4}) we demonstrate that various
theories associated with the vector space $V$ are
pairwise mutually syntactically interpretable, uniformly in
$\vk$ and $D$ (Theorem \ref{MainTheorem}).
In particular, we prove this for the first order theories of
\begin{itemize}
\item the projective space $\cP,$
\item $\End V,$ the endomorphism semi-group of $V,$
\item the groups $\pgl V$ and $\gl V,$
\end{itemize}
and the second order theories
\begin{itemize}
\item $\TH(\str{V,D},\Mon(\vk^+)),$
\item $\TH(\str{\vk,D},\LII(\vk^+)).$
\end{itemize}
{\rm (Note that the mentioned first order
theories are mutually interpretable
for {\it arbitrary} division rings.)}
As a consequence, $\TH(\gl V) \ge \TH_2(\vk),$ or, in
other words, the automorphism groups of
infinite-dimensional vector spaces interpret set
theory. This provides a solution to the question from
\cite{Sh3} mentioned above for any variety of vector
spaces over a fixed division ring {\rm with $(*)$.}

An early version of Theorem \ref{MainTheorem}, without
the elementary theories of classical groups in the
list of mutually interpretable $V$-theories and under
the additional assumption of commutativity of $D,$ has
been proved in the joint paper of the author and
Belegradek \cite{BT}.

Theorem \ref{MainTheorem} gives a solution to  Felgner's
problem for infinite-dimensional linear groups
of types GL and PGL:
\begin{align} \tag{$**$}
&\gl{\vk_1,D_1} \equiv \gl{\vk_2,D_2} \Leftrightarrow\nonumber\\
&\pgl{\vk_1,D_1} \equiv \pgl{\vk_2,D_2}\Leftrightarrow \nonumber\\
&\TH(\str{\vk_1,D_1},\LII(\vk_1^+))=\TH(\str{\vk_2,D_2},\LII(\vk_2^+)),
\nonumber
\end{align}
where $\vk_1,\vk_2$ are infinite cardinals,
$D_1$ and $D_2$ are arbitrary division rings {\rm with $(*)$}.
Theorem \ref{MainTheorem} provides also more accurate
estimate of the logical power of the elementary theory
of projective space $\TH(\cP)$: according to
\cite[Theorem 1.7]{MRRS} if $D$ is commutative, then the theory
$\TH(\cP)$ has the logical power at least that of
second order logic on the cardinal $\min(\vk,|D|).$

To estimate the logical strength of the elementary
theories $\TH(\Gl V)$ and $\TH(\pGl V)$ we need a
stronger logic than $\LII(\vk^+)$ is.  This logic
is $\cL_D(\vk^+),$ extending expressive power of
$\LII(\vk^+)$ by a possibility to quantify over
arbitrary automorphisms of the division ring $D.$
Theorem \ref{MnThForGls} states that the theories $\TH(\Gl V),$
$\TH(\pGl V),$ and $\TH(\str{\vk,D},\cL_D(\vk^+))$
are pairwise mutually syntactically interpretable, uniformly
in $\vk$ and $D.$ This enables us to give a criterion of
elementary equivalence of infinite-dimensional
semi-linear groups similar to $(**)$.

We do not consider in this paper the
classification of elementary types for the class of
infinite-dimensional linear groups of types E and
E${}_\fX,$ which are natural infinite-dimensional analogues of
finite-dimensional groups of the type SL over fields
(see \cite[1.2, 2.1]{HO'M} for details).
We prove in \cite{To2} that all infinite-dimensional
groups of the types E and E${}_\fX$ over a fixed
division ring $D$ are elementary equivalent.

In Section \ref{3.4} we examine the condition
\begin{equation}
\TH(\str{\vk_1,D_1},\LII(\vk_1^+))=\TH(\str{\vk_2,D_2},\LII(\vk_2^+)).
\end{equation}
In particular, this makes possible to prove that
$\gl{\aleph_0,\R} \equiv \gl{\vk,D}$ iff
$\vk=\aleph_0$ and $D \cong \R,$ and $\gl{\aleph_0,\C}
 \equiv \gl{\vk,D}$ iff $\vk=\aleph_0$ and $D$ is an
uncountable algebraically closed field of
characteristic zero.  Furthermore,
using results from \cite{MRRS}, we prove
that $\Gl{\aleph_0,\C} \equiv \Gl{\vk,D}$ iff
$\vk=\aleph_0$ and $D \cong \C.$ This demonstrates
that the condition (\theequation) does not suffice
for the elementary equivalence of semi-linear groups.

In Section 0 we recall a number of basic facts of
linear group theory and a small portion of
mathematical logic.

\setcounter{section}{-1}

\section{Basic concepts and notation} \label{0.12}

Let  $V$  be always (throughout all the text) a {\it left
infinite-dimensional vector space over a division ring
$D.$} The dimension of $V$ will be denoted by $\vk.$  We
denote the elements of $V$  by lower case Latin letters
$a,b,c,\ldots,$ and the elements of $D$  by lower case Greek
letters  $\l, \mu,\nu.$ We shall use the letter
$W$  as the notation of an {\it arbitrary} left vector space
over $D.$

The {\it projective space} $P(W)$ is treated as the
set of all subspaces of $W$ \cite{Ba,Art,O'Mea0};
$P^*(W)$ will denote the set of all proper non-zero
subspaces of $W.$ $P^n(W)$ is the standard notation
for the set of all $n$-dimensional subspaces of  $W$
\cite{O'Mea0}.  We denote by $P^{(n)}(W)$ the set of
all subspaces in $W$ of dimension or codimension $n.$
The subspaces of $W$ of dimension
one and codimension one will be called {\it lines} and {\it hyperplanes},
respectively; the term `line' will be never used in the
present paper in the sense of projective geometry.
The letter $N$ usually denotes a line of $W$
and the letter $M$ a hyperplane of $W$ (possibly with indices, primes, etc.).

Let  $f$ be an isomorphism between division rings $D_1$ and $D_2,$
and $W_1,$ $W_2$ be vector spaces over $D_1$ and
$D_2,$ respectively. Recall that a transformation
$\sigma$ from $W_1$ to $W_2$ is a {\it semi-linear
transformation with respect to the {\rm(}associated{\rm)}
isomorphism $f,$} if
\begin{alignat*}2
&\sigma(a+b) = \sigma(a)+\sigma(b),    &&a,b \in W, \\
&\sigma(\lambda a) = f(\lambda)\cdot \sigma(a), &\quad &a \in
W, \lambda  \in D_1.
\end{alignat*}
Since a semi-linear transformation determines uniquely
its associated isomorphism, the action of the
associated isomorphism is usually written in the form
$\lambda^\sigma.$

The group of all bijective semi-linear transformations
from $W$ into itself (col\-li\-ne\-a\-tions) is called the
{\it semi-linear} ({\it collinear}) {\it group} of the
space $W$ \cite{O'Mea}. The standard notation
is $\Gl W.$ The subgroup of all linear transformations
from $\Gl W$ is the {\it general linear group} of the
space  $W;$ it is written as $\gl W.$

Every collineation $\sigma \in \Gl W$ induces in a
natural way a permutation  $\hat{\sigma}$ of the set
$P(W).$ The transformation  $\hat{\sigma}$ is said to
be the {\it projective image} of $\sigma.$ The set of
all projective images of the elements of the group
$\Gl W$ with the composition law is the {\it
projective semi-linear group} of $W.$ Notation:
$\pGl W.$ Clearly, the mapping $\,\hat{}\,$ is a
homomorphism from the group  $\Gl W$ to the group
$\pGl W.$  The {\it projective general linear group}
of the space $W$ is a subgroup of $\pGl W,$ consisting
of the projective images of linear transformations; it
is written as $\pgl W.$

A transformation $\tau : W \to W$ such that for
some $\lambda_{\tau} \in D^*$
$$
\tau a = \l_{\tau} \cdot a,\quad \forall a \in  W,
$$
is called a {\it radiation.} We shall denote $\tau$  by
$\l\cdot \id(W).$ The set of all radiations of
$W$  with the composition law is obviously the group
isomorphic  to $D^*,$ multiplicative group of $D.$
Notation: $\rl W.$

\begin{Prop} \label{0.1.1}
{\rm (\cite[Chapter III, Section 3]{Ba}).} Let $\dim
W \ge  2,$ and $\sigma_1, \sigma_2 \in  \Gl W.$ Then
$\hat\sigma_1=\hat\sigma_2$ if and only if
$\sigma_1\sigma^{-1}_2\in  \rl W.$
\end{Prop}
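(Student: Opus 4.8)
The plan is to prove both implications by passing to the single semilinear transformation $\rho = \sigma_1\sigma_2^{-1} \in \Gl W$ and exploiting the fact, recorded just before the statement, that $\sigma \mapsto \hat\sigma$ is a homomorphism from $\Gl W$ onto $\pGl W$. Under this map the whole proposition reduces to identifying the kernel of $\,\hat{}\,$ with $\rl W$: indeed $\hat\sigma_1 = \hat\sigma_2$ is equivalent to $\widehat{\sigma_1\sigma_2^{-1}} = \hat\sigma_1(\hat\sigma_2)^{-1} = \id$, so once I know that $\hat\rho = \id$ holds precisely when $\rho$ is a radiation, both directions follow at once. Note that $\rho$, being a composite of bijective semilinear maps, is itself semilinear, hence additive.

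For the inclusion $\rl W \subseteq \ker(\,\hat{}\,)$ (which yields the implication `$\Leftarrow$'): if $\rho \in \rl W$, say $\rho a = \lambda a$ for all $a$ with a fixed $\lambda \in D^*$, then every subspace $U$ is carried to $\{\lambda a : a \in U\} = U$, since $U$ is closed under left multiplication by scalars and $\lambda \ne 0$. Thus $\hat\rho$ fixes every point of $P(W)$, i.e. $\hat\rho = \id$; applied to $\rho = \sigma_1\sigma_2^{-1}$ this forces $\hat\sigma_1 = \hat\sigma_2$.

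For the converse (`$\Rightarrow$'), assume $\hat\rho = \id$, so $\rho$ fixes every subspace of $W$ setwise; in particular it fixes every line $\langle a \rangle$, which means $\rho a = \lambda_a a$ for some $\lambda_a \in D^*$ depending a priori on $a$. The heart of the argument, and the only place the hypothesis $\dim W \ge 2$ enters, is to show that $\lambda_a$ is actually independent of $a$. Given linearly independent $a, b$, additivity gives $\lambda_{a+b}(a+b) = \rho(a+b) = \rho a + \rho b = \lambda_a a + \lambda_b b$; comparing coefficients in the independent pair $a,b$ (legitimate in a left vector space) yields $\lambda_a = \lambda_{a+b} = \lambda_b$. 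For a linearly dependent nonzero pair $a, b$, I choose, using $\dim W \ge 2$, a vector $c \notin \langle a \rangle$; then $a,c$ and $b,c$ are each independent, so $\lambda_a = \lambda_c = \lambda_b$. Hence $\lambda_a$ equals a common constant $\lambda$ for all $a \ne 0$, i.e. $\rho a = \lambda a$ identically, so $\rho \in \rl W$ and therefore $\sigma_1\sigma_2^{-1} \in \rl W$.

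The only genuine obstacle is the constancy of $\lambda_a$; everything else is formal bookkeeping with the homomorphism $\,\hat{}\,$. I expect $\dim W \ge 2$ to be essential at exactly two points — to produce an independent pair in the first place, and to supply the auxiliary vector $c$ that handles collinear pairs — and I would note that the statement really does fail for $\dim W = 1$, where $P(W) = \{0,W\}$ is fixed by every bijective semilinear map, whereas such a map need not be a radiation when $D$ admits automorphisms that are not conjugations.
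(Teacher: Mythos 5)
Your proof is correct. The paper itself gives no argument for this proposition --- it is quoted from Baer's book --- and what you have written is precisely the standard proof found there: identify the kernel of $\,\hat{}\,$ with $\rl W$ by showing that a semilinear bijection fixing every line multiplies each nonzero vector by a scalar $\lambda_a$, and then use additivity on an independent pair (plus an auxiliary vector $c$ for collinear pairs, which is where $\dim W\ge 2$ enters) to see that $\lambda_a$ is constant. Your closing remark about the failure in dimension one is also accurate.
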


The group $\rl W$ is clearly a normal subgroup of $\Gl W.$ It
is easy to see that a radiation  $\tau$ lies in $\gl W$
iff $\l_{\tau} \in  Z(D),$ where  $Z(D)$  is the
center of $D.$ Furthermore, $Z(\gl W) = Z(\rl W) =
\gl W \cap \rl W.$ Thus,

\begin{Cor} \label{PrjsAsFactorByRads}
Let  $\dim W \ge  2.$ Then

{\rm (a)} The group $\pGl W$ is isomorphic
to the quotient group $\Gl W/\rl W.$

{\rm (b)} The group  $\pgl W$ is isomorphic to the
quotient group $\gl W/Z(\rl W).$
\end{Cor}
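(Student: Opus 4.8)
The plan is to deduce Corollary \ref{PrjsAsFactorByRads} from Proposition \ref{0.1.1} together with the preceding remarks about radiations. I will treat the two parts separately, though they share the same mechanism.

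For part (a), consider the homomorphism $\,\hat{}\,:\Gl W \to \pGl W$ sending $\sigma$ to its projective image $\hat\sigma.$ By definition $\pGl W$ is the set of all such projective images with composition, so this map is surjective. Its kernel consists of those $\sigma$ with $\hat\sigma = \hat{\id} = \id_{P(W)},$ and by Proposition \ref{0.1.1} (applied with $\sigma_2 = \id$, legitimate since $\dim W \ge 2$) this holds iff $\sigma \cdot \id^{-1} = \sigma \in \rl W.$ Hence $\ker(\,\hat{}\,) = \rl W,$ and the first isomorphism theorem gives $\pGl W \cong \Gl W/\rl W.$

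For part (b), I would restrict the same homomorphism to the subgroup $\gl W \le \Gl W.$ Its image is by definition $\pgl W,$ so the restriction is a surjection $\gl W \to \pgl W.$ Its kernel is $\gl W \cap \ker(\,\hat{}\,) = \gl W \cap \rl W.$ The text has just recorded that $Z(\gl W) = Z(\rl W) = \gl W \cap \rl W,$ so the kernel equals $Z(\rl W).$ Again by the first isomorphism theorem, $\pgl W \cong \gl W/Z(\rl W).$

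The only real content beyond bookkeeping is locating the kernels correctly, and that is exactly where Proposition \ref{0.1.1} does the work: it identifies the radiations as precisely the collineations acting trivially on the projective space. The identification $\gl W \cap \rl W = Z(\rl W)$ I would take from the displayed remark immediately preceding the corollary, so no separate argument is needed. Thus I do not anticipate a genuine obstacle here; the proposition is the substantive input and the corollary is a direct application of the first isomorphism theorem in each case. The only mild care needed is to confirm that $\,\hat{}\,$ really is a homomorphism (already noted in the text) and that the hypothesis $\dim W \ge 2$ is in force so that Proposition \ref{0.1.1} applies.
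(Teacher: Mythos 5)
Your proof is correct and follows exactly the route the paper intends: the corollary is stated as an immediate consequence of Proposition \ref{0.1.1} (which identifies the kernel of the projective-image homomorphism as $\rl W$) together with the preceding remark that $\gl W \cap \rl W = Z(\rl W)$, followed by the first isomorphism theorem. Nothing is missing.
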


We describe now the involutions in the group  $\gl W.$
Let $\sigma$ be an arbitrary involution of $\gl W.$
There are two different cases:  $\chr D \ne 2$  and
$\chr D =2.$

{\it I. The characteristic of $D$  is not}  2. In the
case we have a decomposition
\begin{equation}
W = W^-_\s\oplus  W^+_\s,
\end{equation}
where $W^+_\s= \{a\in W: \s a=a\}$  and  $W^-_\s=\{a\in W: \s a=-a\}.$
The subspaces  $W^-_\s$ and $W^+_\s$ are called the
{\it subspaces} of $\s.$ The decomposition
(\theequation) implies that there is a basis of $W$ in
which $\s$  is diagonalized. Furthermore, one can
easily prove the following

\begin{Lem} \label{0.1.3}
Let  $\s_1,\ldots,\s_n$ be pairwise
commuting involutions in $\gl W.$ Then there is a
basis  of  $W$  in which all  $\s_1,\ldots
,\s_n$ are diagonalized.
\end{Lem}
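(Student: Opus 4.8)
The plan is to argue by induction on $n$, reducing simultaneous diagonalization to the single-involution decomposition $W = W^-_\s \oplus W^+_\s$ already recorded above. Throughout I use that $\chr D \ne 2$ guarantees $2 = 1 + 1$ is invertible in $D$, so that the projections $a \mapsto \tfrac12(a + \s a)$ onto $W^+_\s$ and $a \mapsto \tfrac12(a - \s a)$ onto $W^-_\s$ are well defined; this is exactly what lets a map squaring to the identity split $W$ into its $(+1)$- and $(-1)$-eigenspaces. It will be convenient to prove the slightly more flexible statement in which each $\s_i$ is only assumed to satisfy $\s_i^2 = \id$ (allowing $\s_i = \id$), since the restrictions arising in the induction need not remain genuine involutions.

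For the base case $n = 1$ the decomposition $W = W^-_{\s_1} \oplus W^+_{\s_1}$ does the job: choosing any basis of each summand (such bases exist for a vector space over a division ring of arbitrary dimension) and taking their union yields a basis consisting of eigenvectors of $\s_1$, in which $\s_1$ is diagonal with diagonal entries $\pm 1$.

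For the inductive step I assume the claim for any $n-1$ pairwise commuting maps squaring to $\id$, and split $W = W^+_{\s_1} \oplus W^-_{\s_1}$ by means of $\s_1$. The key observation is that each $\s_i$ with $i \ge 2$ leaves both summands invariant: if $\s_1 a = a$ then $\s_1(\s_i a) = \s_i(\s_1 a) = \s_i a$ because $\s_i \s_1 = \s_1 \s_i$, so $\s_i a \in W^+_{\s_1}$, and likewise $\s_i$ preserves $W^-_{\s_1}$. Hence $\s_2, \ldots, \s_n$ restrict to linear maps on each summand; these restrictions still square to $\id$ and still pairwise commute. By the induction hypothesis each summand admits a basis diagonalizing the restrictions of $\s_2, \ldots, \s_n$. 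The union of these two bases is a basis of $W$ diagonalizing $\s_2, \ldots, \s_n$, and it simultaneously diagonalizes $\s_1$ as well, since every one of its vectors lies in an eigenspace of $\s_1$.

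I expect no serious obstacle here; the only points requiring care are bookkeeping rather than ideas. Specifically one must (i) keep the statement stable under restriction, which is why I weaken `involution' to `$\s_i^2 = \id$' so that a restriction equal to the identity causes no trouble, and (ii) ensure the union of bases of a direct-sum decomposition is again a basis, which holds in arbitrary (possibly infinite) dimension. The essential algebraic input is entirely contained in the characteristic-$\ne 2$ eigenspace splitting established before the lemma.
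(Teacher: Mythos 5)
Your proof is correct. The paper offers no proof of this lemma (it is stated as something "one can easily prove" from the eigenspace decomposition $W = W^-_\s \oplus W^+_\s$), and your induction on $n$ --- using commutativity to show each $\s_i$ preserves the eigenspaces of $\s_1$, restricting, and taking unions of bases --- is exactly the standard argument the paper has in mind; the strengthening to maps merely satisfying $\s_i^2 = \id$ is the right bookkeeping device to keep the induction stable under restriction.
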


An involution $\s \in \gl W$ is called {\it extremal}
if some its subspace is a line (or, equivalently, a
hyperplane).

{\it II. The characteristic of  $D$  is equal to} 2.  In this case
we can also assign  to  an involution   $\s \in \gl W$
two subspaces of $W.$ These subspaces are $\Fix(\s) =
\{a\in W~:~\s a=a\}$  and $\Rng(\id(W)+\s),$ where
$\Rng(\pi)$ is the image of a transformation  $\pi.$

Choose a linearly independent set $\{d_i\,:\,i\in I\}$ such
that
$$
W = \Fix(\s) \oplus  \str{d_i\,:\, i\in I}.
$$
Let  $e_i=d_i+\s d_i, i\in I.$ The set $\{e_i: i\in I\}$
is obviously a linearly independent  subset of
$\Fix(\s).$

Let  $\{e_j: j\in J\}$  be a complement of
$\{e_i:i\in I\}$  to a basis of  $\Fix(\s).$ Thus, $\s$
acts on the basis  $\{e_i: i\in I\} \cup  \{e_j:\in J\}
\cup  \{d_i: i\in I\}$ of $W$ as follows
\begin{alignat}2 \label{eq0.1.1}
&\s e_i = e_i,    && i \in I, \\
&\s e_j = e_j,    && j \in J,  \nonumber \\
&\s d_i = d_i+e_i, &\quad & i \in I. \nonumber
\end{alignat}
On the other hand, any $\s \in \gl W,$ which
acts on some basis of $W$ similar to
(\ref{eq0.1.1}), is an involution.

The definition of the extremal involutions remains the
same: an involution of the group $\gl W$ is called
{\it extremal,} if it has a subspace of dimension one. It
is important that in the case when $\chr D =2,$ the
extremal involutions are also {\it transvections,}
that is, linear transformations of the form
$$
\s a = a +\delta(a)b,
$$
where $\delta$  is a non-zero linear function from
$W$  to $D$ such that $\delta (b)=0.$ Clearly,
$\Fix(\sigma) = \ker(\delta)$ and $\Rng(\id(W)+\sigma)
= \str b.$ The line  $\str b$ is called the {\it line}
of  $\s$ and the hyperplane $\ker(\delta)$ is called
the {\it hyperplane} of  $\sigma$ ({\it subspaces} of
$\sigma$).

\begin{Lem} \label{TransvsBasics}
\mbox{\rm (a)} Let  $M$ and  $N$ be a hyperplane and a
line with $N \subseteq M.$ There is a transvection
$\sigma$  in $\gl W$  such that the subspaces of  $\sigma$
are identical to  $M$  and  $N.$

{\rm (b)} Let  $\sigma_1$ and $\sigma_2$  be two transvections in
$\gl W,$ and let $N_k,M_k$ be the subspaces  of  $\s_k,$
where $k=1,2.$ Then $\sigma_1\sigma_2= \sigma_2\sigma_1$
if and only if $(N_1 \subseteq  M_2 \wedge M_1\supseteq N_2).$

{\rm (c)} Let  $\sigma_1$   and   $\sigma_2$   be two distinct
transvections  in $\gl W.$ Then  $\sigma_1$  and $\sigma_2$
have a mutual subspace if and only if $\sigma_1\sigma_2$ is
a transvection.
\end{Lem}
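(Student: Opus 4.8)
The plan is to fix once and for all an explicit parametrization of transvections. Write $\sigma_k a = a + \delta_k(a)\, b_k$, where $\delta_k \colon W \to D$ is a nonzero left-linear functional (so $\delta_k(\lambda a) = \lambda \delta_k(a)$, which is exactly what places each $\sigma_k$ in $\gl W$) satisfying $\delta_k(b_k) = 0$, so that the line of $\sigma_k$ is $N_k = \str{b_k}$ and its hyperplane is $M_k = \ker \delta_k$. Part (a) is then a direct construction: given a hyperplane $M$ and a line $N \subseteq M$, choose a nonzero functional $\delta$ with $\ker\delta = M$ and a nonzero $b$ spanning $N$; since $N \subseteq M$ we get $\delta(b) = 0$, so $\sigma a = a + \delta(a) b$ is a transvection whose subspaces are precisely $M = \ker\delta$ and $N = \str b$.

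The heart of the matter is a single computation underlying both (b) and (c). Using left-linearity of $\delta_1$,
\[
\sigma_1\sigma_2 a = a + \delta_1(a)\, b_1 + \delta_2(a)\, b_2 + \delta_2(a)\,\delta_1(b_2)\, b_1 ,
\]
and swapping indices and subtracting gives
\[
(\sigma_1\sigma_2 - \sigma_2\sigma_1)\, a = \delta_2(a)\,\delta_1(b_2)\, b_1 - \delta_1(a)\,\delta_2(b_1)\, b_2 .
\]
For (b), the maps commute iff this vanishes identically. The backward direction is immediate, since $N_1 \subseteq M_2$ and $N_2 \subseteq M_1$ say exactly $\delta_2(b_1) = 0$ and $\delta_1(b_2) = 0$. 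For the forward direction I split on whether $b_1, b_2$ are independent: if they are, each coefficient must vanish for all $a$ and, as $\delta_1, \delta_2 \ne 0$, both containments follow; if $b_1, b_2$ are dependent then $N_1 = N_2$, one of $\delta_1(b_2), \delta_2(b_1)$ vanishes automatically, and the identity forces the other, so both conditions hold (the second vacuously).

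For (c) I would rewrite $\sigma_1\sigma_2 = \id + \tau$ with $\tau a = \delta_1(a)\, b_1 + \delta_2(a)\,(b_2 + \delta_1(b_2)\, b_1)$, and recall that $\id + \tau$ is a transvection exactly when $\tau$ has the form $a \mapsto \delta(a) b$ with $\delta$ a nonzero functional, $b$ a nonzero vector, and $\delta(b) = 0$. The argument runs over four cases according to whether the lines and the hyperplanes of $\sigma_1, \sigma_2$ coincide. When $N_1 \ne N_2$ and $M_1 \ne M_2$, the functionals $\delta_1, \delta_2$ are independent and $b_1, b_2$ are independent, so $\tau$ has rank two and $\sigma_1\sigma_2$ is not a transvection, matching the absence of a mutual subspace. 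When exactly one subspace is shared, the equality $\delta_1(b_2) = 0$ holds (from $M_1 = M_2$ in the shared-hyperplane case, from $N_1 = N_2$ in the shared-line case) and collapses $\tau$ to rank one in the required form, so $\sigma_1\sigma_2$ is a transvection.

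The step I expect to be delicate is the remaining case, in which $\sigma_1$ and $\sigma_2$ share both their line and their hyperplane. Then $\delta_2 = \delta_1 c$ and $b_2 = \mu b_1$, whence $\tau a = \delta_1(a)\,(1 + c\mu)\, b_1$, and one must check that the scalar $1 + c\mu$ is nonzero precisely when the two \emph{distinct} transvections do not multiply to the identity. In characteristic two, where every transvection is an involution, distinctness already forces $c\mu \ne 1$ and hence $1 + c\mu \ne 0$, so the product is a genuine transvection. This coincident case, together with the consistent bookkeeping of the side on which scalars of the noncommutative ring $D$ act, is where the care is needed; everything else reduces to the two displayed formulas above.
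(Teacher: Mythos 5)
Your proof is correct, and it is self-contained where the paper is not: the paper gives no proof of this lemma, citing O'Meara (pp.~101--102) instead, so your direct computation from the parametrization $\sigma_k a = a + \delta_k(a)\,b_k$ is exactly the elementary argument one would want to see spelled out. The two displayed formulas are right for left vector spaces over a division ring (left-linearity of $\delta_1$ is what produces the term $\delta_2(a)\delta_1(b_2)b_1$ in that order), and the case analysis for (b) and (c) is complete. Two remarks. First, in the dependent case of (b) you say the commutation identity ``forces the other'' vanishing; in fact when $N_1=N_2$ both $\delta_1(b_2)$ and $\delta_2(b_1)$ vanish automatically, since $b_2\in N_1\subseteq M_1$ and $b_1\in N_2\subseteq M_2$, so that case is trivial on both sides of the equivalence --- your conclusion is still correct, only the bookkeeping is off. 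Second, you are right that the coincident case of (c) is the delicate one: with $\delta_2=\delta_1 c$ and $b_2=\mu b_1$ the product is $\id$ plus $a\mapsto\delta_1(a)(1+c\mu)b_1$, and $1+c\mu=0$ with $c\mu\neq 1$ is genuinely possible in characteristic $\neq 2$ (take $\sigma_2=\sigma_1^{-1}$, which shares both subspaces with $\sigma_1$ yet multiplies with it to the identity), so part (c) as literally stated holds only in characteristic $2$, or under the convention that the identity counts as a transvection. Since the lemma sits inside the paper's characteristic-$2$ discussion and is applied only there, your resolution --- transvections are involutions in characteristic $2$, so distinctness rules out $c\mu=1=-1$ --- is the intended one and closes the argument.
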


Lemma \ref{TransvsBasics} is a well-known result,
which can be found, for example, in \cite[pp.
101-102]{O'Mea}, where it is formulated for arbitrary
vector spaces over division rings (not only for
finite-dimensional ones as in most of the works in linear
group theory, but also for infinite-dimensional
vector spaces).

{\bf Remarks.} (a) It should be pointed out that both
methods of assigning subspaces to an involution
$\sigma$ (whether characteristic is equal to 2, or
not) could be treated in a uniform way, if we assign,
in the style of O'Meara, to an involution of $\gl
W$ its {\it fixed} and {\it residual} subspaces, where
the residual one is the subspace $\Rng(\id(W)-\sigma).$

(b) Note also that in both cases we assign to each
involution in $\gl W$ an {\it unordered} pair of
subspaces of $W.$

Let us describe the involutions in the group $\pgl
W$ of dimension at least two. An involution  $\hat\s \in \pgl W$
is said to be an {\it involution of the first kind} in the group
$\pgl W,$ if  $\hat\s$  is induced by an involution
of  $\gl W$ \cite[p. 8]{Die1}. The involutions that
are not of the first kind are called
{\it involutions of the second kind.} We denote the
identity element of  $\pgl W$  simply by  $1.$ By
Proposition \ref{0.1.1} $\hat\s^2=1$  iff  $\s^2=
\l\cdot \id(W),$ where  $\l \in  Z(D).$ It is easy to
see that if  $\s^2= \l\cdot \id(W),$ then  $\hat\s$
is an involution of the first kind iff $\l$  is a
square in  $Z(D).$

The $\pgl W$-involutions induced by extremal $\gl
W$-involutions are called {\it extremal}, too.  Let
$\hat\sigma$ be  an involution of the first kind
induced by a $\gl W$-involution  $\sigma.$ The {\it
subspaces} of $\hat \s$ are surely chosen (well-defined)
to be identical to the subspaces of $\s.$
Let $\s$  be an involution of  $\gl W$ or a $\pgl V$-involution of the
first kind. We call  $\s$ a $\gamma$-{\it involution,}
if  $\gamma$  is  equal to $\min(\dim R, \dim S),$
where  $R$  and  $S$  are the subspaces of $\s.$

We discuss now the important notion of a {\it minimal
pair.} The notion appeared in the paper of Mackey
\cite{Ma}. That paper has a section devoted to the
groups of autohomeomorphisms of infinite-dimensional
normed linear spaces over the field of reals. In
\cite{Ri1,Ri2,Ri3} Rickart extended the methods of
Mackey from the groups of autohomeomorphisms to some
classical groups.

Let  $\dim W \ge  3.$ Minimal pairs interpret the
elements of $P^{(1)}(W)$  as follows. An extremal
involution determines two subspaces: a line  $N$  and
a hyperplane $M.$ Then, if we have a pair  $\str{\s_1,\s_2}$
such that
\begin{itemize}
\item[(1)] $\s_1,\s_2$ are extremal $\gl W$-involutions;
\item[(2)] $(N_1=N_2 \wedge M_1\neq M_2)$  or
$(N_1\neq N_2 \wedge M_1=M_2),$
\end{itemize}
then involutions $\s_1$ and $\s_2$ have the unique
mutual subspace -- namely, the subspace which
is in both the pairs of subspaces associated
with $\s_1$ and $\s_2,$ and therefore the tuple $\str{\s_1,\s_2}$
codes some subspace of $W.$ In
the case, when $\chr D =2$  it is technically convenient to
add the condition
\begin{itemize}
\item[(0)] $\s_1\s_2= \s_2\s_1.$
\end{itemize}
So a pair of  $\str{\s_1,\s_2}$ of
$\gl W$-involutions satisfying (1,2) in the case $\chr
D \neq 2$  or the conditions  (0,1,2)  in the case
$\chr D =2$  is called a {\it minimal pair} of the group
$\gl W.$

Rickart in the above mentioned  above papers
\cite{Ri1,Ri2,Ri3} modifying the methods of Mackey
showed that  if $\chr D \ne 2$  then the property of
being a $\gl W$-minimal  pair is group-theoretic.
Rickart denoted by $c(I)$ the set of all
involutions in the centralizer of a subset $I$ of the
group $\gl W.$

\begin{Th} \mbox{ (\cite[Theorem 2.6]{Ri1}). }  \label{Mckey&Rckrt}
Let  $\chr D \ne 2$  and let $\dim W \ge 3.$ Then the
following properties are equivalent

{\rm (a)} $\str{\sigma_1, \sigma_2}$  is a minimal pair in
$\gl W;$

{\rm (b)} $c(c(\sigma_1, \sigma_2)) = c(c(\pi_1,\pi_2)),$
where $\pi_1$ and $\pi_2$ are arbitrary non-commuting
extremal involutions in $c(c(\sigma_1,\sigma_2)).$
\end{Th}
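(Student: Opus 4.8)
The plan is to translate the group-theoretic condition (b) into an explicit linear-algebraic description of the double centralizer $c(c(\sigma_1,\sigma_2))$. Since $\chr D \neq 2$, every involution $\sigma$ of $\gl W$ is diagonalizable, with $W = W^-_\sigma \oplus W^+_\sigma$; consequently an involution $\tau$ commutes with $\sigma$ exactly when $\tau$ preserves both eigenspaces of $\sigma$. I will use throughout that $c$ is an order-reversing polarity, so that $c^3 = c$ and $c\circ c$ is a closure operator whose fixed sets (the biclosed sets) are exactly those of the form $c(c(S))$. Finally, by the duality between a subspace and its codimension (equivalently, the fixed-versus-residual symmetry of the Remark), it suffices to treat a minimal pair $\str{\sigma_1,\sigma_2}$ coding a common line $N$, with distinct hyperplanes $M_1 \neq M_2$; I set $P := M_1 \cap M_2$, of codimension $2$, and fix a $2$-dimensional complement $U \supseteq N$ of $P$, so that $M_1 \cap U$ and $M_2 \cap U$ are two lines of $U$ distinct from $N$.

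The first computation is that $\tau \in c(\sigma_1,\sigma_2)$ iff $\tau$ is an involution preserving each of $N, M_1, M_2$; passing to $W/P$ (which is $2$-dimensional, with $N, M_1, M_2$ inducing three distinct lines), this is equivalent to the conditions $\tau(N) = N$, $\tau(P) = P$ and $\bar\tau = \pm\id$ on $W/P$. In particular $c(\sigma_1,\sigma_2)$ \emph{depends only on the pair $(N,P)$}, and it contains every involution supported on $P$ (acting as the identity on $U$). The crucial second computation is that any $\tau \in c(c(\sigma_1,\sigma_2))$ commutes with all these involutions of $P$; a Schur-type argument (the involutions of $P$ have centralizer the scalars) forces $\tau$ to be block-diagonal for $W = U \oplus P$ and to act as a scalar $\pm\id$ on $P$. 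Commuting with the remaining, two-dimensional, members of $c(\sigma_1,\sigma_2)$ then reduces everything to an explicit $2\times 2$ calculation on $U$, whose outcome is
$$
c(c(\sigma_1,\sigma_2)) = \{-\id\} \cup \{\pi : \pi \text{ is extremal with line } N \text{ and hyperplane } H \supseteq P\},
$$
once more depending only on $(N,P)$.

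With this description, (a)$\Rightarrow$(b) follows. The displayed set contains non-commuting extremal involutions (two members with distinct hyperplanes fail to commute when $\chr D \neq 2$). Moreover any two non-commuting extremal involutions $\pi_1,\pi_2$ in it share the line $N$ and have distinct hyperplanes $H_1, H_2 \supseteq P$; a codimension count gives $H_1 \cap H_2 = P$, so $\str{\pi_1,\pi_2}$ is again a minimal pair coding $N$ with the same associated codimension-$2$ space $P$. Since $c(\cdot,\cdot)$ of such a pair depends only on $(N,P)$, we get $c(\pi_1,\pi_2) = c(\sigma_1,\sigma_2)$, hence $c(c(\pi_1,\pi_2)) = c(c(\sigma_1,\sigma_2))$, which is precisely (b).

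For (b)$\Rightarrow$(a) I would argue the contrapositive, splitting on whether $\sigma_1,\sigma_2$ commute. If they commute, then by Lemma~\ref{0.1.3} they are simultaneously diagonalizable, and the same Schur-type argument shows $c(c(\sigma_1,\sigma_2))$ consists of pairwise-commuting (diagonal) involutions; thus it contains no non-commuting extremal pair and the premise of (b) fails. (This subsumes the degenerate cases $\sigma_i = \pm\id$ and $\sigma_1 = \pm\sigma_2$, where the double centralizer reduces to the four-element set $\{\id,-\id,\rho,-\rho\}$.) The substantive case is $\sigma_1,\sigma_2$ non-commuting but with $\str{\sigma_1,\sigma_2}$ not a minimal pair — either not both extremal, or extremal but sharing no common subspace. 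Here the plan is to classify the biclosed sets that contain a non-commuting extremal pair, to show their atoms are exactly the sets $Y_{(N,P)}$ of the display above (and their hyperplane-duals), and to locate inside $X := c(c(\sigma_1,\sigma_2))$ a genuine minimal pair $\pi_1,\pi_2$; the forward direction then gives $c(c(\pi_1,\pi_2)) = Y_{(N,P)}$, while the failure of minimality forces $X$ to contain an extremal involution with a different line or a different codimension-$2$ space, so $Y_{(N,P)} \subsetneq X$ and (b) fails. I expect this direction to be the main obstacle: two non-commuting extremal involutions need not themselves form a minimal pair, so one cannot simply feed an arbitrary pair of $X$ into the forward direction, and one must compute $X$ finely enough for non-minimal, non-extremal, or badly-positioned $\sigma_1,\sigma_2$ to verify the strict containment in every case.
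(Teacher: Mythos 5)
The paper does not prove this statement; it is quoted from Rickart (\cite[Theorem 2.6]{Ri1}) and used as a black box, so there is no internal proof to compare against and your argument has to stand on its own. On its own terms, the forward half is in good shape: your description of $c(\sigma_1,\sigma_2)$ for a minimal pair with common line $N$ (involutions preserving $N$ and $P$ and inducing $\pm\id$ on $W/P$), the resulting identification of $c(c(\sigma_1,\sigma_2))$ with $\{-\id\}\cup\{\pi:\pi$ extremal with line $N$ and hyperplane $\supseteq P\}$, and the observation that any non-commuting extremal pair inside this set is again a minimal pair with the same $(N,P)$ are all correct (I would only caution that the decisive constraints in the ``$2\times 2$ calculation'' come from members of $c(\sigma_1,\sigma_2)$ that are \emph{not} block-diagonal for $U\oplus P$, namely extremal involutions with line in $P$ and an arbitrary complementary hyperplane through $N$; these are what rule out the involutions $\pm(\id_{U'}\oplus(-\id_P))$ and the extremal involutions with hyperplane $N+P$). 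The commuting sub-case of the converse is also fine.

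The genuine gap is the one you flag yourself, but you have mislocated where the difficulty sits. The strictness $c(c(\pi_1,\pi_2))\subsetneq X$ does \emph{not} require computing $X$ ``finely enough'': the inclusion $c(c(\pi_1,\pi_2))\subseteq X$ is formal from $\pi_1,\pi_2\in X$ and $c^3=c$, and it is strict because $\sigma_1,\sigma_2\in X$ while two non-commuting, non-central involutions lying in $Y_{(N,P)}$ (or its hyperplane dual) would both be extremal with a common subspace and distinct complementary subspaces, i.e.\ would themselves form a minimal pair, contradicting the standing assumption. What is genuinely missing is the \emph{existence} step: that a double centralizer containing a non-commuting extremal pair must contain a \emph{minimal} pair. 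A non-commuting extremal pair $\pi_1,\pi_2\in X$ may have distinct lines and distinct hyperplanes; the natural repair, replacing $\pi_2$ by $\pi_1\pi_2\pi_1\in X$, produces a minimal pair exactly when $\pi_1$ fixes one but not both of the subspaces of $\pi_2$, and this fails in the configuration $L_1\not\subseteq H_2$, $L_2\not\subseteq H_1$. Handling that configuration (and the non-extremal $\sigma_i$ if they are admitted) forces you to compute $c(c(\pi_1,\pi_2))$ for the non-minimal positions after all --- which is essentially the content of the lemmas Rickart proves on the way to his Theorem 2.6 --- so until those computations are supplied the direction (b)$\Rightarrow$(a) is a plan rather than a proof.
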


Thus, modulo definability of the extremal involutions,
the property of being a minimal pair is even first
order.

In the case when $\chr D =2$ we may obtain the
first order characterization of  $\gl W$-minimal pairs
(modulo definability of extremal involutions) by
applying Lemma \ref{TransvsBasics}.

\begin{Prop} \label{MPsInChar2}
Let  $\chr D =2,$   $\dim W \ge  3,$ and  $\s_1,\s_2$ be commuting
transvections in $\gl W.$ Then the following
conditions are equivalent

{\rm (a)} $\str{\s_1,\s_2}$  is a minimal pair in
$\gl W;$

{\rm (b)} $\s_1\s_2$  is a transvection  and
there is a transvection  $\s,$ commuting with $\s_1,$
but not with $\s_2.$
\end{Prop}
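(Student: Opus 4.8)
The plan is to use the hypothesis to reduce assertion (a) to the purely combinatorial condition (2) in the definition of a minimal pair, and then to match (2) against (b) by a case analysis on the incidence of the subspaces of $\sigma_1$ and $\sigma_2$. Write $N_k, M_k$ for the line and hyperplane of $\sigma_k$. Since $\sigma_1,\sigma_2$ are assumed to be commuting transvections, conditions (0) and (1) of the definition hold automatically (in characteristic $2$ the transvections are precisely the extremal involutions), so (a) is equivalent to condition (2): $(N_1=N_2 \wedge M_1\ne M_2)$ or $(N_1\ne N_2 \wedge M_1=M_2)$. First I would dispose of the degenerate possibility $\sigma_1=\sigma_2$: then $\sigma_1\sigma_2=\id$ is not a transvection and (2) plainly fails, so (a) and (b) both fail. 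Hence I may assume $\sigma_1\ne\sigma_2$, which lets me invoke part (c) of Lemma \ref{TransvsBasics}.

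For (2)$\Rightarrow$(b) I would treat the two disjuncts of (2). Suppose first $N_1=N_2$ and $M_1\ne M_2$. The two transvections then share the line $N_1$, so by Lemma \ref{TransvsBasics}(c) the product $\sigma_1\sigma_2$ is again a transvection. To build the required witness I would use $\dim W\ge 3$: since distinct hyperplanes are incomparable, I can pick a line $N\subseteq M_1$ with $N\not\subseteq M_2$, and then a hyperplane $M$ containing the at-most-two-dimensional subspace $N+N_1$. By Lemma \ref{TransvsBasics}(a) there is a transvection $\sigma$ with subspaces $N,M$, and by the commuting criterion of Lemma \ref{TransvsBasics}(b) it commutes with $\sigma_1$ (because $N\subseteq M_1$ and $N_1\subseteq M$) but not with $\sigma_2$ (because $N\not\subseteq M_2$). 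The symmetric disjunct $N_1\ne N_2$, $M_1=M_2$ is handled dually: now $\sigma_1\sigma_2$ is a transvection because the hyperplane is shared, and I would choose a hyperplane $M\supseteq N_1$ with $N_2\not\subseteq M$, together with a line $N\subseteq M\cap M_1$ (nonzero since $M\cap M_1$ has codimension at most two and $\dim W\ge 3$); the transvection with these subspaces commutes with $\sigma_1$ but not with $\sigma_2$.

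For the converse I would argue by contraposition, assuming $\sigma_1\ne\sigma_2$ and that (2) fails, which leaves exactly two configurations. If $N_1\ne N_2$ and $M_1\ne M_2$, then $\sigma_1,\sigma_2$ have no common subspace, so Lemma \ref{TransvsBasics}(c) shows $\sigma_1\sigma_2$ is not a transvection and the first clause of (b) fails. The remaining, genuinely subtle, configuration is $N_1=N_2$ and $M_1=M_2$ with $\sigma_1\ne\sigma_2$: here $\sigma_1\sigma_2$ \emph{is} a transvection, so the first clause of (b) holds and cannot by itself rule out this non-minimal pair. The point of the second clause is exactly to exclude it: by Lemma \ref{TransvsBasics}(b) the condition for an arbitrary transvection $\sigma$ to commute with $\sigma_k$ depends only on $N_k$ and $M_k$, which now coincide for $k=1,2$; hence $\sigma$ commutes with $\sigma_1$ if and only if it commutes with $\sigma_2$, so no witness as in (b) can exist.

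The main obstacle I anticipate is not any single computation but the structural realization that the first clause of (b) is insufficient, and that the whole weight of the equivalence falls on the coincident case $N_1=N_2$, $M_1=M_2$. Once Lemma \ref{TransvsBasics}(b) is read as saying that commutativity of transvections is governed solely by the incidence of their lines and hyperplanes, that case collapses at once. The only other step demanding care is the explicit construction of the witnessing transvections, where the hypothesis $\dim W\ge 3$ is what guarantees that the requisite lines, hyperplanes, and their intersections actually exist.
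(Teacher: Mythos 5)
Your proof is correct and takes essentially the same route as the paper's: reduce (a) to the incidence condition on the lines and hyperplanes, construct the witnessing transvection via Lemma \ref{TransvsBasics}(a),(b) in the forward direction, and use Lemma \ref{TransvsBasics}(b),(c) to collapse the remaining configurations in the converse. You are in fact slightly more explicit than the paper, which leaves the degenerate case $\sigma_1=\sigma_2$ and the configuration $N_1\ne N_2,\ M_1\ne M_2$ (where the product fails to be a transvection) implicit.
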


\begin{proof}
$\Rightarrow$. Suppose that  $\s_k$ has the subspaces
$N_k,M_k,$ where $k=1,2.$ We assume that $(N_1 \neq N_2 \wedge
M_1=M_2).$ Then we may take as $\s$ a
transvection with the line $N_1$  and the
hyperplane  $M',$ where $M'$ is a hyperplane which does not contain
$N_2.$ In the dual case $(N_1=N_2 \wedge M_1\neq M_2)$ one may
choose a line  $N',$  the linear span of an element
from  $M_1 \setminus M_2,$ and then construct  $\s$  by
the subspaces  $N'$  and  $M_1.$

$\Leftarrow$. If $\s_1$  and  $\s_2$ commute, but the
pair  $\str{\s_1,\s_2}$ is not minimal, then
$(N_1=N_2 \wedge  M_1=M_2)$ and the second condition
in \ref{MPsInChar2}(b) is false by
\ref{TransvsBasics}(b), since any transvection,
commuting with $\s_1,$ must commute with $\s_2.$
\end{proof}

A pair  $\str{\hat{\s}_1,\hat{\s}_2} \in \pgl W$ is
called {\it minimal,} if there are involutions $\sigma_1$  and
$\sigma_2$  in the preimages of $\hat{\sigma}_1$  and
$\hat{\sigma}_2,$ respectively, such that $\str{\s_1,\s_2}$
is a $\gl W$-minimal pair.

Using   minimal    pairs  Rickart, Dieudonn\'e
\cite{Ri1,Ri2,Ri3,Die1} and other authors described the
groups  of automorphisms for various types of
classical groups.

\medskip

{\bf Remark.} Note that certain automorphisms of {\it
finite-dimensional} group $\gl W$ (or $\pgl W$) send
minimal pairs with a mutual {\it line} to minimal pairs
with a mutual {\it hyperplane}; this is impossible in
the infinite-dimensional case (see \cite{Die2,HO'M,O'Mea} for
details). Thus, roughly speaking, there is no hope
to distinguish lines and hyperplanes coded by minimal
pairs, and, moreover, the subspaces of dimension
$k$ and the subspaces of codimension $k$
in the finite-dimensional case.
On the other hand, we shall interpret in
the infinite-dimensional linear group
$\pgl V$ the elements of the projective
space $P(V),$ and then interpret the inclusion
relation on $P(V).$ This will demonstrate
that the set of all minimal pairs with a  mutual
line is $\varnothing$-definable in the infinite-dimensional
group $\pgl V$ in contrast to the finite-dimensional case.

\medskip

We close this section with a portion of logic.

We shall denote by $\TH(\cM,\cL)$ the theory of a
structure $\cM$ in a logic $\cL.$

Let $\{T^0_i : i \in {\bold I}\}$ and $\{T^1_i : i \in
{\bold I}\}$ be two families of theories in logics
$\cL_0$ and $\cL_1,$ respectively.  We say that the
theory $T^0_i$ is {\it syntactically interpretable} in
$T^1_i$ {\it uniformly in} $i \in {\bold I},$ in
symbols $T^0_i \le T^1_i,$ if there is a mapping
$\,{}^*\,$ from the set of all $\cL_0$-sentences to
the set of $\cL_1$-sentences, such that, for every
$\cL_0$-sentence $\chi$ and for every $i \in {\bold
I},$ $T^{0}_i \vdash \chi$ iff $T^1_i
\vdash \chi^*$ \cite[Chapter V]{BSh,Ho}.  If, in addition,
$T^1_i \le T^0_i$ uniformly in $i \in {\bold I},$ the
theories $T^0_i, T^1_i$ are said to be {\it
mutually syntactically interpretable uniformly in $i \in
{\bold I}.$} The relation  $\le$ is clearly reflexive
and transitive.

Quite informally, in cases when the class of indices
is clear from the context, we shall often write
interpretability results in the form $T^0 \le T^1.$

We state now two sufficient conditions
for uniform syntactical interpretation.

A structure  $\cM$  is said  to  be {\it
$\varnothing$-interpretable/reconstructible
{\rm(}without parameters{\rm)} in a structure
$\cN$  by means of  a  logic  $\mathcal L,$}  if
there  are  a positive integer  $n,$ a
$\varnothing$-definable by means of  $\mathcal L$  set
$X$ of $n$-tuples in  $\cN,$  and a surjective
mapping  $f: X \to \cM$ such that  $f$-preimages
of all the basic relations on $\cM$ (including
the equality relation) are $\varnothing$-definable by
means of $\mathcal L$ in $\cN.$ From this
definition, one can easily realize, what means
`an interpretation with parameters' or
`an interpretation uniform in ...'.

A usual way to  construct a syntactical
interpretation is the following well-known
result (see e.g. \cite[Chapter V]{Ho}).

\begin{Th}[Reduction Theorem] \label{0.2.1}
If, uniformly in $i \in {\bold I},$ a structure $\cM_i$ is interpretable
without parameters in the structure $\cN_i$ by means
of logic $\cL,$ then, uniformly in $i,$ the theory
$\TH(\cM_i,\cL)$ is syntactically interpretable in
the theory $\TH(\cN_i,\cL).$
\end{Th}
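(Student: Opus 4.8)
The plan is to take the interpretation data as given and use it to define, mechanically, a translation $\,{}^*\,$ on sentences that respects provability. Suppose that, uniformly in $i \in \bold I$, the structure $\cM_i$ is interpretable without parameters in $\cN_i$ by means of $\cL$. By the definition of interpretation given just above, this supplies a single positive integer $n$, a $\cL$-formula $\delta(\avx)$ (with $\avx$ an $n$-tuple) defining the domain $X$ in each $\cN_i$, a surjection $f : X \to \cM_i$, and for each basic relation $R$ of $\cM_i$ (including equality) a $\cL$-formula $\rho_R$ of the appropriate arity in $n$-blocks of variables, such that $\rho_R$ defines the $f$-preimage of $R$ in $\cN_i$. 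The key point is that all of this data is the same formula for every $i$; only the structure $\cN_i$ varies. This uniformity is exactly what will let one fixed syntactic map work across the whole family.

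The main step is to define $\,{}^*\,$ by recursion on the structure of $\cL_0$-formulas, where here $\cL_0 = \cL_1 = \cL$. First I would replace each object variable $v$ of the $\cM_i$-language by a block $\avx_v$ of $n$ variables ranging over $\cN_i$, and relativize all quantifiers to $\delta$: an existential quantifier $\exists v\,\psi$ translates to $\exists \avx_v\,(\delta(\avx_v) \wedge \psi^*)$, and dually for $\exists$-free clauses the universal quantifiers are relativized by implication. Each atomic formula $R(v_1,\ldots,v_k)$ is sent to $\rho_R(\avx_{v_1},\ldots,\avx_{v_k})$; in particular an equation $v_1 = v_2$ is sent to $\rho_=(\avx_{v_1},\avx_{v_2})$ rather than to literal equality, since $f$ need not be injective. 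Boolean connectives are carried through unchanged. Because $\cL$ is a logic with quantification over relations of bounded power, second-order quantifiers $\exists X\,\psi$ are handled analogously: a $k$-ary relation variable on $\cM_i$ becomes a $kn$-ary relation variable on $\cN_i$, again with its range relativized to tuples all satisfying $\delta$ (and respecting the equivalence induced by $\rho_=$, which can be written down as a further conjunct). The clause $X(v_1,\ldots,v_k)$ then translates atomically to the corresponding second-order atom on the $n$-blocks.

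The correctness statement I would then prove is the semantic Interpretation Lemma: for every $\cL$-sentence $\chi$ and every $i$, $\cM_i \models \chi$ if and only if $\cN_i \models \chi^*$. This is a routine induction on $\chi$, the base cases being exactly the defining property of the formulas $\delta$, $\rho_R$, and $\rho_=$, and the quantifier cases using surjectivity of $f$ to pass between elements of $\cM_i$ and $\delta$-tuples of $\cN_i$. Granting this lemma, the theorem follows immediately: $\TH(\cM_i,\cL) \vdash \chi$ means $\cM_i \models \chi$, which holds iff $\cN_i \models \chi^*$, i.e. iff $\TH(\cN_i,\cL) \vdash \chi^*$, and this equivalence holds uniformly in $i$ since the map $\,{}^*\,$ does not depend on $i$.

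The part that deserves the most care is the second-order case of the Interpretation Lemma. The first-order cases are standard relativization, but for a relation quantifier one must check that every relation $S$ on $\cM_i$ of power $<\vk^+$ (or whatever bound $\cL$ imposes) corresponds to a relation $\tilde S$ on $\cN_i$ of admissible power with $\tilde S \subseteq X^k$ and $\tilde S$ invariant under the $\rho_=$-equivalence, and conversely. The forward direction uses $f$ to pull $S$ back; the converse uses $f$ to push an admissible invariant $\tilde S$ forward, where invariance under $\rho_=$ is what guarantees well-definedness. One must also confirm that the cardinality bound is preserved under these passages, which holds because $f$ has $n$-tuples as fibers and $n$ is finite, so power is unchanged up to the bound. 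Once this bookkeeping is in place, the induction closes and the uniform interpretability of theories is established.
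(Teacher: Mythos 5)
The paper does not prove this theorem at all: it is quoted as a standard fact with a citation to Hodges, so there is no ``paper's proof'' to compare against. Your proposal is the expected textbook argument --- translate by recursion on formulas, relativizing quantifiers to the domain formula $\delta$, sending atomic formulas (including equality) to the defining formulas $\rho_R$, $\rho_=$, proving the semantic Interpretation Lemma by induction, and then passing to complete theories --- and the first-order portion, the treatment of equality as a congruence, and the final deduction of uniform syntactic interpretability are all correct.

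There is, however, one genuine flaw, and it sits exactly in the step you flagged as delicate. You translate a $k$-ary relation variable by its \emph{full} $\rho_=$-invariant preimage under $f^k$ and justify the cardinality bookkeeping by saying that ``$f$ has $n$-tuples as fibers and $n$ is finite, so power is unchanged up to the bound.'' This conflates the arity of the elements of a fiber with the size of the fiber: $f\colon X\to \cM_i$ is merely surjective, so a single fiber $f^{-1}(a)$ can be arbitrarily large (e.g.\ of power $|\cN_i|$), and then the full preimage of a relation $S$ of power $\le\vk$ can have power exceeding $\vk$ and fail to be in the range of the relation quantifiers of $\LII(\vk^+)$ on $\cN_i$. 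With your stated translation the existential second-order case of the induction therefore breaks down in general. The standard repair is to drop the invariance requirement and instead translate the atom $X(v_1,\ldots,v_k)$ as ``there exist $\avy_1,\ldots,\avy_k$ with $\rho_=(\avx_{v_j},\avy_j)$ for each $j$ and $(\avy_1,\ldots,\avy_k)\in\tilde X$''; then an existential relation quantifier over $\cM_i$ can be witnessed in $\cN_i$ by a set of representatives (one preimage tuple per element of $S$), which has the same power as $S$, while the converse direction still works because pushing forward along $f$ can only decrease cardinality. With that adjustment your induction closes and the proof is complete.
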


In some cases, to provide the conclusion of the Theorem above,
one can use interpretations {\it with} parameters.
Suppose, there is a $\cL$-formula $\chi(\av x)$ such that
for each tuple  $\av a_i$ in the domain of the
structure $\cN_i,$ satisfying  $\chi,$ the structure $\cM_i$
is reconstructible (uniformly) in the structure
$\str{\cN_i;\av a_i}$ by means of $\cL.$ Therefore the theory
$\TH(\cM_i,\cL)$  is uniformly syntactically interpretable
in the theory $\TH(\str{\cN_i;\av a_i},\cL).$
Let  $\theta \mapsto  \theta^*(\av a)$ be the corresponding
mapping of $\cL$-sentences. Then the mapping
$$
\theta  \mapsto  (\forall \av x)(\chi(\av x)  \rightarrow
\theta^*(\av x))
$$
provides a uniform syntactical interpretation
$\TH(\cM_i,\cL)$ in $\TH(\cN_i,\cL)$ \cite[Chapter V]{Ho}.
We shall often use such a trick below.

Most of our structures will be multi-sorted; they
can be treated as ordinary ones, in a usual way.

\section{Relation  {\it Cov} }\label{1.1}

In his paper \cite{Die1} Dieudonn\'e used at times the
binary relation `$y$ is the product of two commuting
conjugates of $x$':
\begin{equation} \label{eq1.1.1}
(\exists z_1 z_2)(y = x^{z_1}x^{z_2}=x^{z_2}x^{z_1}),
\end{equation}
where  $x^z=zxz^{-1}.$

In  \cite{To1}  the  author applied the relation given
by formula (\ref{eq1.1.1})  in order to prove the
following result:  if $\gl{\aleph_{\alpha};D_1} \equiv
\gl{\aleph_{\beta};D_2},$ where $D_1$ and $D_2$  are
division rings of characteristic $\neq  2,$ then
$\str{\alpha;<} \equiv  \str{\beta;<}.$ Note that
McKenzie \cite{McKen} proved the similar
result for the class of symmetric groups
$S_{\alpha}=\text{Sym}(\aleph_{\alpha}).$

We say that  `{\it $\sigma$  covers  $\pi$}',  if the
pair  $\str{\sigma,\pi}$ satisfies the formula (\ref{eq1.1.1}). So
we denote this formula by $\Cov(x,y).$

Recall our convention from Section \ref{0.12}: $V$ is
the notation for an arbitrary infinite-dimensional
vector space over a division ring. The letters $D$ and
$\vk$ {\it are always associated} with $V,$ and denote the
underlying division ring of $V$ and the dimension of
$V,$ respectively.

We shall apply below the relation $\Cov$  to solve the
problem of group-theoretic (first order) characterization  of
involutions of the first kind in the group $\pgl V;$ the latter
ones will be used later for a reconstruction of the projective
space $\str{P(V);\subseteq}$ in the group $\pgl V.$
In this section we describe the behaviour of the
relation  $\Cov$ on the set of all $\pgl V$-involutions
of the first kind.

Again, according to Section \ref{0.12}, a
$\gamma$-involution of the group $\pgl V$  is the
projective image of a $\gamma$-involution of the
group $\gl V.$ The conditions of being  a
$\gamma$-involution of the group $\pgl V$ for some
$\gamma$ and of being a  $\pgl V$-involution of the
first kind are clearly equivalent.

We shall denote elements of the
group  $\pgl V$  by lower case Greek letters
$\sigma,\pi,\ldots$ and elements of the group  $\gl V$
by lower case Latin letters $s,p,\ldots$  so that
$\sigma~=~\hat s,$ $\pi~=~\hat p,$ $\rho~=~\hat r$  and so
on.  It  is convenient  to  agree that in the
situation, when $\sigma= \hat s$  and  $\sigma$  is a
$\pgl V$-involution of the first kind $s$  is also an
{\it involution} (in $\gl V).$ Following this
agreement, we state that

\begin{Lem} \label{PMLem}
If $\chr D \ne 2,$ and $\s,\pi$ are involutions
in the group $\pgl V,$ then $\s\pi=\pi\s$ if
and only if $sp=\pm ps.$
\end{Lem}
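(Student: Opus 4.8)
The plan is to push the relation $\s\pi=\pi\s$ from $\pgl V$ down into $\gl V$ by means of Proposition \ref{0.1.1}, and then to exploit the fact that $s$ and $p$ are genuine \emph{involutions} to pin down the scalar that arises. By the standing convention $\s=\hat s$ and $\pi=\hat p$ with $s^2=p^2=\id(V)$, so in particular $(ps)^{-1}=s^{-1}p^{-1}=sp$. Hence $\s\pi=\pi\s$ is the same as $\widehat{sp}=\widehat{ps}$, which by Proposition \ref{0.1.1} holds iff $(sp)(ps)^{-1}=(sp)^2\in\rl V$. Since $s,p\in\gl V$, the map $(sp)^2$ is linear, so it lies in $\gl V\cap\rl V$ and is therefore of the form $\lambda\cdot\id(V)$ with $\lambda\in Z(D)$, $\lambda\ne 0$. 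Thus the entire lemma reduces to the equivalence
\[
(sp)^2=\lambda\cdot\id(V),\ \lambda\in Z(D),\ \lambda\ne 0\quad\Longleftrightarrow\quad sp=\pm ps .
\]

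The implication from right to left is immediate: if $sp=\pm ps$, then $sp$ and $ps$ differ by the radiation $\pm\id(V)\in\rl V$, so $\widehat{sp}=\widehat{ps}$ and hence $\s\pi=\pi\s$. (Equivalently, $(sp)^2=\pm\id(V)$.)

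For the converse, which I regard as the only substantive step, I would reason as follows. Suppose $(sp)^2=\lambda\cdot\id(V)$ with $\lambda\in Z(D)$, $\lambda\ne 0$. Since $\lambda\cdot\id(V)$ is central in $\gl V$ and $(sp)^{-1}=ps$, the relation rewrites as $sp=\lambda\cdot ps$. Now I compute $(ps)^2$ in two ways. Conjugating the central element $(sp)^2=\lambda\cdot\id(V)$ by $p$ gives $p(sp)^2p^{-1}=(ps)^2=\lambda\cdot\id(V)$, because conjugation fixes central elements. On the other hand, substituting $sp=\lambda\cdot ps$ and using that $\lambda$ commutes with everything yields $(sp)^2=\lambda^2(ps)^2$, so $(ps)^2=\lambda^{-1}\cdot\id(V)$. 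Comparing the two values of $(ps)^2$ forces $\lambda=\lambda^{-1}$, i.e. $\lambda^2=1$; as $Z(D)$ is a field, $\lambda=\pm 1$, and therefore $sp=\lambda\cdot ps=\pm ps$.

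The hypothesis $\chr D\ne 2$ is used in two places: through the convention recalled just before the lemma it guarantees that the first-kind involutions $\s,\pi$ admit involutive preimages $s,p\in\gl V$ (so that $s^2=p^2=\id(V)$ may be assumed), and it makes the two roots $+1$ and $-1$ distinct, so the dichotomy $sp=\pm ps$ is meaningful. I anticipate no further obstacle, since every manipulation takes place inside $\gl V$ and relies only on $s^2=p^2=\id(V)$ together with the centrality of $\lambda$.
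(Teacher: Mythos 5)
Your proof is correct and follows essentially the same route as the paper's: Proposition \ref{0.1.1} reduces $\s\pi=\pi\s$ to $sp=\mu\, ps$ with $\mu\in Z(D),$ and then squaring a suitable element ($(ps)^2$ in your version, $(sps^{-1})^2$ in the paper's) forces $\mu^2=1.$ The one difference worth noting is scope: you assume $s^2=p^2=\id(V),$ i.e.\ that $\s$ and $\pi$ admit involutive preimages, whereas the paper's proof uses only $p^2=\lambda\cdot\id(V)$ with $\lambda\in Z(D)$ --- which is all that holds for a preimage of an \emph{arbitrary} $\pgl V$-involution, second kind included. That extra generality is precisely what lets the author recycle ``an argument similar to that used to prove Lemma \ref{PMLem}'' for involutions of the second kind in the proof of Proposition \ref{BestHorses}. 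Your computation goes through verbatim under the weaker hypothesis (conjugating $p^2=\lambda\cdot\id(V)$ by $s$ still gives $\mu^2\lambda=\lambda$), so nothing essential is lost, but as written your argument covers only the first-kind case.
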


\begin{proof}
Indeed, by \ref{0.1.1} $\sigma\pi= \pi\sigma$
implies  $sp = \mu ps,$ where  $\mu \in  Z(D).$ Since
$p$  induces an involution,  $p^2= \lambda \cdot \id(V),$
where  $\lambda  \in  Z(D).$ We have
$$
(sps^{-1})^2 = (\mu p)^2 \Rightarrow
\lambda \cdot \id(V) = \mu^2 \lambda \cdot \id(V).
$$
So $\mu^2=1$ that is $\mu = \pm 1.$
\end{proof}

\begin{Lem} \label{Cov(s,pi)}
Suppose that $\chr D\ne 2,$ and $\gamma, \gamma'$
are cardinals $\le \vk=\dim V.$ Then

{\rm (a)} a $\g$-involution in the group $\pgl V$ with
infinite $\gamma$ covers any $\gamma'$-involution if and
only if $\gamma' \le \gamma;$

{\rm (b)} a $\gamma$-involution in $\pgl V$ with
finite $\gamma$ covers any $\gamma'$-involution if and only
if  $\gamma'$ is finite even cardinal and  $\gamma'  \le  2\gamma.$
\end{Lem}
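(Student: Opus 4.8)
\emph{Plan.} The first move is to pass from $\pgl V$ down to $\gl V$. Write $\sigma=\hat s$ with $s$ an involution of $\gl V$, whose $\pm1$-eigenspaces have dimensions $p,q$ (so $\gamma=\min(p,q)$ and $p+q=\vk$), and recall that a conjugate $\sigma^{\zeta}$ with $\zeta=\hat z$ equals $\widehat{s^{z}}$. A product of two conjugates of $\sigma$ that commute in $\pgl V$ is thus $\widehat{s^{z_1}s^{z_2}}$, and by Lemma \ref{PMLem} those images commute precisely when the involutions $s_1:=s^{z_1}$ and $s_2:=s^{z_2}$ either commute or anticommute in $\gl V$. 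So I would reduce the statement to a question inside $\gl V$: as $s_1,s_2$ run over conjugates of $s$ that commute or anticommute, which types $\gamma'$ arise for the first-kind involution $\widehat{s_1s_2}$?

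Second, I would dispose of the anticommuting case. If $s_1s_2=-s_2s_1$ then $s_1$ conjugates $s_2$ to $-s_2$, hence interchanges the two eigenspaces of $s_2$; being invertible it forces these to have \emph{equal} dimension, i.e.\ $p=q$. As $p+q=\vk$ is infinite, $p=q$ can occur only when $p=q=\vk$, that is when $\gamma=\vk$; in particular anticommuting is impossible whenever the two eigendimensions differ, which covers all of part (b) (there $\gamma$ is finite) and the case $\gamma<\vk$ of part (a). When it does occur, $t:=s_1s_2$ satisfies $t^2=-\id(V)$ and $s_1$ swaps the two eigenspaces of $\hat t$, so $\hat t$ is a $\vk$-involution whenever it is of the first kind at all (and if $-1$ is not a square in $Z(D)$ it is of the second kind, hence irrelevant); since $\gamma=\vk$ in this situation, $\gamma'=\vk=\gamma$ is harmless for the bound $\gamma'\le\gamma$.

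Third is the main computation, the commuting case. By Lemma \ref{0.1.3} one may simultaneously diagonalize $s_1,s_2$, obtaining $V=V_{++}\oplus V_{+-}\oplus V_{-+}\oplus V_{--}$ sorted by the eigenvalue signs, with dimensions $a,b,c,d$. That $s_1,s_2$ be conjugate to $s$ reads $a+b=p=a+c$ and $c+d=q=b+d$, while $t=s_1s_2$ has $+1$-eigenspace $V_{++}\oplus V_{--}$ and $-1$-eigenspace $V_{+-}\oplus V_{-+}$, so its type is $\gamma'=\min(a+d,\,b+c)$. The conclusion is then cardinal arithmetic. For \emph{finite} $\gamma$ the smaller eigenspace of $s$ has dimension $\gamma$ and the larger $\vk$; then $a,b,c$ are finite with $b=c$ and $d=\vk$, whence $\gamma'=\min(\vk,2b)=2b$ with $0\le b\le\gamma$, exactly the finite even cardinals $\le2\gamma$, proving (b). For \emph{infinite} $\gamma$ the defining equations give $b,c\le\gamma$, so $\gamma'\le b+c\le\gamma$ always; conversely, for any $\gamma'\le\gamma$ one splits the eigenspaces of a prescribed $\gamma'$-involution $t$ into pieces $V_{\pm\pm}$ of the required sizes and defines $s_1,s_2$ from them, realizing $t=s_1s_2$ with $s_1,s_2$ of the same type as $s$; this proves (a).

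Finally, to upgrade ``some'' to ``any $\gamma'$-involution'', I would use conjugacy-invariance of $\Cov$: if $\pi=\sigma^{\zeta_1}\sigma^{\zeta_2}$ witnesses a covering, then $g\pi g^{-1}=\sigma^{g\zeta_1}\sigma^{g\zeta_2}$ witnesses one too, so $\sigma$ covers every conjugate of anything it covers; and in characteristic $\ne2$ all first-kind $\gamma'$-involutions form a single conjugacy class, being determined up to conjugacy by the unordered pair of eigendimensions $\{\gamma',\vk\}$ or $\{\vk,\vk\}$. I expect the delicate points to be precisely the anticommuting case — ensuring it neither widens the range of $\gamma'$ beyond $\gamma$ nor is overlooked — together with the infinite-cardinal bookkeeping, where the finite identity $b=c$ fails and one must argue instead with the crude bounds $b,c\le\gamma$.
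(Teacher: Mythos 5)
Your proposal is correct and follows essentially the same route as the paper's proof: reduce to $\gl V$ via Lemma \ref{PMLem}, dispose of the anticommuting case by noting it forces $\gamma=\vk$, and in the commuting case simultaneously diagonalize and count (your four-block decomposition with dimensions $a,b,c,d$ is exactly the paper's symmetric-difference computation with the index sets $A_1,A_2$, giving $\gamma'\le\gamma$ for infinite $\gamma$ and $\gamma'=2(\gamma-|A_1\cap A_2|)$ for finite $\gamma$). The converses are established by the same explicit eigenspace-splitting constructions; your extra remarks on conjugacy-invariance of $\Cov$ and the single conjugacy class of $\gamma'$-involutions only make explicit what the paper leaves implicit.
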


\begin{proof} (a)  Assume that  a $\gamma$-involution
$\sigma$ covers some $\gamma'$-involution  $\pi.$ This means
that $\pi= \sigma_1\sigma_2= \sigma_2\sigma_1,$ where $\sigma
_1,\sigma_2$  are conjugate to  $\sigma.$ Since
$\sigma_1\sigma_2= \sigma_2\sigma_1,$ we
have by \ref{PMLem} $s_1s_2=\pm s_2s_1,$ where $\sigma_k=
\hat s_k,$  $k=1,2.$  First suppose  that  $s_1$
and $s_2$ commute. We have  $p = \mu s_1s_2,$
where  $\pi= \hat p, \mu  \in  Z(D).$
Since $p$ is an involution, then $\mu=\pm 1.$
Without loss of generality we can assume that $\mu  =
1,$ because  $(-p)$  induces  $\pi,$ too.

For an involution $s,$ inducing $\s,$ we
have $\vk=\dim V=\dim V_s^-+\dim V_s^+.$
Therefore we can assume that $(\dim V^{-}_s= \gamma  \wedge \dim V^{+}_s= \vk)$
(otherwise one can use $-s$).

By Lemma \ref{0.1.3} there is a basis
$\{e_i: i < \vk\}$ of $V,$
in which both  $s_1$  and  $s_2$ are diagonalized. Let
$A_k= \{i: s_k e_i= -e_i\}, k=1,2.$ Then
\begin{equation} \label{eq1.1.2}
V^-_p= \str{ e_i: i \in  (A_1 \cup A_2) \setminus (A_1\cap A_2)}
\end{equation}
Therefore $\dim V^-_p \le \gamma,$ because  $|A_1|  =
|A_2| = \gamma.$ So
$$
\gamma'= \min(\dim V^-_{p}, \dim V^+_{p}) \le \gamma.
$$

Suppose now that  $s_1s_2= -s_2s_1.$ Then $s_2$  is
conjugate to  $(-s_2).$ Hence the involution $s$ is a
$\vk$-involution. So $\gamma' \le \gamma.$

Now we prove the converse.  Suppose that $\gamma' \le \gamma.$
Choose  a basis of  $V$ in
the form  $\{e_i: i \in  I_1\} \cup \{e_i~:~i\in
I_2\} \cup  \{e_i: i \in  I\},$ where $I_1,I_2,I$  are
disjoint index sets of powers $\gamma,\gamma',$ and
$\vk,$ respectively.  We define $s_1$  and  $s_2$
as follows:
\begin{alignat*}4
s_1e_i &= -e_i,&\quad  & i \in  I_1,\qquad\qquad  &  s_2e_i &= -e_i,&\quad &i \in  I_1,  \\
s_1e_i &= e_i,        && i \in  I_2,              &  s_2e_i &= -e_i, && i \in  I_2, \\
s_1e_i &= e_i,        && i \in  I,                &  s_2e_i &= e_i,  && i \in  I.
\end{alignat*}
Clearly, $s_1$   and   $s_2$   are  conjugate and
commuting $\gamma$-involutions, and the involution
$\widehat{s_1s_2}$ is a $\gamma'$-involution.

Now  we  prove  (b).  If  a $\gamma$-involution
covers some $\gamma'$-involution, then by  (\ref{eq1.1.2})
we have
\begin{align*}
\gamma' = & |(A_1\cup A_2) \setminus (A_1\cap A_2)| =|A_1| + |A_2| - 2|A_1\cap A_2| =\\
          & 2(\gamma -|A_1\cap A_2|).
\end{align*}
The converse is obvious.
\end{proof}

\begin{Cor} \label{OnlyVKsCoverMe}
Suppose that $\chr D \ne 2.$ Then

{\rm (a)} Every $\vk$-involution of $\pgl V$
covers every involution of the first kind;

{\rm (b)} if some $\gamma$-involution covers some $\vk$-involution,
then $\gamma=\vk.$
\end{Cor}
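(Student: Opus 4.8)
The plan is to read off both parts directly from Lemma~\ref{Cov(s,pi)}, the only auxiliary observations being that every involution of the first kind is a $\gamma'$-involution for some cardinal $\gamma' \le \vk$, and that the relation $\Cov$ depends only on the conjugacy types of the two involutions involved. The latter point lets me apply Lemma~\ref{Cov(s,pi)} freely to whichever representatives are convenient, since if $\sigma$ covers $\pi$ and we conjugate the witnessing pair $\sigma_1,\sigma_2$ (and $\pi$) by a common element, we again obtain a valid covering; the former point is immediate because the subspaces $R,S$ of an involution of the first kind lie inside $V$, so $\gamma' = \min(\dim R,\dim S) \le \dim V = \vk$.

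For part (a), I would note that a $\vk$-involution is a $\gamma$-involution with $\gamma = \vk$, and this $\gamma$ is infinite since $\dim V = \vk \ge \aleph_0$. Given any involution $\pi$ of the first kind, realized as a $\gamma'$-involution, the bound above gives $\gamma' \le \vk$. Applying Lemma~\ref{Cov(s,pi)}(a) with the infinite cardinal $\gamma = \vk$, a $\vk$-involution covers every $\gamma'$-involution with $\gamma' \le \vk$, hence it covers every involution of the first kind.

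For part (b), I would suppose a $\gamma$-involution covers a $\vk$-involution; the covered involution then has associated cardinal $\gamma' = \vk$, which is infinite. First I would rule out finite $\gamma$: by Lemma~\ref{Cov(s,pi)}(b), a $\gamma$-involution with finite $\gamma$ covers only $\gamma'$-involutions with $\gamma'$ a finite even cardinal, contradicting $\gamma' = \vk$ infinite. Hence $\gamma$ is infinite, and Lemma~\ref{Cov(s,pi)}(a) forces $\gamma' \le \gamma$, i.e. $\vk \le \gamma$. Since the subspaces of any $\gamma$-involution lie in $V$ we always have $\gamma \le \vk$, and therefore $\gamma = \vk$.

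There is essentially no obstacle here: the corollary is a bookkeeping consequence of the infinite/finite dichotomy in the associated cardinal established by Lemma~\ref{Cov(s,pi)}. The only thing worth stating carefully is the uniform bound $\gamma,\gamma' \le \vk$ coming from the ambient dimension, which is exactly what pins $\gamma$ down to $\vk$ in part (b) once $\vk \le \gamma$ is obtained.
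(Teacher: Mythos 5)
Your proposal is correct and follows exactly the route the paper intends: the paper's entire proof of this corollary is the citation ``By Lemma \ref{Cov(s,pi)}'', and your argument is just that lemma's two clauses combined with the observation that every involution of the first kind is a $\gamma'$-involution for some $\gamma'\le\vk$. The details you supply (ruling out finite $\gamma$ in part (b) via the evenness clause, then squeezing $\vk\le\gamma\le\vk$) are precisely the intended bookkeeping.
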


\begin{proof}
By Lemma \ref{Cov(s,pi)}.
\end{proof}

\section{`First kind' and `first order'} \label{1.2}

In this section  we  solve  the  problem of a group-theoretic
characterization of involution of the first kind in
the group  $\pgl V.$ We show that the set
of  all involutions of the first kind is a $\varnothing$-definable
subset of  $\pgl V.$

In his book  \cite[pp. 8-13]{Die1}  Dieudonn\'e
showed that the extremal involutions in the projective
general linear group $\pgl W$ of finite dimension at
least three over a division ring of characteristic
$\ne 2$ can be  distinguished from other involutions
of this group by group-theoretic methods.

He applied (if $\dim W \ne 6$) the following
techniques. First of these is the use of maximal sets
of pairwise commuting and pairwise conjugate
involutions ($m$-sets, for short). Second one is (in
our terms) the use of the relation $\Cov.$ For
example, the power of an $m$-set of extremal
involutions is less than the power of an $m$-set of
$\gamma$-involutions for any $\gamma  \ge  2.$ The
involutions of the second kind either provide the
greater powers of $m$-sets or cover (up to conjugacy)
more involutions than the extremal ones. In the case
$\dim W = 6$ more delicate methods are used.

Our task is more general: we have to distinguish
the involutions of the first kind in $\pgl V$  from involutions
of the second kind by means of first order logic. The method
of $m$-sets is (in the general case) not first order.
Furthermore, in the infinite-dimensional case
it even has not the algebraic efficiency, because, for
example, the power of an  $m$-set of extremal
$\pgl V$-involutions coincide with the power of an
$m$-set of  $n$-involutions for any natural  $n \in  \N.$

\medskip

{\it I.} Let {\it $D$  be a division ring of characteristic
$\ne 2.$ } The following formula is an obstacle for
the involutions of the second kind, because there is no
involution of the second kind satisfying it:
$$
Ob(x) = (\exists y_1y_2y_3) (\bigwedge_{k\ne m} x \sim x^{y_k}x^{y_m}
\wedge x = x^{y_1}x^{y_2}x^{y_3}),
$$
($\sim$ is the conjugacy relation).

\begin{Prop} \label{BestHorses}
An involution  $\sigma \in \pgl V$ satisfies the formula  $Ob$
if and only if  $\sigma$  is  a $\gamma$-involution, where
$\gamma  = 4\gamma'$ for some cardinal $\gamma'.$
\end{Prop}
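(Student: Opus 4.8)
The plan is to work throughout in $\gl V$ via Proposition \ref{0.1.1}. Writing $\sigma_k=\sigma^{y_k}$, the formula $Ob(\sigma)$ says that $\sigma=\sigma_1\sigma_2\sigma_3$, that each $\sigma_k$ is conjugate to $\sigma$, and that each of the products $\sigma_k\sigma_m$ ($k\ne m$) is conjugate to $\sigma$. Since $\sigma$ is an involution, each $\sigma_k\sigma_m\sim\sigma$ is again an involution; and in any group a product of two involutions is an involution if and only if the two factors commute. Hence $\sigma_1,\sigma_2,\sigma_3$ pairwise commute in $\pgl V$, and the computation of Lemma \ref{PMLem} (valid also when a lift satisfies $s^2=\lambda\cdot\id(V)$) shows that suitable lifts satisfy $s_ks_m=\pm s_ms_k$ for all $k\ne m$.

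For the implication $\Leftarrow$, let $\sigma$ be a first-kind $\gamma$-involution with $\gamma=4\gamma'$. I will exhibit the $\sigma_k$ as simultaneously diagonal involutions. Fix a basis of $V$ in which a lift $s$ of $\sigma$ is diagonal with $-1$-eigenset $A$ of type $\gamma$, and describe prospective sets $A_1,A_2,A_3$ by the eight Venn regions of the index set. Assign cardinality $\gamma'$ to each of the three single-overlap, three double-overlap and one triple-overlap regions, let the four odd-parity regions ($R_{100},R_{010},R_{001},R_{111}$) partition $A$, draw the double-overlap regions from $A^c$, and give the remaining region the rest (cardinality $\vk$). A short count gives $|A_k|=|A_k\triangle A_m|=|A_1\triangle A_2\triangle A_3|=4\gamma'=\gamma$, so the diagonal involutions $s_k$ with $-1$-eigensets $A_k$, all their pairwise products, and the product $s_1s_2s_3=s$ are $\gamma$-involutions, hence pairwise conjugate. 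Thus $\sigma=\sigma_1\sigma_2\sigma_3$ with every conjugacy requirement met, so $Ob(\sigma)$ holds. This assignment applies to finite $\gamma$ (the multiples of $4$) and, taking $\gamma'=\gamma$, to every infinite $\gamma$ as well.

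For $\Rightarrow$ in the first-kind case, suppose $\sigma$ is a $\gamma$-involution satisfying $Ob$. If $\gamma$ is infinite there is nothing to prove, as $\gamma=4\gamma$. If $\gamma$ is finite then $\gamma<\vk$; an anticommuting pair $s_ks_m=-s_ms_k$ would give $s_k\sim-s_k$ and force $s_k$ to be a $\vk$-involution (exactly as in the proof of Lemma \ref{Cov(s,pi)}), which is impossible, so all three lifts genuinely commute and are simultaneously diagonal by Lemma \ref{0.1.3}. Choosing signs so that $|A_k|=\gamma$, the type conditions read $|A_k|=|A_k\triangle A_m|=|A_1\triangle A_2\triangle A_3|=\gamma$. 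Letting $P,Q,T$ be the total cardinalities of the single-, double- and triple-overlap regions, summing the three single equations gives $P+2Q+3T=3\gamma$, summing the three symmetric-difference equations gives $2P+2Q=3\gamma$, and the triple equation is $P+T=\gamma$; eliminating $P$ and $Q$ forces $\gamma=4T$, i.e. $\gamma=4\gamma'$ with $\gamma'=|A_1\cap A_2\cap A_3|$.

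The main obstacle is the rest of $\Rightarrow$: ruling out involutions of the second kind. Here every lift satisfies $s_k^2=\lambda\cdot\id(V)$ for one fixed non-square $\lambda\in Z(D)$, and again $s_ks_m=\pm s_ms_k$. If some pair commutes genuinely, then $(s_ks_m)^2=\lambda^2\cdot\id(V)$ is a square scalar, so $\sigma_k\sigma_m$ is of the first kind, contradicting $\sigma_k\sigma_m\sim\sigma$. Therefore all three pairs anticommute; then $s_3$ commutes with $s_1s_2$ and $(s_1s_2s_3)^2=(s_1s_2)^2s_3^2=-\lambda^3\cdot\id(V)$, while $\sigma=\sigma_1\sigma_2\sigma_3$ gives $s_1s_2s_3=\eta s$ with $\eta\in Z(D)$ and hence $\eta^2\lambda=-\lambda^3$, so that $-1$ is a square in $Z(D)$. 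But then $(s_1s_2)^2=-\lambda^2\cdot\id(V)$ is a square scalar too, making $\sigma_1\sigma_2$ of the first kind — the same contradiction. Hence no second-kind involution satisfies $Ob$, which completes the equivalence.
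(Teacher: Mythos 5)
Your proof is correct and follows essentially the same route as the paper's: forcing pairwise commutation in $\pgl V$ from the conjugacy clauses, excluding second-kind involutions by computing the scalar squares of the products, reducing the first-kind forward direction to finite $\gamma$ with simultaneous diagonalization and a Venn-region count yielding $\gamma=4|A_1\cap A_2\cap A_3|$, and building the converse from seven regions of size $\gamma'$ (which is exactly the paper's seven-column table in different notation). The only real deviations are cosmetic: you avoid the paper's normalization $s^2=-\id(V)$ in the second-kind case by deriving the contradiction from $\sigma_1\sigma_2$ instead of from $\sigma$, and you organize the counting as a small linear system in the region totals rather than direct inclusion--exclusion.
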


\noindent {\bf Remark.} Therefore $\gamma$ is either an
infinite cardinal, or finite, which is a multiple
of four.

\begin{proof}
Suppose for a contradiction that some involution of the second kind
$\sigma$ satisfies the formula  $Ob.$ Let  $\sigma =
\hat s,$ where  $s^2= \lambda \cdot \id(V)$ and
$\lambda$ is in $Z(D),$ but is not a square in  $Z(D).$

It is clear that each involution  $\sigma'$
which is a conjugate of $\sigma$  is induced (in
particular) by a  transformation $s'$ such
that  $s'{}^2= \lambda \cdot \id(V).$

If  $\models  Ob[\sigma],$  then  $\sigma$  covers
itself, and hence  $\sigma = \sigma_1\sigma_2=
\sigma_2\sigma_1,$ where $\sigma_1$ and
$\sigma_2$  are conjugates of $\sigma$ (the second
equality holds, since all $\s,\s_1,\s_2$ are involutions). For some
$s_1$ and  $s_2,$ which induce  $\sigma_1$ and
$\sigma_2$  we have
\begin{alignat*}2
&\bullet s  = \nu \cdot s_1s_2,         && \nu  \in  Z(D),\\
&\bullet s^2_k = \lambda \cdot \id(V),  &\quad &k=1,2, \\
&\bullet s_1s_2 = \pm s_2s_1.
\end{alignat*}
To verify the latter equality, one can use an argument
similar to that used to prove Lemma \ref{PMLem}.

Therefore
$$
s^2= \nu s_1s_2\nu s_1s_2= \nu^2 s_1s_2s_1s_2=  \pm \nu^2s^2_1s^2_2=
 \pm \nu ^2\lambda^2\cdot \id(V).
$$

Thus, there exists  $s$ with $\sigma = \hat s$ such that
$s^2= -\id(V).$ The condition  $\models Ob[\sigma]$
also implies  that   $\sigma$
is  a product of three conjugates of $\sigma$:
$\sigma = \sigma_1\sigma_2\sigma_3.$
As we have just observed, $s_ks_m= -s_ms_k,$ where $k\ne m.$
So for some $\mu \in Z(D)$
\begin{align*}
s^2 &=  \mu s_1s_2s_3\mu s_1s_2s_3= \mu^2(-1)^2s^2_1s_2s_3s_2s_3\\
    &= \mu^2(-1)^{2+1}\cdot s^2_1s^2_2s^2_3=\mu^2(-1)^{6}\id(V)=\mu^2\id(V).
\end{align*}

Hence  $\sigma$  is an involution of the first kind, a
contradiction.

Assume now that  $\models Ob[\sigma],$ where  $\sigma$
is an involution of the first kind. Therefore $\sigma$
covers itself. By  \ref{Cov(s,pi)} $\sigma$  is a $\gamma$-involution,
where the cardinal  $\gamma $  is even (in particular,
infinite): $\gamma  = 2\delta.$ Clearly, we should
only consider the case $\gamma  < \aleph_0.$
Let  $\sigma = \sigma_1\sigma_2\sigma_3$  or
$s = \mu s_1s_2s_3,$ where $\s_i$ is a conjugate of $\s$
and $\mu  \in  Z(D).$ It is easy to see that $\mu=\pm 1$:
\begin{equation}
s = s_1 s_2 s_3  \text{  or  } s = -s_1 s_2 s_3.
\end{equation}
By multiplying both sides of the equations in
(\theequation) by $-\id(V)$ if necessary we may
suppose that $\gamma  = \dim V^-_s < \aleph_0.$
Furthermore, $s_ks_m= s_ms_k, k\ne m,$ because the
equation $s_k s_m= -s_m s_k$ holds only for
$\vk$-involutions.

Since $s_1,s_2,s_3$ are pairwise commuting, we may
apply Lemma \ref{0.1.3}. Therefore the $V^+$-subspace of the
involution $(-s_1s_2s_3)$ has finite dimension, and
it cannot be equal to $s.$

Thus,  $s = s_1s_2s_3.$ Consider a basis  $\{e_i:
i<\vk \}$  in which all $s_1,s_2,s_3$  are
diagonalized. Construct (as in the proof of Lemma \ref{Cov(s,pi)})
the sets $A_k= \{i: s_ke_i=-e_i\}, k=1,2,3.$ It is
obvious that the cardinal $\dim V^-_s$  is equal to
$$
|A_1 \cap A_2 \cap A_3|+ |A_1\setminus (A_2 \cup A_3)|+
|A_2 \setminus (A_1 \cup A_3)|+ |A_3 \setminus  (A_1 \cup A_2)|.
$$
Since   $s \sim s_k s_{m}, k \ne m,$ we have $|A_1 \cap A_2|=|A_1 \cap A_3|
=|A_2 \cap A_3| = \delta,$ because, for example,
$$
\gamma  = \dim V^-_s= |A_1|+ |A_2|- 2|A_1 \cap A_2| =
4\delta - 2|A_1 \cap A_2|
$$
Therefore
\begin{align*}
|A_1 \setminus (A_2 \cup A_3)| & =
|A_1| - (|A_1 \cap A_2|+ |A_1 \cap A_3| - |A_1 \cap A_2 \cap A_3|) \\
       &= |A_1 \cap A_2 \cap A_3|.
\end{align*}
Hence  $\dim V^-_s= 4|A_1 \cap A_2 \cap A_3|.$

We  prove  the   converse.  Let
$$
\{e_{i,n}:  i <\gamma', n<7\} \cup \{e_i: i<\vk \},
$$
where $\gamma' \le \vk$ is a cardinal, be a basis of  $V.$ Consider the
involutions $s_1,s_2,s_3~\in~\gl V$ such
that
\begin{itemize}
\item[(1)]  $s_ke_i= e_i,$ where $i<\vk;$
\item[(2)]  the following table is realized for all $i < \gamma'$:
\end{itemize}
$$
\begin{array}{rrrrrrrr}
        & s_1 & s_2 &  s_3 & s_1s_2s_3 & s_1s_2  & s_1s_3 & s_2s_3\\
e_{i,0} & -1  &  1  &   1  & -1        & -1      & -1     &  1\\
e_{i,1} & -1  &  1  &  -1  &  1        & -1      &  1     & -1\\
e_{i,2} & -1  & -1  &  -1  & -1        &  1      &  1     &  1\\
e_{i,3} & -1  & -1  &   1  &  1        &  1      & -1     & -1\\
e_{i,4} &  1  & -1  &  -1  &  1        & -1      & -1     &  1\\
e_{i,5} &  1  & -1  &   1  & -1        & -1      &  1     & -1\\
e_{i,6} &  1  &  1  &  -1  & -1        &  1      & -1     & -1
\end{array}
$$
(a column of the table demonstrates the behaviour of
the corresponding transformation on the set $e_{i,0},
e_{i,1},\ldots, e_{i,6}).$ It is easy to check that
$\hat{s}_1$ (a $4\gamma'$-involution) satisfies the
formula $Ob.$
\end{proof}

\begin{Cor} \label{I'mVKInv}
$\vk$-involutions of the group $\pgl V$ over a
division ring of characteristic $\ne 2$ are exactly
involutions, satisfying  the formula
$$
K(x) = Ob(x) \wedge (\forall y)(Ob(y) \rightarrow \Cov(x,y)).
$$
\end{Cor}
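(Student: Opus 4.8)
The plan is to deduce the corollary directly from Proposition~\ref{BestHorses} (which tells us exactly which involutions satisfy $Ob$) and from Corollary~\ref{OnlyVKsCoverMe} (which records how $\vk$-involutions behave under $\Cov$). The single arithmetic fact driving both directions is that $\vk$ is infinite, so $\vk = 4\vk$; consequently a $\vk$-involution is a $4\gamma'$-involution with $\gamma' = \vk$, and therefore satisfies $Ob$ by Proposition~\ref{BestHorses}. I also record that a $\vk$-involution actually exists: partition a basis $\{e_i : i < \vk\}$ into two blocks of size $\vk$ (possible since $\vk + \vk = \vk$) and let $s \in \gl V$ act by $-\id(V)$ on one block and by $\id(V)$ on the other; then $\dim V^-_s = \dim V^+_s = \vk$, so $\hat s$ is a $\vk$-involution.

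For the forward implication I would take a $\vk$-involution $\sigma$ and check the two conjuncts of $K$. The first, $Ob(\sigma)$, is the arithmetic observation above. For the second, I use that any element satisfying $Ob$ is, by Proposition~\ref{BestHorses}, an involution of the first kind, namely a $\gamma$-involution; Corollary~\ref{OnlyVKsCoverMe}(a) then gives that $\sigma$ covers every such element, that is, $\Cov(\sigma, y)$ holds for every $y$ with $Ob(y)$. Hence $\sigma$ satisfies $K$.

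For the converse I would take an involution $\sigma$ satisfying $K$. From $Ob(\sigma)$ and Proposition~\ref{BestHorses}, $\sigma$ is a $\gamma$-involution with $\gamma = 4\gamma'$ for some cardinal $\gamma'$. I then instantiate the universal clause of $K$ at the concrete $\vk$-involution $\pi$ constructed above, which satisfies $Ob$ by the arithmetic remark; the clause forces $\Cov(\sigma, \pi)$, so the $\gamma$-involution $\sigma$ covers a $\vk$-involution. By Corollary~\ref{OnlyVKsCoverMe}(b) this forces $\gamma = \vk$, whence $\sigma$ is a $\vk$-involution, as required.

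The deduction is short, because the genuine analysis already sits in Proposition~\ref{BestHorses} and Lemma~\ref{Cov(s,pi)}. The only points I would double-check are the infinite-cardinal identity $\vk = 4\vk$ (it is exactly what makes both applications of Proposition~\ref{BestHorses} go through) and the fact that wherever $Ob(y)$ holds the element $y$ is an involution of the first kind, so that Corollary~\ref{OnlyVKsCoverMe}(a) is legitimately applicable to the universally quantified $y$ in the forward direction. I do not anticipate a serious obstacle: the formula $K$ is merely the first-order repackaging of ``a $\gamma$-involution with $4 \mid \gamma$ that covers every such involution,'' and the covering dichotomy of Corollary~\ref{OnlyVKsCoverMe} singles out precisely $\gamma = \vk$ among these.
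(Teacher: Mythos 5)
Your argument is correct and is exactly the paper's proof: the paper disposes of this corollary in one line, ``By Corollary~\ref{OnlyVKsCoverMe} and Proposition~\ref{BestHorses},'' and your write-up simply fills in the intended details (the identity $\vk=4\vk$ for the forward direction, and instantiating the universal clause at a $\vk$-involution plus Corollary~\ref{OnlyVKsCoverMe}(b) for the converse). The only point worth noting is the one you already flag yourself: the quantified $y$ in $K$ should be read as ranging over involutions, a convention the paper shares.
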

\begin{proof}
By  Corollary \ref{OnlyVKsCoverMe}  and Proposition \ref{BestHorses}.
\end{proof}

\begin{Prop} \label{FKsAreDefnbl1} Let  $\sigma$  be
an involution of the group $\pgl V$ over a division
ring of characteristic $\ne 2.$ Then the following
statements are equivalent:

{\rm (a)} $\sigma$  is an involution of the first kind;

{\rm (b)} $\sigma$  satisfies the formula
$$
FK_1(x) =
K(x) \vee (\exists y)(K(y) \wedge \Cov(y,x) \wedge \neg \Cov(x,y)).
$$
\end{Prop}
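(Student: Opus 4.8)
The plan is to read $FK_1$ as saying ``$\sigma$ is either a $\vk$-involution or is strictly dominated, under $\Cov$, by some $\vk$-involution'', and to show that this behaviour separates involutions of the first kind from those of the second kind. Both directions rest on Corollaries \ref{OnlyVKsCoverMe} and \ref{I'mVKInv}: by \ref{I'mVKInv} the subformula $K$ defines exactly the $\vk$-involutions (which are of the first kind), and by \ref{OnlyVKsCoverMe} every $\vk$-involution covers every involution of the first kind, while no involution covers a $\vk$-involution unless it is itself a $\vk$-involution.

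For the implication (a)$\Rightarrow$(b) I would argue as follows. Let $\sigma$ be a $\gamma$-involution of the first kind, $\gamma\le\vk$. If $\gamma=\vk$ then $K(\sigma)$ holds by \ref{I'mVKInv} and the first disjunct of $FK_1$ is satisfied. If $\gamma<\vk$, fix any $\vk$-involution $\tau$ (one exists since $\vk$ is infinite: split $V$ into two subspaces of dimension $\vk$ and take the corresponding $\pm$-involution). Then $K(\tau)$ holds, $\Cov(\tau,\sigma)$ holds by \ref{OnlyVKsCoverMe}(a), and $\neg\Cov(\sigma,\tau)$ holds by \ref{OnlyVKsCoverMe}(b), because a $\gamma$-involution with $\gamma\neq\vk$ cannot cover a $\vk$-involution. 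Thus the second disjunct holds with witness $y=\tau$.

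For the converse (b)$\Rightarrow$(a) I would prove the contrapositive: assuming $\sigma$ is of the second kind, I show $\neg FK_1(\sigma)$. First, $\neg K(\sigma)$, since $K$ defines only first-kind involutions. It remains to refute the second disjunct, i.e. to show that whenever a $\vk$-involution $\tau$ satisfies $\Cov(\tau,\sigma)$ it also satisfies $\Cov(\sigma,\tau)$. So suppose $\sigma=\tau_1\tau_2=\tau_2\tau_1$ with $\tau_k=\hat t_k$ conjugates of $\tau$ (hence $\vk$-involutions in $\gl V$). By Lemma \ref{PMLem}, $t_1t_2=\pm t_2t_1$; the sign $+$ would make $t_1t_2$ an involution and $\sigma$ of the first kind, so $t_1t_2=-t_2t_1$ and $\sigma=\hat u$ with $u=t_1t_2$, $u^2=-\id(V)$. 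The crucial structural observation is that this $u$ automatically has ``rotating'' form: writing $V=V^+_{t_1}\oplus V^-_{t_1}$, where both summands have dimension $\vk$ since $t_1$ is a $\vk$-involution, the anticommuting involution $t_2$ interchanges the two summands isomorphically, and $u=t_1t_2$ carries each summand isomorphically onto the other.

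Granting this, I would finish by exhibiting two commuting conjugates of $u$ whose product is a $\vk$-involution. Using the rotating structure I split $V^+_{t_1}$ and $V^-_{t_1}$ compatibly into halves of dimension $\vk$, obtaining a $u$-invariant decomposition $V=C_1\oplus C_2$ with each $(C_i,u)$ again of rotating form with halves of dimension $\vk$, hence conjugate to $(V,u)$; in particular $(V,u)\cong(V,u)\oplus(V,u)$ and $(V,-u)\cong(V,u)$. Setting $u_1=u$ and letting $u_2$ act as $u$ on $C_2$ and as $-u$ on $C_1$, one checks that $u_1,u_2$ are commuting conjugates of $u$ and that $u_1u_2=\id(V)$ on $C_1$ and $-\id(V)$ on $C_2$, so $\widehat{u_1u_2}$ is a $\vk$-involution covered by $\sigma$. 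Since all $\vk$-involutions are conjugate and $\Cov(\sigma,-)$ is invariant under conjugation, $\sigma$ then covers every $\vk$-involution, in particular $\tau$; this refutes the second disjunct and completes the contrapositive.

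I expect the main obstacle to be precisely the construction in this last paragraph: producing commuting conjugates of $u$ with a $\vk$-involution as product. The naive temptation is to invoke that any two transformations squaring to $-\id(V)$ are conjugate, which is false for general division rings, since it depends on whether $D$ contains a square root of $-1$. What makes the argument go through is that in the relevant case $u$ arises from two anticommuting $\vk$-involutions and is therefore already of rotating form, which reduces everything to the elementary classification of rotating complex structures by the dimension of their halves and avoids any appeal to the fine structure of $D$.
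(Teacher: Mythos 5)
Your proof is correct and follows essentially the same route as the paper: the forward direction via Corollaries \ref{OnlyVKsCoverMe} and \ref{I'mVKInv}, and the converse by reducing to the anticommuting case $t_1t_2=-t_2t_1$, observing that $u=t_1t_2$ is forced into rotating form with halves of dimension $\vk$, and then exhibiting two commuting conjugates of $u$ whose product is a $\vk$-involution. The only differences are cosmetic: you organize the converse as a contrapositive and use a two-block splitting $V=C_1\oplus C_2$ with $u_2=\pm u$ on the blocks, where the paper partitions the index set into four pieces and writes the conjugates explicitly.
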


\begin{proof}
(a) $\Rightarrow$ (b) is obvious by Corollaries \ref{OnlyVKsCoverMe}
and \ref{I'mVKInv}.

(b) $\Rightarrow$ (a). Assume that $\models FK_1[\sigma]$  and
$\sigma$  is not a $\vk$-involution. Let  $\pi$
be a $\vk$-involution, covering  $\sigma.$ Hence
there is a transformation $s \in  \gl V$  in the
preimage of  $\sigma,$  which is a product
$p_1p_2$ of two  $\vk$-$\gl V$-involutions. We
have  $p_1p_2= \pm  p_2p_1.$ If  $p_1$ commutes with
$p_2,$  then  $s$  is a $\gl V$-involution.

So let us consider the case  $p_1p_2=-p_2p_1.$
Clearly $p_1 V^+_2= V^-_2,$
where  $V^{\pm}_2$  are subspaces of  $p_2.$
Choose two bases  $\{e_i: i<\vk\}$  and
$\{e_{i^*}: i<\vk\}$ of the subspaces
$V^+_2$ and  $V^-_2,$ respectively, such that
$p_1e_i= e_{i^*}, i<\vk .$ So
\begin{alignat*}4
&se_i     &&= p_1p_2e_i     &&= e_{i^*},            &&i<\vk,\\
&se_{i^*} &&= p_1p_2e_{i^*} &&= -p_1e_{i^*} = -e_i.
\end{alignat*}

We shall show that  $\sigma$  covers  $\pi.$ Partition the
set  $\vk$  into four subsets
$I_1,I_2,I_3,I_4$ of power $\vk.$ We define the
transformations  $s_1$ and  $s_2$  as follows
\begin{xalignat*}2
s_1e_{i_1} &= e_{i_3}, &  s_2e_{i_1} &=e_{i_2}, \quad i_k\in  I_k, \quad k\in \{1,2,3,4\},\nonumber \\
s_1e_{i_2} &= e_{i_4}, &  s_2e_{i_2} &=-e_{i_1},\nonumber \\
s_1e_{i_3} &= -e_{i_1}, &  s_2e_{i_3} &=e_{i_4},\nonumber \\
s_1e_{i_4} &= -e_{i_2}, &  s_2e_{i_4} &=-e_{i_3}.\nonumber
\end{xalignat*}

Clearly,  $\sigma_1$  and   $\sigma_2$  are
conjugate to  $\sigma.$ It is easy to verify that
$s_1s_2= s_2s_1.$ Let $r$ denote the
transformation $s_1s_2.$ Since  $s_1^2=
s_2^2= -\id(V),$ $r$  is a $\gl V$-involution.
Furthermore,  $r(e_{i_1}) = e_{i_4}$ and
$r(e_{i_2}) = -e_{i_3},$ where $i_k \in  I_k$ and $k\in \{1,2,3,4\},$
and hence $r$  is a $\vk$-involution. So $\sigma$
covers each $\vk$-$\pgl V$-involution, contradicting
$\models FK_1[\s].$
\end{proof}

{\it II. $D$ is a division ring of  characteristic}  2.  In
contrast to the case  {\it I},  we may apply the  methods
of  Dieudonn\'e from  \cite{Die1}.
As a byproduct of his classification of automorphisms
of the groups $\pgl{n,D},$ for $3 \le n < \aleph_0$ and
$\chr D =2,$ he proved that in this group the conditions
`to cover itself' and `to be an involution of the first
kind' are equivalent. We transfer this result
to  infinite dimensions.

\begin{Prop} \label{FKsAreDefnbl2}
Let  $\chr D =2.$ Then the set
of all $\pgl V$-involutions covering themselves coincides
with the set of all involutions of the first kind.
\end{Prop}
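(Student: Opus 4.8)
The plan is to prove the two inclusions separately. Throughout write $\sigma=\hat s$ for an involution $\sigma\in\pgl V$, so that $s^2=\lambda\cdot\id(V)$ with $\lambda\in Z(D)$, and recall that $\sigma$ is of the first kind precisely when $\lambda$ is a square in $Z(D)$.

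First I would dispose of the implication \emph{if $\sigma$ covers itself then $\sigma$ is of the first kind}. Suppose $\models\Cov[\sigma,\sigma]$, i.e. $\sigma=\sigma_1\sigma_2=\sigma_2\sigma_1$ with $\sigma_1,\sigma_2$ conjugate to $\sigma$. Lifting conjugating elements $g_i=\hat h_i$ and setting $s_i=h_i s h_i^{-1}$ gives representatives with $\hat s_i=\sigma_i$ and $s_i^2=\lambda\cdot\id(V)$. By Proposition \ref{0.1.1} the relations $\sigma=\sigma_1\sigma_2$ and $\sigma_1\sigma_2=\sigma_2\sigma_1$ lift to $s=\mu s_1s_2$ and $s_1s_2=\nu s_2s_1$ with $\mu,\nu\in Z(D)$. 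The key step is to show $\nu=1$: conjugating $s_1s_2=\nu s_2s_1$ by $s_1$ and using that $s_1^2=\lambda\cdot\id(V)$ is central yields $\nu^2=1$, whence $\nu=1$ because $Z(D)$ has characteristic $2$. Thus $s_1$ and $s_2$ genuinely commute, and $\lambda\cdot\id(V)=s^2=\mu^2s_1^2s_2^2=\mu^2\lambda^2\cdot\id(V)$, so $\lambda=\mu^{-2}$ is a square and $\sigma$ is of the first kind. This is the characteristic-$2$ analogue of Lemma \ref{PMLem}, and is in fact cleaner than the three-factor argument of Proposition \ref{BestHorses}, since in characteristic $2$ the sign ambiguity $\pm$ collapses entirely.

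For the converse, \emph{every involution of the first kind covers itself}, I would give an explicit construction. Let $s$ be an involution of $\gl V$ inducing $\sigma$, with residual subspace $R=\Rng(\id(V)+s)$, and put $n=\id(V)+s$, a square-zero map with $\Rng n=R\subseteq\Fix(s)=\ker n$. The idea is to write $n=n_1+n_2$ with each $n_k\colon V\to R$ surjective and $R\subseteq\ker n_k$; then $\Rng n_k=R\subseteq\ker n_\ell$ forces $n_1n_2=n_2n_1=0$, so the involutions $s_k=\id(V)+n_k$ commute, and $s_1s_2=\id(V)+n_1+n_2+n_1n_2=\id(V)+n=s$. If in addition each $s_k$ is conjugate to $s$, then $\str{\sigma_1,\sigma_2}$ witnesses $\models\Cov[\sigma,\sigma]$, with $\sigma_k=\hat s_k$.

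The main obstacle is producing such a splitting with $s_1,s_2$ \emph{conjugate} to $s$, uniformly over all division rings of characteristic $2$. When $Z(D)$ has more than two elements one simply takes $n_1=\beta n$ and $n_2=(1+\beta)n$ for a central $\beta\notin\{0,1\}$; both have kernel exactly $\Fix(s)$ and range exactly $R$, so by the normal form (\ref{eq0.1.1}) they induce involutions of the same type as $s$, hence conjugate to it. When $Z(D)=\mathbb F_2$ the scalar trick fails and $\beta$ must be replaced by a non-scalar automorphism of $R$ having neither $0$ nor $1$ as an eigenvalue, which exists as soon as $\dim R\ge 2$. The genuinely delicate case is $\dim R=1$ over $\mathbb F_2$ (in particular $D=\mathbb F_2$), where $s$ is a transvection and no such automorphism of the one-dimensional $R$ exists; here one instead realizes $s$ as a product of two commuting transvections \emph{sharing the line of $s$ but with distinct hyperplanes}, splitting the defining functional as $\delta=\delta_1+\delta_2$ into two non-zero pieces inside the (more than one-dimensional, since $\dim V$ is infinite) space of functionals vanishing on that line, and invoking Lemma \ref{TransvsBasics}(b,c) for commutativity and for the product being $s$. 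Finally, since over $\mathbb F_2$ every scalar is a square, there are no involutions of the second kind, so this construction really does exhaust the first-kind involutions; combined with the first inclusion this yields the asserted equality. (Alternatively this direction can be reduced to Dieudonné's finite-dimensional computation in \cite{Die1} on a suitable $s$-invariant subspace, but the direct construction above avoids the exceptional dimension $6$.)
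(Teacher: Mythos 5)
Your proof is correct, and while the first half coincides with the paper's, the second half takes a genuinely different route. For ``covers itself $\Rightarrow$ first kind'' you reproduce Dieudonn\'e's squaring argument $\lambda=\mu^2\lambda^2$, the only addition being that you actually verify the characteristic-$2$ analogue of Lemma \ref{PMLem} (that $\nu^2=1$ forces $\nu=1$, so commuting projective involutions have commuting lifts), which the paper merely asserts in its opening sentence. For the converse the paper works with the normal form (\ref{eq0.1.1}), splits into the cases $|I|\le|J|$ and $|I|>|J|$, and in each case uses the infinitude of the larger index set to build conjugates whose residual subspaces are ``tilted'' off $R$ by a bijection of index sets; infinite-dimensionality is essential in both cases. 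You instead keep $R$ fixed and split the square-zero map $n=\id(V)+s$ as $n=(\beta\circ n)+((\id_R+\beta)\circ n)$ for an automorphism $\beta$ of $R$ with $\beta$ and $\id_R+\beta$ both invertible; this is cleaner, works verbatim in finite dimensions, and isolates the single genuinely exceptional case $D=\mathbb{F}_2$, $\dim R=1$, where your splitting $\delta=\delta_1+\delta_2$ of the transvection's functional (together with Lemma \ref{TransvsBasics}) finishes the job. Two minor points you should make explicit: the existence of $\beta$ for $\dim R\ge 2$ over $\mathbb{F}_2$ needs a one-line justification (decompose $R$ into blocks of dimension $2$ or $3$ and take a companion matrix whose characteristic polynomial has no root in $\{0,1\}$); and your case list as written omits $\dim R=1$ with $Z(D)=\mathbb{F}_2$ but $|D|>2$ --- harmless, since $\lambda b\mapsto\lambda\beta_0 b$ for any $\beta_0\in D\setminus\{0,1\}$ is a linear automorphism of the line $R$ with the required property, but it should be said.
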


\begin{proof}
If  $\sigma,\pi$  are  $\pgl V$-involutions
then $\sigma\pi = \pi\sigma$ iff  $sp = ps.$
Suppose that a  $\pgl V$-involution  $\sigma$ of the
second kind covers itself. Let  $\sigma = \hat{s}$
and  $s^2= \lambda\cdot \id(V),$ where  $\lambda$   is
 an element of $Z(D)$ which is not  a square in
$Z(D).$ If  $\sigma =\sigma_1\sigma_2$ for some
$\sigma_1$  and $\sigma_2,$ which are conjugate to
$\sigma,$ then  $s = \mu s_1s_2,$ where  $\mu  \in
Z(D)$ and  $s^2_k= \lambda \cdot \id(V), k=1,2.$
Squaring the both parts of the equation  $s = \mu
s_1s_2,$ we obtain the equation  $\lambda  = \mu^2
\lambda^2.$ So  $\lambda  = \mu^{-2},$ but this is
impossible. The argument is due to Dieudonn\'e
\cite[p. 17]{Die1}.

Conversely, we show that every
$\pgl V$-involution of the first kind covers itself. Let  $s$  be a
$\gl V$-involution. According to Section \ref{0.12},
there exists a basis  $\{e_i~:~ i~\in~I~\} \cup
\{e_j~:~j~\in~J\} \cup  \{d_i~:~i~\in~I~\}$  of  $V$
such that
\begin{alignat}2 \label{eq1.2.1}
&se_i = e_i,    &&       i\in I,  \\
&se_j = e_j,     &&       j\in J, \nonumber \\
&sd_i = d_i+e_i,&\quad & i\in I. \nonumber
\end{alignat}

Assume that  $|I| \le  |J|.$ Then the cardinal  $|J|$
coincides with  $\dim V,$ in particular, $|J| \ge  \aleph_0.$
So one can partition $J$  into two subsets  $J_0$  and
$J_1$ of powers  $|I|$  and  $|J|,$ respectively.
Consider  $s_0\in  \End V$ such that
\begin{alignat*}2
& s_0e_i = e_i,              &&      i \in  I, \\
& s_0e_j = e_j,              &&      j \in  J,\\
& s_0d_i = d_i+e_i+e_{f(i)}, &\quad &i \in  I
\end{alignat*}
where  $f$  is a bijection from $I$  onto  $J_0.$  The
element  $s_0$ belongs to  $\gl V,$ because the
system  $\{e_i: i\in I\} \cup  \{e_j: j\in J\}
\cup  \{d_i+e_i+e_{f(i)}: i\in I\}$  is a basis of
$V.$ It is easy to check that
$s_0 \sim s$  and $ss_0 \sim s.$ Indeed,
$ss_0$ acts on the vectors of the
basis $\{e_i: i\in I\} \cup  \{e_j: j\in J\} \cup
\{d_i: i\in I\}$  as follows
\begin{alignat}2  \label{eq1.2.2}
& ss_0e_i= e_i,               && i \in  I,\\
& ss_0e_j= e_j,               && j \in  J,\nonumber \\
& ss_0d_i= d_i+e_{f(i)},&\quad & i \in  I. \nonumber
\end{alignat}
Equations (\ref{eq1.2.2}) may be rewritten
in the following form:
\begin{alignat*}2
& ss_0e_j = e_j,     &&       j \in J_0, \\
& ss_0e_k = e_k,     &&       k \in  I\cup  J_1,\\
& ss_0c_j = c_j+e_j, &\quad & j \in  J_0.
\end{alignat*}
where $c_j=d_{f\inv(j)}.$ The conjugacy of  $s$  and
$ss_0$ follows then from $|I| = |J_0|$  and
$|I\cup  J_1| =|J|.$

Suppose that  $|I| > |J|.$ Clearly,
$|I| = \vk  \ge  \aleph_0.$ Partition  $I$
into two subsets  $I_0$  and  $I_1$ of power $\vk.$
Consider the transformations  $s_0,s_1\in  \gl V$ such that
\begin{alignat*}3
s_0e_i &=  e_i,                  & s_1e_i &=  e_i,  && i \in  I,\\
s_0e_j &=  e_j,                  & s_1e_j &=  e_j,   && j \in  J,\\
s_0d_i &=d_i+e_i+e_{f(i)},\qquad & s_1d_i &=  d_i+e_i, && i \in  I_0,\\
s_0d_i &=  d_i+e_i,              & s_1d_i &= d_i+e_i+e_{f^{-1}(i)},
&\quad & i \in  I_1,
\end{alignat*}
where  $f$  is a bijection from  $I_0$  onto
$I_1.$ Both  $s_0$  and $s_1$ are conjugates of $s$
(since, for example, for the former one the system
$\{e_i+e_{f(i)} : i \in I_0\} \cup \{e_i : i \in I_1\}$
is a basis of the subspace $\str{e_i : i \in I =I_0 \cup I_1}$).
Their product $s_0s_1$ is also a conjugate of $s$:
\begin{alignat*}2
s_0s_1e_i &= e_i,              &&        i \in  I,\\
s_0s_1e_j &= e_j,              &&        j \in  J,\\
s_0s_1d_i &= d_i+e_{f(i)},     &&        i \in  I_0,\\
s_0s_1d_i &= d_i+e_{f^{-1}(i)}, &\quad & i \in  I_1.
\end{alignat*}
\end{proof}

\begin{Th} \label{FKsAreDefnbl}
There exists a first order formula $FK(x)$ in the
group-theoretic language such that $\pgl V \models FK[\sigma]$ if and
only if $\sigma$  is an involution of the first kind.
\end{Th}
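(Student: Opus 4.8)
The goal is to produce a single first-order formula $FK(x)$ in the group-theoretic language that defines the involutions of the first kind in $\pgl V$, valid uniformly regardless of the characteristic of $D$. The key observation is that the two preceding propositions already handle the two characteristic cases separately: Proposition \ref{FKsAreDefnbl1} gives an explicit formula $FK_1(x)$ that works when $\chr D \ne 2$, and Proposition \ref{FKsAreDefnbl2} shows that in characteristic $2$ the involutions of the first kind are exactly those involutions that cover themselves, i.e.\ those satisfying $\Cov(x,x)$. So the substance of the theorem is already proved in each case; what remains is to fuse the two case-specific characterizations into one formula.

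The plan is to find a first-order group-theoretic condition that distinguishes the two characteristic regimes, and then combine the two formulas by a case split on that condition. A natural candidate is a sentence $\chi_2$ expressing ``$\chr D = 2$'' in purely group-theoretic terms. The most direct route is to use the behaviour of involutions already analyzed: for instance, in characteristic $2$ every involution of the first kind covers itself (Proposition \ref{FKsAreDefnbl2}), whereas in characteristic $\ne 2$ the involutions covering themselves are exactly the $\gamma$-involutions with $\gamma$ a multiple of four (Proposition \ref{BestHorses}), so one can write a sentence detecting whether \emph{every} self-covering involution behaves as in the characteristic-$2$ description, or more simply detect the presence/absence of involutions of the second kind or of extremal self-covering involutions. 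Once such a closed formula $\chi_2$ is available, set
$$
FK(x) \;=\; \bigl(\neg\chi_2 \wedge FK_1(x)\bigr) \;\vee\; \bigl(\chi_2 \wedge \Cov(x,x)\bigr),
$$
where $FK_1$ is the formula from Proposition \ref{FKsAreDefnbl1}. Then $\pgl V \models FK[\sigma]$ iff $\sigma$ is an involution of the first kind: if $\chr D \ne 2$ the first disjunct applies by Proposition \ref{FKsAreDefnbl1} and the second is vacuous, while if $\chr D = 2$ the second disjunct applies by Proposition \ref{FKsAreDefnbl2}.

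The main obstacle is producing the group-theoretic sentence $\chi_2$ detecting the characteristic. One must verify that ``$\chr D = 2$'' is expressible by a first-order sentence in the language of $\pgl V$ alone, uniformly in $\vk$ and $D$. A clean way is to exploit that an involution $\hat s$ is of the first kind precisely when its lift satisfies $s^2 = \lambda\cdot\id(V)$ with $\lambda$ a square in $Z(D)$, together with the fact established in the two propositions that the self-covering property cleanly separates first-kind involutions from second-kind ones in characteristic $2$ but not in characteristic $\ne 2$ (where $Ob$ and $K$ were needed). Concretely, I expect $\chi_2$ can assert something like ``there exists an extremal involution covering itself'', since extremal involutions are $1$-involutions and by Proposition \ref{BestHorses} no self-covering involution in characteristic $\ne 2$ has $\gamma=1$ (as $1$ is not a multiple of four and is finite), whereas in characteristic $2$ every involution of the first kind, extremal ones included, covers itself by Proposition \ref{FKsAreDefnbl2}. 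The delicate point is to express ``extremal'' first-order without already having $FK$ in hand, but since extremal involutions can be characterized among self-covering involutions by minimality of their $m$-set behaviour or by the non-existence of a strictly smaller nontrivial subspace configuration, one can arrange $\chi_2$ to quantify only over elements already known to be self-covering, sidestepping the circularity. Assembling these pieces and checking that the resulting sentence $\chi_2$ holds exactly when $\chr D = 2$ is the one genuinely technical step; the final disjunction is then immediate.
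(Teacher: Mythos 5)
Your overall architecture is exactly the paper's: the proof of Theorem \ref{FKsAreDefnbl} reduces to constructing a single group-theoretic sentence $\theta$ with $\pgl V \models \theta$ iff $\chr D = 2$, and then splitting $FK(x)$ as a disjunction of $\neg\theta \wedge FK_1(x)$ and $\theta \wedge \Cov(x,x)$. So the reduction step is fine. The gap is that the construction of $\theta$ \emph{is} the entire content of the proof, and your candidate for it is both incomplete and problematic. You propose ``there exists an extremal involution covering itself,'' but you cannot define ``extremal'' in $\pgl V$ at this stage: the paper's group-theoretic characterizations of extremal involutions (covered involutions all conjugate in characteristic $\ne 2$; exactly two conjugacy classes covered in characteristic $2$) are different in the two cases and, more importantly, are only established \emph{relative to the set of involutions of the first kind} --- which is precisely what $FK$ is supposed to define. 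Restricting to ``self-covering'' involutions does not break this circle, because in characteristic $\ne 2$ involutions of the second kind can cover themselves (the proof of Proposition \ref{BestHorses} only rules out their satisfying the stronger condition $Ob$), so the pool you quantify over is not purged of second-kind elements.

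The paper's $\theta$ avoids all of this by quantifying over arbitrary solutions of $x^2=1$ and using only $\Cov(x,x)$ and $Ob(x)$: it is the disjunction of ``every involution covers itself'' and ``every self-covering involution satisfies $Ob$.'' In characteristic $\ne 2$ both disjuncts fail (a $1$-involution cannot cover itself; a $2$-involution covers itself but fails $Ob$ since $2$ is not a multiple of $4$). In characteristic $2$ one must still split on $|D|$: for $|D| \ge 5$ one exhibits scalars $\mu_0,\mu_1,\mu_2$ witnessing that every first-kind involution satisfies $Ob$ (so the second disjunct holds), while for $|D| \le 4$ one invokes Wedderburn's theorem and the perfectness of finite fields to conclude there are no second-kind involutions at all (so the first disjunct holds). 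Your sketch addresses neither the verification in characteristic $\ne 2$ that the chosen sentence actually fails, nor the small-field subcase in characteristic $2$; both are genuinely needed, and the second cannot be dismissed as routine.
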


\begin{proof}
All we have to do is to
construct a sentence  $\theta$ such that
$\pgl V \models  \theta$  iff $\chr D =2.$ One may take as
$\theta$  the sentence
\begin{equation} \label{eqCharD=2}
(\forall x)(x^2=1 \rightarrow  \Cov(x,x)) \vee
(\forall x)(x^2 =1 \wedge \Cov(x,x) \rightarrow  Ob(x))
\end{equation}

Let  $\chr D \neq  2.$ By Lemma \ref{Cov(s,pi)} a
1-involution cannot cover itself. By Lemma \ref{Cov(s,pi)}
and Proposition \ref{BestHorses} a 2-involution can cover itself,
but does not satisfy the formula  $Ob.$

Let now the underlying division ring $D$ be of characteristic 2 and
of power $\ge  5.$ Then there are two distinct elements  $\mu_1,\mu_2$
in $D \setminus \{0,1\}$  such that
$\mu_1+\mu _2+1 \neq  0.$ Let   $\mu_0=1.$ So
if $s_0,s_1,s_2 \in  \gl V$  are defined by the following
conditions where $k \in \{0,1,2\}$
\begin{alignat*}2
s_ke_i &= e_i,  && i\in I,   \\
s_ke_j &= e_j,  && j\in J, \\
s_kd_i &= d_i+\mu _ke_i, &\quad & i\in I,
\end{alignat*}
then  (a) $s_k \sim s_0, k=1,2,$ (b) $s_0 \sim s_0s_1s_2,$
(c) any product of two distinct
transformations in  $\{s_0,s_1,s_2\}$  is
conjugate to  $s_0.$ Therefore $\models Ob[\hat{s}_0].$

Let, finally, $D$ have characteristic 2 and $|D|\le
4.$ By the Wedderburn theorem, $D$ is a field.  As any
finite field is perfect, every element in $D$ is a
square. So the group $\pgl V$ has no involutions of
the second kind.
\end{proof}

\section{The reconstruction of the betweenness relation
($\mbox{\rm char} D \ne 2$)} \label{1.3}

In his book \cite{Ba} Baer described the
automorphisms of the groups  $\gl W$  over division
rings of characteristic $\neq 2.$ His methods were
different from the methods by
Mackey--Rickart--Dieudonn\'e. Namely, instead of
interpretation of the set  $P^{(1)}(W),$ lines and
hyperplanes of $W,$ in the group  $\gl W$  he
reconstructed (in logical terms) {\it by means of
monadic second order logic} the structure
$\str{P(W);B};$ here  $B$  is the ternary {\it
betweenness} relation on the set  $P(W)$:
$B(L_0,L_1,L_2)$ iff $(L_0 \subseteq L_1 \subseteq
L_2)$ or $(L_0~\supseteq~L_1~\supseteq~L_2),$ where
$L_0,L_1,L_2$ are elements of $P(W),$ the projective
space over $W.$

In this  and in the next sections we show that the
structure
$$
\cP(V)=\str{P(V);\,\subseteq}
$$
is reconstructible in the group $\pgl V$  by means of
first order logic. Since the group $\pgl V$  is
interpretable in  $\gl V,$ the structure $\cP(V)$ is
reconstructible in  $\gl V$  by means of first order
logic. So the above-mentioned results from \cite{Ba}
are significantly strengthened.

First, we shall reconstruct in  $\pgl V$  the
structure
$$
\cPG'(V) = \str{\pgl V,P^*(V);\circ,B,act},
$$
where  $P^*(V)$  is the set of proper non-zero subspaces of
$V,$  $\circ$  is the
composition law on  $\pgl V,$ $B$ is the restriction of
the betweenness relation  on  $P^*(V),$ and $act$ is
the ternary relation for the action of the
group $\pgl V$ on the set $P^*(V).$ Then we shall
reconstruct $\cP(V)$  in  $\cPG'(V).$

\begin{Th} \label{RecoveringOfBetweenness}
The structure  $\cPG'(V)$  can be {\rm(}uniformly in $\dim V$ and $D${\rm)}
reconstructed without parameters in the group $\pgl V$
by means of first order logic.
\end{Th}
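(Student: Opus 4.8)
The plan is to build $\cPG'(V)$ on top of the involutions of the first kind, which are already $\varnothing$-definable by Theorem~\ref{FKsAreDefnbl}. First I would isolate the extremal ($1$-)involutions among them, and here the covering relation does the work. By Lemma~\ref{Cov(s,pi)} the precise list of which $\gamma'$-involutions a given $\gamma$-involution covers is a first-order readable fingerprint of $\gamma$: a $\gamma$-involution covers itself exactly when $\gamma$ is infinite or a finite \emph{even} cardinal, so $\neg\Cov(\sigma,\sigma)$ singles out the odd finite indices; and among these, $\gamma=1$ is distinguished because a $1$-involution covers only $2$-involutions (a single conjugacy class), whereas a $\gamma$-involution with odd $\gamma\ge 3$ covers the several non-conjugate types $2,4,\ldots,2\gamma$. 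Thus the extremal involutions form a $\varnothing$-definable set. Since $\dim V=\vk\ge\aleph_0\ge 3$, Theorem~\ref{Mckey&Rckrt} then makes the set of minimal pairs $\varnothing$-definable as well, and each minimal pair codes one element of $P^{(1)}(V)$, i.e. a single line or a single hyperplane.

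Next I would code an arbitrary $U\in P^*(V)$. As $U$ is proper and non-zero it has a complement $U'\ne 0$, so there is an involution $\sigma$ of the first kind with $\{V^+_\sigma,V^-_\sigma\}=\{U,U'\}$; fixing a line $\ell\subseteq U$ (given by a minimal pair) I let $(\sigma,\ell)$ code the $\sigma$-subspace containing $\ell$. Every element of $P^*(V)$ arises in this way. The key definable ingredient is membership of a line in the selected $\sigma$-subspace, which I would reduce to first-order conditions about commutation and products of the finitely many involutions involved, using that $\hat s\hat p=\hat p\hat s$ iff $sp=\pm ps$ (Lemma~\ref{PMLem}) together with simultaneous diagonalisation (Lemma~\ref{0.1.3}); geometrically this detects when two lines sit in the same $\sigma$-eigenspace, the anticommuting case being governed by the fact that it forces a $\vk$-involution. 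Two codes are then declared equivalent when their selected subspaces coincide, a relation I would phrase dual-invariantly through the commutation/conjugacy pattern of the involutions involved, and one checks it captures equality of subspaces exactly.

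With $P^*(V)$ realised as this definable quotient, the action $act$ is induced by conjugation: $g$ sends the class of $(\sigma,\ell)$ to the class of $(g\sigma g^{-1},\,g\ell g^{-1})$, which is precisely $U\mapsto gU$ and is visibly well-defined and first-order. The betweenness relation $B$ I would define from the incidence pattern of the coding decompositions, written in the symmetric ``$U_1$ lies between $U_0$ and $U_2$'' form, so that $B$ is manifestly invariant under the lattice duality interchanging lines and hyperplanes; orienting $B$ into genuine inclusion is deferred to the reconstruction of $\cP(V)$, where the infinite-dimensional distinction between lines and hyperplanes stressed in the Remark (the $\varnothing$-definability of minimal pairs with a mutual line) is exploited. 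All of these definitions are uniform in $\vk$ and $D$, so the Reduction Theorem~\ref{0.2.1} delivers the desired syntactic interpretation without parameters.

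I expect the main obstacle to be the second step: proving that ``a given line lies in the selected subspace of $(\sigma,\ell)$'' is genuinely first-order, and that the induced equivalence matches equality of subspaces, \emph{uniformly} for every infinite $\vk$ and every $D$ with $\chr D\ne 2$. Concretely, one must express purely group-theoretically when a line sits in $V^+_\sigma$ rather than $V^-_\sigma$ and rule out spurious identifications coming from involutions that interchange the two eigenspaces; the infinite dimension is exactly what guarantees enough room to realise the auxiliary complements and independent lines needed as witnesses, and verifying this carefully is the technical heart of the argument.
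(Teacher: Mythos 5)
Your outline for the case $\chr D\ne 2$ follows the paper's own strategy fairly closely: extremal involutions are isolated by the covering relation exactly as in Lemma \ref{MPsAreDefnbl1} (the paper uses the equivalent, simpler criterion that an extremal involution covers only one conjugacy class), minimal pairs come from Theorem \ref{Mckey&Rckrt}, subspaces are coded by an involution together with a selector, and $B$ and $act$ are obtained dual-invariantly. Two points, however, are genuine gaps rather than deferred technicalities. First, you cannot pass directly from Rickart's theorem to definability of minimal pairs in $\pgl V$: Theorem \ref{Mckey&Rckrt} characterises minimal pairs via the sets $c(c(\sigma_1,\sigma_2))$ of involutions commuting \emph{in} $\gl V$, whereas in $\pgl V$ commutation of $\hat s$ and $\hat p$ only gives $sp=\pm ps$ (Lemma \ref{PMLem}), so the visible centralizer is too large. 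The paper spends the bulk of Lemma \ref{MPsAreDefnbl1} constructing a formula $Com(x,y)$ that recognises genuine commutation of preimages, via the definable configuration of a non-extremal involution with two extremal ones whose lines sit in its two different subspaces; an anticommuting $\pi$ swaps the subspaces of $\sigma$ and destroys that configuration. You name the $\pm$ problem but offer no mechanism to resolve it; without it the whole construction of minimal pairs, and hence of $P^{(1)}(V)$, does not get off the ground.

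Second, and more seriously, the theorem is stated for arbitrary division rings, and your argument — as you yourself write at the end — only addresses $\chr D\ne 2$. In characteristic $2$ essentially every ingredient you use is unavailable: involutions have no $\pm1$ eigenspace decomposition, their two associated subspaces $R=\Rng(\id+s)\subseteq S=\Fix(s)$ are \emph{nested} rather than complementary, so there is no involution with subspaces $\{U,U'\}$ for a complement $U'$ of $U$; Theorem \ref{Mckey&Rckrt} does not apply (the paper substitutes Proposition \ref{MPsInChar2}, resting on the transvection calculus of Lemma \ref{TransvsBasics}); extremal involutions are now characterised by covering exactly \emph{two} conjugacy classes, not one; and a single subspace $U$ is realisable as $\Fix(s)$ only when $\codim U\le\dim U$ and as $\Rng(\id+s)$ only when $\dim U\le\codim U$, which is why the paper codes elements of $P^*(V)$ there by triples (an involution plus a minimal pair whose conjugacy type tells you whether to read off $S$ or $R$) and builds the relation $C$ out of the three formulae $C_2'$, $C_2''$, $C_2'''$. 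This is a full parallel argument occupying its own section, not a routine adaptation, so omitting it leaves roughly half the theorem unproved. (A final small point: to make the interpretation uniform in $D$ one also needs a sentence separating the two characteristics, which the paper supplies in (\ref{eqCharD=2}).)
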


In this section we investigate the case  $\chr D \ne
2$ and in the next section we prove Theorem
\ref{RecoveringOfBetweenness} for
the case $\chr D =2.$ In each case we shall realize
the following strategy. The first step will be the
reconstruction of the set $P^{*}(V).$ Let $C$ denote
the two-placed relation on $P(V)$ defined by the
condition $(L_0~\subseteq~L_1) \vee (L_0~\supseteq~L_1).$
The next step will be a reconstruction of the relation  $C.$
Having the reconstruction of  $C$  one can
easily reconstruct  $B,$ because  $B(L_0,L_1,L_2)$  is
true iff
\begin{align} \label{eqC=>B} C(L_0,L_2) \wedge
(\forall L)(L \in  P^{(1)}(V) \wedge C(L,L_0) \wedge
     C(L,L_2) \rightarrow C(L,L_1)).
\end{align}

We have proved the $\varnothing$-definability of the set of all $\pgl
V$-involutions of the first kind (Theorem \ref{FKsAreDefnbl}).
Thus, we may use the variables  $x,y,z,\ldots$ {\it  only for the involutions of
the first kind.} Since we shall work later only
with the involutions of the first kind,  it is
convenient to call them simply {\it involutions.}

{\it From now on and to the end of this  section  we
suppose that  $\chr D \ne 2.$} Convention: the letters
$R$  and   $S$   will be always used for  the
subspaces  of  an  involution   $\sigma.$ Recall that
the subspaces  of   $\sigma$   are  identical  to
the subspaces  $V^{\pm }_{s}$  of an involution
$s \in  \gl V$  in the preimage of  $\sigma.$ The
letter   $\rho$   will  be  used  for  the  extremal
involutions (1-involutions); we shall also always use
the letter $N$ for the line of an extremal involution
$\rho,$ and the letter $M$ for its hyperplane.
The following fact is simple, but very useful.

\begin{Lem}{\rm (\cite[Lemma 2.2]{Ri2}). } \label{Heads&Tails}
\mbox{\rm (a)} Let  $r,s$  be involutions of the
group  $\gl W$ over a division ring of characteristic  $\ne 2,$
and suppose that $r$ is extremal. Then  $rs = sr$
if and only if a subspace of $s$ is contained in the
hyperplane of $r$ and another subspace of $s$
contains the line of $r;$

{\rm (b)} in particular, extremal $\pgl V$-involutions
$\rho_1$ and $\rho_2$ commute if and only if
$(N_1\subseteq M_2 \wedge N_2\subseteq M_1)$
or $(N_1=N_2\wedge M_1=M_2).$
\end{Lem}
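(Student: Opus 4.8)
The statement to prove is a commutation criterion for an extremal involution $r$ with an arbitrary involution $s$ in $\gl W$ (characteristic $\ne 2$), together with its projective corollary. The plan is to work directly with the eigenspace decompositions furnished by the diagonalizability of involutions in characteristic $\ne 2$. Since $r$ is extremal, one of its subspaces is a line and the other a hyperplane; write $W = \str{v} \oplus H$, where $\str{v} = W^-_r$ is the line (so $rv = -v$) and $H = W^+_r$ is the hyperplane on which $r$ acts as the identity (or the dual assignment; by replacing $r$ with $-r$, which induces the same situation after fixing which subspace is the line, we may fix the roles). For $s$ I would use the decomposition $W = W^-_s \oplus W^+_s$ from equation~(\theequation) in Section~0. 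The key algebraic observation is that two diagonalizable involutions commute if and only if they are simultaneously diagonalizable (this is the $n=2$ case of Lemma~\ref{0.1.3}); equivalently, $rs = sr$ iff each eigenspace of $r$ is $s$-invariant and each eigenspace of $s$ is $r$-invariant.

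The main step is to translate this simultaneous-diagonalizability condition into the geometric statement about containments. First I would prove the forward direction: assuming $rs = sr$, the line $\str{v}$ is $s$-invariant, hence $sv = \pm v$, so $v$ lies in one of the subspaces $W^{\pm}_s$ of $s$; that is, one subspace of $s$ contains the line of $r$. Dually, the hyperplane $H = W^+_r$ is $s$-invariant, and since $s$ is diagonalizable its restriction to $H$ is again diagonalizable, so $H$ decomposes as $(H \cap W^-_s) \oplus (H \cap W^+_s)$; combined with the fact that exactly one of $W^{\pm}_s$ already swallows $v$, a dimension/complement count forces the \emph{other} subspace of $s$ to be contained in the hyperplane $H$ of $r$. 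This gives precisely the asserted conclusion: one subspace of $s$ inside the hyperplane of $r$, the other containing the line of $r$. For the converse, I would run the argument backwards: if the subspaces $W^{\pm}_s$ of $s$ are distributed so that one sits inside $H$ and the other contains $v$, then choosing a basis adapted to $W = W^-_s \oplus W^+_s$ in which $v$ is a basis vector, one checks directly that $r$ and $s$ act diagonally on a common basis, whence they commute.

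For part~(b), the projective statement, I would pass from $\gl V$ to $\pgl V$ using Proposition~\ref{0.1.1} and the conventions of Section~\ref{1.2}: a relation $\rho_1 \rho_2 = \rho_2 \rho_1$ in $\pgl V$ lifts to $r_1 r_2 = \mu\, r_2 r_1$ with $\mu \in Z(D)$, and by Lemma~\ref{PMLem} (since both are involutions of the first kind) we have $r_1 r_2 = \pm r_2 r_1$. When the sign is $+$, part~(a) applied with $s = r_2$ (both extremal) gives $N_1 \subseteq M_2$ and $N_2 \subseteq M_1$, yielding the first alternative. The remaining task is to account for the possibility $r_1 r_2 = -r_2 r_1$ and to show it collapses to the degenerate alternative $(N_1 = N_2 \wedge M_1 = M_2)$: the anticommutation relation forces $r_1$ to swap the eigenspaces $W^{\pm}_{r_2}$, but for \emph{extremal} involutions, whose residual subspace is a line, this is only consistent when the two lines and the two hyperplanes coincide, giving the second alternative.

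I expect the main obstacle to be the careful complement/dimension bookkeeping in the forward direction of part~(a) when $\dim W$ is infinite: the naive argument ``the hyperplane has codimension one, so its intersections with $W^{\pm}_s$ determine everything'' must be handled so that the conclusion ``one subspace of $s$ is contained in $M$'' is genuinely forced rather than merely possible. The cleanest way around this is to avoid counting altogether and argue purely by invariance: show that $s$-invariance of the line and hyperplane of $r$, together with $\dim W^-_r = 1$, pins down which of $W^{\pm}_s$ meets the line and forces the complementary one into the hyperplane. Since this is cited as a known result from \cite{Ri2}, I would keep the write-up short, emphasizing the simultaneous-diagonalization principle and relegating the containment verification to a direct basis computation.
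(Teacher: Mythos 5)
The paper does not actually prove this lemma---it is quoted from Rickart \cite[Lemma 2.2]{Ri2}---so there is no in-paper argument to compare against; I will judge your plan on its own terms. Part (a) is essentially right: the eigenspace decomposition, the observation that $sN=N$ forces $sv=\pm v$ (so one subspace of $s$ swallows the line), and your fallback to a pure invariance argument for the containment $W^{-\epsilon}_s\subseteq M$ (write $a=\mu v+m$, apply $s$, compare components in $N\oplus M$ to get $\mu=0$) is exactly the clean way to avoid the dimension-counting trap you correctly flag. The converse via a common diagonalizing basis adapted to $N\oplus(W^{\epsilon}_s\cap M)\oplus W^{-\epsilon}_s$ also works.

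Part (b), however, contains a genuine error in the case analysis. First, in the commuting case $r_1r_2=r_2r_1$, part (a) tells you only that \emph{some} subspace of $r_2$ contains $N_1$ and \emph{the other} lies in $M_1$; since the subspaces of $r_2$ are $N_2$ and $M_2$, this splits into two sub-cases: either $(N_1\subseteq M_2\wedge N_2\subseteq M_1)$, or $(N_1\subseteq N_2\wedge M_2\subseteq M_1)$, and the latter forces $N_1=N_2$, $M_1=M_2$. So the commuting case already yields \emph{both} alternatives of the lemma, not just the first; as written, your argument would "prove" that two extremal involutions with the same line and hyperplane satisfy $N_1\subseteq M_2$, which is false since $N_2\cap M_2=\{0\}$. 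Second, the anticommuting case $r_1r_2=-r_2r_1$ does not "collapse to the degenerate alternative": it is simply impossible. Anticommutation gives $r_1r_2r_1^{-1}=-r_2$, so $r_1$ carries $W^{+}_{r_2}$ onto $W^{-}_{r_2}$; for an extremal involution one of these is a line and the other a hyperplane, and in infinite dimension (indeed in any dimension $\ge 3$) they cannot be isomorphic. This is consistent with nothing, not even with $N_1=N_2\wedge M_1=M_2$. The conclusion of the lemma is still correct, but you must reassign the second alternative to the commuting case and discard the anticommuting case as vacuous.
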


In order to reconstruct $P^*(V),$ we first actually
reconstruct $P^{(1)}(V),$ interpreting its elements by
minimal pairs in $\pgl V.$ We start therefore with

\begin{Lem} \label{MPsAreDefnbl1} The set of all
minimal pairs is a $\varnothing$-definable subset of
the group $\pgl V.$
\end{Lem}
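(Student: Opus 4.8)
The plan is to reduce the whole statement to a single new definability result --- that the extremal (that is, $1$-) involutions form a $\varnothing$-definable subset of $\pgl V$ --- and then to quote the Mackey--Rickart characterization (Theorem \ref{Mckey&Rckrt}), which, once extremal involutions are available, makes the minimal-pair relation first order (exactly as announced in the remark following that theorem). Throughout I use the convention that variables range over involutions of the first kind, legitimate by Theorem \ref{FKsAreDefnbl}, and I write $\sim$ for the first-order conjugacy relation $x\sim y\equiv(\exists z)(y=zxz^{-1})$.

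First I would isolate the extremal involutions using the covering relation alone. The relevant arithmetic is already recorded in Lemma \ref{Cov(s,pi)}: a $1$-involution, being of finite type $\gamma=1$, covers a nonidentity $\gamma'$-involution iff $\gamma'$ is a finite even cardinal with $\gamma'\le 2\gamma=2$, i.e. iff $\gamma'=2$. Hence the nonidentity involutions covered by a $1$-involution are precisely the $2$-involutions, and these are pairwise conjugate, since involutions of the first kind are conjugate in $\pgl V$ exactly when they share the same type $\gamma$. By contrast, any involution of finite type $\gamma\ge 2$ covers both $2$- and $4$-involutions (as $2\gamma\ge 4$), and any involution of infinite type $\gamma$ covers all $\gamma'\le\gamma$, hence in particular both $2$- and $4$-involutions; in either case it covers two nonconjugate nonidentity involutions. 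Therefore the formula
\[
\mathrm{Extr}(x)\;=\;x\neq 1\;\wedge\;(\exists y)(y\neq 1\wedge\Cov(x,y))\;\wedge\;(\forall y_1 y_2)\bigl(y_1\neq 1\wedge y_2\neq 1\wedge\Cov(x,y_1)\wedge\Cov(x,y_2)\rightarrow y_1\sim y_2\bigr)
\]
defines exactly the extremal involutions of $\pgl V$, uniformly in $\vk$ and $D$. (The existential conjunct merely forces $x$ to cover something nontrivial, which is automatic for every nonidentity involution since $\vk\ge\aleph_0$ guarantees the existence of a covered $2$-involution; I keep it only to rule out vacuous truth.)

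With $\mathrm{Extr}$ available, I would express minimal-pair-ness as set up in the text. Under our convention a $\pgl V$-minimal pair is a pair $\str{\rho_1,\rho_2}$ of extremal involutions of the first kind whose subspaces satisfy condition $(2)$, i.e. which share exactly one of their line/hyperplane (for extremal involutions the line and the hyperplane are genuinely distinguishable by dimension, so this is well defined). Theorem \ref{Mckey&Rckrt} supplies the group-theoretic surrogate: $\str{\rho_1,\rho_2}$ is minimal iff $c(c(\rho_1,\rho_2))=c(c(\pi_1,\pi_2))$ for arbitrary non-commuting extremal involutions $\pi_1,\pi_2$ in $c(c(\rho_1,\rho_2))$, where $c(I)$ is the set of involutions in the centralizer of $I$. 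Since centralizers and $x^2=1$ are first order and extremality is $\mathrm{Extr}$, this is a first-order formula $MP(x_1,x_2)$; conjoining $\mathrm{Extr}(x_1)\wedge\mathrm{Extr}(x_2)$ yields the required $\varnothing$-definition of the set of all minimal pairs.

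The main obstacle, apart from the covering bookkeeping for $\mathrm{Extr}$ (routine given Lemma \ref{Cov(s,pi)}), is the legitimacy of applying Theorem \ref{Mckey&Rckrt}, stated for $\gl W$, inside the quotient $\pgl V$. Here I would lean on Lemma \ref{PMLem}: for involutions of the first kind, commuting in $\pgl V$ corresponds to commuting up to sign in $\gl V$, and for extremal involutions (whose lifts may be taken to satisfy $s^2=\id$) the sign ambiguity swaps $s\leftrightarrow -s$ without disturbing the unordered pair of subspaces, whose commuting behaviour is governed by Lemma \ref{Heads&Tails}(b). Consequently the double centralizers $c(c(\cdot))$ computed in $\pgl V$ track those computed in $\gl V$, so that the Mackey--Rickart equivalence transfers intact. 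This transfer, rather than any single computation, is the step that will require the most care.
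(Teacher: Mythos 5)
Your first step --- isolating the extremal involutions by the condition that the nonidentity involutions they cover are pairwise conjugate --- is exactly the paper's formula $E_1(x)$, and the covering arithmetic you cite from Lemma \ref{Cov(s,pi)} is the right justification for it. The gap is in your final paragraph, and it is precisely the point on which the paper's proof spends almost all of its effort. Theorem \ref{Mckey&Rckrt} is a statement about double centralizers computed with \emph{genuine} commutation in $\gl V,$ and your transfer argument only controls the first level of that computation: for an \emph{extremal} $\rho$ one indeed has $-\rho\not\sim\rho$ (its subspace dimensions are $1$ and $\vk$), so $\hat t\hat\rho=\hat\rho\hat t$ forces $t\rho=\rho t$ and the first centralizer in $\pgl V$ is the image of the one in $\gl V.$ But $c(\sigma_1,\sigma_2)$ contains $\gamma$-involutions for every $\gamma\ge 2,$ in particular $\vk$-involutions $u$ with $\dim V^-_u=\dim V^+_u=\vk,$ for which $u\sim -u$ and commutation with $\hat u$ in $\pgl V$ only yields $wu=\pm uw.$ Hence the second-level set $c(c(\hat\sigma_1,\hat\sigma_2))$ computed in $\pgl V$ may a priori be strictly larger than the image of $c(c(\sigma_1,\sigma_2)),$ and your assertion that the double centralizers ``track'' each other is unsupported; the remark about the sign swap not disturbing the unordered pair of subspaces does not address this.

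The paper closes exactly this gap by constructing a first order formula $Com(x,y)$ which holds of two first-kind involutions iff they have preimages in $\gl V$ that genuinely commute, and this is not a formality. One first proves $\varnothing$-definability (by a formula $\vartheta$) of the set of triples $\str{\sigma,\rho_1,\rho_2}$ in which the lines of the extremal involutions $\rho_1,\rho_2$ lie in \emph{distinct} subspaces of $\sigma$ and their hyperplanes contain distinct subspaces of $\sigma$; then $Com(x,y)$ says that $y$ commutes with $x$ in $\pgl V$ and that conjugation by $y$ preserves this configuration. The point is that a preimage $p$ with $sp=-ps$ interchanges $V^-_s$ and $V^+_s$ and therefore destroys the configuration, while a genuinely commuting preimage preserves it. Without $Com$ (or an independent proof that the $\pgl V$-double-centralizers coincide with the images of the $\gl V$-ones), the translation of condition (b) of Theorem \ref{Mckey&Rckrt} into a first order formula over $\pgl V$ is not justified, so your proof as written is incomplete at its main step.
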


\begin{proof} As we noted in Section \ref{0.12}, it
follows from Theorem \ref{Mckey&Rckrt} by Rickart
that, modulo definability of extremal involutions, the
set of minimal pairs is a $\varnothing$-definable in
$\gl V.$ A careful analysis of the conditions of
Theorem  \ref{Mckey&Rckrt} shows that in order to
obtain $\varnothing$-definability of minimal pairs in
$\pgl V$ only two things need to be done:
\begin{itemize}
\item we have to prove $\varnothing$-definability of
$\pgl V$-extremal involutions;
\item and to construct a first order formula, say, $Com(x,y)$
such that $\models Com[\s,\pi],$
where $\s,\pi$ are involutions (of the first kind)
in $\pgl V,$ if and only if $\s$ and $\pi$ commute iff
they have preimages $s,p \in \gl V$ which commute (it may happen
that $sp=-ps$).
\end{itemize}

By Lemma \ref{Cov(s,pi)}, extremal $\pgl V$-involutions
(1-involutions) cover only 2-in\-vo\-lu\-tions, or, in other
words, involutions covered by extremal ones are conjugate.
The latter condition is obviously $\varnothing$-definable,
and hence there is a first order formula, which will be denoted
by $E_1(x),$ whose realizations are exactly $\pgl V$-extremal involutions.
(We  use the index ${}_1$ to specify the case
under consideration: $\chr D \ne 2;$ for the same
purpose in the next section we shall use the index
${}_2.$)

Let us construct the formula $Com.$ The key technical
step here is a proof of $\varnothing$-definability of
the set of all triples $\str{\s;\rho_1,\rho_2}$ such
that the subspaces of involution $\s$ and the
subspaces of extremal involutions $\rho_1,\rho_2$
realize the following configuration:
\begin{align} \label{eqOneHeadOnAPillow}
& \{ (N_1\subseteq  S \wedge M_1\supseteq  R) \wedge
(N_2\subseteq  R \wedge M_2\supseteq  S) \} \vee \\
& \{ (N_1\subseteq  R \wedge M_1\supseteq  S) \wedge
(N_2\subseteq  S \wedge M_2\supseteq  R)\},  \nonumber
\end{align}
that is, the lines of involutions $\rho_1,\rho_2$ lie in distinct
subspaces of $\sigma,$ and, dually, their hyperplanes
contain distinct subspaces of $\sigma.$

\begin{Clm}
The set of all triples $\str{\s,\rho_1,\rho_2}$ with
{\rm (\theequation)} is $\varnothing$-definable in $\pgl V.$
\end{Clm}

Let us deduce the conclusion of the lemma from the latter
Claim. Suppose that a formula $\vartheta(x;x_1,x_2)$ defines the
triples with (\theequation).

If involutions  $\sigma,\pi\in  \pgl V$ as well as some
their  preimages commute, then  $\pi$
preserves the subspaces of $\sigma.$ So for  every
pair  of  extremal  involutions   $\str{\rho_1,\rho_2}$
realizing with $\sigma$  the
configuration (\ref{eqOneHeadOnAPillow}),  the triple
$\str{\sigma;\rho^{\pi}_1,\rho_2}$  also realizes
(\ref{eqOneHeadOnAPillow}).

If  $\sigma$   and   $\pi$   commute,  but   $sp =
-ps,$  then   $\pi$ moves  the  subspace
$V^-_{s}$   to  the   subspace    $V^+_{s}.$
Therefore if  $\str{\sigma;\rho_1,\rho_2}$
satisfies  (\ref{eqOneHeadOnAPillow}), then
$\str{\sigma;\rho^{\pi}_1,\rho_2}$ does not.

Thus, one may take as a formula  $Com$  the formula
$$
[x,y]=1 \wedge (\forall x_1,x_2)(\vartheta(x;x_1, x_2)
\rightarrow  \vartheta(x;x^y_1, x_2))
$$

Now we prove the Claim. The following formula can serve as the
formula $\vartheta$ characterizing our triples:
\begin{align*}
\vartheta(x; x_1, x_2) = & \bigwedge_{k=1}^2 E_1(x_k) \wedge [x_k,x] =1
\wedge (x_1\neq x_2) \wedge [x_1,x_2]=1 \wedge \\
& (\forall y)(E_1(y) \wedge [x,y]=1
\rightarrow \bigvee_{k=1}^2 [y,x_k]=1).
\end{align*}

We demonstrate that a triple  $\str{\sigma;\rho_1,\rho_2}$
realizes (\ref{eqOneHeadOnAPillow}) iff $\models
\vartheta[\sigma;\rho_1,\rho_2].$ The necessity part is easy by
Corollary \ref{Heads&Tails}. Suppose now, towards a contradiction, that the
configuration is not realized  by a triple
$\str{\sigma;\rho_1,\rho_2},$ but
$\models \vartheta[\sigma;\rho_1,\rho_2].$
We claim that there exists an  extremal involution
$\rho$  commuting with $\sigma,$ but  not  commuting
with both $\rho_1$  and $\rho_2.$  Since  $\rho_1 \ne
\rho_2\wedge [\rho_1,\rho_2] =1,$ then  $N_1$  and
$N_2$ are distinct lines. Thus, $[\rho_1,\rho_2]=1$
implies  $N_2 \subseteq M_1.$ The line  $N
=\str{a_1+a_2},$ where  $N_k=\str{a_k}$ and $k=1,2,$
does not  lie in  $M_1.$  As (\ref{eqOneHeadOnAPillow})  is not  realized by
$\str{\sigma;\rho_1,\rho_2},$  one  can find both $N_1$
and  $N_2$  in a subspace  $S$  of  $\sigma;$ another
subspace $R$ of $\s$ is therefore contained in $M_1.$ Thus, the involution $\rho,$
constructed by  the  subspaces   $N$ and $M_1,$
commutes with  $\sigma.$ On the other hand, $\rho$
commutes neither with $\rho_1,$ nor  $\rho_2,$
because  $(N\neq N_1\wedge N\neq N_2)$  and $(N \not\subseteq
M_1\wedge N \not\subseteq M_2).$

The proof of the lemma is now completed.
\end{proof}

Let $MP_1(x_1,x_2)$ denote a first order formula
defining the set of all minimal pairs in $\pgl V.$

A natural generalization of the latter Lemma is

\begin{Th}  \label{TheyHaveSmthMutual}
The relation `involutions $\sigma_1$ and  $\sigma_2$
have a unique mutual subspace' is a $\varnothing$-definable
{\rm(}uniformly in $V${\rm)} relation on the group  $\pgl V.$
\end{Th}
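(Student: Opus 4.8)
The plan is to reconstruct the relation ``$\sigma_1$ and $\sigma_2$ have a unique mutual subspace'' by building on the $\varnothing$-definability of minimal pairs (formula $MP_1$) and of the configuration-checking formula $\vartheta$ from Lemma~\ref{MPsAreDefnbl1}. Recall that two involutions of the first kind each carry an unordered pair of subspaces $\{R,S\}$ of $V$; a ``mutual subspace'' means a subspace common to both unordered pairs. First I would observe that ``mutual subspace'' is the natural infinite-dimensional analogue of the minimal-pair relation, which only handled the case where both involutions were \emph{extremal}. So the task is to lift the reasoning behind minimal pairs from 1-involutions to arbitrary involutions of the first kind.

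My approach would be to characterize a mutual subspace of $\sigma_1$ and $\sigma_2$ using extremal involutions as ``probes.'' The key idea is that a subspace $T$ of $V$ (of the appropriate size) is detected by the family of extremal involutions whose line lies in $T$ and whose hyperplane contains $T$: by Lemma~\ref{Heads&Tails}, such an extremal $\rho$ commutes with any involution $\sigma$ for which $T$ is one of its subspaces $R$ or $S$. Thus I would try to express ``$\sigma_1$ and $\sigma_2$ share a subspace'' by asserting the existence of a rich enough family of extremal involutions that commute with \emph{both} $\sigma_1$ and $\sigma_2$ and that are mutually tied together (via commuting/non-commuting relations among themselves, controlled by $\vartheta$) so as to witness a single common subspace rather than merely some loose coincidence. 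Concretely, I would write a formula asserting: there exist extremal involutions $\rho$, commuting with both $\sigma_1$ and $\sigma_2$, whose lines can be chosen freely inside a common subspace $T$, together with a companion extremal involution pinning down that the shared subspace is the same one on both sides. The relation $\vartheta(x;x_1,x_2)$ already lets me say that two extremal involutions sit in distinct subspaces of a given involution, so I can use its negation to force their lines into the \emph{same} subspace.

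To get \emph{uniqueness} of the mutual subspace, I would add the clause that if $\sigma_1,\sigma_2$ had two distinct mutual subspaces, then (since each involution has only two subspaces) the unordered pairs would coincide, $\{R_1,S_1\}=\{R_2,S_2\}$; this degenerate case must be excluded or recognized separately. The formula should therefore assert both that a mutual subspace exists and that the two involutions do not have identical subspace-pairs, so that exactly one subspace is shared. I expect the main obstacle to be the uniqueness/degeneracy bookkeeping: distinguishing the case of exactly one common subspace from the case of two common subspaces (equal pairs) and from the case of none, purely by the commuting patterns of extremal probes, requires a careful case analysis according to whether the shared subspace is a line, a hyperplane, or of intermediate dimension, and according to whether $\min(\dim R,\dim S)$ is small. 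The verification that the extremal probes really pin down a \emph{single} subspace—rather than, say, two nested subspaces that happen to be compatible—will be the delicate point, and I would handle it by exploiting that in infinite dimension the line/hyperplane asymmetry (noted in the Remark of Section~\ref{0.12}) lets the probes separate the two candidate subspaces of each involution, so that a genuinely common subspace is forced.
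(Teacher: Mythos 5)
Your overall strategy --- probing the subspaces of $\sigma_1$ and $\sigma_2$ with extremal involutions and using $\vartheta$ to select one of the two subspaces of a given involution --- is the same as the paper's, but the comparison mechanism you describe has a genuine gap, and one of your basic claims is false. First, in characteristic $\ne 2$ the configuration ``line of $\rho$ lies in $T$ and hyperplane of $\rho$ contains $T$'' is impossible (there $V=N\oplus M$), and in any case Lemma \ref{Heads&Tails} does not say that such a $\rho$ commutes with every involution having $T$ as a subspace: commutation also requires the hyperplane of $\rho$ to contain the \emph{other} subspace, which depends on the involution. Second, and more seriously, a family of extremal involutions commuting with both $\sigma_1$ and $\sigma_2$ whose lines ``can be chosen freely inside a common subspace $T$'' does not witness a mutual subspace. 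Take $V=A\oplus B\oplus C\oplus E$ with all four summands nonzero, let $\sigma_1$ have subspaces $A\oplus B$ and $C\oplus E$, and $\sigma_2$ have subspaces $A\oplus C$ and $B\oplus E$: every extremal involution with line in $A$ and hyperplane containing $B\oplus C\oplus E$ commutes with both, and its line ranges over all of $A$, yet $\sigma_1$ and $\sigma_2$ share no subspace. Your test must force $T$ to be a \emph{whole} subspace of each $\sigma_i$, not merely contained in one, and nothing in the proposal accomplishes this.

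The paper's proof supplies exactly the missing device. From a pair $(\sigma,\rho)$ with $\rho$ extremal and commuting with $\sigma$, the formula $\chi_{l}(x;\sigma,\rho)=(\exists y)(\vartheta(\sigma;\rho,y)\wedge MP_1(x,y)\wedge \avpi_{l}\sim\str{x,y})$ defines the set of \emph{all} extremal involutions whose line lies in the subspace $R$ of $\sigma$ opposite to the one containing the line of $\rho$ --- with no commutation or hyperplane constraint on $x$, since $x$ is only required to form a line-sharing minimal pair with some $\vartheta$-witness $y$. This probe set depends only on the subspace $R$ itself, so $R_1\subseteq R_2$ becomes $(\forall t)(\chi_{l}(t;\sigma_1,\rho_1)\to\chi_{l}(t;\sigma_2,\rho_2))$, and a mutual subspace becomes extensional equality of two such probe sets, expressed by the universally quantified biconditional $\chi''$ (with the dual formula built from $\avpi_{h}$ handling the other subspace). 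Finally, your uniqueness bookkeeping is unnecessary: in $\pgl V$ with $\chr D\ne 2$ an involution of the first kind is determined by its unordered pair of subspaces, so distinct involutions share at most one subspace, and it suffices to conjoin $x_1\ne x_2$.
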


\begin{proof} Fix a minimal pair $\avpi_{l},$ which determines a line,
and a pair $\avpi_{l}$ non-conjugate to $\avpi_{h}$ (which
must determine a hyperplane). Let  $\rho$
be an extremal involution, commuting with a non-extremal involution
$\sigma.$ Suppose that the line of $\rho$ lies in a
subspace  $S$  of $\sigma.$ Consider the set of
extremal involutions, satisfying the condition
$$
\chi_{l}(x;\sigma,\rho) = (\exists y)
(\vartheta(\sigma;\rho,y) \wedge MP_1(x,y) \wedge
\avpi_{l}\sim \str{x,y}).
$$
Since the formula  $\vartheta$  describes only configurations
of the form (\ref{eqOneHeadOnAPillow}), then the line $N_1$ of
extremal involution  $\rho_1,$ satisfying $\chi_{l},$
must lie in  $R.$ Conversely, let $\rho_1$ be an
arbitrary extremal involution such that its line $N_1$
is in  $R.$ Assume that $M_1$ is the hyperplane of $\rho_1.$
Clearly, $R = N_1 +(R \cap  M_1).$ Assume that  $N_1 = \str{a_1}.$ Let
$a$ be a non-zero element in $R \cap  M_1$ and  $R \cap
M_1 = \str{a} \oplus  R'.$ We construct an extremal involution
$\rho_2$  using the line $N_1$ and the hyperplane
$M_2 =\str{a+a_1} \oplus  R' \oplus  S.$
The pair  $\str{\rho_1,\rho_2}$ is obviously minimal
$(M_1\ne M_2);$ as $N_1 \subseteq R$ and
$S \subseteq  M_2,$ then  $\rho_2$ commutes with
$\sigma.$ Hence  $\rho_1$  satisfies the condition
$\chi_{l}.$

Therefore, if for a pair of involutions
$\str{\sigma_1,\rho_1},$ where $\sigma_1$ is non-extremal,
we have $\models (\forall x)(\chi_{l}(x;\sigma_1,\rho_1)
\rightarrow  \chi_{l}(x;\sigma,\rho)),$ then  $R_1 \subseteq  R.$
The replacement of the pair $\avpi_{l}$  in $\chi_l$
by the pair $\avpi_{h}$ gives us the condition $\chi_{h}(x;\sigma,\rho)$
such that if
$
\models  (\forall x)(\chi_{h}(x;\sigma_1,\rho_1)
\rightarrow  \chi_{h}(x;\sigma,\rho))
$
then  $S_1 \supseteq  S$  (the  condition
dual to $R_1 \subseteq  R$).

Let us formalize our considerations.

\begin{Clm} \label{SubsInC}  The relation `a subspace
of an involution $\sigma_1$ is in the relation  $C$ with
a subspace of an involution $\sigma_2$' is a $\varnothing$-definable
relation on the group $\pgl V.$
\end{Clm}

\begin{proof}
Consider the formula
\begin{align*}
\chi(t;x,\avy,z) = & E_1(t) \wedge MP_1(\avy) \wedge
E_1(z) \wedge [x,z] =1 \wedge \\
&(\exists z_1)(\vartheta(x;z,z_1) \wedge MP_1(z_1,t) \wedge
\avy  \sim \str{z_1,t}).
\end{align*}
It follows from the above arguments that the formula
$$
(\exists \avy,z_1,z_2)(\forall t)(\chi(t;x_1,\avy,z_1)
\rightarrow  \chi(t;x_2,\avy,z_2))
$$
guarantees the conclusion of Claim \ref{SubsInC} in the case, when
both involutions $\sigma_1,\sigma_2$ are non-extremal. In
the case, when exactly one involution in the pair
$\str{\sigma_1,\sigma_2}$ is extremal, we can use the
formula
$$
(\exists \avy,z_1,z_2)(\chi(x_2;x_1,\avy,z_1)
\vee  \chi(x_1;x_2,\avy,z_2)).
$$
Finally, if both involutions $\sigma_1,\sigma_2$
are extremal, the condition
\begin{align*}
(\exists y)\{ E_1(y) \wedge & (([x_1,y] =1 \wedge MP_1(y,x_2)) \vee %\\
%&
([x_2,y] =1 \wedge MP_1(y,x_1))) \}.
\end{align*}
may be used.

Summing up all the cases, we can easily construct a formula
$\chi'(x_1,x_2),$ providing the definability of the relation
specified in the claim.
\end{proof}

Let us complete the proof of Theorem
\ref{TheyHaveSmthMutual}. Put
$$
\chi''(x_1,x_2) = (x_1\ne x_2) \wedge
(\exists \avy,z_1,z_2)(\forall t)
(\chi(t;x_1,\avy,z_1) \leftrightarrow
\chi(t;x_2,\avy,z_2))
$$
Hence an arbitrary pair of distinct involutions
$\str{\sigma_1,\sigma_2}$  has a mutual subspace iff  $\models MS_1[\sigma_1,\sigma_2],$ where
$$
MS_1(x_1,x_2) = MP_1(\avx) \vee  (\neg E_1(x_1) \wedge
\neg E_1(x_2) \wedge \chi''(\avx)).
$$
\end{proof}

We are ready now to give a {\it proof of Theorem}
\ref{RecoveringOfBetweenness} {\it for the case $\chr D \ne
2.$} We shall code subspaces from $P^*(V)$ by pairs of
involutions, satisfying the formula
$$
MS^*_1(x_1,x_2) = MS_1(x_1,x_2) \wedge (\exists y)(E(y) \wedge
\bigwedge_{k=1}^2 [x_k,y]=1).
$$
The formula  $MS^*_1$ is useful due to the following property:
if $\models  MS^*_1[\avs]$ and  $S,R_1,R_2$  are subspaces
of $\sigma_1$  and $\sigma_2,$ then there is no
involution with the subspaces $R_1$  and $R_2,$ because
either they both lie in the same hyperplane or their
intersection is at least one-dimensional
(Lemma \ref{Heads&Tails}). Therefore, if $\models~
MS^*_1[\avs]$ and $\models~MS_1[\sigma_1,\tau] \wedge
MS_1[\sigma_2,\tau],$ then a subspace of
$\tau$  coincides with $S.$

Let a pair $\avs$ satisfy  $MS^*_1$ and $S(\avs)$
denote by the mutual subspace of
$\sigma_1$  and $\sigma_2.$ If $\theta(x_1,x_2)$
is the formula
$$
(x_1=x_2) \vee  MS_1(x_1,x_2),
$$
then a quadruple of involutions $\str{\avs,\avpi}$  satisfies
the formula
$$
EP_1(\avx,\avy) = MS^*_1(\avx) \wedge MS^*_1(\avy)  \wedge
\bigwedge_{i,j=1}^2 \theta (x_i,y_j),
$$
iff $S(\avs) = S(\avpi).$ Indeed, assume that $\models
EP_1[\avs,\avpi],$ but  $S(\avs) \ne  S(\avpi).$
Let  $(S,S_1)$ and $(S,S_2)$ be the subspaces of $\sigma_1$ and
$\sigma_2,$  respectively,   $(P,P_1)$ and $(P,P_2)$ be the
subspaces of $\pi_1,\pi_2.$ Without loss of generality we can assume
that $P = S_1,$ therefore $P_1= S,$ because $\models  MS^*_1[\avs].$
By symmetry $P_2= S.$ Hence $\pi_1= \pi_2,$ and
$\not\models  MS^*_1[\avpi].$ The converse is easy.

We reconstruct now the relation  $C.$ Let $\avs, \avpi$ be
two pairs of involutions, satisfying $MS^*_1.$
We claim that $C(S(\avs),S(\avpi))$ holds iff $\models C_1[\avs,\avpi],$
where $C_1$ denotes the formula
$$
C_1(\avx,\avy) = (\forall \avz)(EP_1(\avx,\avz)
\rightarrow \bigwedge_{i,j=1}^2 \chi'(z_i,y_j)).
$$
The `only if' part is obvious. To prove the converse, we
can use the following fact: there exists a subspace
$R,$ a direct complement of $S(\avs)$ such that the
subspace $R$ is in the relation $C$ neither with any
subspace of  $\pi_1,$ nor with any subspace of
$\pi_2.$

To reconstruct the betweenness relation $B$ we may use
(\ref{eqC=>B}).

Clearly, $\varphi\, S(\avs) = S(\avpi),$ where $\varphi  \in
\pgl V,$ holds iff the pairs
$\str{\sigma_1^{\varphi},\sigma_2^{\varphi}}$ and
$\str{\pi_1,\pi_2}$  satisfy the formula $EP_1.$
This provides an interpretation of the action of
$\pgl V$ on  $P^*(V).$ The proof of Theorem
\ref{RecoveringOfBetweenness} for the case $\chr D \ne
2$ is completed.  $\qed$

\section{The reconstruction of the betweenness relation
($\mbox{\rm char } D=2$)} \label{1.4}

{\it Throughout this section we assume, unless otherwise
stated, that $\chr D=2.$}
According to Section \ref{0.12}, one can assign two
subspaces $R=R(\sigma)$ and $S=S(\sigma)$ of $V$ with $R \subseteq S$ to
each involution  $\sigma=\hat{s} \in \pgl V,$ namely
$S=\Fix(s)$ and $R =\Rng(\id(V)+s).$ As in Section
\ref{1.3} we shall call involutions of the first kind
just involutions.

We shall apply the strategy described in the previous
section. Our first step is therefore

\begin{Lem} \label{MPsAreDfnbl2}
The set of all minimal pairs is a $\varnothing$-definable
in $\pgl V.$
\end{Lem}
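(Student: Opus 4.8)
The plan is to follow the two-part strategy of the proof of Lemma~\ref{MPsAreDefnbl1}, while exploiting a simplification peculiar to characteristic~$2$. Recall from the opening of the proof of Proposition~\ref{FKsAreDefnbl2} that when $\chr D=2$ two $\pgl V$-involutions commute if and only if their preimages commute; thus commutation is \emph{honest}, and the delicate auxiliary formula $Com$ that was forced on us in the case $\chr D\ne 2$ (to cope with preimages satisfying $sp=-ps$) is simply not needed here. Consequently, once extremal involutions are shown to be $\varnothing$-definable, Proposition~\ref{MPsInChar2} translates essentially verbatim into a first order definition of minimal pairs: a pair $\str{x_1,x_2}$ is minimal exactly when $x_1,x_2$ are commuting extremal involutions whose product $x_1x_2$ is again extremal and some extremal involution commutes with $x_1$ but not with $x_2$. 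So the whole content of the lemma is a $\varnothing$-definition of the set of extremal $\pgl V$-involutions, for which I write $E_2(x)$, following the indexing convention of Section~\ref{1.3}.

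I would work throughout with the residual and fixed subspaces $R(\sigma)=\Rng(\id(V)+s)$ and $S(\sigma)=\Fix(s)$ of an involution $\sigma=\hat s$; here $R(\sigma)\subseteq S(\sigma)$ and $\dim R(\sigma)=\codim S(\sigma)$, so $\sigma$ is a $\gamma$-involution with $\gamma=\dim R(\sigma)$, and extremal means $\gamma=1$. The main obstacle is precisely the definition of $E_2$, and it is here that the case $\chr D=2$ genuinely differs. In the case $\chr D\ne 2$, Lemma~\ref{Cov(s,pi)} showed that a $1$-involution covers only $2$-involutions, so extremal involutions are those whose covered involutions form a single conjugacy class. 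That trick fails when $\chr D=2$: since the involutions are now unipotent, the product of two commuting conjugate transvections with a mutual subspace is again a transvection (Lemma~\ref{TransvsBasics}(c)), so a transvection covers $1$-involutions \emph{as well as} $2$-involutions.

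My route around this is to characterise extremal involutions by the \emph{minimality of their covering set}, which remains first order through the relation $\Cov$. Concretely, I would take
$$
E_2(x)=FK(x)\wedge x\ne 1 \wedge (\forall y)\big[\,(FK(y)\wedge y\ne 1 \wedge \theta(y,x))\rightarrow \theta(x,y)\,\big],
$$
where $\theta(u,v)$ abbreviates $(\forall z)(FK(z)\rightarrow(\Cov(u,z)\rightarrow\Cov(v,z)))$, i.e. ``everything covered by $u$ is covered by $v$''; thus $E_2(x)$ asserts that the set of involutions covered by $x$ is $\subseteq$-minimal among nonidentity involutions of the first kind. To justify this I would first establish the char~$2$ analogue of Lemma~\ref{Cov(s,pi)} by repeating the computation of Section~\ref{1.1}, now for the square-zero operators $n_k=\id(V)+s_k$: if $\sigma_1,\sigma_2$ are commuting conjugates of a $\gamma$-involution $\sigma$ and $\pi=\sigma_1\sigma_2$, then $R(\pi)=\Rng(n_1+n_2+n_1n_2)\subseteq R(\sigma_1)+R(\sigma_2)$, whence any $\gamma'$-involution covered by $\sigma$ has $\gamma'\le 2\gamma$, and for $\gamma=1$ exactly the values $\gamma'\in\{1,2\}$ occur. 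The two geometric facts I must then extract are (i) all transvections cover precisely the $1$- and $2$-involutions, so they share one covering set, and (ii) every nonidentity $\gamma$-involution covers some $1$-involution and some $2$-involution — the delicate half, since for $\gamma\ge 2$ one must exhibit commuting conjugates whose residual has dimension exactly $1$ (an explicit block construction with $n_1n_2=0$ and $\rank(n_1+n_2)=1$ does this, e.g.\ $A_1=I$, $A_2=\bigl(\begin{smallmatrix}1&1\\0&1\end{smallmatrix}\bigr)$), together with the fact that such a $\gamma$-involution also covers a $4$-involution and therefore strictly dominates a transvection. This exhibits $\{1\text{-inv},2\text{-inv}\}$ as the global minimum covering set, attained exactly by the transvections, which is what $E_2$ isolates.

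With $E_2(x)$ in hand, honest commutation lets me assemble the defining formula for minimal pairs directly from Proposition~\ref{MPsInChar2},
$$
MP_2(x_1,x_2)=E_2(x_1)\wedge E_2(x_2)\wedge [x_1,x_2]=1 \wedge E_2(x_1x_2)\wedge (\exists z)(E_2(z)\wedge [z,x_1]=1\wedge [z,x_2]\ne 1),
$$
where $E_2(x_1x_2)$ records that the product is a transvection and the final conjunct supplies the transvection commuting with one but not the other; by Proposition~\ref{MPsInChar2} this holds iff $\str{x_1,x_2}$ is a minimal pair, completing the proof of the lemma.
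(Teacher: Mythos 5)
Your proof is correct and its architecture is the paper's: in characteristic $2$ commutation in $\pgl V$ is honest, so everything reduces to a $\varnothing$-definition of the extremal involutions, after which Proposition \ref{MPsInChar2} yields the formula for minimal pairs exactly as you write it. The one genuine divergence is the formula $E_2$. The paper takes Dieudonn\'e's counting condition --- ``$x$ covers, up to conjugacy, exactly two involutions'' --- while you take $\subseteq$-minimality of the covering set. Both conditions rest on the same geometry: the rank bound $R(s_0s_1)\subseteq R(s_0)+R(s_1)$ (so transvections cover only $1$- and $2$-involutions), explicit commuting conjugates showing every $\gamma$-involution with $\gamma\ge 2$ covers some $1$-involution and some $2$-involution, and one extra witness separating $2$-involutions from transvections (the paper exhibits a covered $3$-involution, you a $4$-involution; either works). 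Your variant does need one observation that the counting condition does not: a non-extremal involution must cover \emph{all} transvections and \emph{all} $2$-involutions, not merely some, so that the covering set of a transvection is genuinely contained in its own. This is immediate because $\Cov(x,\cdot)$ is closed under conjugation in its second argument and all $1$-involutions (respectively, all $2$-involutions) are mutually conjugate, but it should be said explicitly. With that remark added, the two proofs are interchangeable, and neither buys anything over the other.
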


\begin{proof}
Proposition \ref{MPsInChar2} from Section \ref{0.12} says
that the minimal pais are $\varnothing$-definable
in $\gl V.$ This time in order to transfer this result
to the group $\pgl V$ we need only to prove the
$\varnothing$-definability of $\pgl V$-extremal
involutions, because in the case when $\chr D=2$
commuting involutions (of the first kind) $\s,\pi \in
\pgl V$ have preimages $s,p \in \gl V$ which also
commute (and hence there is no need in a formula similar
to $Com$ from the previous section).

The condition `there are exactly two conjugacy classes of
involutions covered by $x$' was used by Dieudonn\'e for
characterization of the extremal involutions in the
projective general linear groups over division rings
of characteristic 2 and of finite dimensions at least
6 \cite[pp. 13-14]{Die1}. Let $E_2(x)$ denote a first order
sentence
corresponding to the mentioned condition.
One may prove a $\varnothing$-definability of the
extremal involutions in the infinite-dimensional case
by a slight modification of Dieudonn\'e's arguments.

Indeed, we know from Section \ref{1.2} that any
$\pgl V$-involution of the first kind covers
itself (Proposition \ref{FKsAreDefnbl2}).
Any 1-involution can cover some 2-involution:
\begin{alignat*}3
s_0e_i &= e_i,      & s_1e_i &= e_i, &&i < \vk,  \nonumber \\
s_0d_0 &= d_0+e_0,\quad    & s_1d_0 &= d_0, \nonumber\\
s_0d_1 &= d_1,        & s_1d_1 &= d_1+e_1 \nonumber
\end{alignat*}
($s_0$ and $s_1$ are commuting 1-involutions
whose product $s_0s_1$ is a 2-involution).
On the other hand, if $s_0$ and $s_1$ are extremal
involutions in $\gl V$ then $R(s_0s_1) \subseteq R(s_0)+R(s_1)$ since
for every $a \in V$
$$
s_0s_1 a+a=(s_0(s_1 a)+s_1 a)+(s_1 a+a).
$$
Hence $\dim R(s_0s_1) \le 2$ (we reproduce here
an argument from \cite[p. 100]{O'Mea}). Thus,
extremal involutions cannot cover $\gamma$-involutions
with $\gamma > 2,$ and therefore any extremal
involution satisfies $E_2(x).$

We claim now that every $\gamma$-involution,
where $\gamma > 1,$ covers involutions
in at least three conjugacy classes of
involutions, and hence does not satisfy
$E_2(x).$

Let first $\gamma$ be a cardinal $ > 2.$
Then any $\gamma$-involution can cover
some 1-involution:
\begin{xalignat*}4
&\text{\rm (i)}\! & s_0e_i &= e_i,   & s_1e_i &= e_i,    &  i &< \gamma, \nonumber \\
&    & s_0e_j &= e_j,     & s_1e_j &= e_j, \,   &  j &\in  J, \nonumber \\
&    & s_0d_0 &= d_0+e_0, & s_1d_0 &= d_0+e_1,  &   & \nonumber \\
&    & s_0d_1 &= d_1+e_1, & s_1d_1 &= d_1+e_0,  &   & \nonumber \\
&    & s_0d_i &= d_i+e_i, & s_1d_i &=d_i+e_i,\, &  i &\in  \gamma \setminus  2 \nonumber
\end{xalignat*}
as well as some 2-involution:
\begin{xalignat*}4
&\text{\rm (ii)} \! & s_0e_i &= e_i,     & s_1e_i &= e_i,          & i  &< \gamma, \nonumber \\
&        & s_0e_j &= e_j,     & s_1e_j &= e_j,          & j  &\in  J,\nonumber \\
&        & s_0d_0 &= d_0+e_0, & s_1d_0 &= d_0+e_0+e_1,  &    &  \nonumber \\
&        & s_0d_1 &= d_1+e_1, & s_1d_1 &=d_1+e_0,       &    &\nonumber \\
&        & s_0d_i&= d_i+e_i,  & s_1d_i &=d_i+e_i,       & i  &\in  \gamma \setminus  2.\nonumber \\
\end{xalignat*}
So any $\gamma$-involution covers 1-involutions, 2-involutions,
and $\gamma$-in\-vo\-lu\-tions.

In the case when $\gamma=2$ we show that any 2-involution
can cover, for example, some 3-involution (and hence we again
have elements from at least three conjugacy classes):
\begin{xalignat*}3
s_0e_i &= e_i,           & s_1e_i &= e_i,     & i &< \vk , \nonumber \\
s_0d_0 &= d_0+e_0,       & s_1d_0 &=d_0,      &   &\nonumber \\
s_0d_1 &= d_1+e_0+e_1,   & s_1d_1 &= d_1+e_0, &   &\nonumber \\
s_0d_2 &= d_2,           & s_1d_2 &= d_2+e_2. &  & \nonumber
\end{xalignat*}
\end{proof}

It is easy to construct a formula (we denote it by
$\avx  \equiv  \avy$), which is satisfied by a tuple
$\str{\avs_1,\avs_2},$ where  $\avs_1$  and  $\avs_2$
are minimal pairs iff the subspace determined by
$\avs_1$ coincides with the subspace determined by
$\avs_2.$ By \ref{TransvsBasics}(c) the  desired
formula may  be  chosen  in  the following  form:
$$
\bigwedge_{i,j=1}^2 (x_iy_j \sim x_i) \vee (x_i=y_j).
$$

Let us fix, as in the previous section, a minimal pair
$\avpi_l$ with a mutual line and a non-conjugate to
$\avpi_l$  minimal pair  $\avpi_h.$ Consider the
formula
$$
C'_2(x_1,x_2;\avy)= (\forall \avz)(\avz  \sim \avy  \wedge
\avz^{x_1} \equiv  \avz  \rightarrow  \avz^{x_2} \equiv  \avz),
$$
where  $\avy$  is of length 2.

First step to the reconstruction of binary relation $C$
on $P^*(V)$ is

\begin{Clm} \label{C'}
Let  $\sigma_1$  and  $\sigma_2$  be involutions. Then

{\rm (a)} $\models  C'_2[\sigma_1,\sigma_2;\avpi_l]$ iff
$S_1 \subseteq  S_2;$

{\rm (b)} $\models  C'_2[\sigma_1,\sigma_2;\avpi_h]$ iff
$R_1 \supseteq  R_2.$
\end{Clm}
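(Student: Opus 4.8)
The plan is to reduce both equivalences to an explicit description, valid when $\chr D = 2$, of the lines and hyperplanes that are fixed (as projective points) by an involution of the first kind. Unwinding the definitions of $\equiv$ and of conjugation, the formula $C'_2(x_1,x_2;\avpi_l)$ asserts that every line kept fixed by $\sigma_1$ is also kept fixed by $\sigma_2$, while $C'_2(x_1,x_2;\avpi_h)$ asserts the same with ``line'' replaced by ``hyperplane''. Hence (a) and (b) will follow once the fixed lines and fixed hyperplanes of $\sigma = \hat s$ are identified in terms of the subspaces $R = R(\sigma)$ and $S = S(\sigma)$.

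The heart of the matter is an eigenvalue computation special to characteristic $2$. If a $\gl V$-involution $s$ fixes a line $\str a$, then $sa = \mu a$ for some $\mu \in D^*$, so $a = s^2 a = \mu^2 a$ and $\mu^2 = 1$; as $\chr D = 2$ this gives $(\mu+1)^2 = 0$, hence $\mu = 1$. Therefore $\sigma$ fixes $\str a$ if and only if $a \in \Fix(s) = S$, that is, if and only if $\str a \subseteq S$. Dually, if $s$ fixes a hyperplane $M$ then $s$ induces an involution on the one-dimensional quotient $V/M$, which by the same argument is the identity; thus $sa + a \in M$ for all $a$, that is, $R = \Rng(\id(V)+s) \subseteq M$. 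Conversely $R \subseteq M$ makes $M$ invariant, since for $m \in M$ one has $sm = m + (\id(V)+s)m \in M$. So $\sigma$ fixes $M$ if and only if $R \subseteq M$.

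With these two facts the Claim is immediate. For (a), the lines fixed by $\sigma_k$ are precisely the lines contained in $S_k$, and since $a \in S_k$ is equivalent to $\str a \subseteq S_k$, the inclusion of the two fixed-line sets is equivalent to $S_1 \subseteq S_2$. For (b), the hyperplanes fixed by $\sigma_k$ are precisely those containing $R_k$; using that every subspace equals the intersection of the hyperplanes containing it, the statement ``every hyperplane containing $R_1$ contains $R_2$'' is equivalent to $R_2 \subseteq R_1$, i.e.\ $R_1 \supseteq R_2$.

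The step requiring the most care is the semantic bookkeeping for $C'_2$: one must verify that as $\avz$ ranges over minimal pairs with $\avz \sim \avpi_l$ it codes exactly the lines of $V$, and that $\avz^{x_1} \equiv \avz$ expresses precisely that $\sigma_1$ fixes the line coded by $\avz$ (and likewise for $\avpi_h$ and hyperplanes). This dictionary is furnished by the conjugacy of all line-pairs, the non-conjugacy of $\avpi_l$ and $\avpi_h$, and the defining property of $\equiv$; granted it, the characteristic-$2$ eigenvalue argument disposes of both directions of (a) and (b) simultaneously.
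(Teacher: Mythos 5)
Your proof is correct and follows essentially the same route as the paper: the whole claim reduces to showing that, in characteristic $2$, an involution $\sigma=\hat s$ fixes a line $N$ iff $N\subseteq S$ and fixes a hyperplane $M$ iff $R\subseteq M$, which both you and the paper obtain from the observation that $\mu^2=1$ forces $\mu=1$ when $\chr D=2$. Your only (harmless) deviation is in the hyperplane case, where you pass to the induced involution on the one-dimensional quotient $V/M$ instead of the paper's explicit computation $(1+\mu^2)a\in M$ with $a$ and $sa$ linearly dependent over $M$ --- the same calculation in a slightly cleaner wrapper.
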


\begin{proof}
Take an arbitrary involution $\s.$ If $N$ is an
arbitrary line, then  $\sigma N=N$
iff  $N \subseteq  S;$ if $M$ is any hyperplane, then
$\sigma M=M$  iff  $R \subseteq  M.$ Let us consider
the second `iff' statement. Assume $\s=\hat s.$
Since $R=\{a+s a : a \in V\} \subseteq M,$ then
for every $m \in M$ the element $m+s m$ is in $M.$
On the other hand, take an element $a \notin M.$
If suffices to prove that $a+s a \in M.$ The elements
$a$ and $s a$ are linearly dependent over $M$:
$\mu a+s a \in M$ for some non-zero $\mu \in D.$ Due to the
$s$-invariance of $M$ we have that $\mu s a+ a\in M,$
and hence $(1+\mu^2)a \in M.$ As the underlying ring
$D$ is of characteristic 2, then $1+\mu^2=(1+\mu)^2=0$
or $\mu=1.$
\end{proof}

So by Claim \ref{C'}(a) a tuple  $\str{\sigma_1,\sigma_2;\avpi_l}$
satisfies the formula
$$
MS_2(x_1,x_2;\avy) = C'_2(x_1,x_2;\avy) \wedge C'_2(x_2,x_1;\avy)
$$
iff   $S_1=S_2$  and  a tuple
$\str{\sigma_1,\sigma_2;\avpi_h}$  satisfies  this
formula iff $R_1=R_2.$

We have also to build formulae
$C''_2(x_1,\avy_1;x_2,\avy_2)$ and
$C'''_2(x_1,\avy_1;x_2,\avy_2)$  describing the
situations of the form $R_1 \subseteq  S_2$  and $R_1
\supseteq  S_2.$ Note that the situation of the form
$S_1 \subset  R_2$ (strict inclusion) is realized iff
the dimension of the underlying vector space is
infinite.

We use as a formula  $C''_2$  the formula
\begin{align*}
C''_2(x_1,\avy_1;x_2,\avy_2) =
(\exists x_3,x_4)[&MS_2(x_1,x_3;\avy_1)
\wedge MS_2(x_2,x_4;\avy_2) \wedge \phantom{aaaaaa} \\
& \{ (C'_2(x_3,x_4;\avy_1) \wedge C'_2(x_3,x_4;\avy_2)) \vee \\
& (C'_2(x_4,x_3;\avy_1) \wedge C'_2(x_4,x_3;\avy_2))\}]
\end{align*}

\begin{Clm} \label{C''}
Let  $\sigma_1$  and  $\sigma_2$ be involutions. Then

{\rm (a)} $\models C''_2[\sigma_1,\avpi_l;\sigma_2,\avpi_h]$
iff  $S_1\supseteq  R_2;$

{\rm (b)} $\models  C''_2[\sigma_1,\avpi_h;\sigma_2,\avpi_l]$  iff
$R_1\subseteq  S_2.$
\end{Clm}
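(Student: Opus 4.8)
The plan is to read off what the formula $C''_2$ asserts about the associated subspaces, and then verify each of the two equivalences directly. Recall from Section \ref{0.12} that in characteristic $2$ an involution $\tau\in\pgl V$ is determined by a pair of subspaces $R(\tau)\subseteq S(\tau)$ subject only to the identity $\dim R(\tau)=\codim S(\tau)$, and conversely every such pair is realised by an involution. By Claim \ref{C'} and the definition of $MS_2$, the quantifier $(\exists x_3,x_4)$ in $C''_2[\sigma_1,\avpi_l;\sigma_2,\avpi_h]$ ranges over involutions with $S(x_3)=S_1$ and $R(x_4)=R_2$, the remaining subspaces $R(x_3)\subseteq S_1$ and $S(x_4)\supseteq R_2$ being free up to the dimension identity; and the bracketed disjunction asserts that the pairs $(R(x_3),S_1)$ and $(R(x_4),S(x_4))$ are nested one way or the other: either $S_1\subseteq S(x_4)$ and $R(x_3)\supseteq R_2$, or $S(x_4)\subseteq S_1$ and $R_2\supseteq R(x_3)$.

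The forward implication of (a) is then immediate from $R(\tau)\subseteq S(\tau)$. In the first disjunct $R_2=R(x_4)\subseteq R(x_3)\subseteq S(x_3)=S_1$; in the second disjunct $R_2=R(x_4)\subseteq S(x_4)\subseteq S(x_3)=S_1$. Either way $S_1\supseteq R_2$. For the converse I would construct the witnesses, splitting on the comparison of the cardinals $\dim R_2$ and $\codim S_1$. If $\dim R_2\le\codim S_1$, I aim for the first disjunct: pick $R(x_3)$ with $R_2\subseteq R(x_3)\subseteq S_1$ and $\dim R(x_3)=\codim S_1$ (possible since $\codim S_1=\dim R_1\le\dim S_1$), and enlarge $S_1$ to $S(x_4)\supseteq S_1$ with $\codim S(x_4)=\dim R_2$. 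If $\dim R_2>\codim S_1$, I aim for the second disjunct: take $R(x_3)\subseteq R_2$ of dimension $\codim S_1$, and seek $S(x_4)$ with $R_2\subseteq S(x_4)\subseteq S_1$ and $\codim S(x_4)=\dim R_2$.

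The main obstacle is the existence of this last $S(x_4)$ in the tight subcases, where a naive count threatens to fail and one must use that $R_2,S_1$ genuinely come from involutions. Indeed $\dim R_2>\codim S_1$ forces $\codim S_1<\vk$, hence $\dim S_1=\vk$; and when $\dim R_2=\vk$, the inclusion $R_2\subseteq S_2$ together with $\codim S_2=\dim R_2=\vk$ yields $\codim R_2=\vk$, so the additivity $\codim R_2=\codim S_1+\dim(S_1/R_2)$ forces $\dim(S_1/R_2)=\vk=\dim R_2$, leaving exactly the room needed to cut $S_1$ down to $S(x_4)$. The cases $\dim R_2<\vk$ reduce to routine infinite cardinal arithmetic. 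Part (b) is the dual construction, interchanging the roles of $R$ and $S$ and of $\avpi_l,\avpi_h$: one fixes $R(x_3)=R_1$ and $S(x_4)=S_2$, splits on $\dim R_1$ versus $\codim S_2$, and argues symmetrically, the codimension constraints again furnishing the room required in the extremal subcase.
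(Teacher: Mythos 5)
Your proof is correct and follows essentially the same route as the paper: read off the meaning of $C''_2$ via Claim C${}'$ and $MS_2$, get the forward direction from $R(\tau)\subseteq S(\tau)$, and for the converse split on $\dim R_2$ versus $\codim S_1$ and build the witnesses $\sigma_3,\sigma_4$ by enlarging/shrinking $S_1$ and $R_2$ exactly as the paper does. The only difference is that you supply the cardinal-arithmetic justification (using $\dim R_2=\codim S_2\le\codim R_2$) for the existence of the intermediate subspace $S(x_4)$, which the paper asserts without comment.
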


\begin{proof}
(a) There are involutions  $\sigma_3,\sigma_4$
such that $S_1=S_3$  and  $R_2=R_4.$ On
the other hand,  $(S_3 \subseteq S_4 \wedge R_3 \supseteq R_4)$
or $(S_3 \supseteq S_4 \wedge R_3\subseteq R_4).$
Since  $S_3 \supseteq R_3$
and  $S_4\supseteq R_4,$ then in both cases
$S_1 \supseteq  R_2.$

We prove the converse. Suppose that  $\codim S_1 > \dim R_2.$
Hence we can find subspaces  $L_0$  and
$L_1$  such that  $V = S_1 \oplus L_0 \oplus L_1$
and  $R_2 \cong  L_1.$ Then there is an
involution  $\sigma_4$ such that  $S_4=S_1\oplus L_0$
and  $R_4=R_2.$ An involution
$\sigma_3$  is constructed as follows:
$S_3=S_1$  and  $R_3$ is
a subspace in  $S_1$  containing  $R_2$  and
isomorphic to  $L_0 \oplus L_1.$ In the case
$\codim S_1 \le  \dim R_2$  we choose a subspace
$S_4$ with $S_4 \supseteq  R_2$  lying in  $S_1$
such that  $\codim S_4= \dim R_2.$ An involution
$\sigma_4$  is constructed by the subspaces
$S_4$  and $R_2.$ We choose a
subspace  $R_3$ with $R_3 \subseteq  R_2,$  of dimension
equal to $\codim S_1.$ Then we take as $\sigma_3$ an involution
with the subspaces  $S_3=S_1$  and  $R_3.$

Part (b) can be proved by analogy with (a).
\end{proof}

Let us construct  $C'''_2$:
$$
C'''_2(x_1,\avy_1;x_2,\avy_2) =
(\forall x)(MS_2(x,x_1;\avy_1) \rightarrow C'_2(x_2,x;\avy_2)).
$$

\begin{Clm}   \label{C'''}
Let  $\sigma_1$  and  $\sigma_2$  be involutions. Then

{\rm (a)} $\models
C'''_2[\sigma_1,\avpi_l;\sigma_2,\avpi_h]$  iff
$S_1\subseteq  R_2$;

{\rm (b)} $\models  C'''_2[\sigma_1,\avpi_h;\sigma_2,\avpi_l]$  iff
$R_1 \supseteq  S_2.$
\end{Clm}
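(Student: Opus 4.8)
The plan is to unfold $C'''_2$ via Claim~\ref{C'} and the definition of $MS_2$, reducing each part to a purely geometric statement about the residual and fixed subspaces $R(x)\subseteq S(x)$ of the first-kind involutions $x\in\pgl V$. The single fact driving everything is that, since $\chr D=2$, the operator $\id(V)+s$ has kernel $\Fix(s)=S(x)$ and image $\Rng(\id(V)+s)=R(x)$, whence $\dim R(x)=\codim S(x)$ and $R(x)\subseteq S(x)$. Conversely, the construction recalled in Section~\ref{0.12} shows that \emph{every} pair $R\subseteq S$ with $\dim R=\codim S$ and $R\neq 0$ is realized as $(R(x),S(x))$ for some involution: pick a complement $T$ of $S$ with $\dim T=\codim S=\dim R$, a bijection $d_i\mapsto e_i$ from a basis of $T$ to a basis of $R$, and set $x$ to act as $\id(V)$ on $S$ and by $xd_i=d_i+e_i$. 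I would record this realization lemma first, since it converts both parts into counting arguments.

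For part (a), substituting $x_1=\sigma_1,\,\avy_1=\avpi_l,\,x_2=\sigma_2,\,\avy_2=\avpi_h$, the premise $MS_2(x,\sigma_1;\avpi_l)$ says $S(x)=S_1$ and the conclusion $C'_2(\sigma_2,x;\avpi_h)$ says $R(x)\subseteq R_2$. Thus $\models C'''_2[\sigma_1,\avpi_l;\sigma_2,\avpi_h]$ asserts that every involution $x$ with $\Fix(x)=S_1$ satisfies $R(x)\subseteq R_2$; by the realization lemma the subspaces $R(x)$ range over \emph{all} subspaces of $S_1$ of dimension $\codim S_1$. The forward implication is immediate, since $S_1\subseteq R_2$ forces $R(x)\subseteq S_1\subseteq R_2$. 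For the converse I argue by contraposition: if $S_1\not\subseteq R_2$, choose $v\in S_1\setminus R_2$ and, using $1\le\codim S_1=\dim R_1\le\dim S_1$, build a subspace $R\subseteq S_1$ with $\dim R=\codim S_1$ and $v\in R$; the corresponding involution $x$ then has $S(x)=S_1$ but $R(x)=R\not\subseteq R_2$, so $C'''_2$ fails.

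Part (b) is dual. Substituting $x_1=\sigma_1,\,\avy_1=\avpi_h,\,x_2=\sigma_2,\,\avy_2=\avpi_l$, the premise now reads $R(x)=R_1$ and the conclusion reads $S_2\subseteq S(x)$, so $\models C'''_2[\sigma_1,\avpi_h;\sigma_2,\avpi_l]$ asserts that every involution $x$ with $\Rng(\id(V)+x)=R_1$ has $S(x)\supseteq S_2$; here $S(x)$ ranges over all subspaces containing $R_1$ of codimension $\dim R_1$. If $R_1\supseteq S_2$ then $S(x)\supseteq R(x)=R_1\supseteq S_2$ for every such $x$. Conversely, if $S_2\not\subseteq R_1$, pick $w\in S_2\setminus R_1$ and produce a subspace $S\supseteq R_1$ of codimension $\dim R_1$ with $w\notin S$; the associated involution $x$ has $R(x)=R_1$ and $S(x)=S\not\supseteq S_2$, again defeating $C'''_2$.

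The one delicate point, and the main obstacle, is the existence of these witnessing subspaces in the reverse directions, i.e. the infinite-cardinal bookkeeping ensuring enough room. For (b) this amounts to finding, in $\bar V=V/R_1$, a subspace of codimension $\dim R_1$ missing the nonzero image $\bar w$; this succeeds because $\codim R_1=\dim(S_1/R_1)+\dim R_1\ge\dim R_1\ge 1$, leaving a free basis vector to exclude. I would also dispatch the routine points that each constructed $x$ is a genuine non-identity involution (guaranteed by $R(x)\neq 0$), hence induces an involution of the first kind in $\pgl V$, and that the inequalities $\dim R_1=\codim S_1$, $R_1\subseteq S_1$ used above are exactly the constraints from the realization lemma applied to $\sigma_1$.
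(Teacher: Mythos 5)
Your proof is correct and follows essentially the same route as the paper: the forward directions use the chain $R(x)\subseteq S(x)$, and the converses rest on the fact that the subspaces $R(x)$ (resp.\ $S(x)$) realized by involutions with $S(x)=S_1$ (resp.\ $R(x)=R_1$) sweep out all of $S_1$ (resp.\ cut down to exactly $R_1$), which is what the paper expresses by saying the sum of the $R$'s is $S_1$ and the intersection of the $S$'s is $R_1$. Your explicit contrapositive construction of a witnessing subspace, together with the dimension bookkeeping $\dim R_1=\codim S_1\le\dim S_1$ and $\codim R_1\ge\dim R_1$, just fills in the details the paper leaves implicit.
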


\begin{proof}
(a) Suppose $S_1\subseteq  R_2.$ Consider an
involution  $\sigma$  such that  $S = S_1.$ Then
$R_2 \supseteq  R,$ because  $R_2 \supseteq  S_1
= S \supseteq R.$ So $\models C'''.$ Conversely,
if for each involution  $\sigma$  such
that  $S = S_1$ we have  $R_2 \supseteq  R,$ then
$R_2 \supseteq  S_1.$ Indeed, the sum of all
subspaces  $R = R(\sigma)$  for such  $\sigma$'s
is equal to  $S_1.$

(b) Suppose $R_1 \supseteq  S_2.$ We have then $S
\supseteq  S_2$ for an involution $\sigma$ with
$R = R_1.$ Hence $\models  C'''.$
Conversely, if for each involution  $\sigma,$  such
that  $R = R_1$  we have  $S_2 \subseteq  S,$ then
$S_2\subseteq R_1,$ because the intersection of all
$S = S(\sigma)$ for  such  $\sigma$'s  is equal to
$R_1.$ \end{proof}

Let us now turn to a {\it proof of Theorem}
\ref{RecoveringOfBetweenness} {\it for the case $\chr D
=2.$}

The elements of $P^*(V)$ will be interpreted by
triples $\avs=\str{\sigma_1,\sigma_2,\sigma_3},$ where
$\sigma_1$ is a (non-identity) involution, and
$\str{\sigma_2,\sigma_3}$ is a minimal pair.

In the case when $\str{\sigma_2,\sigma_3}$ is a minimal
pair with a mutual line, we assign to the triple the
subspace $S_1= \Fix(s_1), \sigma_1= \hat s_1.$
Otherwise we assign to the triple $\avs$ the subspace
$R_1= \Rng(\id(V)+s_1).$

Let $S(\avs)$ denote the subspace, which corresponds
to a triple of involutions
$\str{\sigma_1,\sigma_2,\sigma_3}.$ By Claims \ref{C'}--\ref{C'''}
$C(S(\avs_1),S(\avs_2))$  iff $\models C_2[\avs_1,\avs_2],$
where $C_2$ denotes the formula
\begin{align*}
C_2(x_1,\avy_1;x_2,\avy_2) = &[\avy_1 \sim \avy_2 \wedge C'_2(x_1,\avy_1;x_2,\avy_2)] \vee \\
                             &[\avy_1 \not\sim \avy_2 \& \, (C''_2(x_1,\avy_1;x_2,\avy_2)  \vee  C'''_2(x_1,\avy_1;x_2,\avy_2))].
\end{align*}

Using the equivalence
$$
L_0= L_1 \Leftrightarrow  (\forall L)(C(L,L_0) \leftrightarrow C(L,L_1))
$$
we may construct a formula  $ET_2(x_1,\avy_1;x_2,\avy_2),$
where $|\avy_k|=2,$
such that  $\models ET_2[\avs_1;\avs_2]$  iff  $S(\avs_1) =
S(\avs_2).$ Then, applying (\ref{eqC=>B}), we may reconstruct
the betweenness relation $B.$ Having $ET_2,$ we may easily reconstruct the
action of $\pgl V$  on  $P^*(V).$ $\qed$

Thus, we have interpreted by means of first order
logic the structure $\cPG'(V)=\str{\pgl
V,P^*(V);\circ,B,act}$ in the group $\pgl V$ uniformly
in $\dim V$ for the case, when $\chr D\ne 2$ and for the
case $\chr D=2.$ Recall that these cases can be
distinguished from one another by a suitable
first order sentence (see the formula (\ref{eqCharD=2})).  Using
standard techniques, one can therefore build an
interpretation of $\cPG'(V)$ in $\pgl V$ which is also
uniform in $D.$

\section{The reconstruction of the inclusion relation} \label{1.5}

The following theorem is our crucial result.

\begin{Th}   \label{PrjSpaceInPrjGroup} The projective
space $\cP(V)=\str{P(V);\subseteq}$  can be
reconstructed without parameters in the projective
linear group $\pgl V$  by means of first order logic.
\end{Th}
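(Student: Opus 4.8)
The plan is to leverage Theorem \ref{RecoveringOfBetweenness}, which already reconstructs the auxiliary structure $\cPG'(V)=\str{\pgl V,P^*(V);\circ,B,act}$ without parameters in $\pgl V$. Since uniform syntactical interpretability is transitive (the relation $\le$ is transitive, as noted in Section \ref{0.12}), it suffices to reconstruct $\cP(V)=\str{P(V);\subseteq}$ without parameters inside $\cPG'(V)$. Two tasks remain. First, enlarge the sort $P^*(V)$ to $P(V)$ by adjoining the two improper subspaces $0$ and $V$. Second, and this is the heart of the matter, upgrade the \emph{unoriented} data carried by $B$ to the \emph{oriented} inclusion $\subseteq$. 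Note that the comparability relation $C(L_1,L_2)$, meaning $L_1\subseteq L_2$ or $L_2\subseteq L_1$, is already available, since $C(L_1,L_2)$ is equivalent to $B(L_1,L_1,L_2)$.

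The main obstacle is precisely the orientation. The relations $B$ and $C$ are by their very definitions invariant under reversing all inclusions, so the reduct $\str{P^*(V);B}$ is isomorphic to its own order-dual; from $B$ (or $C$) alone the relation $\subseteq$ simply cannot be defined, nor can $0$ be told apart from $V$. The chirality we must produce is exactly the failure of a correlation (an order-reversing automorphism) of $P(V)$, which holds in the infinite-dimensional case because $V\not\cong V^*$. Hence the symmetry can only be broken using the action $act$, and the assumption $\vk\ge\aleph_0$ must enter here; this is also why the analogue fails in finite dimension, as recorded in the Remark of Section \ref{0.12}.

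The key new ingredient is a parameter-free, non-self-dual predicate. For $L\in P^*(V)$ put
$$\mathrm{InfDim}(L)\ :\equiv\ (\exists g)\bigl(act(g,L)\ne L\ \wedge\ C(act(g,L),L)\bigr),$$
that is, $L$ is properly comparable to a nontrivial translate of itself under $\pgl V$. I claim $\mathrm{InfDim}(L)$ holds if and only if $\dim L$ is infinite, \emph{irrespective of} $\codim L$. Indeed $g$ comes from an invertible linear map, so $act(g,L)$ lies in the same $\pgl V$-orbit as $L$ and in particular $\dim(act(g,L))=\dim L$; a finite-dimensional subspace can be neither a proper sub- nor a proper super-space of an equidimensional one, so $\mathrm{InfDim}$ fails. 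Conversely, if $\dim L=\lambda\ge\aleph_0$ one can drop a single dimension: choosing $L'\subsetneq L$ with $\dim(L/L')=1$ gives $\dim L'=\lambda$ and $\codim L'=\codim L$ (for an infinite $\dim L$ this holds whether $\codim L$ is finite or infinite, in particular for a hyperplane), whence $L'$ is in the orbit of $L$ and some $g$ realises $act(g,L)=L'\subsetneq L$. Thus $\mathrm{InfDim}$ is true on hyperplanes ($\dim=\vk$) and false on lines ($\dim=1$): it breaks the duality symmetry.

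Finishing is now bookkeeping. Let $\mathrm{Ext}(L)$ be the $B$-definable predicate singling out the extreme elements (those $L\in P^*(V)$ lying strictly between no two distinct members of $P^*(V)$); these are exactly the lines and the hyperplanes. Then $\mathrm{Line}(N):\equiv \mathrm{Ext}(N)\wedge\neg\mathrm{InfDim}(N)$ defines exactly the lines, while $\mathrm{Ext}(L)\wedge\mathrm{InfDim}(L)$ defines the hyperplanes. For a line $N$ and any $L\in P^*(V)$ one has $C(N,L)$ iff $N\subseteq L$, because a line cannot strictly contain a nonzero subspace; hence inclusion is recovered from ``$L_1$ contains no more lines than $L_2$'':
$$L_1\subseteq L_2\ \iff\ (\forall N)\bigl(\mathrm{Line}(N)\wedge C(N,L_1)\rightarrow C(N,L_2)\bigr),$$
which is correct since every subspace is the span of the lines it contains. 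Finally adjoin $0$ and $V$ as two formal elements (routine in the multi-sorted setting, as remarked in Section \ref{0.12}), extending $\subseteq$ by $0\subseteq L\subseteq V$; they are now distinguishable because $\subseteq$ is oriented. This reconstructs $\cP(V)$ without parameters and, by transitivity through Theorem \ref{RecoveringOfBetweenness}, in $\pgl V$, completing the proof. The only genuinely nontrivial step is the definability and correctness of $\mathrm{InfDim}$, where infinite-dimensionality is used decisively.
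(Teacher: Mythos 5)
Your overall architecture (reduce to defining the set of lines inside $\cPG'(V)$, recover $\subseteq$ from ``every line in $L_1$ is in $L_2$,'' then adjoin $0$ and $V$) is exactly the paper's, and your diagnosis that the whole difficulty is breaking the self-duality of $B$ is also correct. But the one step you flag as ``the only genuinely nontrivial step'' is wrong, and it is precisely where the paper has to work hardest.

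Your predicate $\mathrm{InfDim}(L)\equiv(\exists g)(act(g,L)\ne L\wedge C(act(g,L),L))$ does \emph{not} hold for hyperplanes. Since $act(g,L)$ lies in the $\pgl V$-orbit of $L$, it has the same dimension \emph{and the same codimension} as $L$. If $\codim L$ is finite, every proper subspace of $L$ has strictly larger codimension and every proper superspace has strictly smaller codimension, so no member of the orbit is properly comparable with $L$; in particular two distinct hyperplanes are never comparable, and $\mathrm{InfDim}$ fails for a hyperplane exactly as it fails for a line. The error is in your parenthetical claim that for $L'\subsetneq L$ with $\dim(L/L')=1$ one has $\codim L'=\codim L$ ``whether $\codim L$ is finite or infinite'': when $\codim L$ is finite, $\codim L'=\codim L+1\ne\codim L$. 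What your formula actually defines is $\dcd L\ge\aleph_0$ (both dimension and codimension infinite) --- this is literally Lemma \ref{1.5.4}(b) of the paper --- and that condition is self-dual, so it cannot separate lines from hyperplanes. The paper instead distinguishes them by iterating a single transformation: for a line $N=\str a$ and $\varphi=\hat f$ the invariant subspace $N_\varphi=\str{f^na:n\in\Z}$ always has $\dcd N_\varphi\le\aleph_0$, whereas for a suitable hyperplane $M$ and suitable $\varphi$ the cogenerated invariant subspace $M_\varphi=\bigcap_n\varphi^nM$ can have $\dcd M_\varphi\ge\min\{|D|^{\aleph_0},\vk\}$ (Lemma \ref{dcdM_f>=vk'}); this forces a case split between $\vk=\aleph_0$ (where a different invariance criterion, Claim \ref{LnsAreDfnbl0}, is used) and $\vk>\aleph_0$. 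None of this machinery is present in your argument, so the symmetry between lines and hyperplanes is never actually broken and the proof does not go through.
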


In the previous section we reconstructed in  $\pgl V$
the structure
$$
\cPG'(V) = \str{\pgl V,P^*(V);\circ,B,act}
$$
(one may surely use the corresponding binary relation
$C$ instead of the betweenness relation $B,$ but this is
sometimes less technically convenient).  In this
section we shall prove that the inclusion relation  on
$P^*(V)$ is a definable relation in the structure
$\cPG'(V),$ that is  the structure
$$
\cPG''(V) =\str{\pgl V,P^*(V);\circ, \subseteq,act}
$$
can be reconstructed in  $\pgl V.$ Therefore
the projective space can be reconstructed in $\pgl V,$
too, due to

\begin{Clm}
The structure $\cP(V)$ is $\varnothing$-interpretable
in $\str{P^*(V),\subseteq}.$
\end{Clm}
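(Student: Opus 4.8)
The plan is to realize $\cP(V)=\str{P(V);\subseteq}$ as a quotient of a first-order definable set of pairs taken from $\str{P^*(V),\subseteq}$. The only real issue is that $P^*(V)=P(V)\setminus\{0,V\}$ lacks precisely the two extremal subspaces, the zero subspace and $V$ itself. Since $\dim V$ is infinite, every proper non-zero subspace has both a proper non-zero subspace strictly below it and one strictly above it; hence $\str{P^*(V),\subseteq}$ has neither a least nor a greatest element, and neither $0$ nor $V$ can be picked out as a \emph{single} element of $P^*(V)$. I would therefore synthesize these two missing points as equivalence classes of tuples.

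Concretely, I would take $n=2$ and let the domain $X$ of the interpretation be the set of comparable pairs $\str{A,B}\in P^*(V)^2$, i.e. those with $A\subseteq B$ or $B\subseteq A$; this is quantifier-free definable from $\subseteq$. The surjection $f\colon X\to P(V)$ is prescribed by a three-way case split: a diagonal pair $\str{A,A}$ codes the genuine subspace $A$; a strictly increasing pair $\str{A,B}$ with $A\subsetneq B$ codes the bottom element $0$; and a strictly decreasing pair with $A\supsetneq B$ codes the top element $V$. This $f$ is onto $P(V)$: each proper non-zero subspace is $f\str{A,A}$, while $0$ and $V$ are attained because an infinite-dimensional $V$ contains a strict chain of two proper non-zero subspaces (say a line inside a plane), so strictly increasing and strictly decreasing pairs both exist.

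Next I would exhibit the $f$-preimage of equality, namely the equivalence relation $\sim$ that identifies two diagonal pairs iff their entries coincide, identifies all strictly increasing pairs with one another, identifies all strictly decreasing pairs with one another, and keeps these three types apart. The defining predicates for the three types ($A=B$; $A\subseteq B\wedge A\neq B$; $B\subseteq A\wedge A\neq B$) are quantifier-free in $\subseteq$ and $=$, so $\sim$ is $\varnothing$-definable. Likewise the $f$-preimage of $\subseteq$, a relation $\trianglelefteq$ on $X$, is defined by a case split on the type of each argument: the increasing type lies below everything, the decreasing type lies above everything, two diagonal pairs $\str{A,A},\str{A',A'}$ satisfy $\trianglelefteq$ exactly when $A\subseteq A'$, and the mixed cases are settled by the same top/bottom convention. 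Only finitely many definable cases occur, so $\trianglelefteq$ is $\varnothing$-definable as well.

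The data $(X,f,\sim,\trianglelefteq)$ then constitute a parameter-free interpretation of $\cP(V)$ in $\str{P^*(V),\subseteq}$, which is exactly the Claim; moreover the construction is manifestly uniform in $V$ and $D$. I expect no genuine obstacle beyond bookkeeping: the single idea is to represent the two non-recoverable extremal subspaces $0$ and $V$ by the two canonical classes of \emph{strictly} comparable pairs, reserving the diagonal for the elements that already live in $P^*(V)$.
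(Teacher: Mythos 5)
Your proposal is correct and is essentially the paper's own argument: the paper also interprets $\cP(V)$ via pairs from $P^*(V)^2$, sending diagonal pairs to their common entry and using the strictly comparable (resp.\ remaining) pairs to code the two missing extremal subspaces, with the preimages of equality and inclusion defined by the same finite case analysis. The only differences are cosmetic — the paper uses all of $P^*(V)^2$ rather than restricting to comparable pairs, and swaps which class codes $0$ versus $V.$
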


\begin{proof}
For $R_1,R_2$ in $P^*(V)$ put $f(R_1,R_2)=R_1$
if $R_1=R_2,$ put $f(R_1,R_2)=V$ if $R_1 \subset R_2$
and put $f(R_1,R_2)=\{0\}$ in other cases. Clearly,
$f$ maps $P^*(V)^2$ onto $P(V),$ and the $f$-preimages
of the equality and inclusion relations are $\varnothing$-definable
in $\str{P^*(V),\subseteq}.$
\end{proof}

So let us concentrate our efforts on the proof
of following

\begin{Prop} \label{RecovInclInPG'}
The inclusion relation on $P^*(V)$  is a $\varnothing$-definable
relation on the structure $\cPG'(V)$
{\rm(}uniformly in $V${\rm)}.
\end{Prop}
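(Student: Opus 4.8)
The plan is to take the structure $\cPG'(V)=\str{\pgl V,P^*(V);\circ,B,act}$ and \emph{orient} the betweenness relation $B$ into the directed inclusion. First I would recover from $B$ the comparability relation $C$ on $P^*(V)$ (with $C(x,y)$ meaning that one of $x,y$ contains the other, e.g. via $C(x,y)\equiv B(x,x,y)$), together with the notion ``$a$ lies properly between the comparable subspaces $b$ and $c$''. The relation $\subseteq$ is one of the two orientations of $C$ that are compatible with $B$; since every maximal chain is linearly ordered by $B$ up to reversal, and the comparability graph on $P^*(V)$ is connected (any two subspaces are joined through a common line or hyperplane), a single global choice of direction propagates along chains to determine $\subseteq$ everywhere. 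Thus the entire problem reduces to breaking this one residual duality.

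Next I would isolate the \emph{extreme} elements of $\str{P^*(V);C,B}$, namely those $a$ that lie properly between no two comparable subspaces; these are exactly the lines and the hyperplanes, and for such an $a$ every subspace comparable to $a$ sits on a single side of $a$ (above a line, below a hyperplane). Using $act$ I would then show that the extreme elements split into exactly two $\pgl V$-orbits: the group is transitive on lines and on hyperplanes, and in infinite dimension no collineation carries a one-dimensional subspace to a one-codimensional one, so these two orbits are distinct and are the only orbits of extreme elements. Orienting $C$ is therefore equivalent to naming, by a first-order condition in $\cPG'(V)$, which of these two orbits consists of lines.

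The crux, and the essentially infinite-dimensional ingredient, is precisely this last separation. It cannot come from any \emph{local} feature of $\str{P^*(V);C,B}$, because the comparable side of a line $N$ is the projective space of $V/N$ while that of a hyperplane $M$ is the projective space of $M$, and both have dimension $\vk$, hence are abstractly isomorphic; this is exactly why the distinction is impossible in finite dimension (cf.\ the Remark in Section~0). I would break the symmetry through the non-self-duality of $V$ in infinite dimension: a hyperplane is infinite-dimensional whereas a line is not. Concretely, anchoring the description to the extreme element $a$ (so that comparability with $a$ is automatically one-sided), I would express ``$a$ is infinite-dimensional on its comparable side'' by a first-order, orientation-free condition asserting the existence of a \emph{self-similar} configuration on that side witnessed by $act$: below a hyperplane $M$ the collineations stabilizing $M$ realize a Dedekind-type proper self-embedding of the subspaces $C$-comparable to $M$, a phenomenon with no analogue below a line, where nothing lies on the comparable side at all. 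This singles out the hyperplane-orbit, and dually the line-orbit, uniformly in $\vk$ and $D$.

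Finally, with the line-orbit named, I would fix the orientation by declaring every line to lie below every hyperplane comparable to it, and then propagate to arbitrary $L_1,L_2$: set $L_1\subseteq L_2$ iff $C(L_1,L_2)$ and there is a line $N$ on the lower side of $L_1$ with $B(N,L_1,L_2)$. Verifying that this defines genuine inclusion is then routine from the one-sidedness of extreme elements and the $B$-ordering of chains. I expect the genuinely hard step to be making the ``self-similarity below a hyperplane'' condition both first-order and independent of orientation; once that is secured, the remainder is bookkeeping with $B$, $C$ and $act$.
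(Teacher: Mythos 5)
Your opening reduction is sound and coincides with the paper's: since $L_0\subseteq L_1$ iff every line comparable to $L_0$ is comparable to $L_1$, everything comes down to writing a first-order formula of $\cPG'(V)$ that names the orbit of lines among the $B$-extreme elements (those $L$ with $\dcd L=1$, which Lemma \ref{1.5.4} makes definable). The gap is in your ``crux''. First, the assertion that ``nothing lies on the comparable side of a line'' is false and circular: the subspaces comparable to a line $N$ are exactly those containing $N$, a copy of the projective geometry of $V/N$; only in the \emph{oriented} order is nothing strictly below $N$, and the orientation is precisely what you have not yet defined. Second, and more seriously, the mechanism you propose cannot work even after this is repaired: the stabilizer of an extreme element acts by \emph{bijections} on its comparability cone and induces there the full projective (semi-)linear group of that cone, and the cone of a line is $P(V/N)$ minus its top while the cone of a hyperplane is $P(M)$ minus its bottom --- both projective geometries of dimension $\vk$ over $D$ carrying the same induced group. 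So no ``Dedekind-type proper self-embedding realized by the stabilizer on the comparable side'' distinguishes the two orbits; whatever asymmetry exists between these two cones is not witnessed by the stabilizer action on them, and you give no first-order property that detects it.

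The paper's separating device is global and quantitative, and nothing like it appears in your proposal. For $\varphi=\hat f\in\pgl V$ one forms $N_\varphi=\str{f^na:n\in\Z}$, the least $\varphi$-invariant subspace containing a line $N=\str a$, and dually $M_\varphi=\bigcap_n\varphi^nM$, the greatest $\varphi$-invariant subspace inside a hyperplane $M$; both are definable from $\{L,\varphi\}$ uniformly for $\dcd L=1$. The asymmetry is that $\dcd N_\varphi\le\aleph_0$ \emph{always}, whereas Lemma \ref{dcdM_f>=vk'} constructs $\varphi$ and $M$ with $\dcd M_\varphi\ge\min(|D|^{\aleph_0},\vk)>\aleph_0$. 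Combined with the definability of $\dcd L\ge\aleph_0$ and $\dcd L=\aleph_0$, this yields the formula singling out lines when $\vk>\aleph_0$; the case $\vk=\aleph_0$ needs the separate argument of Claim \ref{LnsAreDfnbl0} (a line $S$ with $S_\varphi\notin P^*(V)$ forces $\varphi$ to have no invariant line, while a hyperplane can collapse under $M_\varphi$ even when $\varphi$ fixes a hyperplane), and the two cases are themselves definably distinguished. To complete your argument you would need to supply a concrete first-order condition playing this role; the one you sketch does not.
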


{\it Proof.} Clearly, the inclusion relation on  $P^*(V)$ is
definable in  $\cPG'(V)$  iff the condition
$(\dim L=1)$  is definable in this structure. Indeed,
$L_0 \subseteq  L_1$  is equivalent to the condition
$$
(\forall L)(\dim L=1 \wedge C(L,L_0) \rightarrow C(L,L_1)),
$$
where  $C(L_0,L_1)$  is an abbreviation for the
formula $B(L_0,L_0,L_1).$

Consider  the  following function from $P(V)$   to
$\vk$:
$$
\dcd L = \min(\dim L,\codim L).
$$

\begin{Lem} \label{1.5.4}
The  following  conditions  are
definable in  $\cPG'(V)$:

{\rm (a)}  $\dcd L =1;$

{\rm (b)}  $\dcd L \ge  \aleph_0;$

{\rm (c)} $\dcd L = \aleph_0.$
\end{Lem}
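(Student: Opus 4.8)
The plan is to handle the three clauses separately, using only the symmetric betweenness relation $B$ (hence the derived comparability relation $C(L_0,L_1)=B(L_0,L_0,L_1)$) together with the action of $\pgl V$ on $P^*(V)$, all of which are available in $\cPG'(V)$. Throughout I exploit that every $g\in\pgl V$ induces a bijection of $P^*(V)$ preserving both dimension and codimension, so $gL$ always has the same $\dim$ and $\codim$ as $L$; and I keep in mind that the \emph{direction} of inclusion is not available (this is exactly the lines-versus-hyperplanes phenomenon discussed in the Remark after Lemma \ref{TransvsBasics}), so every formula I write must be invariant under reversing all inclusions. The function $\dcd$ is built to be invariant under this flip, which is what makes it accessible.

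For (a) I would characterize $\dcd L=1$ as saying that $L$ is a line or a hyperplane, i.e. that $L$ is not strictly interior to any chain. Call $L$ \emph{internal} if there are $L_1,L_2\in P^*(V)$ with $B(L_1,L,L_2)$, $L_1\ne L$, $L\ne L_2$ and $L_1\ne L_2$. If $\dim L\ge 2$ and $\codim L\ge 2$ then $L$ has both a proper nonzero subspace and a proper superspace strictly inside $V$, so $L$ is internal; conversely a line has nothing of $P^*(V)$ strictly below it and a hyperplane nothing strictly above it, so neither is internal. Hence $\dcd L=1$ is defined by $\neg(L\ \text{is internal})$, which is first order over $B$.

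For (b) the key is the self-embedding criterion $\dcd L\ge\aleph_0 \Longleftrightarrow \exists g\in\pgl V\ (gL\subsetneq L)$, which I render inside $\cPG'(V)$ as $\exists g\,(C(gL,L)\wedge gL\ne L)$; this is equivalent because if $gL\supsetneq L$ then $g^{-1}L\subsetneq L$, so the flip does not affect an existential statement. For the nontrivial direction, an invertible $g$ preserves both $\dim$ and $\codim$, so $gL\subsetneq L$ forces $\dim L$ infinite (otherwise $\dim gL=\dim L$ and $gL\subseteq L$ give $gL=L$) and likewise $\codim L$ infinite (otherwise $\codim gL=\codim L$ contradicts $\codim gL>\codim L$). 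Conversely, when both $\dim L$ and $\codim L$ are infinite I build $g$ explicitly: choose countable independent families $a_0,a_1,\dots\in L$ and $b_0,b_1,\dots$ in a complement of $L$, set $g$ equal to the identity off the span of these vectors, and on their span let $g$ realize the basis permutation $a_n\mapsto a_{n+1}$, $b_0\mapsto a_0$, $b_n\mapsto b_{n-1}$ for $n\ge 1$. This permutes a basis of $V$, hence lies in $\gl V$ and descends to $\pgl V$; writing $L=\str{a_n:n\ge 0}\oplus L_1$ one gets $gL=\str{a_1,a_2,\dots}\oplus L_1$, which omits the direction $a_0$, so $gL\subsetneq L$.

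For (c) I would express $\dcd L=\aleph_0$ by combining (b) with minimality: since $\aleph_0$ is the least infinite cardinal and is realized as a value of $\dcd$ (every cardinal in $[\aleph_0,\vk]$ occurs), one has $\dcd L=\aleph_0$ iff $\dcd L\ge\aleph_0$ and no $L'$ satisfies $\aleph_0\le\dcd L'<\dcd L$. Everything thus reduces to defining the comparison $\dcd L\le\dcd L'$ by means of the $\pgl V$-action. Directed containment $\exists g\,(gA\subseteq B)$ would encode $\dcd A\le\dcd B$ cleanly when $A,B$ share the same ``polarity'' (both of small dimension, or both of small codimension), since such a $g$ preserves $\dim$ and $\codim$; the trouble is that polarity is exactly the datum the structure withholds, and only the symmetric $C$ is at our disposal, so the naive containment relation is confounded by the $\vk$-degenerate cross-polarity cases. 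The main obstacle is therefore to produce a genuinely flip-invariant definition of $\dcd L\le\dcd L'$. I plan to do this by quantifying over an auxiliary complementary pair — a subspace together with a complement of it, which swaps polarity while preserving $\dcd$ — so that the two polarities are placed on the same footing, and then to read off the comparison from how $\pgl V$-images of $L'$ fit as strict parts of $L$. Checking that the resulting formula is invariant under the global inclusion-reversing flip, and that it truly computes the minimum rather than collapsing in the cross-polarity cases, is the delicate point; once this comparison relation is in hand, clauses (b) and the minimality criterion combine at once to yield $\dcd L=\aleph_0$.
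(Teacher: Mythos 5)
Your parts (a) and (b) are correct and coincide with the paper's own argument: (a) is just the negation of your ``internal'' condition (the paper writes it as $(\forall L_0,L_1)(B(L_0,L,L_1)\rightarrow L=L_0\vee L=L_1)$), and (b) is exactly the paper's formula $(\exists\varphi)(\varphi L\ne L\wedge C(L,\varphi L))$, with your shift construction correctly filling in the direction the paper leaves as ``easy to see.''

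Part (c), however, has a genuine gap: you reduce the clause to a flip-invariant definable comparison $\dcd L\le\dcd L'$ and then explicitly leave the construction and verification of that comparison as ``the delicate point.'' That is precisely the content of clause (c), so nothing is proved. The route you sketch is also doubtful on its own terms: quantifying over ``a subspace together with a complement of it'' presupposes that complementation is definable from the symmetric relation $B$ alone, but expressing $L_1\cap L_2=\{0\}$ or $L_1+L_2=V$ requires directed inclusion, which is exactly what Section 5 is still in the process of recovering; and a general comparison of $\dcd$-values is far more than the lemma needs. The paper avoids all of this with a single flip-invariant device: $\dcd L=\aleph_0$ iff $\dcd L\ge\aleph_0$ and there exists $L_1$ with $\dcd L_1\ge\aleph_0$ such that every $L_2$ with $\dcd L_2\ge\aleph_0$ satisfies $(\exists\varphi)\,B(L,\varphi L_2,L_1)$. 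The underlying geometric fact is that a nested pair $S_0\subseteq S_1$ with $\dim S_0=\codim S_1=\aleph_0$ absorbs, up to the $\pgl V$-action, every subspace of infinite dimension and codimension, whereas if either member of a pair had $\dcd$ exceeding $\aleph_0$ one could not fit between them both a subspace of dimension $\aleph_0$ and one of codimension $\aleph_0$. Since $B$ is symmetric under reversing all inclusions, this condition is automatically polarity-free, and neither a comparison relation nor complements are needed. You would need either to supply this (or an equivalent) argument, or actually to carry out and verify the construction you postponed.
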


\begin{proof}
(a) The condition  $\dcd L = 1$  is
equivalent to
$$
(\forall L_0,L_1)(B(L_0,L,L_1) \rightarrow \bigvee_{k=0}^1 L=L_k).
$$

(b) It is easy to see that the condition is equivalent
to
$$
(\exists \varphi)(\varphi L\ne L \wedge C(L,\varphi L)).
$$

The proof of (c) is based on the
following property of each pair  $\str{S_0,S_1} \in
P^*(V)$  with
$$
(\dim S_0 =\codim S_1 =\aleph_0) \wedge (S_0 \subseteq S_1):
$$
every subspace  $S$  of infinite dimension and
codimension can be transformed by an element of
$\pgl V$  to a subspace lying between  $S_0$  and
$S_1.$

Conversely, if a pair  $\str{S_0,S_1},$ where $\dcd
S_0 \ge  \aleph_0$ and $\dcd S_1 \ge \aleph_0,$ has
the mentioned property, then   $\dcd S_0= \dcd S_1=
\aleph_0.$ Indeed, if a subspace  $S$ with $\dcd S \ge
\aleph_0$ lies between  $S_0$ and  $S_1,$ then
\begin{alignat*}3
& \dim S  &  &\ge  \min\{\dim S_0,\dim S_1\}    && \ge  \aleph_0, \\
& \codim S&  &\ge  \min\{\codim S_0,\codim S_1\}&& \ge \aleph_0.
\end{alignat*}

Since we may choose  $S$ such that  $\dim S = \aleph_0$
or $\codim S = \aleph_0,$ then
$$
\min\{\dim S_0,\dim S_1\} = \min\{\codim S_0,\codim S_1\} = \aleph_0.
$$
Hence $\dcd S_0= \dcd S_1= \aleph_0.$
So  the condition   $\dcd  L = \aleph_0$ is
definable, because the equivalent condition
\begin{align*}
(\dcd L \ge  \aleph_0) \wedge & (\exists L_1)(\dcd L_1\ge  \aleph_0\wedge \\
& (\forall L_2)(\dcd L_2 \ge \aleph_0 \rightarrow
(\exists \varphi)(B(L,\varphi L_2,L_1)))
\end{align*}
does.
\end{proof}

Let  $\varphi  \in  \pgl V$  and  $\varphi  =\hat f.$ The
least  $f$-invariant subspace, containing a line  $N =
\str a,$  is the subspace  $N_{\varphi}=
\sum_{n \in \Z}  \varphi^{n}N = \str{ f^{n} a: n\in \Z}.$
Consider the dual version. Let $M$ be an arbitrary hyperplane and
$M_{\varphi}$ denote the greatest $f$-invariant subspace, which
is contained in  $M.$ Clearly,  $M_{\varphi}$  equals to
$\bigcap_{n \in \Z} \varphi^{n} M.$ One
can verify that the subspaces $N_{\varphi}$  and
$M_{\varphi}$ are definable with parameters
$\{N,\varphi\}$  and $\{M,\varphi\},$ respectively
(uniformly for lines and hyperplanes). Indeed, let
\begin{align*}
\theta(L;L',\psi) = & (\psi L=L) \wedge C(L,L') \wedge \\
 & (\forall L'')((\psi L''=L'' \wedge C(L',L'')) \rightarrow B(L',L,L'')).
\end{align*}
It is easy to see that if $S$ is a line or a hyperplane
then $S_\f$ is the unique (if any) realization of the
formula $\theta(L;S,\f).$ Thus, we may
use in our formulae the expressions of the form
$L_{\varphi}$ for  $L$  with  $\dcd L=1.$

Consider the case  $\vk  = \aleph_0.$

\begin{Clm} \label{LnsAreDfnbl0}
Let  $\dim V = \aleph_0.$ Then the following
conditions are equivalent:

{\rm (a)} $S$  is a line;

{\rm (b)}  $\dcd S =1$ and for each  $\varphi \in
\pgl V$ if $S_\varphi \not\in P^*(V)$ then
$\f$ does not preserve any subspace isomorphic to $S.$
\end{Clm}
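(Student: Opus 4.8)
The plan is to exploit the hypothesis $\vk = \aleph_0$ through an asymmetry between lines and hyperplanes that appears once we regard $V$ as a module over a Laurent polynomial ring. Throughout write $\varphi = \hat f$ with $f \in \gl V$; since radiations act trivially on $P(V)$, both $S_\varphi$ and the relation ``$\varphi$ preserves $W$'' (which I read as $\varphi W = W$, equivalently $fW = W$) depend only on $\varphi$, so I may argue with $f$. Make $V$ a left module over $R = D[t,t^{-1}]$ by letting $t$ act as $f$; this is well defined because $f$ is $D$-linear, so $t$ commutes with the scalars, and $R$ is a domain with a division algorithm, hence a left and right principal ideal domain. Because $\dcd S = 1$ holds exactly when $S$ is a line or a hyperplane, it suffices to prove that every line satisfies (b) and that every hyperplane violates it.

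For (a)$\Rightarrow$(b), let $S = N = \str a$ be a line and suppose $N_\varphi \notin P^*(V)$. Since $N_\varphi = \str{f^n a : n \in \Z} \supseteq N \neq \{0\}$, we get $N_\varphi = V$, i.e.\ $V = R\cdot a$ is a \emph{cyclic} $R$-module, $V \cong R/\mathrm{Ann}(a)$ with $\mathrm{Ann}(a)$ a left ideal $Rg$. Here I would invoke the structure theory: if $g \neq 0$ then, after clearing the unit $t$ so that $g \in D[t]$ has invertible leading coefficient and nonzero constant term, the division algorithm shows $\dim_D V = \deg g < \aleph_0$, contradicting $\vk = \aleph_0$; hence $\mathrm{Ann}(a) = 0$ and $V \cong R$ is \emph{free}. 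A line preserved by $\varphi$ would yield an eigenvector $b \neq 0$ with $(t-\lambda)b = 0$ for some $\lambda \in D^*$, but $t-\lambda$ is a nonzero element of the domain $R$ and so acts injectively on the free module $V$, forcing $b = 0$. Thus $\varphi$ preserves no line, which is precisely condition (b).

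For the converse I would prove the contrapositive by exhibiting, for every hyperplane $M$, a single $\varphi$ with $M_\varphi \notin P^*(V)$ that nonetheless preserves an infinite-dimensional subspace. Choosing $a \notin M$ together with a basis of $M$ and reindexing by $\Z$, arrange $V = \str{e_i : i \in \Z}$ with $e_0 = a$, $M = \str{e_i : i \neq 0}$, and let $f$ be the bilateral shift $f(e_i) = e_{i+1}$. Writing $\xi$ for the $e_0$-coordinate functional and identifying $V \cong R$ via $e_i \leftrightarrow t^i$, one computes that $\xi(f^m v)$ is the $t^{-m}$-coefficient of $v$, so $M_\varphi = \bigcap_n f^n M = \{v : \xi(f^m v) = 0 \ \forall m\} = \{0\} \notin P^*(V)$. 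On the other hand the left ideal $W = R\cdot(1+t)$, that is $W = \str{e_i + e_{i+1} : i \in \Z}$, is shift-invariant, has dimension $\aleph_0$ (hence is isomorphic to $M$), and is proper since $1+t$ is not a monomial and therefore not a unit of $R$. So $\varphi$ preserves a subspace isomorphic to $M$ while $M_\varphi \notin P^*(V)$, and $M$ fails (b); consequently any $S$ satisfying (b) must be a line.

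The main obstacle is the module-theoretic input, namely that an infinite-dimensional cyclic $R$-module over a possibly noncommutative $D$ is free, so that $t = f$ admits no eigenvector; everything else is a direct computation. I would take care to record the division-algorithm and PID facts for $D[t,t^{-1}]$ with central $t$ over a division ring, and to keep the left/right module conventions consistent, since those are the only points at which noncommutativity of $D$ could intervene.
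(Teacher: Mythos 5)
Your proof is correct, and both halves run parallel to the paper's own argument: the paper likewise reduces (a)$\Rightarrow$(b) to the statement that a $\varphi$ for which $\str{f^n a : n\in\Z}=V$ admits no invariant line, and proves (b)$\Rightarrow$(a) by exhibiting, for a hyperplane, a shift whose invariant core is $\{0\}$ but which fixes a hyperplane. What you add is the $D[t,t^{-1}]$-module packaging: the paper simply asserts ``so there are no $f$-invariant lines in $V$'' after noting $S_\varphi=V$, whereas you actually justify it (cyclic plus infinite-dimensional forces the annihilator to vanish, so $V\cong R$ is free and $t-\lambda$ acts injectively); the same point can be made by hand by observing that $\{f^n a : n\in\Z\}$ must then be a basis on which $f$ acts as the bilateral shift, and a finitely supported eigenvector of that shift is zero. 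Your counterexample for the converse also differs in detail from the paper's (which takes $fe=e$, $fe_n=e_{n+1}$, the invariant hyperplane $\str{e_n : n\in\Z}$, and the bad hyperplane spanned by $e_0+e$ and the $e_n$ with $n\ne 0$), and has the mild merit of realizing an arbitrary hyperplane $M$ directly as the kernel of a coordinate functional rather than relying on transitivity of $\pgl V$ on hyperplanes. One small imprecision: since $\cong$ here is the $\pgl V$-orbit relation (the paper later abbreviates $L_0\not\cong L_1$ as $\neg(\exists\psi)(\psi L_0=L_1)$), the fact that $W=R\cdot(1+t)$ has dimension $\aleph_0$ does not by itself give $W\cong M$; you also need $\codim W=1$, which does hold because $t\equiv -1$ in $V/W$, so your example stands as written.
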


\begin{proof}
Let  $S = \str a$  be a line and $\varphi=\hat f.$ If
$S_{\varphi} \not\in  P^*(V),$ then $S_{\varphi}= V =
\str{f^{n} a: n\in \Z}.$ So  there are no
$f$-invariant lines in  $V.$ Conversely, choose a
basis of  $V$ in the form $\{e\} \cup  \{e_n: n \in
\Z \}.$ Consider  $f \in  \gl V$ such that
\begin{alignat*}2
&fe   &&= e,\\
&fe_n &&= e_{n+1},\quad n \in  \Z.
\end{alignat*}

The hyperplane  $\str{e_n: n \in  \Z}$  is
$f$-invariant. Let the hyperplane  $M$  be the span of
the set $\{e_0+e\} \cup  \{e_n:n\neq 0\}.$ It is
easy to see that  $M_{\varphi}= \{0\} \not\in  P^*(V).$
\end{proof}

Clearly, the condition \ref{LnsAreDfnbl0}(b) is
definable. So the conclusion of Proposition
\ref{RecovInclInPG'} is true if  $\vk  = \aleph_0.$
Let  $\vk$  be again an arbitrary infinite cardinal.

\begin{Lem}  \label{dcdM_f>=vk'}
There are $\varphi \in  \pgl V$ and
a hyperplane  $M \in  P^*(V)$  such that
$\dcd M_{\varphi}  \ge  \vk',$ where  $\vk' =
\min\{|D|^{\aleph_0},\vk\}.$
\end{Lem}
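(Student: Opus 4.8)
The plan is to build, by hand, a transformation $f\in\gl V$ and a hyperplane $M$ for which the invariant subspace $M_\varphi=\bigcap_{n\in\Z}f^{n}M$ (with $\varphi=\hat f$) has both dimension and codimension at least $\vk'$. The guiding observation — and the real point of the lemma — is that, although $M_\varphi$ is the intersection of the \emph{countable} family of hyperplanes $\{f^{n}M:n\in\Z\}$, its codimension is \emph{not} bounded by $\aleph_0$ in infinite dimensions. Indeed, writing $M=\ker\delta$ for a (left-linear) functional $\delta$ and introducing the orbit-readout map $\Phi\colon V\to D^{\Z}$, $\Phi(v)=(\delta(f^{-n}v))_{n\in\Z}$, one has $M_\varphi=\ker\Phi$, so that $\codim M_\varphi=\dim_{D}(\operatorname{im}\Phi)$, and $\operatorname{im}\Phi$ is a $D$-subspace of $D^{\Z}$, whose dimension can be as large as $\dim_{D}(D^{\Z})=|D|^{\aleph_0}$. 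Thus the target cardinal $\vk'=\min\{|D|^{\aleph_0},\vk\}$ is precisely the largest codimension such an $M_\varphi$ can possibly have, and the task is to realize it.

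To realize it, I would first fix a \emph{shift-invariant} left $D$-subspace $\Sigma\subseteq D^{\Z}$ of dimension $\vk'$. Since $\dim_{D}(D^{\Z})=|D|^{\aleph_0}\ge\vk'$, such a $\Sigma$ exists: choose $\vk'$ sequences whose two-sided shifts form a linearly independent family and let $\Sigma$ be their span, which is invariant under the shift $\sigma$ and its inverse and has dimension $\vk'\cdot\aleph_0=\vk'$. Next I would write $V=\Sigma\oplus C$ with $\dim_{D}C=\vk$, so that $\dim V=\max(\vk',\vk)=\vk$, and set $f=(\sigma|_{\Sigma})\oplus\id_{C}$; this $f$ is a bijective linear map, hence $f\in\gl V$ and $\varphi=\hat f\in\pgl V$. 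Finally I would take $\delta(s+c)=s_{0}$ for $s\in\Sigma,\ c\in C$ (the $0$-th coordinate of the $\Sigma$-component), a nonzero left-linear functional, and put $M=\ker\delta$, a hyperplane in $P^{*}(V)$.

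A direct computation then completes the proof. For $v=s+c$ we get $\delta(f^{-n}v)=(\sigma^{-n}s)_{0}=s_{-n}$, so $\Phi(v)=(s_{-n})_{n\in\Z}$ is the reversal of $s$; hence $\operatorname{im}\Phi$ is a linear image of $\Sigma$ and has dimension $\vk'$, while $\ker\Phi=\{s+c:s=0\}=C$. Therefore $M_\varphi=\bigcap_{n\in\Z}f^{n}M=\ker\Phi=C$, which gives $\codim M_\varphi=\dim\Sigma=\vk'$ and $\dim M_\varphi=\dim C=\vk\ge\vk'$, so that $\dcd M_\varphi=\min(\vk,\vk')=\vk'\ge\vk'$, as required.

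The main obstacle is conceptual rather than computational: one has to notice that the codimension of the largest invariant subspace inside a hyperplane is governed by the dimension of an image in $D^{\Z}$ and can climb all the way to $|D|^{\aleph_0}$, instead of being capped at $\aleph_0$ as the naive ``countably many functionals'' count would suggest. Once this is seen, everything else is routine; the only points that need a line of care are the existence of a shift-invariant $\Sigma\subseteq D^{\Z}$ of dimension exactly $\vk'$ (obtained by closing a suitably chosen independent family under the shift) and the standard cardinal identity $\dim_{D}(D^{\Z})=|D|^{\aleph_0}$.
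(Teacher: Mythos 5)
Your proof is correct and follows essentially the same route as the paper's: both rest on $\dim D^{\Z}=|D|^{\aleph_0}$, pick $\vk'$ linearly independent elements of $D^{\Z}$, and realize them inside $V$ with the $\Z$-shift as $f$ so that the orbit-intersection of a suitable hyperplane has codimension $\vk'$. The only difference is packaging: the paper spreads the data over a basis $\{a_{j,n}\}$ with a distinguished fixed vector $a$ carrying the readout and verifies via the functionals $\delta_n$, while you embed the shift-invariant span $\Sigma$ directly and identify $M_{\varphi}=\ker\Phi$, $\codim M_{\varphi}=\dim\operatorname{im}\Phi$ --- a tidier but equivalent computation.
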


\begin{proof}
Let  $D^{\Z}$ be the vector space of functions from $\Z$ to $D.$
Clearly,  $\dim D^{\Z}= |D|^{\aleph_0}.$
Then we may find in $D^{\Z}$ a linearly independent set of
functions  $\{F_j:~j~<~\vk'~\}$  of power  $\vk'.$
Choose  a basis  $V$  in the following form:
$$
\{a\} \cup  \{a_{j,n}:j < \vk', n\in \Z\} \cup  \{b_i: i < \vk\}.
$$

Consider  $f \in  \gl V$ acting on the basis as follows
\begin{alignat*}4
& \mbox{\rm (i)}   && fa       && = a;         \\
& \mbox{\rm (ii)}  && fa_{j,n} &&= a_{j,n+1}, &\quad  j &< \vk', n \in \Z; \\
& \mbox{\rm (iii)}\quad && fb_i     &&= b_i,       &       i &< \vk .
\end{alignat*}

Let  $\varphi = \hat f,$ and let $M$ denote the hyperplane
with a basis
$$
\{a_{j,n}-F_j(-n)a: j < \vk', n\in \Z \} \cup  \{b_i: i<\vk \}.
$$

Let  $\{\delta_n : n \in  \Z\}$ be linear
functions from  $V$  to  $D$ such that the kernel of
$\delta_n$  is  $\varphi^nM$  and  $\delta_n(a)
=1.$ We show that  $\delta_n(a_{j,0}) = F_j(n).$
Since
$$
f^n( a_{j,-n}-F_j(n)a) = a_{j,0}-F_j(n)a,
$$
then  $\delta_n(a_{j,0}-F_j(n)a)=0$  and
$\delta_n(a_{j,0}) = F_j(n).$

It is obvious that  $b \in  M_{\varphi}$  iff,  for all
$n \in \Z,$ $\delta_n(b)=0.$ Let  $b$  be
a non-zero  element  in  $\str{a_{j,0}:j<\vk'}.$ We prove
that $b \not\in  M_{\varphi}.$  Indeed, if  for  all  $n \in
\Z,$ $\delta_n(b) =0,$   where
$b =\displaystyle\sum_{j \in J} \mu_j a_{j,0},$
then we have for an arbitrary integer $n$:
$$
\delta_n(b) = \delta_n(\sum_{j \in J} \mu_j a_{j,0}) =
\sum_{j \in J} \mu_j F_j(n) = 0
$$
or   $\displaystyle \sum_{j \in J} \mu_jF_j= 0.$ Hence the set
$\{F_j: j\in J\}$  is linearly dependent.
\end{proof}

Suppose now that  $\vk  > \aleph_0.$ As a consequence
of the latter theorem we have that any line $N$ satisfies
the following definable condition:
\begin{equation} \label{eqLinesOnly}
(\dcd L=1) \wedge (\exists \varphi)(\exists L')(\dcd L'=1
\wedge L' \not\cong  L \wedge \dcd L'_\varphi > \aleph_0),
\end{equation}
where  $L_0\not\cong  L_1$ is an abbreviation for the
formula $\neg (\exists \psi)(\psi L_0=L_1).$ Conversely, assume
(\ref{eqLinesOnly}) is satisfied by a hyperplane  $S.$ Then there are
a line  $S'=\str a$ and $\varphi = \hat f$  such that the
cardinal $\dcd S'_\varphi  = \dcd \str{f^n a:n \in  \Z}$
is strictly greater than  $\aleph_0.$ On the other
hand, as  $\vk > \aleph_0,$ we have
$$
\dcd \str{ f^n a:n~\in~\Z} \le  \aleph_0.
$$
Therefore the condition $\dim L=1$ is a $\varnothing$-definable.

A final remark:  $\vk  = \aleph_0$  iff  the
structure   $\cPG'(V)$ satisfies the
definable condition  $(\forall L)(\dcd L \le  \aleph_0).$
\qed

\section{Semi-linear groups} \label{2.1}

We devote this and two next sections
to a proof of the following theorem.

\begin{Th} \label{Gl>pGl>pgl>gl} Uniformly in  $\dim V$ and $D,$
\begin{equation}  \label{eq2.1.1}
\TH(\Gl V) \ge \TH(\pGl V) \ge  \TH(\pgl V) \ge \TH(\gl V).
\end{equation}
\end{Th}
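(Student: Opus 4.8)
The plan is to establish the three inequalities in \eqref{eq2.1.1} one at a time, in each case reconstructing, uniformly in $\vk$ and $D$ and without parameters, the group on the right inside the group on the left, and then quoting the Reduction Theorem (Theorem \ref{0.2.1}). The backbone throughout is the reconstruction of the projective space already achieved in Theorem \ref{PrjSpaceInPrjGroup}, together with the action of $\pgl V$ on $\cP(V)$ recovered in Theorem \ref{RecoveringOfBetweenness}, and the structural description of the four groups in Corollary \ref{PrjsAsFactorByRads}: namely $\pGl V \cong \Gl V/\rl V$ and $\pgl V \cong \gl V/Z(\rl V)$, where $\gl V$ is a normal subgroup of $\Gl V$ and $\pgl V$ a normal subgroup of $\pGl V$, with $\pGl V/\pgl V \cong \out{D}$.

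For the first inequality $\TH(\Gl V) \ge \TH(\pGl V)$ I would exhibit $\rl V$ as a $\varnothing$-definable normal subgroup of $\Gl V$ and then interpret $\pGl V$ as the quotient $\Gl V/\rl V$ (Corollary \ref{PrjsAsFactorByRads}(a)), taking as domain $\Gl V$ itself with the definable congruence $x \sim y \iff xy\inv \in \rl V$ and the inherited multiplication. Since $\rl V$ is the centraliser $C_{\Gl V}(\gl V)$ of the linear transformations, the substantive task is to define $\gl V$ inside $\Gl V$. This I would do by carrying out the reconstruction of $\cP(V)$ and of its $D$-coordinatisation inside $\Gl V$ — which is available because $\Gl V$ realises, modulo $\rl V$, the full collineation group of $P(V)$, and the involutions and minimal pairs that carry the arguments of Sections \ref{1.1}–\ref{1.5} lie in the subgroup $\gl V \le \Gl V$ — and then singling out those collineations whose associated automorphism of $D$ is trivial.

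The second inequality $\TH(\pGl V) \ge \TH(\pgl V)$ reduces to defining $\pgl V$ as a $\varnothing$-definable subgroup of $\pGl V$, after which $\pgl V$ with its restricted operation is interpreted directly. Here $\pgl V$ is exactly the normal subgroup of $D$-linear collineations (those with trivial induced field automorphism), and I would again reconstruct $\cP(V)$ with its coordinatisation inside $\pGl V$ — the formulas $FK$, $MP_1$, $MS_1$ of Sections \ref{1.2}–\ref{1.3} all refer to elements that already lie in the subgroup $\pgl V \le \pGl V$, so they remain usable — and then select the linear collineations by triviality of the induced automorphism on the reconstructed $D$. I expect this separation of linear from semi-linear maps (i.e. defining $\gl V$ in $\Gl V$ and $\pgl V$ in $\pGl V$) to be the recurring technical difficulty in the first two links.

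The last inequality $\TH(\pgl V) \ge \TH(\gl V)$ is the main obstacle, because $\gl V$ sits in a central extension $1 \to Z(D)^* \to \gl V \to \pgl V \to 1$ and a group cannot in general be recovered from its central quotient; the missing information has to come from the geometry. Using Theorems \ref{PrjSpaceInPrjGroup} and \ref{RecoveringOfBetweenness} I would reconstruct in $\pgl V$ the projective space $\cP(V)$ together with the action of $\pgl V$ on it, and from $\cP(V)$ (with $\dim V=\vk\ge 3$) recover, by a first-order version of the fundamental theorem of projective geometry, the coordinatising division ring $D$ and the vector space $V$ with its $D$-action. An element of $\gl V$ would then be coded by a pair $(\bar g,\lambda)$ consisting of its projective image $\bar g\in\pgl V$ and a scalar $\lambda\in Z(D)^*$, with the group law read off from the reconstructed linear action on $V$. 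The hard part is precisely this faithful, definable reconstruction of $D$, of $V$, and of the scalar normalisation — pinning down the $Z(D)^*$-torsor over each $\bar g$ and the resulting multiplication cocycle single-valuedly; once the two-sorted structure $\str{V,D}$ and the action are in hand, assembling $\gl V$ from the pairs $(\bar g,\lambda)$ with the reconstructed multiplication is routine.
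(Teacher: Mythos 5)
Your three-step decomposition and overall strategy (make $\rl V$ definable in $\Gl V$ and pass to the quotient; make $\pgl V$ definable in $\pGl V$ and restrict; recover $\gl V$ from $\pgl V$ together with the reconstructed projective space) coincide with the paper's skeleton (Theorems \ref{Gl>pGl}, \ref{pGl>pgl}, \ref{PH<>H}). But there is a genuine gap at the point you dispose of in one clause: the claim that the formulas $FK$, $MP_1$, $MS_1$ of Sections \ref{1.2}--\ref{1.3} ``remain usable'' in $\Gl V$ and $\pGl V$ because the elements they are meant to pick out lie in the subgroups $\gl V$, $\pgl V.$ A first order formula evaluated in $\pGl V$ has its quantifiers ranging over all of $\pGl V$: the witnesses $y_1,y_2,y_3$ in $Ob(x)$ may now be semi-linear, and the universal quantifier in $K(x)$ ranges over genuinely new involutions of $\pGl V,$ namely those induced by semi-involutions of $\Gl V$ with non-inner associated automorphism (including the exceptional ones with $\s^2=\l\cdot\id(V)$ and $\l\ne\mu^\s\mu$ for all $\mu$). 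So none of the definability results of Sections \ref{1.1}--\ref{1.5} transfer automatically; they must be re-proved in the larger group, and that is exactly what Sections \ref{2.1}--\ref{2.2} of the paper do: Lemma \ref{PCov=Cov}, the claims that every non-linear involution of $\Gl V$ covers every linear one (Claims \ref{LinsInTheirHands1} and \ref{LinsInTheirHands2}), Lemma \ref{SemisVs2Gammas} on what the exceptional semi-involutions cover, and the chain $\chi_0,\chi_1,\chi_2$ isolating the $\vk$-involutions of $\pgl V$ inside $\pGl V.$ Without an argument of this kind your first two inequalities are not established. (The paper also avoids your detour through a coordinatisation of $D$ when defining $\gl V$ in $\Gl V$: in characteristic $\ne 2$ it quotes Baer's theorem that $\rl V$ is the centraliser of the involutions $\s$ with $\s\not\sim-\s,$ and in characteristic $2$ it uses definability of minimal pairs together with the fact that radiations are exactly the elements fixing every line and hyperplane.)

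For the last inequality your plan is workable in outline, but you have left the decisive step open rather than solved it: coding $g\in\gl V$ by a pair $(\bar g,\l)$ with $\l\in Z(D)^*$ presupposes a definable section of the central extension, i.e.\ a uniform choice of base point in each $Z(D)^*$-coset, and no such section is available in general. The paper's Proposition \ref{(PH,P)>H} sidesteps the cocycle entirely: fixing parameters $L_1^*$ (a line, resp.\ a plane in the linear case) and a complement $L_2^*,$ it takes the definable subgroup of elements of the projective group stabilising this configuration and sends $\f=\hat f$ to $\l_f^{-1}\left. f\right|_{L_2^*};$ this is a well-defined isomorphism onto $H(L_2^*)\cong H(V),$ and since the parameters range over a $\varnothing$-definable set the Reduction Theorem applies. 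You should adopt a normalisation of this kind (equivalently, code $f$ by its projective image together with the value of $f$ at one fixed vector) in place of the torsor and cocycle bookkeeping you flag as ``the hard part'' but do not carry out.
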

(Recall that $\ge$ means `syntactically interprets',
see Section \ref{0.12} for the definition).

{\bf Remarks.} (a) We shall prove in Section \ref{3.1}
that the elementary theory of the projective space
$\cP(V)=\str{P(V);\subseteq}$ syntactically interprets
the second order theory
$\Theory(\str{\vk,D},\LII(\vk^+))$ (see the
Introduction for the definitions of the latter
structure and the logic  $\LII(\vk^+)$). Therefore
$$
\Theory(\pgl V) \ge \Theory(\cP(V)) \ge \Theory(\str{\vk,D},\LII(\vk^+)).
$$
Under the assumption $\vk=\dim V \ge |D|$ the theory
$\Theory(\str{\vk,D},\LII(\vk^+))$ becomes
the full second theory of the structure $\str{\vk,D};$
one can interpret in this theory the elementary
theory of the semi-linear (general linear) group of $V$:
$$
\Theory(\str{\vk,D},\LII(\vk^+))\ge \Theory_2(\str{\vk,D}) \ge \Theory(\Gl V) \ge \Theory(\gl V).
$$
Then it follows from Theorem \ref{Gl>pGl>pgl>gl} that,
under the assumption $\dim V \ge |D|,$ the elementary
theories of the groups $\Gl V,$ $\pGl V,$ $\pgl V,$
and $\gl V$ are pairwise mutually syntactically
interpretable, or, in other words, they have the
same logical power.

We shall prove in Section \ref{2.3} that both
relations $\Theory(\pGl V) \ge \Theory(\Gl V)$ and
$\Theory(\pgl V) \ge \Theory(\gl V)$ hold without any
assumptions on the dimension of $V.$ Note also that in
the general case $\Theory(\gl V) \not\ge \Theory(\Gl
V)$ (Section \ref{3.4}).

(b) The results, we shall consider in Sections
\ref{2.1}--\ref{2.3}, will not be used for a proof of
mutual syntactical interpretability of the elementary
theories of the groups $\pgl V$ and $\gl V$ with the
second order theory $\Theory(\str{\vk,D},\LII(\vk^+)).$ So a
reader who is not interested in the semi-linear case
may now move to Section \ref{3.1}.

We shall prove Theorem \ref{Gl>pGl>pgl>gl} in three
steps following the sign $\ge$  in (\ref{eq2.1.1}).

To start a proof of the first relation, $\TH(\Gl V) \ge \TH(\pGl V),$
we need some facts on so-called semi-involutions. We shall also
use this information in a proof of the second relation,
$\TH(\pGl V) \ge  \TH(\pgl V).$

According to \cite[Chapter I, Section 3]{Die2}, a transformation $\sigma \in
\Gl W$ is said to be a {\it semi-involution} if it
induces an involution in the group $\pGl W.$ Consider
a semi-involution $\sigma.$ Clearly, the square of a
semi-involution is a radiation: $\sigma^2=\lambda \cdot \id(V).$
The semi-involution $\sigma$ can induce the same involution
in $\pGl W$ as an {\it involution} $\pi \in \Gl W,$ that
is $\hat\sigma=\hat\pi.$ It is easy to see that this is
possible iff the following condition holds
\begin{equation} \label{eqMuSigmaMu}
(\exists \mu \in D)(\lambda=\mu^\sigma \mu).
\end{equation}
(Recall from Section \ref{0.12} that $\mu^\sigma$
denotes the action of the associated automorphism of $\sigma$
on a scalar $\mu.$) Hence there is no involution in $\Gl W$
which induces the involution $\hat\sigma$ iff
\begin{equation} \label{eqNoMu's!}
(\forall \mu  \in D)(\lambda  \ne  \mu^{\sigma}\mu).
\end{equation}
Notice the similarity between the concepts we introduce
here and the concepts which produce the partition of the
set of $\pgl W$-involutions on involutions of the first
kind and involutions of the second kind.

In the next section we shall discuss semi-involutions satisfying
(\ref{eqNoMu's!})  in more detail. The key fact on semi-involutions
satisfying (\ref{eqMuSigmaMu}) is the following

\begin{Prop}  \label{SemisInDie2} \mbox{\rm
(\cite[Chapter I, Section 3]{Die2}). }
Assume that a semi-involution $\s \in \Gl W$ satisfies
\eqref{eqMuSigmaMu} that is $\s^2=\l \cdot \id(W)$
and $\l=\mu^\s \mu$ for some $\mu \in D.$ Then
there exists a basis of $W$ on which $\s$ acts
as the radiation $\mu \cdot \id(W).$
\end{Prop}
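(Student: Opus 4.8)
The plan is to normalise $\s$ to a genuine involution and then manufacture an eigenbasis by a descent (Hilbert~90 type) argument. First I would set $\pi=\mu^{-1}\s$, i.e. $\pi(a)=\mu^{-1}\s(a)$ for $a\in W$; this is again an element of $\Gl W$, being the composite of the radiation $\mu^{-1}\cdot\id(W)$ with $\s$, and it is semilinear with associated automorphism $h\colon\nu\mapsto\mu^{-1}\nu^{\s}\mu$. A direct computation using the hypothesis shows that $\pi$ is an involution: indeed $\pi^2(a)=\mu^{-1}(\mu^{\s})^{-1}\l\,a$, and since $\l=\mu^{\s}\mu$ the middle factors collapse to give $\pi^2(a)=\mu^{-1}\mu\,a=a$, so $\pi^2=\id(W)$. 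Moreover a vector $e$ satisfies $\pi e=e$ precisely when $\s e=\mu e$. Thus the whole statement reduces to a single assertion: the set $\Fix(\pi)$ of $\pi$-fixed vectors contains a basis of $W$.

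Next I would show that $\Fix(\pi)$ spans $W$ over $D$ by an averaging argument that works in every characteristic. For any $a\in W$ the vector $a+\pi a$ is fixed by $\pi$, and more generally $\nu a+\pi(\nu a)=\nu a+\nu^{h}\pi a$ is fixed for every scalar $\nu$. Combining these two fixed vectors,
\[
\nu^{h}(a+\pi a)-(\nu a+\nu^{h}\pi a)=(\nu^{h}-\nu)\,a ,
\]
so as soon as we can choose $\nu$ with $\nu^{h}\neq\nu$, the vector $a$ is exhibited as a left $D$-linear combination of the two $\pi$-fixed vectors $a+\pi a$ and $\nu a+\nu^{h}\pi a$. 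Hence $\Fix(\pi)$ generates $W$ as a left $D$-space; and since over a division ring every generating set contains a basis, I may extract a basis $\{e_i\}\subseteq\Fix(\pi)$. On this basis $\s e_i=\mu e_i$, which is exactly the required conclusion.

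The step I expect to be the main obstacle — and the only place where the argument really uses that $\s$ is a \emph{semi}-involution — is guaranteeing the existence of a scalar $\nu$ with $\nu^{h}\neq\nu$, that is, the nontriviality of the automorphism $h$ of $\pi$. Note that $h=\id$ would mean $\nu^{\s}=\mu\nu\mu^{-1}$ for all $\nu$, i.e. the automorphism associated with $\s$ is the inner automorphism given by $\mu$; equivalently $\pi=\mu^{-1}\s$ would be linear, so $\s$ would be a radiation times a linear involution and $\hat\s$ would already lie in $\pgl W$. In that degenerate situation the conclusion genuinely fails, since a nontrivial linear involution has no fixed basis. It is therefore precisely the hypothesis that $\s$ is a true semi-involution — its associated automorphism is not inner, whence $h\neq\id$ — that rescues the averaging step, and I would record this nontriviality explicitly (via the description of semi-involutions above) before invoking the descent computation.
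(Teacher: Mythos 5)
Your reduction and the descent computation are correct, and they are in essence the classical argument of Dieudonn\'e and Baer that the paper cites instead of reproving: $\pi=\mu^{-1}\s$ is semilinear with $\pi^2=\id(W)$ and associated automorphism $h(\nu)=\mu^{-1}\nu^{\s}\mu$; a vector is fixed by $\pi$ exactly when it is a $\mu$-eigenvector of $\s$; and the identity $(\nu^{h}-\nu)a=\nu^{h}(a+\pi a)-(\nu a+\nu^{h}\pi a)$ shows that $\Fix(\pi)$ spans $W$ as a left $D$-space as soon as some $\nu$ satisfies $\nu^{h}\ne\nu$, after which one extracts a basis from the spanning set. All of this works in every characteristic, and you are right that the entire weight of the proof rests on producing such a $\nu.$

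The gap is in your final paragraph. You discharge the condition $h\ne\id$ by asserting that a ``true semi-involution'' has non-inner associated automorphism, but that is not the paper's definition: a semi-involution is any $\s\in\Gl W$ inducing an involution in $\pGl W,$ so every linear involution $\s\in\gl W$ with $\s\ne\pm\id(W)$ qualifies. Such a $\s$ satisfies \eqref{eqMuSigmaMu} with $\l=1$ and $\mu=\pm 1,$ yet $h=\id$ and, as you yourself observe, the conclusion genuinely fails. So the obstruction you isolated is real, but it cannot be argued away from the stated hypotheses; what your proof actually requires is the further assumption that the associated automorphism of $\s$ is not conjugation by $\mu,$ equivalently that $\mu^{-1}\s\notin\gl W.$ (Note also that $h\ne\id$ is not the same as ``the associated automorphism of $\s$ is not inner'': it could be inner, given by some $\rho$ with $\rho^{-1}\mu\notin Z(D),$ and your argument would still go through.) This extra hypothesis holds automatically in the one place the paper invokes the proposition, namely Corollary \ref{BeautyOfNLInvs} on non-linear involutions, where $\mu=1$ and $h$ is the nontrivial associated automorphism itself; you should state it explicitly rather than fold it into the word ``semi-involution''.
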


As an immediate consequence we have

\begin{Cor} \label{BeautyOfNLInvs} \mbox{\rm (a)}
Every non-linear involution in the group $\Gl W$ has a
basis of $W$ which it pointwise fixes.

{\rm (b)} Non-linear involutions in the group $\Gl W$ over
division ring $D$ are conjugate if and only if their associated
automorphisms are conjugate in the group $\aut D.$
\end{Cor}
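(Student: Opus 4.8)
The plan is to derive both parts directly from Proposition \ref{SemisInDie2}, exploiting the observation that a \emph{non-linear involution} is simply a semi-involution $\sigma$ with $\sigma^2=\id(W)$, i.e. with $\lambda=1$ in the notation $\sigma^2=\lambda\cdot\id(W)$. Since the whole analytic content already sits inside the Proposition, the corollary should indeed come out as bookkeeping.

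For part (a): because $\lambda=1$, condition \eqref{eqMuSigmaMu} holds with the choice $\mu=1$ (as $1^\sigma\cdot 1=1$). Proposition \ref{SemisInDie2} then furnishes a basis of $W$ on which $\sigma$ acts as the radiation $1\cdot\id(W)=\id(W)$; that is, $\sigma$ fixes this basis pointwise. Since a semi-linear transformation is determined by its values on a basis together with its associated automorphism, this single application of the Proposition settles (a).

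For part (b), the key point is that the assignment $\Phi:\Gl W\to\aut D$ sending $\sigma$ to its associated automorphism is a group homomorphism. Indeed, computing $(\sigma\tau)(\lambda a)$ shows that the automorphism of a product $\sigma\tau$ is the composite of the automorphisms of $\sigma$ and $\tau$, and uniqueness of the associated automorphism makes $\Phi$ well defined. Consequently, if $\sigma_2=\tau\sigma_1\tau^{-1}$ then $\Phi(\sigma_2)=\Phi(\tau)\Phi(\sigma_1)\Phi(\tau)^{-1}$, so the associated automorphisms $f_1=\Phi(\sigma_1)$ and $f_2=\Phi(\sigma_2)$ are conjugate in $\aut D$; this is the forward direction.

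For the converse, suppose $f_2=g f_1 g^{-1}$ with $g\in\aut D$. By part (a), choose bases $\{e_j:j\in J\}$ and $\{e_j':j\in J\}$, indexed by a common set since $\dim W$ is intrinsic, fixed pointwise by $\sigma_1$ and $\sigma_2$ respectively. Let $\tau\in\Gl W$ be the bijective semi-linear transformation with associated automorphism $g$ determined by $\tau(e_j)=e_j'$; it is bijective precisely because it carries a basis onto a basis. Then $\tau\sigma_1$ and $\sigma_2\tau$ agree on every $e_j$ and share the associated automorphism $g f_1=f_2 g$, the equality $g f_1=f_2 g$ being exactly the hypothesis $f_2=g f_1 g^{-1}$; hence $\tau\sigma_1=\sigma_2\tau$, i.e. $\sigma_1$ and $\sigma_2$ are conjugate. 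I expect no genuine obstacle: the only thing to watch is the order of composition of associated automorphisms (so that $\Phi$ is a homomorphism and not an anti-homomorphism) and the standard uniqueness fact that a semi-linear map is pinned down by its automorphism together with its action on a basis.
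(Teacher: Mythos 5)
Your proof is correct and follows essentially the same route as the paper, which presents the corollary as an immediate consequence of Proposition \ref{SemisInDie2}: part (a) is exactly the specialization $\lambda=\mu=1$ of that proposition, and part (b) is the standard bookkeeping using the homomorphism from $\Gl W$ onto $\aut D$ for the forward direction and the transport of pointwise-fixed bases (from part (a)) for the converse. No gaps.
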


Proposition \ref{SemisInDie2} is proved in \cite{Die2}
formally for finite-dimensions, but, in fact, the proof
works for arbitrary vector space $W.$ The
proof of Corollary \ref{BeautyOfNLInvs} for semi-linear
groups of characteristic $\ne 2$ can be also found in
\cite[Chapter  VI, Section 6]{Ba}.

\begin{Th} \label{Gl>pGl}
$\TH(\Gl V) \ge \TH(\pGl V).$
\end{Th}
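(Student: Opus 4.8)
The plan is to interpret $\pGl V$ in $\Gl V$ as the quotient $\Gl V/\rl V$. By Corollary \ref{PrjsAsFactorByRads}(a) we have $\pGl V \cong \Gl V/\rl V$, so if the normal subgroup $\rl V$ is $\varnothing$-definable in $\Gl V$, then the coset equivalence relation $x \sim y \Leftrightarrow x\inv y \in \rl V$ is $\varnothing$-definable, the group multiplication descends to the $\sim$-classes, and the Reduction Theorem (Theorem \ref{0.2.1}) yields $\TH(\Gl V) \ge \TH(\pGl V)$. Thus the whole theorem reduces to a single task: \emph{exhibit a first order formula defining the set $\rl V$ of radiations inside $\Gl V$.}

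First I would record a centralizer description of $\rl V$. For $\sigma \in \Gl V$ and a radiation $\lambda\cdot\id(V)$ one computes $\sigma(\lambda\cdot\id(V))\sigma\inv = \lambda^{\sigma}\cdot\id(V)$, so $\sigma$ centralizes $\rl V$ exactly when its associated automorphism is trivial; conversely a $\sigma$ commuting with every linear transformation must fix every line setwise, and a semi-linear bijection fixing every line is a radiation, so $\rl V = \mathrm{C}_{\Gl V}(\gl V)$. More economically, it suffices to centralize the set $E$ of \emph{extremal linear involutions} — reflections when $\chr D \ne 2$, transvections when $\chr D = 2$: any element commuting with all of them preserves every line (and every hyperplane) when $\chr D\ne 2$, respectively centralizes the irreducibly acting group they generate when $\chr D=2$, and is therefore a radiation. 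Hence $\rl V = \mathrm{C}_{\Gl V}(E)$, and the entire problem is reduced to the $\varnothing$-\textbf{definability of $E$ in $\Gl V$}.

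To define $E$ I would reuse the characterization of extremal involutions developed in Sections \ref{1.1}--\ref{1.5} (minimal pairs, the Mackey--Rickart criterion of Theorem \ref{Mckey&Rckrt}, and the $\Cov$-relation), which already isolates the extremal involutions among the \emph{linear} ones. The genuinely new difficulty in $\Gl V$ — and the step I expect to be the main obstacle — is that $\Gl V$ also contains \emph{non-linear} involutions (semi-involutions squaring to $\id(V)$), which must be excluded by a first order condition before the linear machinery applies. Here Corollary \ref{BeautyOfNLInvs} is the decisive tool: a non-linear involution pointwise fixes a basis of $V$, and, by part (b), two of them are conjugate iff their associated automorphisms are conjugate in $\aut D$. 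This forces the centralizer of a non-linear involution to be a collinear group over the fixed subring, and makes its behaviour under conjugacy and under $\Cov$ markedly different from that of a true reflection or transvection; converting this structural difference into a formula $\mathit{Lin}(x)$ that holds of an involution precisely when it is linear is the technical heart of the argument. Once $E$ is defined by conjoining $\mathit{Lin}$ with the extremality formula, I set $\rl V = \mathrm{C}_{\Gl V}(E)$ and finish by the Reduction Theorem as above; as usual, the sentence \eqref{eqCharD=2} expressing $\chr D = 2$ lets one merge the two characteristic cases into a single interpretation uniform in $D$.
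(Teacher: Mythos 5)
Your global strategy coincides with the paper's: reduce everything to the $\varnothing$-definability of $\rl V$ in $\Gl V$, then pass to the quotient via Corollary \ref{PrjsAsFactorByRads} and the Reduction Theorem. Your centralizer identity $\rl V = \mathrm{C}_{\Gl V}(E)$ is correct, and it is essentially the ``natural geometrical plan'' the paper itself announces; the paper carries that plan out (through minimal pairs and preservation of the subspaces in $P^{(1)}(V)$) only in characteristic $2$, while in characteristic $\ne 2$ it short-circuits via Baer's Theorem \ref{rl_in_Gl}, which exhibits $\rl V$ as the centralizer of the set of involutions $\sigma$ with $\sigma \not\sim -\sigma$ and thereby avoids having to define $E$ at all in that case.

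The genuine gap sits exactly where you place ``the technical heart'': you never produce the formula $\mathit{Lin}(x)$, i.e.\ you give no first order condition separating the non-linear involutions of $\Gl V$ from the extremal linear ones, so the set $E$ is not actually defined and the interpretation is not completed. The paper closes this gap without defining linearity separately: Claims \ref{LinsInTheirHands1} and \ref{LinsInTheirHands2} show (using Corollary \ref{BeautyOfNLInvs}, as you anticipate, when $\chr D \ne 2$, and the normal form (\ref{eq0.1.1}) when $\chr D = 2$) that \emph{every non-linear involution covers every linear involution}, hence covers representatives of many conjugacy classes. Combining this with Lemma \ref{PCov=Cov} and the covering computations of Sections \ref{1.2}--\ref{1.4}, the extremal linear involutions are exactly the involutions of $\Gl V$ that cover, up to conjugacy, just one involution (for $\chr D \ne 2$), respectively exactly two (for $\chr D = 2$); this single $\Cov$-condition simultaneously defines $E$, excludes the non-linear involutions, and (since the two counts differ) lets one distinguish the characteristics so that the interpretation is uniform in $D$. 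Without this covering argument, or some substitute for it, your proof does not go through; with it, your $\mathrm{C}_{\Gl V}(E)$ route works and is in fact a uniform streamlining of what the paper does.
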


\begin{proof}

By Corollary \ref{PrjsAsFactorByRads} it suffices to show that
the group $\rl V$ is a $\varnothing$-definable
subgroup of $\Gl V.$ For semi-linear groups of
characteristic $\ne 2$ it follows from the well-known
result from \cite{Ba} (Theorem \ref{rl_in_Gl}
below), the group-theoretic description $\rl W$ in the
group $\Gl W.$ The author has no information about a
similar result for the semi-linear groups of
characteristic 2. We shall realize in this case a
natural geometrical plan which works without any
assumptions on the characteristic. Suppose we have
proved that the set of all $\gl V$-minimal pairs is a
definable subset in the group $\Gl V.$ Then the
definability of $\rl V$ can be easily deduced from the fact
that the radiations and only the radiations preserve
all the subspaces in $P^{(1)}(V).$

We can use all the results on the behaviour of the
relation $\Cov$ we obtained earlier due to the
following simple fact:

\begin{Lem} \label{PCov=Cov}
Let $\gamma,\gamma'$ be cardinals $\le \dim V.$
Then some $\gamma$-involution covers some $\gamma'$-involution
in the group $\gl V$ if and only if some $\gamma$-involution
of the first kind covers some $\gamma'$-involution
of the first kind in $\pgl V.$
\end{Lem}

{\it I. The characteristic of  $D$  is not} 2.
The above mentioned result from \cite{Ba}
is the following

\begin{Th} \label{rl_in_Gl} \mbox{\rm (\cite[Chapter VI, Section  6]{Ba}). }
Let $W$ be a vector space of dimension at least $3$ over
a division ring of characteristic $\ne 2.$ Then the subgroup $\rl W$
of the group $\Gl W$ is the centralizer
of the set of all involutions $\sigma$ such that
$\sigma \not\sim -\sigma.$
\end{Th}

In particular, the group $\rl V$ is a definable
subgroup of the group $\Gl V,$ because $-\id(V)$ is the
unique involution in the center of $\Gl V.$

The following claim will help us later to distinguish
the cases $\chr D\ne 2$ and $\chr D=2.$

\begin{Clm} \label{LinsInTheirHands1}
Let $\chr D \ne 2.$ Then every non-linear involution in $\Gl V$
covers every linear involution.
\end{Clm}

\begin{proof}
By Corollary \ref{BeautyOfNLInvs}(b), all non-linear involutions with the same
associated automorphism are conjugate. Using
Corollary \ref{BeautyOfNLInvs}(a), choose a
basis $\{e_i : i < \vk\},$ where $\vk=\dim V,$ which $\sigma$ pointwise
fixes. Let
$I_1\cup I_2$ be a partition of $\vk.$ Consider
an involution $\sigma_1,$ having the same associated
automorphism (of the order 2) as $\sigma$ has, and such that
\begin{alignat*}2
\sigma_1e_i &= -e_i, &\quad i&\in I_1, \\
\sigma_1e_i &= e_i,  & i&\in I_2.
\end{alignat*}
As the associated automorphism of the product of two
transformations in the semi-linear group is the
product of the associated automorphisms of the
factors, the transformation $\sigma_1\sigma$ $(=\sigma\sigma_1)$ is
in $\gl V.$
\end{proof}

We could finish the consideration of the case here,
but we shall do a little more.  After completing
a proof of Theorem \ref{Gl>pGl>pgl>gl}, we are going to
consider the structure of isomorphisms for infinite-dimensional
linear groups of types $\Gamma$L, P$\Gamma$L, GL, and
PGL.  For this purpose we need a group-theoretic
characterization of minimal pairs in these groups.

It is easy to see that $\gl V$ is the centralizer of the subgroup $\rl V$ in $\Gl
V.$ Therefore by Theorem \ref{rl_in_Gl} $\gl V$ is
also a definable subgroup of the group $\Gl V.$
According to Lemma \ref{PCov=Cov}, $\gl V$-extremal involutions are
exactly those $\gl V$-involutions which cover up to conjugacy
just one involution, and hence they are $\varnothing$-definable in $\gl V.$
Then by applying Theorem \ref{Mckey&Rckrt}, we conclude that

\begin{Clm} \label{MPsAreDefnbl1.2}
Let $\chr D \ne 2.$ Then the set of all $\gl V$-minimal
pairs is $\varnothing$-definable in the group $\Gl V.$
\end{Clm}

{\it II. The characteristic of  $D$ is} 2.  We have
demonstrated in Section \ref{1.4} that $\pgl
V$-extremal involutions are exactly $\pgl
V$-involution of the first kind that cover, up to
conjugacy, only two conjugacy classes of involutions. Hence by Lemma
\ref{PCov=Cov} the extremal involutions are
$\varnothing$-definable in $\gl V.$  By Proposition
\ref{MPsInChar2} if the set of extremal involutions is
definable,  the set of all minimal pairs is definable,
too. Therefore,

\begin{Clm} \label{MPsAreDefnlb2.2}
If $\chr D=2,$ the set of all $\gl V$-minimal pairs is
$\varnothing$-definable
in the group $\Gl V.$
\end{Clm}

Let us prove now a fact similar to Claim \ref{LinsInTheirHands1}.

\begin{Clm} \label{LinsInTheirHands2} Let $\chr D=2.$
Then every non-linear involution in $\Gl V$ covers
every linear involution.
\end{Clm}

\begin{proof}
Let  ${\mathcal B} = \{e_i: i\in I\} \cup  \{e_j: j\in J\}
\cup  \{d_i: i \in I\}$ be a basis of $V,$ where
$I \cup  J$ is a partition of $\vk.$ Choose
non-linear involutions  $\sigma_1,\sigma_2\in \Gl V,$
having the same associated automorphism such that
\begin{alignat*}3
&(1)\quad && \sigma_1 \text{ pointwise fixes }{\mathcal B} ;\\
&(2) && \sigma_2e_i= e_i,     && i\in I, \\
&    && \sigma_2e_j= e_j,     && j\in J, \\
&    && \sigma_2d_i= d_i+e_i, && i\in I.
\end{alignat*}
Since  $\sigma_1$ and $\sigma_2$  have the same
associated automorphism, then $\sigma_1 \sim
\sigma_2.$ It is easy to see that $\sigma_1\sigma_2 =
\sigma_2\sigma_1.$
According to the description
of $\gl V$-involutions given in Section \ref{0.12},
an arbitrary involution in $\gl V$ can be obtained
in this way.
\end{proof}

The first order condition `there is an involution
covering, up to conjugacy, just one involution' holds
for the group $\Gl V$ iff the characteristic of the
division ring $D$ is not 2.  Indeed, by Claims
\ref{LinsInTheirHands1} and \ref{LinsInTheirHands2}
non-linear involutions are out of play. Then we use
Lemma \ref{PCov=Cov}. Hence the interpretation of the
theory $\TH(\pGl V)$ in the theory $\TH(\Gl V)$ can be
done uniformly in $D.$
\end{proof}

\section{ $\mbox{\rm PGL}(V)$ is a definable subgroup of
$\mbox{\rm P$\Gamma$L}(V)$} \label{2.2}

\begin{Th} \label{pGl>pgl} The subgroup $\pgl V$
is $\varnothing$-definable in the group $\pGl V,$
and therefore the theory $\TH(\pGl V)$ syntactically
interprets the theory $\TH(\pgl V).$
\end{Th}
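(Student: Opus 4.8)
The plan is to reduce the statement to a purely group--theoretic description, inside $\pGl V$, of the projective images of \emph{linear} transformations. By Corollary~\ref{PrjsAsFactorByRads} we have $\pGl V \cong \Gl V/\rl V$, and under this identification $\pgl V$ is the image of $\gl V$. Since the associated automorphism is multiplicative and the radiation $\lambda\cdot\id(V)$ has associated automorphism equal to conjugation by $\lambda$, the map $\hat\sigma \mapsto (\text{class of the associated automorphism of }\sigma)$ is a well-defined homomorphism $\pGl V \to \out D$ whose kernel is exactly $\pgl V$. Thus $\hat\sigma\in\pgl V$ if and only if $\sigma$ may be chosen linear (equivalently, its associated automorphism is inner), and the whole problem is to express this condition by a first order formula in the pure group language of $\pGl V$.

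First I would isolate, as a $\varnothing$-definable set $\Lambda$, the involutions of $\pGl V$ that are induced by \emph{linear} involutions of $\Gl V$. For this I would rerun the covering analysis of Sections~\ref{1.1}--\ref{1.2} inside the larger group $\pGl V$: the relation $\Cov$ and the formulas $Ob$, $K$, $FK$ of Theorem~\ref{FKsAreDefnbl} are available verbatim, and the task is to check that their solution sets are not enlarged once the non-linear involutions and the genuine semi-involutions enter the picture. The crucial separating facts are that a product of linear involutions is again linear, whereas by Corollary~\ref{BeautyOfNLInvs} and Proposition~\ref{SemisInDie2} every non-linear involution of $\Gl V$ pointwise fixes a basis and, by Claims~\ref{LinsInTheirHands1} and~\ref{LinsInTheirHands2}, covers \emph{every} linear involution. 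Consequently a non-linear involution cannot be written as a product of conjugate linear involutions and so fails the covering comparisons built into $FK$; the semi-involutions satisfying \eqref{eqNoMu's!} are excluded for the same reason. This should pin down $\Lambda$ as precisely the images of linear involutions, uniformly in $D$, with the cases $\chr D\ne 2$ and $\chr D=2$ treated as in Sections~\ref{1.3} and~\ref{1.4} respectively.

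Second, I would recover the whole subgroup $\pgl V$ from $\Lambda$. Since every element of $\Lambda$ is linear, $\langle\Lambda\rangle \subseteq \pgl V$; the reverse inclusion rests on the structural fact that, in infinite dimension, the general linear group is \emph{boundedly} generated by involutions, so that every element of $\pgl V$ is a product of a fixed finite number $n$ of members of $\Lambda$. This bounded width is exactly what makes $\langle\Lambda\rangle$ first order definable, namely as the set of products of at most $n$ elements satisfying the formula for $\Lambda$. Having a $\varnothing$-definable subgroup, the Reduction Theorem~\ref{0.2.1} then immediately yields $\TH(\pGl V)\ge\TH(\pgl V)$.

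The main obstacle is twofold. The delicate point in the first step is that conjugation by an arbitrary $\hat\sigma\in\pGl V$ already permutes the linear involutions (a conjugate of a linear transformation by a semi-linear one is again linear), so the action of $\pGl V$ on $\Lambda$, and more generally on the reconstructed projective geometry, is blind to the associated automorphism; hence one genuinely cannot detect inner-ness by any action-based condition and must instead recover $\pgl V$ \emph{internally} through generation, which is why securing the bounded-generation statement uniformly in $\vk$ and $D$ (and in both characteristics) is the real crux. It is precisely the absence of a determinant in infinite dimension that should make this bounded involution-generation available, in contrast to the finite-dimensional situation; verifying it, together with the invariance of the solution sets of $Ob$, $K$, $FK$ under the passage from $\pgl V$ to $\pGl V$, is where the substantive work lies.
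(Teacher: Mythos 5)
The decisive step of your argument---that $\pgl V$ is the set of products of a fixed number $n$ of elements of $\Lambda$, because $\gl V$ is boundedly generated by involutions---is asserted rather than proved, and this is not the route the paper takes. The paper instead quotes Baer's Theorem~\ref{pgl_in_pGl}: every element of $\pgl W$ is a product of at most \emph{three} transformations fixing every line inside some two-dimensional subspace, and it makes the weakened condition \eqref{eqTheyAreInPGL+} first order by reconstructing $\str{P^{(2)}(V);B}$ with the $\pGl V$-action inside $\pGl V$. Your substitute lemma (uniform bounded involution width of $\gl V$ over an arbitrary division ring, in both characteristics) is a genuine theorem that would have to be supplied: already a central radiation $\lambda\cdot\id(V)$ with $\lambda^2\neq 1$ is inverted by no involution, hence is not a product of two involutions, and an eigenvalue comparison rules out three, so the width is at least four even in the easiest cases. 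Worse, in characteristic $2$ your set $\Lambda$ cannot be taken to be the involutions the paper actually defines in $\pGl V$ there (only the extremal and the $2$-involutions): extremal involutions are transvections, a product of $n$ of them has residual space of dimension at most $n$, and so no bounded power of that class reaches a $\vk$-involution. You would therefore also need the definability in $\pGl V$ of \emph{all} first-kind involutions in characteristic $2$, which the paper neither proves nor needs.

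Your stated obstacle---that ``one genuinely cannot detect inner-ness by any action-based condition''---is exactly backwards, and it is what pushes you onto the harder path. The computation the paper performs after \eqref{eqTheyAreInPGL+} shows that a semi-linear map fixing every line of a two-dimensional subspace (or every hyperplane over a two-codimensional one) must have inner associated automorphism; this action-based test, combined with Baer's three-factor generation, is precisely how $\pgl V$ is defined. Your first step (isolating the first-kind involutions of $\pgl V$ among the non-linear involutions and the semi-involutions satisfying \eqref{eqNoMu's!}) does coincide in substance with the paper's work in characteristic $\neq 2$, via the formulas $\chi_0,\chi_1,\chi_2$ and Lemma~\ref{SemisVs2Gammas}; note, though, that the exclusion mechanism is that such elements cannot be \emph{covered by} $\vk$-involutions (which cover only elements of $\pgl V$), not that they fail the covering tests in the other direction---by Claims \ref{LinsInTheirHands1} and \ref{LinsInTheirHands2} they in fact cover every linear involution.
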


\begin{proof}
We start with the well-known description of the
subgroup $\pgl W$ in the group $\pGl W$ of dimension
at least $3$ \cite[Chapter III, Section  2]{Ba}. Let us discuss key points
of this description.  Let $\hat \s$ is an element
of $\pGl V.$ It is first proved that
$\hat{\sigma}$ is in the subgroup $\pgl W$ of $\pGl V$ iff
the associated automorphism of $\sigma$ is
inner.  Then the family of transformations satisfying the
condition
\begin{equation} \label{eqTheyAreInPGL}
(\exists L)(\dim L = 2 \wedge (\forall N)(N \subset  L \rightarrow
\hat{\sigma}N = N)),
\end{equation}
(the variable $N$ passes through the set of all lines) is
considered; all such transformations belong to $\pgl
W.$ The final point of the description of ~ $\pgl W$
in ~ $\pGl W$ is

\begin{Th} \label{pgl_in_pGl} \mbox{\rm
(\cite[Chapter III, Section  1]{Ba}) } Let
$\dim W \ge  3.$ A transformation $\hat{\sigma}$ in
~ $\pGl W$ belongs to ~ $\pgl W$ if and only if
$\hat{\sigma}$ is a product of {\rm(}at most
three{\rm)} transformations satisfying the condition
\eqref{eqTheyAreInPGL}.
\end{Th}

We shall prove that the group $\pgl V$ is a definable
subgroup of ~ $\pGl V.$ We shall use a weaker version
of (\ref{eqTheyAreInPGL}). Namely,
\begin{equation} \label{eqTheyAreInPGL+}
(\ref{eqTheyAreInPGL})  \vee (\exists L)(\codim L =2 \wedge
      (\forall M)(M \supseteq  L  \rightarrow \hat{\sigma}M = M)),
\end{equation}
where the variable $M$ passes through the set of all hyperplanes.
Clearly, the second disjunctive term in \eqref{eqTheyAreInPGL+} is
~ the condition dual to the condition \eqref{eqTheyAreInPGL}. Assume
$\dim W \ge  3.$ Let us show that in the group $\pGl W$ the
condition \eqref{eqTheyAreInPGL+} holds  only for elements
$\hat{\sigma} \in \pgl W.$
Indeed, suppose there is a subspace
$L$  of codimension  2 such that every hyperplane
containing $L$ is a $\hat{\sigma}$-invariant. Let $e_1, e_2$ be a pair
of linearly independent elements over $L.$ Since
the hyperplane $\str{e_1}\oplus L$ is $\sigma$-invariant,
$\sigma e_1 = \lambda_1 e_1+ m_1,$ where $m_1 \in L.$
On the other hand, $\sigma e_2 = \lambda_2e_2 +m_2.$
The $\sigma$-invariance of
$\str{e_1+e_2} \oplus L$  gives the equalities  $\lambda  =
\lambda_1 = \lambda_2.$ Furthermore, if $\mu$ is
any element of $D,$ we have
$\sigma(e_1+\mu e_2) = \nu (e_1+\mu e_2)$ + $m$ for
some $\nu \in D.$ Let us calculate  $\sigma(e_1+\mu e_2)$ in another
way: $\sigma(e_1+\mu e_2) = \lambda_1 e_1+m_1+
\mu^{\sigma}(\lambda_2 e_2+m_2).$ This implies $\nu=\lambda$
and $\nu\mu=\mu^\s\lambda,$ and hence $\lambda \mu\lambda^{-1}= \mu^{\sigma}.$
The associated automorphism of $\sigma$ is therefore
inner, and $\hat{\sigma}$ is in $\pgl W.$

So Theorem \ref{pgl_in_pGl} remains true if we replace
\eqref{eqTheyAreInPGL} with \eqref{eqTheyAreInPGL+}.
The group $\pgl V$ can be therefore interpreted in the structure
$\str{\pGl V, P^{(2)}(V); \circ,B,act},$ where $P^{(2)}$
is the set of all subspaces in $P(V)$ of dimension
or codimension $\le 2.$ Indeed, the condition
of being a line or a hyperplane (or $\dcd L=1$ for short as in Section
\ref{1.5}) is surely
definable in the latter structure, and hence the following
condition does:
\begin{align*}
(\exists L_0,L_1)(\bigwedge_{k=0}^1 \dcd L_k  \ne 1 \wedge
& L_0\ne L_1 \wedge B(L_0,L_1,L_1) \wedge \\
 (\forall &L)(\dcd L =1 \wedge B(L,L_0,L_1)  \rightarrow
\varphi L=L)).
\end{align*}
Clearly, this condition is equivalent to the condition
\eqref{eqTheyAreInPGL+}.  Thus, we have to interpret
the structure $\str{P^{(2)};B}$ in $\pGl V.$ First we consider
some facts on semi-involutions of $\Gl V$ in order
to distinguish later involutions they induce in $\pGl V$
from involutions of the first kind of $\pgl V.$

Recall from the previous section that a
semi-involution $\sigma \in \Gl W$ with
$\sigma^2=\lambda\cdot \id(W)$ satisfies the condition
\begin{align*} \tag{\ref{eqNoMu's!}}
(\forall \mu  \in D)(\lambda  \ne  \mu^{\sigma}\mu)
\end{align*}
iff the involution $\hat\sigma$ is not induced
by any {\it involution} in the group $\Gl W.$ If the group
$\Gl W$ consists of such a semi-involution, then the cardinal $\dim W$
is necessarily even that is $\dim W= 2\gamma$ and there is a basis
$$
\{e_i: i<\gamma \} \cup  \{e_{i^*}:i < \gamma \}
$$
of $W$ such that
\begin{alignat}3 \label{eq2.2.4}
&\sigma e_i     &&= e_{i^*},       &\quad & i < \gamma, \\
&\sigma e_{i^*} &&= \lambda e_i.   && \nonumber
\end{alignat}
The latter can be proved using arguments from
Dieudonn\'e's book  \cite[Chapter I, Section  3, A)]{Die2}.
We give a simple proof of this fact for the group $\Gl
V,$ which does not use the mentioned arguments
of Dieudonn\'e.

Take an arbitrary non-zero element $e_0\in V.$ Let $e_{0^*}$ denote
the element $\sigma e_0.$ The vectors $e_0$ and
$e_{0^*}$ are linearly independent, because $\sigma e_0 = \mu e_0$
implies $\l=\mu^\s \mu$. Suppose
now that the system $\{e_i, e_{i^*}: i < \beta \},$ where
$\beta  < \vk,$ is linearly independent. As $|\beta| < \vk=\dim V,$
there is a vector  $e_{\beta }\not\in
\str{e_i,e_{i^*}~:~i < \beta}.$ Assume
$e_{\beta^*} = \sigma e_{\beta}$ and show that
the system $\{e_i,e_{i^*}: i \le \beta \}$ is linearly
independent. If not, we have
$$
\sigma e_{\beta}= \sum_{i < \beta} (\mu_ie_i+\nu_ie_{i^*})
+ \mu_{\beta} e_{\beta}.
$$
Applying $\sigma$ to both parts of this equation, we get
$$
\lambda e_{\beta}= \sum_{i < \beta}(\mu^{\sigma}_ie_{i^*} +
\nu^{\sigma}_i\lambda e_i) + \mu^{\sigma}_{\beta} \sigma e_{\beta}.
$$
Clearly, $\mu_{\beta} \ne  0.$ Therefore
$$
\lambda e_{\beta } - \sum_{i < \beta}  (\mu^{\sigma}_i e_{i^*}
+ \nu^{\sigma}_i \lambda e_i) =
\sum_{i < \beta}  (\mu^{\sigma}_{\beta} \mu_i e_i
+ \mu^{\sigma}_{\beta} \nu_ie_{i^*}) +
\mu^{\sigma}_{\beta } \mu_{\beta} e_\beta
$$
and $\lambda$ must be equal to
$\mu^{\sigma}_{\beta} \mu_{\beta},$ a contradiction.

\begin{Lem} \label{SemisVs2Gammas}
Suppose $\sigma \in \Gl V$ is a semi-involution such that
$\sigma^2 = \lambda \cdot \id(V)$ and $\lambda$ satisfies
the condition \eqref{eqNoMu's!}. Then
the involution $\hat{\sigma}$  covers
in $\pGl V$ every $2\gamma$-involution of $\pgl V,$
where $\gamma \le \vk=\dim V.$
\end{Lem}

\begin{proof}
We use the same method as in the proof of Proposition
\ref{FKsAreDefnbl1}.  Consider a basis of
$V$ of the form
$$
\bigcup_{k=1}^4 \{ e_{i_k}: i_k \in I_k \} \cup
\{ e_i: i < \vk \} \cup  \{e_{i^*}: i < \vk \},
$$
where the index sets $I_1, I_2, I_3, I_4,$ all of
power $\gamma,$ are disjoint from each other and from
$\vk.$ Let $\sigma_1,\sigma_2$ be semi-involutions
with the same associated automorphism as $\sigma$
has. Suppose $\sigma_1$ and $\sigma_2$ take the
vectors from the basis as follows
\begin{xalignat*}2
&\sigma_1e_{i_1} = e_{i_3},          & &\sigma_2e_{i_1}=e_{i_2}, \quad        i_k\in I_k,\quad k=1,2,3,4,\\ \nonumber
&\sigma_1e_{i_2}= e_{i_4},         & &\sigma_2e_{i_2}=\lambda e_{i_1},  \\ \nonumber
&\sigma_1e_{i_3}= \lambda e_{i_1},   & &\sigma_2e_{i_3}= e_{i_4},       \\ \nonumber
&\sigma_1e_{i_4}= \lambda e_{i_2}, & &\sigma_2e_{i_4}= \lambda e_{i_3},  \\ \nonumber
&\sigma_1e_i = e_{i^*},              & &\sigma_2e_i = e_{i^*},\quad i < \vk, \\ \nonumber
&\sigma_1e_{i^*} = \lambda e_i,      & &\sigma_2e_{i^*} =\lambda e_i.  \nonumber
\end{xalignat*}

The transformations $\sigma_1$ and $\sigma_2$ are conjugates
of $\sigma.$
To check the properties below, one should know that
$\lambda^{\sigma}= \lambda$ ($\sigma^3= \sigma^2\sigma =
\lambda \cdot \sigma = \sigma\sigma^2= \lambda
^{\sigma}\sigma$). The transformation $\tau =
\lambda^{-1} \sigma_1\sigma_2$ sends one to another: (a)
$e_{i_1}$  and $\lambda^{-1}e_{i_4},$  (b)
$e_{i_2}$ and $e_{i_3},$ where $i_k\in I_k$ and $k=1,2,3,4.$ Check, for
example, (a):
\begin{align*}
& \tau e_{i_1}= \lambda^{-1} \sigma_1 \sigma_2 e_{i_1}=
\lambda^{-1} \sigma_1 e_{i_2}= \lambda^{-1} e_{i_4},\\
& \tau (\lambda^{-1} e_{i_4}) =
\lambda^{-1} \sigma_1 \sigma_2(\lambda^{-1} e_{i_4}) =
\lambda^{-1} \sigma_1((\lambda^{-1})^{\sigma} \cdot \lambda e_{i_3})
= e_{i_1}.
\end{align*}

Furthermore, (c) $\tau$ fixes each element in
$\{e_i: i < \vk\} \cup  \{e_{i^*}~:~i < \vk\}.$
It is easy to see that  $\sigma_1\sigma_2=
\sigma_2\sigma_1.$ Since the square of the associated
automorphism of $\sigma$  is the inner automorphism
$\mu  \mapsto  \lambda \mu \lambda^{-1},$
and $\mu \mapsto \lambda^{-1} \mu \lambda$
is the associated automorphism of $\lambda^{-1}\id(V),$
then $\tau$  is a linear transformation. According
to (a,b,c) above, $\tau$ (and $\hat \tau$ as well)
is a $2\gamma$-involution. We therefore have $\tau=\l^{-1}\sigma_1\sigma_2=
\l^{-1} \sigma_2 \sigma_1,$ and then the involution $\hat \s$
covers $\hat \tau$ in $\pGl V.$
\end{proof}

We shall reconstruct the structure $\str{P^{(2)}(V);B}$ in
$\pGl V,$ using the methods of Section \ref{1.3} and Section
\ref{1.4}. Much the easier case here is the case when

{\it I. The characteristic of  $D$  is} 2. Let us
agree that the term `a $\gamma$-involution' means `a
$\gamma$-involution of the first kind in the subgroup
$\pgl V$'. We use as the `building materials' for $P^{(2)}(V)$
extremal involution and 2-involutions in $\pgl V.$

Our immediate task is therefore to prove the definability of
$\pgl V$-extremal involutions in $\pGl V.$  Then we show that
2-involutions are involutions, covered by
extremal ones, but not extremal. By Proposition \ref{MPsInChar2}, if
the extremal involutions are definable in  $\pGl V,$
then in $\pGl V$ the set of $\pgl V$-minimal pairs is
definable. As in Section \ref{1.4}, we code the
elements of $P^{(2)}(V)$ by triples
$\str{\sigma,\sigma_1,\sigma_2},$ where  $\sigma$ is a
1- or 2-involution, and  $\str{\sigma_1,\sigma_2}$ is
a minimal pair.  The formulae $C'_2, C''_2,
C'''_2$ {\it mutatis mutandis} retain all needed properties

The extremal involutions could be distinguished by
familiar first order condition `to cover, up to
conjugacy, only two involutions' from Section
\ref{1.4} (see the proof of Proposition
\ref{MPsAreDfnbl2}). The fact that all non-extremal
involutions do not satisfy this condition follows from
the results from Section \ref{1.4}, Proposition
\ref{LinsInTheirHands2}, and Proposition \ref{SemisVs2Gammas}.

{\it II. The characteristic of $D$  is not} 2.
Although the extremal involutions and 2-involutions
are still needed, in contrast to the previous case,
we have to show that the set of {\it all}
involutions of the first kind, or, in other words,
the set of all $\gamma$-involutions, where $\gamma$
is an arbitrary cardinal $\le \vk$ is definable in the group $\pGl V.$

We have to do this, because in Section \ref{1.3} the
proof of definability of $\pgl V$-minimal pairs was
based on Theorem \ref{Mckey&Rckrt}. The first order
formula $MP_1(x_1,x_2),$ where $x_1$ and $x_2$ represent
extremal involutions, and which is a translation of the
conditions of the Theorem into a first order logic,
requires quantification over the elements of sets $c(x_1,x_2)$ and
$c(c(x_1,x_2)).$ Recall that $c(I),$ where $I
\subseteq \gl W$ is the set of all involutions in the
centralizer (in $\gl W$) of $I.$ On the other hand,
if $\sigma_1,\sigma_2$ are $\gl V$-extremal involutions,
the set $c(\sigma_1,\sigma_2)$ consists of
$\gamma$-involutions for every $\gamma  \ge
2$ \cite[Lemma 2.4]{Ri2}. In particular, we cannot
restrict ourselves to the work with extremal involutions as in
the previous case.

We show the definability in $\pGl V$ of the set of all
extremal involutions from the subgroup $\pgl V.$ Since
by \ref{Cov(s,pi)}, \ref{LinsInTheirHands1} and
\ref{SemisVs2Gammas} any non-extremal involutions
covers elements in at least two conjugacy classes of
involutions, we can again use the definable condition
`to cover, up to conjugacy, just one involution'.

To prove the definability of the $\pgl V$-involutions
of the first kind in $\pGl V$ we shall
use the same idea as in Section \ref{1.2}.
First we prove the definability of the set of $\vk$-involutions,
and then apply Proposition \ref{FKsAreDefnbl1}. Obviously, a $\vk$-involution
can cover only elements in the subgroup
$\pgl V$ of the group  $\pGl V.$ According to Proposition \ref{FKsAreDefnbl1},
a $\pgl V$-involution  $\hat{\sigma}$ is of the
first kind iff either it is a $\vk$-involution or it is covered
by some  $\vk$-involution  $\hat{\pi},$ but does not
cover  $\hat{\pi}.$

So let us prove the definability of $\vk$-involutions
in $\pGl V.$ Let the formula $Com^* (x,y)$ be the
formula  $Com(x,y)$ (see the proof of Proposition
\ref{MPsAreDefnbl1}), in which we have replaced all
special variables with ordinary ones. (Recall that all
the variables in $Com$ were special: we required all
the variables to denote {\it involutions of the first
kind}).

If $\hat{\sigma}$  is a $\gamma$-involution such that
$\gamma  < \vk$ and $\hat{\sigma}\hat{\pi} =
\hat{\pi}\hat{\sigma},$ where  $\hat{\pi}$ is an
arbitrary element in $\pGl V,$  then  $\sigma\pi =
\pm \pi\sigma.$ Since $\sigma \not\sim -\sigma,$ then
$\sigma\pi = \pi\sigma.$ Hence  $\pi$
preserves the subspaces of $\sigma.$ Thus,
a $\gamma$-involution satisfies the formula
$$
\chi_0(x) = (\exists y)(xy=yx \wedge
\neg Com^* (x,y))
$$
iff  $\gamma=\vk.$

There could be $\pgl V$-involutions of the second kind
in the set of all realizations of $\chi_0(x).$ We cut
them off using the formula $Ob$ from Section \ref{1.2}:
$$
\chi_1(x) = \chi_0(x) \wedge Ob(x).
$$
Clearly, the formula $\chi_1$ are satisfied only
by $\vk$-involutions, and possibly by some involutions
in $\pGl V~\setminus~\pgl V.$ Any involution
in $\pGl V~\setminus~\pgl V$
can cover all $\vk$-involutions; any $\vk$-involution
covers itself. Hence the formula
$$
\chi_2(x) = \chi_1(x) \wedge (\forall y)(\chi_1(y)  \rightarrow \Cov(y,x))
$$
is satisfied by every $\vk$-involution. On the other
hand, an involution in $\pGl V~\setminus~\pgl V$ cannot
satisfy  $\chi_2,$ since $\vk$-involutions cover only
elements from $\pgl V.$

It follows from the results in Section \ref{1.3} that,
having the set of all $\pgl V$-involutions of the
first kind definable in $\pGl V,$ we can reconstruct
in this group the structure $\str{P^*(V);B},$  and
hence its definable reduct $\str{P^{(2)}(V);B}$
(interpreting the structure $\str{P^*(V);B}$ in the
group $\pgl V,$ we worked {\it inside} the set of all
$\pgl V$-involutions of the first kind, and the
only relations on this set we used were the
relations `$x$ commutes with $y$' and `$x$ is conjugate
to $y$').

The definable condition we used in the end of the
previous section -- `there is an involution covering,
up to conjugacy, just one involution' distinguishes
between the cases $\chr D\ne 2$ and $\chr D=2.$ This
completes the proof of Theorem \ref{pGl>pgl}.
\end{proof}

\begin{Prop} \label{pGl>(pGl,P)}
The structure $\str{\pGl V,P(V);\circ,\subseteq,act}$ can
be interpreted without parameters in the group  $\pGl V$  by means
of first order logic {\rm(}uniformly in $V${\rm)}.
\end{Prop}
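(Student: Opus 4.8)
The plan is to transport the interpretation of the projective space already obtained inside $\pgl V$ up to $\pGl V$, and then to supply the one missing ingredient: the action of the \emph{full} group $\pGl V$ on $P(V)$. By Theorem \ref{PrjSpaceInPrjGroup} together with the reconstruction of $\cPG''(V)=\str{\pgl V,P^*(V);\circ,\subseteq,act}$ carried out in Section \ref{1.5}, the structure $\str{\pgl V,P(V);\circ,\subseteq,act}$ is $\varnothing$-interpretable in $\pgl V$, the elements of $P^*(V)$ being coded by tuples $\avs$ of involutions of the first kind, with a group-theoretic formula expressing when two such tuples code the same subspace, and with $\cP(V)$ recovered from $\str{P^*(V),\subseteq}$ by adjoining the improper subspaces $\{0\}$ and $V$. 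By Theorem \ref{pGl>pgl} the subgroup $\pgl V$ is $\varnothing$-definable in $\pGl V$, so this entire coding can be repeated verbatim inside $\pGl V$, relativised to the definable subgroup $\pgl V$. Thus the domain $P(V)$, the inclusion relation $\subseteq$, the group operation $\circ$ on $\pGl V$ (which is at hand), and the action of $\pgl V$ all come for free; what remains is to define $act(\varphi;L,L')$ for arbitrary $\varphi\in\pGl V$.

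The key observation is that $\pgl V$ is a normal subgroup of $\pGl V$, being the image of the normal subgroup $\gl V$ of $\Gl V$. Hence for any $\varphi\in\pGl V$ and any $\pgl V$-involution $\sigma=\hat s$ of the first kind, the conjugate $\sigma^{\varphi}=\varphi\sigma\varphi^{-1}$ again lies in $\pgl V$ and is again an involution of the first kind: if $\varphi=\hat f$ then $\sigma^{\varphi}=\widehat{fsf^{-1}}$, and $fsf^{-1}\in\gl V$ is an involution. Moreover the subspaces of $\sigma^{\varphi}$ are exactly the images $\varphi R,\varphi S$ of the subspaces $R,S$ of $\sigma$ under the collineation $\varphi$ (in both characteristics, since $f$ carries the fixed and residual subspaces of $s$ onto those of $fsf^{-1}$). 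Consequently, if a subspace $L$ is coded by a tuple $\avs$ of involutions, its image $\varphi L$ under the collineation $\varphi$ is coded by the conjugated tuple $\avs^{\varphi}$.

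Therefore I would define $act(\varphi;L,L')$, that is $\varphi L=L'$, by the formula asserting that the conjugated code $\avs^{\varphi}$ and the code $\avpi$ of $L'$ represent the same subspace, using the code-equality formula of the interpretation (concretely $EP_1$ of Section \ref{1.3} when $\chr D\ne 2$, and its analogue $ET_2$ of Section \ref{1.4} when $\chr D=2$). This is precisely the recipe used in Section \ref{1.3} for the $\pgl V$-action, the only change being that $\varphi$ now ranges over all of $\pGl V$. The action on the improper subspaces $\{0\}$ and $V$ is trivial and is subsumed by the $\varnothing$-interpretation of $\cP(V)$ in $\str{P^*(V),\subseteq}$. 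Gluing the two cases $\chr D\ne 2$ and $\chr D=2$ by means of the distinguishing sentence (\ref{eqCharD=2}) then yields an interpretation uniform in $D$ as well.

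The main point requiring verification — and the only place where anything beyond Sections \ref{1.1}--\ref{1.5} is needed — is that conjugation by an arbitrary $\varphi\in\pGl V$ (not merely $\varphi\in\pgl V$) sends valid codes to valid codes and preserves the code-equivalence relation, so that $L\mapsto\varphi L$ is well defined on codes. This is immediate: all the coding predicates (that $\avs$ is a valid code, that $\avs$ and $\avpi$ code one and the same subspace, and the inclusion relation on codes) are first-order formulas in the pure group language, built only from equality, composition, and the conjugacy and commutation relations, hence invariant under every inner automorphism of $\pGl V$; and conjugation by $\varphi$ preserves $\pgl V$ setwise by normality. Thus no essentially new computation is required, and the transported relation is a genuine action of $\pGl V$ on $P(V)$ extending that of $\pgl V$, which completes the interpretation.
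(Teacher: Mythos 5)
Your proposal is correct and follows essentially the same route as the paper, which simply cites Theorems \ref{pGl>pgl} and \ref{PrjSpaceInPrjGroup}: relativise the involution-coding of $P(V)$ to the $\varnothing$-definable subgroup $\pgl V$ and extend the conjugation recipe for the action to all of $\pGl V$. Your explicit verification that normality of $\pgl V$ and the purely group-theoretic nature of the coding formulas make this extension well defined is exactly the content the paper leaves implicit.
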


\begin{proof} By Theorems \ref{pGl>pgl} and \ref{PrjSpaceInPrjGroup}.
\end{proof}

\section{Overcoming the projectivity} \label{2.3}

The following theorem proves that the expressive power
of first order logic for the infinite-dimensional
classical groups is preserved under the taking of
the projective image. In particular, $\Theory(\pgl V)\ge
\Theory(\gl V),$ and hence the proof of Theorem
\ref{Gl>pGl>pgl>gl} will be completed.

\begin{Th} \label{PH<>H}  Let $H(V)$  be the group
$\gl V$  or the group $\Gl V,$ and $PH(V)$ the
projective image of $H(V).$ Then the theories
$\TH(PH(V))$ and  $\TH(H(V))$ are mutually
syntactically interpretable.
\end{Th}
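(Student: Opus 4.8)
The plan is to prove the two interpretations $\TH(H(V)) \ge \TH(PH(V))$ and $\TH(PH(V)) \ge \TH(H(V))$ separately; only the second is substantial. For the first, recall that by Corollary \ref{PrjsAsFactorByRads} the group $PH(V)$ is a quotient of $H(V)$ by a normal subgroup $Z$: for $H=\text{GL}$ we have $\pgl V \cong \gl V / Z(\gl V)$ with $Z=Z(\gl V)$, and for $H=\Gamma\text{L}$ we have $\pGl V \cong \Gl V / \rl V$ with $Z=\rl V$. In the first case $Z$ is the centre, which is $\varnothing$-definable in any group; in the second case $Z=\rl V$ is $\varnothing$-definable in $\Gl V$ by Theorem \ref{Gl>pGl}. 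In either case the quotient $PH(V)$ is $\varnothing$-interpretable in $H(V)$, and the Reduction Theorem \ref{0.2.1} gives $\TH(H(V)) \ge \TH(PH(V))$. (For $H=\Gamma\text{L}$ this is exactly Theorem \ref{Gl>pGl}.)

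For the converse, which is the real content of ``overcoming the projectivity'', I would first pass to a richer interpreted structure. By Proposition \ref{pGl>(pGl,P)} (and, in the $\pgl{}$ case, by Theorem \ref{PrjSpaceInPrjGroup} together with the reconstruction of the action carried out in Sections \ref{1.3}--\ref{1.5}), the structure $\cA = \str{PH(V),P(V);\circ,\subseteq,act}$ is $\varnothing$-interpretable in $PH(V)$; it therefore suffices to interpret $H(V)$ in $\cA$. The obstacle is genuine: the projection $H(V)\to PH(V)$ has kernel $Z\cong Z(D)^\ast$ (resp.\ $D^\ast$), central scalars act trivially by conjugation and trivially on $P(V)$, so they are completely invisible inside the abstract group together with its action on subspaces. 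Thus $H(V)$ cannot be recovered as a definable subset of $PH(V)$; instead the scalar data lost under projectivisation must be rebuilt from the geometry, and $H(V)$ interpreted as an automorphism group of the rebuilt structure.

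The key step is to coordinatise the interpreted geometry, that is, to reconstruct the two-sorted structure $\str{V,D}$ inside $\cA$. Fixing, as parameters, a hyperplane $M$, a complementary line, and an incident line $N\subseteq M$ (the action on $P(V)$ is transitive on such flags, so the parameter-elimination trick of Section \ref{0.12} applies), one defines via the interpreted action the group $T_M$ of transvections with axis $M$ --- the non-identity transformations fixing every line of $M$ and no line outside $M$ --- which is abelian and, with a fixed such transvection chosen as unit, is a \emph{concrete} copy of the additive group $(M,+)$ of the hyperplane; here each non-zero vector of $M$ appears as a distinct group element, so scalar multiples are genuinely separated. The division ring $D$ is recovered from the subgroups $T_{N,M}$ (Lemma \ref{TransvsBasics}) together with the conjugation action of the transformations stabilising $N$ and $M$, which realises the two-sided multiplication of $D$ on $(D,+)\cong T_{N,M}$; the scalar action $D\times M\to M$ is recovered likewise from the dilations fixing $M$ pointwise. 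Adjoining the complementary line and transporting by the projective structure assembles the full $\str{V,D}$, definably in $\cA$ (up to the harmless $D/D\op$ ambiguity, which only replaces the interpreted group by an isomorphic copy).

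With $\str{V,D}$ reconstructed, $H(V)$ is interpreted as its automorphism group: $\Gl V$ as the group of all semilinear bijections, coded by pairs consisting of an additive automorphism of the recovered $V$ and a compatible ring automorphism of the recovered $D$, and $\gl V$ as the definable subgroup of those with trivial associated automorphism, in agreement with Theorem \ref{pGl>pgl}. It is essential that the kernel $Z$, invisible inside $PH(V)$, now reappears faithfully as the scalar maps $b\mapsto\lambda b$ of the reconstructed space, which are pairwise distinct non-identity automorphisms; this is precisely the sense in which the construction overcomes projectivity. The main difficulty is therefore the concrete coordinatisation of the previous paragraph --- recovering actual vectors and the scalar action, not merely the lattice $P(V)$ --- since only on the reconstructed $\str{V,D}$ do the lost scalars act faithfully. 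Finally, the cases $\chr D\ne 2$ and $\chr D=2$, distinguished by a first order sentence as in Theorem \ref{Gl>pGl>pgl>gl}, are merged into a single interpretation uniform in $\vk$ and $D$, which together with the easy direction completes the proof.
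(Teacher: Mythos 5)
Your easy direction and the reduction to the two-sorted structure $\cA=\str{PH(V),P(V);\circ,\subseteq,act}$ agree with the paper, but from there you take a much longer route than the paper does, and that route has a genuine gap at its final step. Reconstructing $\str{V,D}$ inside $\cA$ is indeed possible (this is essentially Proposition \ref{3.1.2} and Claim \ref{PV>D}, which even work in $\cP$ alone), but you then propose to interpret $H(V)$ as the automorphism group of the reconstructed structure, with elements ``coded by pairs consisting of an additive automorphism of the recovered $V$ and a compatible ring automorphism of the recovered $D$.'' An additive automorphism of the interpreted vector sort is a second-order object over $\cA$: it is a permutation of an infinite definable set, whereas a first-order interpretation must code each element of $H(V)$ by a \emph{finite tuple} from $\cA$. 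You never say what these tuples are, and the only candidates $\cA$ supplies are elements of $PH(V)$ itself --- which carry exactly the scalar ambiguity you set out to remove. The natural repair (code $f$ by its projective image $\hat f$ together with the single value $f(a^*)$ at one fixed reconstructed vector, thereby pinning down the scalar) is precisely the normalisation idea on which the paper's proof rests, so without it your argument is incomplete rather than merely different.

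For contrast, the paper's Proposition \ref{(PH,P)>H} needs no coordinatisation at all. It fixes parameters $L_1^*,L_2^*\in P(V)$ with $\dim L_1^*=1$ and $L_1^*\oplus L_2^*=V$ (in the $\gl V$ case, $\dim L_1^*=2$ and one requires every line of $L_1^*$ to be fixed, which forces the relevant scalar into $Z(D)$), and considers the subgroup $\Gamma$ of all $\varphi\in PH(V)$ stabilising this configuration. Writing $fa=\lambda_f a$ for a fixed non-zero $a\in L_1^*$ and a representative $f$ of $\varphi,$ the map $\varphi\mapsto\lambda_f^{-1}\!\left.f\right|_{L_2^*}$ is a well-defined isomorphism of $\Gamma$ onto $\Gl{L^*_2}$ (resp.\ $\gl{L^*_2}$), and $\dim L_2^*=\dim V$ because $V$ is infinite-dimensional. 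So, contrary to your assertion that ``$H(V)$ cannot be recovered as a definable subset of $PH(V)$,'' the whole point of Section \ref{2.3} is that $H(V)$ \emph{is} isomorphic to a parameter-definable subgroup of $PH(V)$ --- the stabiliser of a decomposition; the kernel of the projection reappears not as scalar maps on a rebuilt space but inside the stabiliser, acting nontrivially on the complement. This is exactly the feature your ``invisibility'' heuristic misses, and it is what makes the paper's argument short.
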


{\it Proof.} It is obvious that $\Theory(\gl V) \ge \Theory(\pgl
V);$ the relation $\Theory(\Gl V) \ge \Theory(\pGl V)$
has been proved in Theorem \ref{Gl>pGl}.  Then by
Theorems \ref{PrjSpaceInPrjGroup} and
\ref{pGl>(pGl,P)} it suffices to prove the following
proposition.

\begin{Prop} \label{(PH,P)>H}  Let $H(V)$ be $\Gl V$
or $\gl V.$  Then the theory $\TH(H(V))$ is
syntactically interpretable in the elementary theory
of the structure
$\str{PH(V),P(V);\circ,\subseteq,act}$
\end{Prop}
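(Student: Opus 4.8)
The plan is to \emph{un-quotient} the projective image. By Corollary \ref{PrjsAsFactorByRads} the group $H(V)$ is an extension of $PH(V)$ by a scalar kernel $K$, where $K=\rl V$ when $H(V)=\Gl V$ and $K=Z(\rl V)$ when $H(V)=\gl V$; in both cases $K$ consists of radiations and is exactly the kernel of the projection $H(V)\to PH(V)$, which is invisible inside $PH(V)$ by itself. Since the projective space $P(V)$ (Theorem \ref{PrjSpaceInPrjGroup}) and the action $act$ are already available in $\str{PH(V),P(V);\circ,\subseteq,act}$, the task reduces to reconstructing this scalar kernel together with the extension data, definably and uniformly in $\vk$ and $D$. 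I would achieve this by reconstructing, inside that structure, the coordinatizing division ring $D$ and enough of the module $V$ to pin down each lift of an element of $PH(V)$.

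First I would fix parameters forming a projective frame: a line $N_0$, a complementary hyperplane $M_0$ (so that $N_0\not\subseteq M_0$, whence automatically $V=N_0\oplus M_0$), together with an auxiliary line in $M_0$ playing the role of a unit point. That a tuple is such a frame is first order in $\str{P(V);\subseteq}$, since the predicates $\dim L=1$ and $\codim L=1$ are definable from $\subseteq$ (a line is an atom, a hyperplane a coatom) and non-incidence is immediate. Using $act$ I would then isolate two definable subgroups of $PH(V)$: the group $\Delta$ of homologies with center $N_0$ and axis $M_0$, namely those $\varphi$ with $\varphi N_0=N_0$ fixing every line of $M_0$, which is isomorphic to $D^*$ and carries the multiplication of $D$ under $\circ$; and the group of elations (transvections, already characterised in Lemma \ref{TransvsBasics}) with axis $M_0$, from which the additive structure of the coordinatizing module is recovered. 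The classical projective coordinatization relative to the fixed frame, with $\cdot$ read off from $\Delta$ and $+$ from the elations, then yields a definable copy of the two-sorted structure $\str{V,D}$ inside our structure.

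With $\str{V,D}$ in hand I would code an element $g\in H(V)$ by the pair $\str{\hat g,\,g a_0}$, where $a_0$ is the fixed generator of $N_0$ (now a reconstructed vector) and $\hat g\in PH(V)$ is its projective image. This code is faithful: if $\hat g=\hat{g'}$ then $g'=\tau g$ for a radiation $\tau\in K$, so the value $g'a_0$ determines $\tau$ and hence $g$. The admissible pairs are definable (the vector $g a_0$ must lie on the reconstructed line $\hat g(N_0)$ and be compatible with $\hat g$ on the frame), and the group operation transports to the code, since $\widehat{g_1g_2}=\hat g_1\circ\hat g_2$ while $(g_1g_2)a_0$ is computed from $g_1a_0$, $g_2a_0$ and the reconstructed action of $\hat g_1$ on $V$ through $act$. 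All basic relations of $H(V)$ are thereby $\varnothing$-definable over the frame, so by the Reduction Theorem \ref{0.2.1} together with the interpretation-with-parameters device following it (quantifying $(\forall\bar x)(\chi(\bar x)\to\cdots)$ over the definable set $\chi$ of frames) one obtains $\TH(H(V))\le\TH(\str{PH(V),P(V);\circ,\subseteq,act})$ uniformly in $\vk$ and $D$.

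The main obstacle is the middle step: carrying out the projective coordinatization \emph{inside the first order structure} and \emph{uniformly in $D$}, i.e. recovering the division-ring addition from elations and resolving the residual scalar ambiguity coherently across all lines rather than just on a single one. The semilinear case $H(V)=\Gl V$ is the delicate one, because a lift $g$ of $\hat g$ carries an associated automorphism of $D$ (cf. Corollary \ref{BeautyOfNLInvs}), so both the calibration on $\hat g(N_0)$ and the computation of $(g_1g_2)a_0$ must track this automorphism; one must also verify uniformity across $\chr D=2$ and $\chr D\ne 2$ and across non-commutative $D$, where scalars transform by conjugation.
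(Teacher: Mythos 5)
Your plan is sound, but it takes a genuinely different and considerably heavier route than the paper. The paper does not reconstruct $D$ or $V$ at all: it fixes a decomposition $V=L_1^*\oplus L_2^*$ with $\dim L_1^*=1$ (resp.\ $\dim L_1^*=2$ for the $\gl{}$ case), passes to the definable subgroup $\Gamma$ of $PH(V)$ stabilizing $L_1^*$ and $L_2^*$ (resp.\ fixing every line of $L_1^*$ and stabilizing $L_2^*$), and observes that normalizing a lift $f$ of $\varphi\in\Gamma$ by the scalar it induces on $L_1^*$ gives a well-defined isomorphism $\varphi\mapsto\lambda_f^{-1}f|_{L_2^*}$ from $\Gamma$ onto $\Gl{L_2^*}\cong\Gl V$ (the two-dimensional $L_1^*$ forces $\lambda_f$ to be central, which is exactly what makes the $\gl{}$ case go through). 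This is a two-step trick that buys brevity at the price of using infinite-dimensionality essentially ($L_2^*$ is a hyperplane, so $\Gl{L_2^*}\cong\Gl V$ only because $\dim V$ is infinite). Your approach --- coordinatize $P(V)$ to recover $\str{V,D}$, then code $g$ by $\str{\hat g,\,ga_0}$ and recover the full action of $g$ on vectors from its action on lines plus the normalization at $a_0$ --- is the classical Fundamental-Theorem-of-Projective-Geometry lifting; it works in any dimension $\ge 3$ and is in fact close in spirit to machinery the paper deploys elsewhere (Proposition \ref{3.1.2} and Claim \ref{PV>D} carry out exactly the coordinatization you need, so the ``main obstacle'' you flag is already resolved uniformly in $D$ and in the characteristic). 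The one place your sketch is genuinely thin is the admissibility condition for $H(V)=\gl V$: there the kernel of $g\mapsto\hat g$ is only $Z(\rl V)$, so the admissible second coordinates $b$ on the line $\hat g N_0$ form a single $Z(D)^*$-orbit rather than all of $\hat gN_0\setminus\{0\}$, and ``compatible with $\hat g$ on the frame'' must be made precise, e.g.\ by requiring that the transformation recovered from $\str{\hat g,b}$ commute with the reconstructed scalar action. Without that restriction your codes would enumerate $\gl V\cdot\rl V$ rather than $\gl V.$
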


\begin{proof}
Let first  $H(V) =\Gl V.$ Suppose that $L^*_1$ and
$L^*_2$ are elements of $P(V)$ such that
\begin{equation} \label{CondsOnPars}
(\dim L^*_1=1) \wedge (L^*_1\oplus  L^*_2= V).
\end{equation}
We shall interpret the elements of $\Gl V$ by transformations
$\varphi  \in \pGl V$ satisfying the
$\{L^*_1,L^*_2\}$-definable condition
\begin{equation} \label{eqPhiFixesThem}
(\varphi L^*_1 = L^*_1) \wedge (\varphi L^*_2= L^*_2),
\end{equation}
Clearly, the set $\Gamma$ of all elements $\f \in \pGl V$
satisfying (\theequation) is a subgroup of $\pGl V.$

Let us construct the interpretation mapping  $\varepsilon.$
Fix a non-zero element  $a \in L^*_1.$ If some
$\varphi \in \pGl V$ satisfies (\ref{eqPhiFixesThem}), then for a
transformation $f \in \Gl V$ inducing $\varphi,$ there is
a scalar $\lambda_{f}\in D$ such that $fa =
\lambda_{f}a.$ Assume that $\varepsilon(\varphi) =
\lambda^{-1}_{f}\! \left. f \right|_{L_2^*}.$ It is
easy to check that $\varepsilon$ is well-defined.
Indeed, for any $f'$ also inducing  $\varphi,$ we have
$f' = \mu f,$ where  $\mu  \in D.$ Then $\lambda_{f'} =
\mu \lambda_{f},$ and hence
$$
\lambda^{-1}_{f'} \left. f' \right|_{L_2^*} =
\lambda^{-1}_{f} \mu^{-1} \mu \left. f \right|_{L_2^*}  =
\lambda^{-1}_{f}  \left. f\right|_{L_2^*}.
$$

The groups $\Gl{L^*_2}$ and $\Gl V$ are evidently isomorphic.
On the other hand, we show that

\begin{clm}
The mapping $\varepsilon$  is an isomorphism between
the groups $\Gamma$ and $\Gl{L^*_2}.$
\end{clm}

\begin{proof}
Clearly, $\varepsilon$ is a surjective. Let now
$\varepsilon(\varphi_1) = \varepsilon(\varphi_2),$
where $\varphi_k \in \Gamma,$ $\varphi_k = \hat{f}_k,$ and $k=1,2.$
Suppose that $f_k a = \lambda_k a,$ where $k=1,2.$ The associated
automorphism of the transformation $\lambda^{-1}_1
f_1$ is $\mu \mapsto \lambda^{-1}_1 \mu^{f_1}
\lambda_1.$ Thus, if $\lambda^{-1}_1 \left.
f_1\right|_{L_2^*} = \lambda^{-1}_2  \left.
f_2\right|_{L_2^*},$ then $\lambda^{-1}_1 \mu^{f_1}
\lambda_1= \lambda^{-1}_2 \mu^{f_2} \lambda_2$ for
every $\mu \in D.$ Let $m$ be an arbitrary element of
$L^*_2.$ We have
\begin{align*}
\lambda^{-1}_1f_1(\mu a+m) &=
\lambda^{-1}_1 \mu^{f_1} f_1(a) + \lambda^{-1}_1 f_1(m) =
\lambda^{-1}_1 \mu^{f_1} \lambda_1 a + \lambda^{-1}_1 f_1(m) \\
&=\lambda^{-1}_2  \mu^{f_2} \lambda_2 a + \lambda^{-1}_2 f_2(m) =
 \lambda^{-1}_2  \mu ^{f_2} f_2(a) +
\lambda^{-1}_2f_2(m) \\
&=\lambda^{-1}_2f_2(\mu a+m).
\end{align*}
Hence $\lambda^{-1}_1 f_1= \lambda^{-1}_2 f_2,$
or $\varphi_1= \varphi_2.$

Assume further that $\varphi  = \varphi_1 \circ \varphi_2,$
where  $\varphi,\varphi_1,\varphi_2$ are elements of $\Gamma.$
Then  $f = \nu f_1 \circ f_2,$ where
$\varphi_k = \hat{f}_k$ and $k=1,2.$ The scalar $\nu$ is
determined by its behavior on $a$:
$$
fa = \lambda a = \nu f_1 \circ f_2(a) =
\nu f_1(\lambda_2 a) =
\nu \lambda_2^{f_1}   \lambda_1 a.
$$
So  $\nu  = \lambda \lambda^{-1}_1 (\lambda^{-1}_2)^{f_1},$
and then   $\lambda^{-1} f =
\lambda^{-1}_1 (\lambda^{-1}_2)^{f_1} f_1\circ f_2,$ or
$\varepsilon (\varphi) =\lambda^{-1}_1 f_1 \circ \lambda^{-1}_2 f_2
= \varepsilon(\varphi_1) \circ \varepsilon (\varphi_2).$
\end{proof}

Since the set of all pairs $\str{L_1^*,L_2^*}$
satisfying (\ref{CondsOnPars}) is
$\varnothing$-definable, the proof of the Proposition in
the case $H(V)=\Gl V$ is completed.

Consider now the case $H(V) = \gl V.$ The choice of parameters
should be surely done in a different way. If
$V= N \oplus M,$ and for some $f \in \gl V$ both subspaces
$N= \str{a}$ and $M$ are $f$-invariant, then it can happen
that $fa = \lambda a,$ where the scalar $\lambda$ is not
necessarily in the center of $D.$ In this case the transformation
$\lambda^{-1} \left. f \right|_M$ is not in $\gl M.$

This difficulty is easily overcome, if we take as
$L^*_1$ and $L^*_2$  a couple of subspaces satisfying
the condition
$$
(\dim L^*_1=2) \wedge (L^*_1\oplus  L^*_2= V),
$$
and replace the condition (\ref{eqPhiFixesThem}) with
\begin{equation} \label{eqPhiFixesThem+}
(\forall N)(N \subseteq  L^*_1 \rightarrow  \varphi N = N)
\wedge (\varphi L^*_2= L^*_2),
\end{equation}
where the variable $N$ passes as usual through $P^1(V)$
(we have already used the first conjunctive
term in Section \ref{2.2}). If  $\varphi  \in \pgl V$
satisfies (\ref{eqPhiFixesThem+}), then $\varphi$ is induced
by an element $f \in \gl V$ such that
$$
fa = \lambda_{f}a,\quad \lambda_{f} \in Z(D)
\text{ for all  $a$ in  } L^*_1.
$$
Hence $\varepsilon (\varphi) = \lambda^{-1}_{f}
\left. f\right|_{L_2^*}$ is an isomorphism from the
group of all $\varphi$ with (\theequation) onto
$\gl{L^*_2}.$ This completes the proof of the
Proposition, and hence the proof of Theorem
\ref{PH<>H}.
\end{proof}

We close this section with two isomorphism theorems
for infinite-dimensional linear groups (both theorems
easily follow from general isomorphism theorems proved
by O'Meara in \cite[Theorem 5.10, Theorem 6.7]{O'Mea}).
We explained in the Introduction the
reason we consider these theorems: we prove them by
{\it classical methods} basing on the machinery
developed by Mackey, Dieudonn\'e, and Rickart.

\begin{Th}  Let $H(V)$ be the group $\gl V$ or the
group $\Gl V,$ $V_1$ an infinite-dimensional vector
space over a division ring $D_1,$ and suppose that the
group $H(V_1)$  is of the same type as $H(V)$ is. Then

{\rm (a)}  $H(V) \cong  H(V_1)$ if and only if
$\str{V,D} \cong  \str{V_1,D_1};$

{\rm (b)}  every isomorphism  $\Lambda$ between the groups  $H(V)$  and
$H(V_1)$ has the following form
\begin{equation} \label{eq_gl_isoms}
\Lambda(\varphi) =
\varepsilon(\varphi)g\circ \varphi \circ g^{-1}, \quad
\varphi \in  H(V),
\end{equation}
where  $\varepsilon$ is a homomorphism from  $H(V)$ to
$\rl{V_1},$ and  $g$ is a collineation from $V$ onto
$V_1.$
\end{Th}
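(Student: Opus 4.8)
The plan is to read off both statements from the reconstruction of the projective space performed in Sections~\ref{1.3}--\ref{1.5}, combined with the classical Fundamental Theorem of Projective Geometry. The easy direction of (a), namely $\str{V,D}\cong\str{V_1,D_1}\Rightarrow H(V)\cong H(V_1)$, is immediate: a collineation $g\colon V\to V_1$ with associated isomorphism $f\colon D\to D_1$ conjugates $\Gl V$ onto $\Gl{V_1}$, and the identity $g(\lambda a)=f(\lambda)g(a)$ shows at once that $\varphi\mapsto g\circ\varphi\circ g^{-1}$ also carries $\gl V$ onto $\gl{V_1}$. Thus the whole content lies in the forward direction of (a) and in (b), which I would establish together by analysing an arbitrary isomorphism $\Lambda\colon H(V)\to H(V_1)$.

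First I would descend to the projective images. The subgroup of radiations contained in $H(V)$ is $\varnothing$-definable: it is the centre $Z(\gl V)=\gl V\cap\rl V$ when $H=\gl$, and it is $\rl V$ when $H=\Gl$ (the proof of Theorem~\ref{Gl>pGl} exhibits $\rl V$ as a definable subgroup of $\Gl V$). Hence $\Lambda$ carries it onto the corresponding subgroup of $H(V_1)$ and, by Corollary~\ref{PrjsAsFactorByRads}, induces an isomorphism $\bar\Lambda\colon PH(V)\to PH(V_1)$. The decisive input is that, by Theorem~\ref{PrjSpaceInPrjGroup} together with Theorem~\ref{RecoveringOfBetweenness} in the projective linear case, and by Proposition~\ref{pGl>(pGl,P)} in the projective semi-linear case, the structure $\str{PH(V),P(V);\circ,\subseteq,act}$ is reconstructible without parameters, \emph{uniformly in $V$}. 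Since one and the same family of formulas interprets the projective space inside $PH(V)$ and inside $PH(V_1)$, the isomorphism $\bar\Lambda$ transports the interpreted copy of $\cP(V)$ to that of $\cP(V_1)$; reading the result through the canonical coordinate maps of the interpretation yields an order isomorphism $\pi\colon P(V)\to P(V_1)$ of subspace lattices satisfying the action-compatibility $\pi(\varphi L)=\bar\Lambda(\varphi)\,\pi(L)$.

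This is the one place where infinite-dimensionality is essential, and it is exactly the phenomenon anticipated in the Remark of Section~\ref{0.12}. Because the reconstruction recovers the \emph{oriented} inclusion $\subseteq$ — the property $\dim L=1$ being definable in $\cP(V)$ as opposed to $\codim L=1$, see the proof of Proposition~\ref{RecovInclInPG'} — the map $\pi$ is a genuine lattice isomorphism and not a duality. Applying the Fundamental Theorem of Projective Geometry to $\pi$ (legitimate as $\dim V\ge\aleph_0\ge 3$) produces a collineation $g\colon V\to V_1$, with associated division-ring isomorphism $f\colon D\to D_1$, that induces $\pi$. In particular $\str{V,D}\cong\str{V_1,D_1}$, which settles the forward direction of (a).

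It remains to normalise $\Lambda$ into the form \eqref{eq_gl_isoms}. Conjugation $\mathrm{Ad}_g(\varphi)=g\circ\varphi\circ g^{-1}$ is an isomorphism $H(V)\to H(V_1)$ whose projective action on subspaces is again $\pi$; as an element of $PH(V_1)$ is determined by its action on $P(V_1)$, the compatibility relations force $\overline{\mathrm{Ad}_g}=\bar\Lambda$. Consequently the automorphism $\Theta=\mathrm{Ad}_g^{-1}\circ\Lambda$ of $H(V)$ induces the identity on $PH(V)$, so $\Theta(\varphi)\varphi^{-1}$ is a radiation for each $\varphi$; writing $\Theta(\varphi)=\varepsilon_0(\varphi)\,\varphi$ and transporting by $g$, one obtains $\Lambda(\varphi)=\varepsilon(\varphi)\,g\circ\varphi\circ g^{-1}$ with $\varepsilon(\varphi)=g\,\varepsilon_0(\varphi)\,g^{-1}\in\rl{V_1}$. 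I expect the genuine work to sit in this final normalisation rather than anywhere geometric: one must check that $\varepsilon$ has the homomorphism property demanded by (b), using that conjugation by $\varphi$ sends a radiation $\lambda\cdot\id(V)$ to $\lambda^{\varphi}\cdot\id(V)$ — trivial in the linear case, where radiations are central, and requiring the associated-automorphism bookkeeping in the semi-linear case. Every piece of projective geometry has already been absorbed into Theorem~\ref{PrjSpaceInPrjGroup}.
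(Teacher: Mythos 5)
Your argument for part (a) coincides with the paper's: descend to the projective groups via Corollary \ref{PrjsAsFactorByRads} and the definability of the radiation subgroup, use the uniformity of the interpretation of $\str{PH(V),P(V);\circ,\subseteq,act}$ to turn $\bar\Lambda$ into an isomorphism of these two-sorted structures, and apply the Fundamental Theorem of Projective Geometry. For part (b), however, you take a genuinely different route. The paper does not normalise $\Lambda$ by $\mathrm{Ad}_g$ at all; instead it invokes Rickart's criterion -- $\Lambda$ has the form \eqref{eq_gl_isoms} provided it sends every $\gl V$-minimal pair determining a \emph{line} to a $\gl{V_1}$-minimal pair determining a \emph{line} -- and then spends its effort verifying that hypothesis: minimal pairs are $\varnothing$-definable in $H(V)$ (Claims \ref{MPsAreDefnbl1.2}, \ref{MPsAreDefnlb2.2}, Theorem \ref{Mckey&Rckrt}, Proposition \ref{MPsInChar2}), and the subclass of those with a mutual line is $\varnothing$-definable in the interpreted structure, so $\Lambda$ cannot swap lines with hyperplanes. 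Your route instead extracts the collineation $g$ directly from the lattice isomorphism $\pi$, observes that $\overline{\mathrm{Ad}_g}=\bar\Lambda$ because a projective transformation is determined by its action on $P(V_1)$, and reads off the radiation-valued correction $\varepsilon$ from Proposition \ref{0.1.1}. This is cleaner and makes the role of the oriented inclusion (lines versus hyperplanes) explicit, whereas the paper's version stays closer to the classical Mackey--Rickart--Dieudonn\'e machinery -- which is deliberate, since the paper's stated aim is to show that the classical proof now goes through once first-kind involutions are characterised.

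One caveat: the step you defer -- that $\varepsilon$ is actually a homomorphism into $\rl{V_1}$ -- is not merely bookkeeping in the semi-linear case. The identity $\Lambda(\varphi\psi)=\Lambda(\varphi)\Lambda(\psi)$ only yields $\varepsilon(\varphi\psi)=\varepsilon(\varphi)\cdot(g\varphi g^{-1})\,\varepsilon(\psi)\,(g\varphi g^{-1})^{-1}$, a crossed-homomorphism condition; for $H=\gl$ this collapses because linear maps centralise radiations, but for $H=\Gl$ one must still argue that the scalars $\lambda_{\varepsilon(\psi)}$ are fixed by the relevant associated automorphisms. The paper absorbs exactly this residue into its citation of Rickart (pp.\ 444--448), so your version either needs to supply that argument or fall back on the same citation.
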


\begin{proof}
Suppose that  $\Lambda$  is an isomorphism from the
group $H(V)$  onto the group $H(V_1).$ By Corollary
\ref{PrjsAsFactorByRads} and Theorems \ref{Gl>pGl},
\ref{pGl>pgl} the isomorphism $\Lambda$ induces, in a
natural way, an isomorphism $\Lambda'$ of the groups
$\pgl V$ and $\pgl{V_1}.$

We denote by $\cPG(V)$ the two sorted-structure, whose
first sort is the group $\pgl V,$ the second
one is the projective space $\cP = \str{P(V);\subseteq }$ and
the action of $\pgl V$ on $P(V)$ is the only new relation
added to the basic relations on the sorts. It follows from
the proofs of Theorems \ref{RecoveringOfBetweenness} and
\ref{PrjSpaceInPrjGroup} that
$\Lambda'$  induces an isomorphism $\Lambda''$ between structures
$\cPG(V)$  and $\cPG(V_1).$ Therefore by the Fundamental
Theorem of Projective Geometry, we have (a).

As has been shown by Rickart \cite[p.
444-448]{Ri3}, $\Lambda$ has the form
(\ref{eq_gl_isoms}), if it sends any $\gl V$-minimal
pair determining a {\it line} to a $\gl{V_1}$-minimal
pair determining a {\it line} (but he had no proof that
this always takes place; see also the remark below).

By Claims \ref{MPsAreDefnbl1.2} and \ref{MPsAreDefnlb2.2} the
$\gl V$-minimal pairs form a $\varnothing$-definable subset in $\Gl
V.$ By Theorem \ref{Mckey&Rckrt} and Propositiion \ref{MPsInChar2} such
pairs are $\varnothing$-definable in $\gl V.$
Thus, $\Lambda$  preserves the GL-minimal pairs.

Since in the structure $\cPG(V)$ the minimal pairs,
which determine a line, form a $\varnothing$-definable subset,
$\Lambda''$ takes any PGL-minimal pair with a mutual
line to a PGL-minimal pair with a mutual line.
Therefore, by the construction of $\Lambda'',$ the
isomorphism $\Lambda$ must preserve GL-minimal
pairs which determine a line.
\end{proof}

{\bf Remark.} Note that, if the underlying vector space
is of finite dimension, the set of minimal pairs with a mutual
{\it line} can be transformed into the set of minimal pairs with a
mutual {\it hyperplane}; this provides one more class of
isomorphisms, which are not described by the formula
(\ref{eq_gl_isoms}), see, for example, \cite[Chapter IV,
Section  1]{Die2}).

\begin{Th} Let $H(V)$ be the group $\pgl V$ or the
group  $\pGl V,$ $V_1$ an infinite-dimensional vector
space over a division ring $D_1,$ and suppose that the
group $H(V_1)$ is of the same type as $H(V)$ is. Then

{\rm (a)}  $H(V) \cong  H(V_1)$  if and only if
$\str{V,D} \cong  \str{V_1,D_1};$

{\rm (b)}  every isomorphism  $\Lambda$ between the groups $H(V)$  and
$H(V_1)$ has the form
\begin{equation} \label{eq_pgl_isoms}
\Lambda(\varphi) = g\circ \varphi \circ g^{-1}, \quad
\varphi \in H(V),
\end{equation}
where  $g$ is a projective collineation from  $P(V)$ onto
$P(V_1).$
\end{Th}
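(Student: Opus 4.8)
The plan is to imitate the proof of the preceding theorem, but to work directly with the reconstructed projective space $\cP(V)=\str{P(V);\subseteq}$ instead of with minimal pairs. This makes the projective case cleaner, because we recover the genuine inclusion relation rather than mere betweenness, so the resulting map of subspace lattices is an honest order isomorphism and the Fundamental Theorem of Projective Geometry yields a semilinear bijection with no line/hyperplane ambiguity to resolve.

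First I would dispose of the easy direction of (a). If $\str{V,D}\cong\str{V_1,D_1}$, fix a semilinear bijection $g\colon V\to V_1$ realizing the isomorphism. Conjugation by $g$ is an isomorphism $\Gl V\to\Gl{V_1}$ carrying $\gl V$ onto $\gl{V_1}$; passing to projective images it induces isomorphisms $\pGl V\to\pGl{V_1}$ and $\pgl V\to\pgl{V_1}$. Hence $H(V)\cong H(V_1)$, and the isomorphism so obtained already has the form (\ref{eq_pgl_isoms}).

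For the remaining direction of (a) and for (b), let $\Lambda$ be an isomorphism of $H(V)$ onto $H(V_1)$, and form the two-sorted structure $\cPG(V)=\str{H(V),P(V);\circ,\subseteq,act}$, where $act$ is the natural action of $H(V)$ on $P(V)$. This structure is interpretable without parameters, uniformly in $V$, in $H(V)$: for $H=\pGl$ this is Proposition \ref{pGl>(pGl,P)}, and for $H=\pgl$ it follows from Theorem \ref{PrjSpaceInPrjGroup} together with the interpretation of the action constructed at the end of the proof of Theorem \ref{RecoveringOfBetweenness}. Therefore $\Lambda$ induces an isomorphism $\Lambda''$ of $\cPG(V)$ onto $\cPG(V_1)$ whose first-sort component is $\Lambda$ itself and whose second-sort component is a lattice isomorphism $g\colon\cP(V)\to\cP(V_1)$. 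Since $\dim V\ge 3$, the Fundamental Theorem of Projective Geometry makes $g$ a projective collineation induced by a semilinear bijection of $V$ onto $V_1$; in particular $\str{V,D}\cong\str{V_1,D_1}$, which settles (a). For (b) I would invoke that $\Lambda''$ preserves $act$: for every $\varphi\in H(V)$ and every $L\in P(V)$ we have $g(\varphi L)=\Lambda(\varphi)(gL)$, i.e. $\Lambda(\varphi)=g\circ\varphi\circ g^{-1}$ as transformations of $P(V_1)$. Because every element of $\pGl{V_1}$, hence of $H(V_1)$, is determined by its action on $P(V_1)$ (only radiations fix every line), this equality of transformations is an equality in the group $H(V_1)$, yielding (\ref{eq_pgl_isoms}).

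The step requiring the most care — and the reason this case is easier than the $\gl/\Gl$ case — is that $g$ must preserve inclusion rather than reverse it, so that the Fundamental Theorem delivers a semilinear map rather than a correlation to the dual space. This is automatic here, since $\Lambda''$ restricts to an isomorphism of $\str{P(V);\subseteq}$ onto $\str{P(V_1);\subseteq}$; unlike the finite-dimensional situation, no separate argument is needed to prevent the interchange of lines and hyperplanes, that asymmetry having already been built into the reconstruction of $\cP(V)$ in Section \ref{1.5}.
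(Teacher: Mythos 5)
Your proposal is correct, and for part (b) it takes a genuinely different route from the paper. The paper's own proof is a one-line appeal to the classical machinery: it cites Dieudonn\'e to the effect that $\Lambda$ has the form (\ref{eq_pgl_isoms}) as soon as it preserves the PGL-minimal pairs determining a line, the preservation itself coming (as in the preceding GL/$\Gamma$L theorem) from the $\varnothing$-definability of that set of pairs in the reconstructed structure $\cPG(V).$ You instead bypass the minimal-pair criterion entirely: having the full structure $\str{H(V),P(V);\circ,\subseteq,act}$ interpretable without parameters (Proposition \ref{pGl>(pGl,P)} for P$\Gamma$L, and Theorem \ref{PrjSpaceInPrjGroup} plus the action interpretation for PGL), you read off the collineation $g$ as the second-sort component of the induced isomorphism $\Lambda'',$ apply the Fundamental Theorem of Projective Geometry for (a), and then obtain (b) directly from preservation of the $act$ relation together with the fact that elements of $\pGl{V_1}$ \emph{are} permutations of $P(V_1),$ so that the identity $\Lambda(\varphi)=g\circ\varphi\circ g^{-1}$ of transformations is already an identity in the group. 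What the paper's route buys is brevity and alignment with the classical literature (the same Dieudonn\'e--Rickart criterion is reused for all four group types); what your route buys is self-containedness --- the projective case needs no external isomorphism-form theorem at all, precisely because the faithfulness of the action on $P(V_1)$ makes the conjugation formula immediate once $g$ is in hand. Your closing observation is also accurate: since Section \ref{1.5} recovers genuine inclusion rather than mere betweenness, $\Lambda''$ cannot induce a correlation, so the line/hyperplane ambiguity that the paper's Remark flags for the finite-dimensional case never arises.
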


\begin{proof}
It is known that  $\Lambda$ has the form  (\ref{eq_pgl_isoms}),
if it preserves the PGL-minimal pairs which determine a line
(it easily follows from the arguments
in \cite[Chapter IV, Section  1, Section  6]{Die2}).
\end{proof}

\section{Theories interpretable in {\rm T\lowercase{h}}$(\cP)$} \label{3.1}

Let $\cE,$ $\cP,$ $\cV,$ and $\cD$ denote the
endomorphism ring of $V,$  the projective space over
$V,$ the abelian group of vectors of the space $V,$
and the division ring $D,$ respectively (with their
standard relations). We shall construct new
multi-sorted structures, by gluing together the
structures in the list $\cE,\cP,\cV,\cD.$

Thus, $\cPV$ denotes the following two-sorted
structure: its first sort consists of the elements of
$P(V),$ and the second one consists of the elements of
$V;$ its basic relations are those of $\cP$ and $\cV$
together with membership relation $\in$ between the
elements of $\cV$ and $\cP.$ The elements of the
structure $\cEPV$ are divided into three sorts:
endomorphisms of $V,$ subspaces of $V,$ elements of
$V.$ Its basic relations are those of $\cE,$ $\cP,$ and
$\cV$ together with two ternary relations for the
action of $\End V$ on $V$ and $P(V).$ We denote by $\cVD$
the two-sorted structure whose sorts are $\cV$ and
$\cD,$ and the basic relations are those of $\cV$ and
$\cD$ together with the ternary relation for the
action of $D$ on $V.$

The main personage of the remaining part of the paper, the two-sorted
structure $\str{\vk,D},$ has the following description:
its first sort is the cardinal $\vk$ with no
relations, the second one is the division ring $D$
with standard relations, and there are no other
relations.

Recall that the logic $\LII(\lambda),$ where $\lambda$
is a cardinal, is a second order logic with
 quantification over arbitrary relations of power $<
\lambda,$ and $\Mon(\lambda)$ is its monadic fragment.
The main result of this and the two next sections can
be informally described as follows:  the first order
theories of the structures associated above with $V$
have the logical power at least that of the theory of
the structure $\str{\vk,D}$ (which is `algebra-free'
as much as possible) in the logic $\LII(\vk^+)$ (as
`strong' as possible).

\begin{Th} \label{P>PV>EPV}
$\TH(\cP) \ge \TH(\cPV) \ge  \TH(\cEPV).$
\end{Th}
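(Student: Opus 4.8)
The plan is to prove the two inequalities separately and, in each case, to exhibit a parameter-definable interpretation and then invoke the Reduction Theorem~\ref{0.2.1} in its version allowing parameters to range over a definable set. Thus it suffices to interpret $\cPV$ in $\cP$ and $\cEPV$ in $\cPV$, uniformly in $\vk$ and $D$; composing the two interpretations then also gives $\TH(\cP)\ge\TH(\cEPV)$. In both steps the work is to recover, by first order formulas of the lattice-plus-vector language, the sorts that are not already present.

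For $\TH(\cP)\ge\TH(\cPV)$ I would recover the vector sort and the membership relation from the lattice $\cP$ by coordinatising the projective geometry. Fix as parameters a hyperplane $M$ (a coatom of the lattice) and a line $N_0=\str{e_0}$ (an atom) with $N_0\not\subseteq M$, so that $V=N_0\oplus M$. The lines $N$ with $N\not\subseteq M$ are then in bijection with the vectors of $M$: such an $N$ meets the affine hyperplane $e_0+M$ in the single point $\str{e_0+m}\leftrightarrow m$, with $N_0$ corresponding to the zero vector. I would define vector addition on this set by the classical Desarguesian (parallelogram) construction with $M$ as hyperplane at infinity and $N_0$ as origin; since $\dim V\ge 3$ guarantees Desargues, this is a first order definition and yields the additive group of $M$. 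Membership is then immediate: for a subspace $U\subseteq M$ one has $m\in U$ iff the point $\str{e_0+m}$ lies in the join $N_0+U$. Because $M$ is a hyperplane and $\vk$ is infinite, $\dim M=\vk$, so $M\cong V$; hence the interpreted structure $\str{P(M),M;\subseteq,+,\in}$ is isomorphic to $\cPV$, with $P(M)$ supplied by the definable interval of subspaces contained in $M$.

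For $\TH(\cPV)\ge\TH(\cEPV)$ the essential point is to name each endomorphism — which carries ``infinitely much data'' — by a finite tuple, and I would do this by coding $\psi$ by its graph. Fix as parameters subspaces $A,B,\Delta$ of $V$ with $V=A\oplus B$ and with $\Delta$ a common complement of $A$ and of $B$; these are all conditions in the language of $\cP$, and since they force $\dim A=\dim B=\dim\Delta$ while $\vk$ is infinite, they force $\dim A=\vk$. The subspace $\Delta$ is the graph of a \emph{definable} isomorphism $j\colon A\to B$, namely $j(a)=b$ iff $a+b\in\Delta$ (for $a\in A$, $b\in B$), and this packages a definable identification $V\cong A\oplus A$. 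Under it the graph of $\psi\in\End A$ becomes the subspace $\Gamma_\psi=\{\,a+j(\psi a):a\in A\,\}$ of $V$, and as $\psi$ varies $\Gamma_\psi$ runs exactly through the complements of $B$ in $V$ — a definable subset of $P(V)$ in bijection with $\End A$. The action $\psi(a)=a'$ is expressed by $a+j(a')\in\Gamma_\psi$, the action on $P(A)$ by taking spans of images, and the ring operations are read off the graphs, $\Gamma_{\psi+\chi}$ and $\Gamma_{\chi\psi}$ being definable from $\Gamma_\psi,\Gamma_\chi$. Since $A\cong V$, the resulting structure $\str{\End A,P(A),A;\dots}$ is isomorphic to $\cEPV$.

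The main obstacle is precisely this last coding: a single endomorphism is an infinite object, so it must be named by a bounded amount of data, and the graph device succeeds only once a ``doubling'' isomorphism $A\to B$ is available definably. The key trick is that this isomorphism need not be supplied as unbounded data but is captured by the single subspace parameter $\Delta$; after that, the action and the ring operations translate into routine lattice-theoretic formulas. (For the first interpretation the only delicate point is the geometric definition of $+$, which is classical once Desargues is in force.) Finally, none of the defining formulas mention $\vk$ or $D$, so both interpretations are uniform in $\vk$ and $D$, as required.
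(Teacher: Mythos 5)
Your proposal is correct and follows essentially the same route as the paper: the first interpretation coordinatises $\cP$ by identifying vectors of a fixed hyperplane $M$ with the lines off $M$ through the affine hyperplane $e_0+M$ and defining addition by the parallelogram construction (the paper writes out the two intersection conditions explicitly, including the separate treatment of linearly dependent pairs), and the second codes an endomorphism by its graph, realised as a complement of $B$ inside $V\cong A\oplus B$ with the third parameter $\Delta$ playing the role of the paper's $L_3^*$ supplying the identification $j\colon A\to B$. The only difference is one of detail, not of method.
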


Let us prove the first $\ge$-statement.

\begin{Prop} \label{3.1.2}
$\TH(\cP) \ge \TH(\cPV).$
\end{Prop}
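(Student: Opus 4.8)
By the Reduction Theorem (Theorem~\ref{0.2.1}) it is enough to interpret the two-sorted structure $\cPV$ in $\cP$ by means of first-order logic, uniformly in $\vk$ and $D$; following the method of Section~\ref{0.12} I shall in fact produce a \emph{parametric} interpretation and then eliminate the parameters by quantifying over all admissible frames. The sort $P(V)$ together with $\subseteq$ is present in $\cP$ verbatim, and from $\subseteq$ one defines the least element $\{0\}$, the lattice sum $L_1+L_2$ and intersection $L_1\cap L_2$, and the atoms (lines) and coatoms (hyperplanes). Since $\dim V=\vk\ge\aleph_0$, the geometry is Desarguesian of dimension $\ge 3$, so the apparatus of projective coordinatization is at our disposal. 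The remaining task is therefore to adjoin the vector sort $V$, its addition, and the membership relation $\in$.

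The heart of the construction is an affine chart. Fix, as parameters, a hyperplane $H$ and a line $N^\ast=\str u$ with $N^\ast\not\subseteq H$, so that $V=N^\ast\oplus H$; the defining condition on the pair $(N^\ast,H)$ is first-order. Every line $\ell\not\subseteq H$ has a unique generator of the shape $u+m$ with $m\in H$, and $\ell\mapsto m$ is a bijection onto $H$; thus the lines transversal to $H$ code precisely the affine hyperplane $u+H$, with $N^\ast$ coding its marked origin. For two coded points $P=\str{u+m_1}$ and $Q=\str{u+m_2}$ the ``direction'' of the affine line joining them is the line $(P+Q)\cap H$, which is lattice-definable; hence parallelism, and with it the parallelogram law, is expressible, and the rule $m_1+m_2=m_3$ translates into the appropriate equalities of directions (the collinear case being handled by projective von~Staudt addition along a single line). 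In this way $(H,+)$ is reconstructed, and for a coded $H$-vector $m$ its genuine line $\str m=(\ell+N^\ast)\cap H$ is recovered, so that membership $m\in L$ is tested by $\str m\subseteq L$ for every $L\in P(V)$.

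To obtain all of $V$, and not merely the slice $u+H$, I record a nonzero vector $v=\l u+m$ by the pair consisting of the interpreted scalar $\l\in D$ --- the division ring being interpretable in $\cP$ through the coordinatization of the projective space \cite{Ba} --- and the coded $H$-vector $m$; the zero vector, the vectors on the axis $N^\ast$, and the vectors inside $H$ are coded by degenerate instances of the same data. Addition is then $(\l_1,m_1)+(\l_2,m_2)=(\l_1+\l_2,m_1+m_2)$, definable from the interpreted field addition and the reconstructed $(H,+)$; and for $\in$ one recovers the true line $\str{\l u+m}$ of a coded vector from $\l,m,N^\ast,H$ via the definable action of $D$ and tests inclusion in $L$. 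Finally the frame $(N^\ast,H)$ is removed by the parameter-elimination trick of Section~\ref{0.12}, yielding a uniform parameterless interpretation.

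I expect the main obstacle to be exactly the passage from one affine chart to the whole space: a single chart sees only the affine hyperplane $u+H$, i.e.\ one scalar slice, so covering every $v\in V$ forces one to bring in the coordinate division ring and its action, that is, to reconstruct the full \emph{linear} --- not merely affine --- structure, and to verify that addition and membership remain first-order definable across the axis $N^\ast$ and the directions inside $H$, including all degenerate and collinear configurations, and uniformly in $\vk$ and $D$ (independently of the chosen frame).
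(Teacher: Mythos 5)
Your construction of the affine chart is exactly the paper's: fix a line $N^*=\str{a^*}$ and a hyperplane $M^*$ with $N^*\oplus M^*=V$, code each $m\in M^*$ by the transversal line $\str{a^*+m}$, recover addition on $M^*$ from the parallelogram configuration (with a separate device for the collinear case), and test membership $m\in L$ by $\str{m}\subseteq L$. Where you diverge is in the second half: you then try to recover \emph{all} of $V$ by adjoining the coordinate division ring and coding $v=\lambda a^*+m$ as a pair $(\lambda,m)$, and you correctly identify this passage as the main burden of your argument. The point you miss is that this passage is unnecessary, and avoiding it is precisely the ``especially simple proof in the infinite-dimensional case'' the paper is after: an interpretation only has to produce a structure \emph{isomorphic} to $\cPV$, and since $\dim V=\vk\ge\aleph_0$ the hyperplane satisfies $\dim M^*=\vk=\dim V$, so $\str{M^*,P(M^*);+,\in}\cong\cPV$ already. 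The affine chart alone therefore finishes the proof, with no division ring, no scalar action, and no von Staudt multiplication. Your longer route is essentially Baer's classical reconstruction, which the paper explicitly acknowledges as ``essentially known for arbitrary dimensions $\ge 3$''; it buys validity in finite dimensions (where a hyperplane is not isomorphic to the whole space) at the cost of the full coordinatization machinery, whose definability details (the uniform definable action of the interpreted $D$ on all coded $M^*$-vectors, the degenerate configurations) you gesture at rather than verify. In the infinite-dimensional setting of this paper, the cheaper argument is available and is the one used.
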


\begin{proof}
The result is essentially known for arbitrary
dimensions $\ge 3$ (it follows from the well-known
reconstruction the abelian group of vectors of $W$ in
the projective space $P(W)$ over $W$ \cite[Chapter
III]{Ba}), but we suggest an especially simple proof
in the infinite-dimensional case.

Consider two parameters: a line $N^*$  and a
hyperplane $M^*$ such that $N^* \oplus M^* = V.$ We
shall interpret the structure $\cV$ in $\cP$ with the
parameters $N^*$  and $M^*.$  Let $a^*$ be a non-zero
element in $N^*.$ If $a$ is an element of $M^*,$ then
we denote by  $a'$ the line $\str{a+a^*}.$ Clearly,
$a=0$ iff $a' = N^*.$ Let $\Lambda$ denote the set of
all one-dimensional subspaces lying outside $M^*.$ It
is easy to see that the mapping ${}'$ is a bijection
from $M^*$ onto $\Lambda.$ The operation $+$ on $M^*$
induces a binary operation $+'$ on $\Lambda.$ We show
that $+'$  is $\{N^*,M^*\}$-definable.

Consider a pair $a_1,a_2$ of linearly independent elements
of $M^*.$ An element  $a \in M^*$  coincides
with the element  $a_1+a_2$ iff the following hold:

(i)  $\{0\} \subset  \str{a^*,a_1} \cap \str{a^*+a,a^*+a_2}
\subseteq  M^*,$

(ii) $\{0\} \subset  \str{a^*,a_2} \cap  \str{a^*+a,
a^*+a_1} \subseteq M^*$

Necessity:
\begin{align*}
\str{a^*,a_1} \cap & \str{a^*+a,a^*+a_2} =
\str{a^*,a_1} \cap \str{a^*+a_1+a_2, a^*+a_2} = \\
&\str{a^*,a_1} \cap  \str{a^*+a_2,a_1} = \str{a_1},
\end{align*}
because of the linear independence of $\{a^*,a_1,a_2\}.$

Sufficiency. An element $\lambda_1(a^*$+$a) +
\lambda_2(a^*+a_2)$ of the subspace
\mbox{$\str{a^*+a,a^*+a_2}$} is in  $M^*$ iff $\lambda_1=
-\lambda_2.$ Hence if (i) holds, then there exist
$\lambda,\mu  \in D$ such that $\lambda  \ne 0$  and
$\mu a_1= \lambda a - \lambda a_2.$  Since $\lambda
 \ne 0,$ then $\nu a_1= a - a_2$ for some $\nu \in D.$
By analogy one deduces from (ii) that $\nu' a_2= a - a_1$
for some $\nu' \in D.$ We then have that $\nu
a_1+a_2= a_1+ \nu' a_2,$ and therefore  $\nu  = \nu' =
1.$

The linear independence of $a_1,a_2$  is equivalent
to the following conditions: (a) both $a_1'$  and $a_2'$
are different from $N^*,$ and (b) the plane $N^*+a'_1$
does not contain the line  $a'_2.$ On the other hand,
the condition (i) is obviously equivalent to the
condition

(i)$'$ $(N^*+a_1') \cap  (a'+a'_2)$  is different
from $\{0\}$  and lies in $M^*.$

The condition (ii) can be rewritten in a similar way.

Suppose  $a_1,a_2$  are linearly dependent non-zero
elements of  $M^*.$ Then the condition `$a = a_1+a_2$'
is equivalent to the following condition: there exist
$b,c,d \in M^*$ such that

(a) each of the pairs  $\{b,a_1\}, \{c,a_2\}, \{b,a\}$ is
linearly independent, and

(b)  $b+a_1=c,$ $c+a_2=d,$ $b+a=d.$

Thus, we can conclude that the operation  $+'$
on $\Lambda$ is  $\{N^*,M^*\}$-definable in  $\cP.$

Assign to every subspace $L \subseteq  M^*$ the
subspace  $L' = L+N^*.$ The mapping $L \mapsto  L'$ is
injective, and the condition  $a \in L$ is equivalent
to $a' \subseteq  L'.$ Let $P(M^*)'$ be the image of
the set $P(M^*).$ The $\{N^*,M^*\}$-definable
structure $\str{\Lambda,P(M^*)';+', \subseteq}$ is
isomorphic to  $\str{M^*,P(M^*);+,\in},$ and the latter
one is isomorphic to $\cPV.$ Since the set of all
pairs $\str{N^*,M^*},$ whose sum is $V,$ is
$\varnothing$-definable in $\cP,$ the result follows.
\end{proof}

\begin{Prop}
$\TH(\cPV) \ge \TH(\cEPV).$
\end{Prop}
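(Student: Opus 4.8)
The plan is to interpret $\cEPV$ inside $\cPV$ by encoding each endomorphism of $V$ as a subspace --- a twisted graph --- and then expressing the ring operations together with the two actions purely through the incidence data ($\subseteq$, the group operation $+$ on $V$, and membership $\in$) that $\cPV$ provides. First I would fix three parameters: subspaces $V_1,V_2,T$ of $V$ subject to $V_1\oplus V_2=V$ and ``$T$ is a common complement of $V_1$ and $V_2$'', i.e. $V=V_1\oplus T=V_2\oplus T$. All of these are expressible in the lattice $\cP$ alone, since meet, join, the bottom $\{0\}$ and the top $V$ are definable from $\subseteq$; and such triples exist because $V$ is infinite-dimensional, forcing $V_1\cong V_2\cong V$. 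A subspace $T$ of this kind is exactly the graph $\{v+\theta(v):v\in V_1\}$ of an isomorphism $\theta\colon V_1\to V_2$, and the complements $W$ of $V_2$ (again lattice-definable as the $W$ with $W\cap V_2=\{0\}$ and $W+V_2=V$) are exactly the graphs of $D$-linear maps $V_1\to V_2$. Hence the definable set $X$ of complements of $V_2$ is in canonical bijection $f\mapsto W_f=\{a+\theta f(a):a\in V_1\}$ with $\End V_1\cong\End V$; this will be my interpretation of the endomorphism sort, while the vector sort is interpreted as $V_1$ and the subspace sort as $\{L:L\subseteq V_1\}$. Since $V_1\cong V$, the resulting copy is isomorphic to $\cEPV$.

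The decisive definable relation is the application of a coded endomorphism to a vector. Given $W=W_f\in X$ and $a\in V_1$, the unique point of $W$ over $a$ is $a+\theta f(a)$, whose $V_2$-component $u=\theta f(a)$ is recovered by the coset condition $(a+u)\in W$ with $u\in V_2$; applying $\theta^{-1}$ via $T$ then yields $f(a)$. Concretely, I would set
\[
\mathrm{App}(W,a,b)\ \equiv\ a\in V_1\wedge b\in V_1\wedge(\exists u)\bigl(u\in V_2\wedge (a+u)\in W\wedge (b+u)\in T\bigr),
\]
which holds iff $b=f_W(a)$: the clause $(a+u)\in W$ pins $u$ to $\theta f(a)$, and $(b+u)\in T$ forces $\theta(b)=u$, hence $b=f(a)$. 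Crucially this uses only $+$, $\in$ and the parameters, with no primitive for scalar multiplication.

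Everything else reduces mechanically to $\mathrm{App}$. Equality of endomorphisms is just equality of the coding subspaces, a basic relation of $\cP$. Composition is defined by letting $\mathrm{Mult}(W_1,W_2,W_3)$ assert that $W_3$ is the (unique, in $X$) complement with $\mathrm{App}(W_3,a,c)\leftrightarrow(\exists b)(\mathrm{App}(W_2,a,b)\wedge\mathrm{App}(W_1,b,c))$ for all $a,c\in V_1$; addition is defined analogously from $\mathrm{App}(W_1,a,b_1)\wedge\mathrm{App}(W_2,a,b_2)\to\mathrm{App}(W_3,a,b_1+b_2)$, using the group operation on $V_1$. The action on vectors is $\mathrm{App}$ itself, and the action on subspaces is captured by $\mathrm{Act}_P(W,L,L')\equiv L,L'\subseteq V_1\wedge(\forall b)(b\in L'\leftrightarrow(\exists a)(a\in L\wedge\mathrm{App}(W,a,b)))$; here $f(L)$ is automatically a subspace contained in $V_1$. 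The remaining basic relations of $\cE$, $\cP$, $\cV$ and the membership $\in$ of $\cEPV$ are simply the restrictions of the corresponding $\cPV$-relations to $V_1$ and to subspaces below $V_1$. Since the parameter set $\{(V_1,V_2,T)\}$ is nonempty and $\varnothing$-definable, the parametrized form of the Reduction Theorem (the $(\forall\bar x)(\chi(\bar x)\to\theta^*(\bar x))$ device of Section~\ref{0.12}) yields $\TH(\cPV)\ge\TH(\cEPV)$ uniformly.

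The main obstacle, and the only genuinely non-routine point, is precisely the definability of $\mathrm{App}$: one must recover the output $f(a)$ of a linear map from nothing but lattice incidence, vector addition and membership, without ever naming a scalar. The trick that makes this work is the two-step encoding --- represent $f$ by a complement $W$ of $V_2$ so that ``apply $f$'' becomes ``intersect the coset $a+V_2$ with $W$'', and use the common complement $T$ to transport the resulting $V_2$-component back into $V_1$. Once this is in place, the interpretation of the full ring structure and of both actions is a direct and uniform translation.
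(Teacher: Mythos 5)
Your proposal is correct and follows essentially the same route as the paper: the paper also fixes three pairwise complementary subspaces $L^*_1,L^*_2,L^*_3$ (your $V_1,V_2,T$), codes endomorphisms of $L^*_1$ by complements $L$ of $L^*_2$, and defines application by exactly your formula $(a,b\in L^*_1)\wedge(\exists a')(a'\in L^*_2\wedge a+a'\in L\wedge b+a'\in L^*_3)$, then recovers $\circ$, $+$, and the two actions from it. The only cosmetic difference is that the paper motivates the coding via graphs in $W^2$ and the isomorphism $V\cong V^2$ before transplanting it inside $V$.
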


\begin{proof} Let us start with a preliminary remark.
To each endomorphism $\varphi$ of a vector
space $W,$ assign the subspace
$$
L_{\varphi}=\{(a,\varphi a): a \in W\}
$$
of the vector space $W^2.$ On the other hand,
each direct complement $L$ of
the subspace $\{(0,c) : c \in W\}$ in
$W^2$ determines some endomorphism $\varphi  \in
\End W$: if a pair  $(a,b)$  is in  $L,$
then put $\varphi a = b.$ We check that $\varphi $
is well-defined. Indeed, if two pairs  $(a,b)$
and $(a,b')$ are in $L,$ then $(0,b-b') \in L$  and
$b = b'.$ The fact that for every $a \in W$ there exists
an element $b \in W$ such that $(a,b) \in L$ follows
from a decomposition
$$
W^2= L \oplus  \{(0,c): c \in W\}.
$$
It is clear also that $\varphi$ is linear.

Since $V$ is infinite-dimensional, the Cartesian
square of $V$ is isomorphic to $V,$ and it makes sense
to realize the above arguments for the reconstruction
of $\cEPV$ in $\cPV.$

We shall use three parameters: elements
$L^*_1,L^*_2,L^*_3 \in P(V),$ satisfying
the $\varnothing$-definable condition
\begin{equation} \label{eqCondsOnThreeSubs}
\bigwedge_{i \ne j} L_i\oplus L_j= V.
\end{equation}
One easily verifies that  $\dim L^*_i= \codim L^*_i= \vk,$
where $i=1,2,3.$

Let $L$ be a direct complement of $L^*_2$ in
$V.$ The transformation  $\sigma_{L}$ with the graph
$$
\{(a,b): a \in L^*_1,\quad b \in L^*_2,\quad a+b \in L\}
$$
is a linear mapping from $L^*_1$ to $L^*_2,$ as
we have actually proved above. Moreover, every linear
mapping from $L^*_1$ to  $L^*_2$ can be
constructed in such a way. By
(\ref{eqCondsOnThreeSubs}) the transformation
$\sigma=\sigma_{L^*_3}$ is bijective. Hence
$\varphi_{L} = \sigma^{-1}\circ \sigma_{L}$ is an
element of $\End{L^*_1}.$ Formally,  $\varphi_{L}a =
b$  iff the following condition
$$
(a,b \in L^*_1) \wedge (\exists a')(a' \in L^*_2\wedge
a+a'\in L \wedge b+a' \in L^*_3).
$$
is true. The transformations  $\varphi_{L}$ and $\varphi_{L'}$
coincide iff
$$
(\forall a)(a \in L^*_1 \rightarrow  \varphi_{L}a = \varphi_{L'}a).
$$
The analogous arguments may be used for interpretations
of the operations $\circ$  and $+$ on  $\End{L^*_1}.$
Thus, we have reconstructed the first sort of the structure
$$
{\mathcal M} = \str{\End{L^*_1},P(L^*_1),L^*_1},
$$
constructed from $L^*_1$ similarly to the construction of
$\cEPV$ from $V.$ Having the relation $\in$ in the
language of  $\cPV,$ we can reconstruct the relation
$\subseteq$ on $P(L^*_1).$ We reconstructed
$\End{L^*_1}$ with its action on $L^*_1.$ Having the
action of $\End{L^*_1}$  on  $L^*_1,$ one can
obtain the action of $\End{L^*_1}$ on $P(L^*_1).$
And, finally,  ${\mathcal M}  \cong \cEPV,$ because $\dim
L^*_1 = \dim V.$
\end{proof}

\begin{Clm} \label{PV>D}
The division ring $\str{D;+,\cdot}$ can be
reconstructed {\rm(}with parameters from a
$\varnothing$-definable set{\rm)} in the structure $\cPV$
by means of first order logic.
\end{Clm}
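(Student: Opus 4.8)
\textit{Proof (plan).} The idea is to realize $D$ as the set of vectors lying on a fixed line, with the division-ring operations read off geometrically. Fix two vectors $a^*,b^*$ subject to the $\varnothing$-definable condition that $a^*\ne 0$ and $b^*\notin\str{a^*}$, and put $N=\str{a^*}$, $N'=\str{b^*}$, $\Pi=N\oplus N'$. I take as the domain of the reconstructed ring the $\varnothing$-definable (given $a^*$) set $\tilde D=\{v: v\in N\}$, identified with $D$ via $\lambda a^*\leftrightarrow\lambda$; equality is vector equality, the zero is $0$ and the unit is $a^*$. Addition needs no work: since $N$ is a subspace it is closed under $+$, and $(\lambda a^*)+(\mu a^*)=(\lambda+\mu)a^*$, so the abelian-group operation of $\cV$ already gives addition on $\tilde D$. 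The whole content is multiplication, and here the main difficulty appears: the structure $\cPV$ has \emph{no scalar action}, so I cannot speak of ``$D$-linear endomorphisms of $N$'' directly (the additive endomorphisms of the line are far too numerous). The way around this is that a $1$-dimensional subspace of $\Pi$ is \emph{automatically} a $D$-subspace, so lines in $\Pi$ can be used to encode $D$-linear maps of the line with $D$-linearity built in for free — exactly the device used in the preceding proposition for graphs of endomorphisms.

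Concretely, I would encode right multiplication. Let $\iota:N\to N'$, $\lambda a^*\mapsto\lambda b^*$, be the reference isomorphism; its graph is the line $\str{a^*+b^*}$, and one checks that for $v\in N$ the image $\iota(v)$ is the unique $w\in N'$ with $v+w\in\str{a^*+b^*}$, hence $\iota$ (and $\iota^{-1}$) is definable from the parameters. For $v=\mu a^*$ the $D$-linear map $r_\mu:N\to N$, $\lambda a^*\mapsto\lambda\mu a^*$, is encoded by the line $L_v=\str{a^*+\iota(v)}=\str{a^*+\mu b^*}$, whose points are exactly the vectors $u+\iota(r_\mu(u))$, $u\in N$. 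To form the product $u\otimes v$ of $u=\lambda a^*$ with $v=\mu a^*$ I would: pass from $v$ to the line $L_v$; locate the unique $p\in L_v$ whose $N$-component equals $u$ (the components along the decomposition $\Pi=N\oplus N'$ being definable via membership and $+$); and return $\iota^{-1}$ of the $N'$-component of $p$. One then verifies $u\otimes v=r_\mu(u)=\lambda\mu a^*$, so that under $\lambda a^*\leftrightarrow\lambda$ the operation $\otimes$ is precisely the multiplication of $D$ with $u$ as the left factor. (The zero and degenerate cases $\lambda=0$ or $\mu=0$ fall out correctly, since $L_v\cap N'=\{0\}$ and $L_{0}=N$.) It is essential here to let $v$ encode the map and $u$ be the argument: the opposite choice would produce $D^{\mathrm{op}}$ rather than $D$, and this orientation is the only delicate point in matching the reconstructed ring to $\str{D;+,\cdot}$ on the nose.

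Assembling these, every ingredient — the line $N$, the components along $N\oplus N'$, the maps $\iota,\iota^{-1}$, the line $L_v$ attached to $v$, and the selection of $p$ — is first order in the relations $+$, $\subseteq$, $\in$ together with the parameters $a^*,b^*$, so $\str{\tilde D;\oplus,\otimes}$ is an interpreted copy of $\str{D;+,\cdot}$. Since the set of admissible parameter pairs $\str{a^*,b^*}$ is $\varnothing$-definable and each choice yields a structure isomorphic to $D$, the reconstruction is a first-order interpretation with parameters from a $\varnothing$-definable set, as required; the machinery of the Reduction Theorem with parameters from Section~\ref{0.12} then applies. \textbf{Main obstacle:} it is exactly the absence of a scalar action that blocks the naive definition of multiplication, and the crux of the argument is the geometric encoding of right multiplications by lines in $\Pi$ so that $D$-linearity is enforced by the subspace lattice rather than asserted by hand. $\qed$
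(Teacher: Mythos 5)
Your construction is correct and is essentially the paper's own argument: the paper also fixes $a^*$ and $b^*\notin\str{a^*}$, gets addition for free from the group of vectors, and defines multiplication by the same incidence configuration in the plane $\str{a^*,b^*}$ (Dieudonn\'e's diagram, with the line $\str{a^*+b^*}$ playing exactly the role of your reference isomorphism $\iota$ and a second line through a point determined by one factor transporting the other). Your presentation via graphs of right-multiplication maps is just a mirror-image bookkeeping of the same diagram (the paper reads the product off the $N$-component after matching in $N'$, you do the reverse), and your attention to the $D$ versus $D^{\mathrm{op}}$ orientation and to the degenerate cases is sound.
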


\begin{proof} Fix a non-zero element
$a^* \in V.$ We identify the elements of $D$ with the elements
of the line $\str{a^*}.$ Clearly, $\str{D;+} \cong
\str{\str{a^*};+}.$ In \cite[Chapter III, Section 1]{Die2} Dieudonn\'e
proving the Fundamental Theorem of Projective Geometry
interprets (algebraically) the division ring
$\str{D;+,\cdot}$  in the projective space  $\str{P(W);\subseteq},$
where $\dim W \ge 3.$ For the reconstruction of the multiplication
the following diagram is used:

\begin{center}
\unitlength=1.00mm
\special{em:linewidth 0.4pt}
\linethickness{0.4pt}
\begin{picture}(87.67,46.17)
\put(20.67,9.67){\line(4,3){48.67}}
\put(20.67,9.67){\line(5,1){67.00}}
\put(30.00,22.33){\makebox(0,0)[cc]{$b$}}
\put(43.00,32.00){\makebox(0,0)[cc]{$\mu b$}}
\put(42.33,9.67){\makebox(0,0)[cc]{$\nu a$}}
\put(66.00,14.00){\makebox(0,0)[cc]{$\mu\nu a$}}
\put(78.67,16.67){\makebox(0,0)[cc]{$\mu a$}}
\put(53.00,12.00){\makebox(0,0)[cc]{$a$}}
\put(33.00,19.00){\line(3,-2){7.67}}
\put(33.33,19.00){\line(6,-1){18.33}}
\put(43.67,27.00){\line(5,-2){21.00}}
\put(43.67,27.00){\line(6,-1){35.00}}
\end{picture}
{} \\
Diagram 1.
\end{center}

Let us reconstruct the multiplication on $D$
basing on the Diagram 1.

We need one more parameter: an element
$b^* \not\in \str{a^*}.$ Let $a \in
\str{a^*}$ and $\varepsilon(a)$ be the element
of the division ring such that $a = \varepsilon (a)a^*.$
Consider non-zero elements
$a,a_1,a_2$  of $\str{a^*}.$ We claim that
$\varepsilon (a) = \varepsilon (a_1)\varepsilon (a_2)$  iff
$\{a^*, b^*\}$-definable condition
\begin{align} \label{eq3.1.2}
(\exists y)(y \in \str{b^*} \wedge & \str{a^*+ b^*} =
\str{a_1+y} \wedge \\
 & \str{a_2+b^*} = \str{a+y}) \nonumber
\end{align}
is true. If $\models (\ref{eq3.1.2}),$ then $\lambda
(a^*+b^*) = \varepsilon (a_1)a^*+ \mu b^*$ for some
$\lambda, \mu \in D,$ and hence $\mu  = \varepsilon
(a_1).$ For some
$\lambda'\in D$ we have $\lambda'(\varepsilon (a_2)a^*+b^*) =
\varepsilon (a)a^*+ \varepsilon (a_1)b^*,$ and hence
$\varepsilon (a) = \varepsilon (a_1)\varepsilon (a_2).$
The converse is easy.
\end{proof}

\section{Recovering a basis} \label{3.2}

In the remaining part of the paper we shall
suppose that {\it the underlying division ring $D$
satisfies the following condition:}
\stepcounter{equation}

\qquad\parbox{10cm}{\it the number of conjugacy classes of the multiplicative
group $D^*$ is equal to the power of $D^*$.} \hfill (\theequation)

Furthermore, everywhere below the term `division ring'
will be understood to mean only a division ring of the
mentioned form. As the reader will see later in this
section, the condition on $D$ we introduce gives
a natural way of `increasing' of the logical
power of first order theories associated
with $V$ in the case when the dimension
of $V$ is `small' (less or equal to
$|D|$).

\begin{Prop} \label{BasisWeAreLookingFor}
There exist formulae  $\chi(\avX), B(x;\avX)$
in the language of the structure  $\cEPV$ such that
for every tuple $\avA$ from the domain, satisfying  $\chi,$
the set
$$
\{a:\cEPV \models  B[a;\avA]\}
$$ is a basis of
$V.$
\end{Prop}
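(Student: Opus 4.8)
The plan is to exhibit a basis as a \emph{normalised set of eigenvectors} of one cleverly chosen endomorphism, the parameter tuple $\avX$ consisting of an endomorphism $f$, a rank-one endomorphism $q$, and a vector $c$. For an endomorphism $f$ call $a\in V$ an \emph{$f$-eigenvector} if $a\ne 0$ and $f\cdot a$ lies on the line $\str a$; this is first order in $\cEPV$, since lines are exactly the atoms of $\str{P(V);\subseteq}$ and the action of $\End V$ on $V$ is a basic relation. On a line $N=\str a$ the eigenvalue of $f$ is well defined only up to conjugacy in $D^{*}$, because rescaling $a$ by $\nu$ turns the eigenvalue $\mu$ into $\nu\mu\nu^{-1}$; this is exactly the point at which the hypothesis $(*)$ will enter.

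First I would record the algebraic core. Key Lemma: if $a_{1},\dots,a_{n}$ are $f$-eigenvectors whose eigenvalues are pairwise non-conjugate in $D^{*}$, then they are linearly independent, by the usual argument of subtracting $\mu_{1}\sum c_{k}a_{k}$ from $f(\sum c_{k}a_{k})$, where the conjugacy hypothesis is precisely what forces the surviving coefficients to vanish. In particular, if the eigenlines of $f$ pairwise carry non-conjugate eigenvalues then every eigenspace is a line. The second ingredient is that the relation ``distinct eigenlines $N_{1},N_{2}$ carry conjugate eigenvalues'' is first order: it holds iff there is $g\in\End V$ with $g\cdot a_{1}\in N_{2}\setminus\{0\}$ and $g\cdot(f\cdot a_{1})=f\cdot(g\cdot a_{1})$, that is, an $f$-equivariant isomorphism $N_{1}\to N_{2}$, and such a map exists exactly when the two one-dimensional eigenvalues are conjugate. (Here I use that $D$ with its operations, hence its conjugacy relation, is interpretable in $\cPV$ by Claim \ref{PV>D}.)

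Next I would put $B(x;f,q,c)$ to be ``$x$ is an $f$-eigenvector and $q\cdot x=c$'', and let $\chi(f,q,c)$ assert: (i) $c\ne 0$ and the image of $q$ lies on the line $\str c$; (ii) $q\cdot a\ne 0$ for every $f$-eigenvector $a$; (iii) the $f$-eigenvectors span $V$, equivalently no hyperplane contains them all, which is first order over $\str{P(V);\subseteq}$; and (iv) any two distinct eigenlines of $f$ carry non-conjugate eigenvalues, written as the negation of the equivariant-map condition above. Granting $\chi$, the defined set meets each eigenline $N$ in exactly one vector, because $q|_{N}\colon N\to\str c$ is a nonzero map of lines, hence a bijection, so $q\cdot a=c$ has a unique solution on $N$; these chosen vectors span $V$ by (iii) and are linearly independent by (iv) together with the Key Lemma, so they form a basis. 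Every clause is first order in $\cEPV$.

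It remains to show $\chi$ is satisfiable, and here a dichotomy on $\vk$ appears. When $\vk\le|D|$ the hypothesis $(*)$ supplies at least $\vk$ pairwise non-conjugate elements $\{\mu_{i}:i<\vk\}$ of $D^{*}$; fixing a basis $\{e_{i}:i<\vk\}$, the diagonal map $f\,e_{i}=\mu_{i}e_{i}$ is left-linear, its eigenlines $\str{e_{i}}$ carry pairwise non-conjugate eigenvalues, and taking any nonzero $c$ with $q\,a=\delta(a)\,c$, where $\delta(\sum\lambda_{i}e_{i})=\sum\lambda_{i}$, realises $\chi$ so that $B$ defines exactly $\{e_{i}:i<\vk\}$. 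The main obstacle is the complementary range $\vk>|D|$: there are then too few conjugacy classes to separate $\vk$ eigenlines, and since an eigenspace that is not a line always contains a hugely dependent ``$\delta=1$'' transversal, no single diagonalisable operator can succeed. I expect this case to demand a different device — splitting $V$ along the $|D|$ available eigenvalue-classes and treating each large eigenspace by an auxiliary shift, or else invoking that for $\vk\ge|D|$ the theory $\TH(\str{\vk,D},\LII(\vk^{+}))$ is already the full second order theory, so that a basis is obtainable by the large-dimension methods mentioned after Theorem \ref{Gl>pGl>pgl>gl}. Settling this regime cleanly is the crux of the argument.
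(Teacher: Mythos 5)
Your construction for the range $\vk\le|D|$ is sound and is essentially the paper's own argument for that case: a diagonalizable $\f^*$ whose eigenvalues $\lambda_i$ are pairwise non-conjugate (exactly where $(*)$ enters) has only the lines $\str{a_i}$ as invariant lines, and a rank-one normalizer $\rho^*$ with $\rho^* a_i=a^*$ selects one vector per line. Your Key Lemma and the first order test for conjugacy of eigenvalues via an $f$-equivariant map between eigenlines are both correct.

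The gap is the case $\vk>|D|$, which you explicitly leave open, and neither of your suggested escapes works. Splitting $V$ along the $|D|$ available eigenvalue classes and ``treating each large eigenspace by an auxiliary shift'' is not an argument; and appealing to the remark after Theorem \ref{Gl>pGl>pgl>gl} is circular in direction: that remark gives $\TH(\str{\vk,D},\LII(\vk^+))\ge\TH(\Gl V),$ whereas Proposition \ref{BasisWeAreLookingFor} is precisely the tool needed to prove the opposite inequality $\TH(\cEPV)\ge\TH(\str{\vk,D},\LII(\vk^+))$ (Theorem \ref{EPV>(vk,D)}); in any case no interpretability statement between theories by itself produces a definable basis inside $\cEPV.$ The paper's device for $\vk>|D|$ is entirely different and makes no use of eigenvalues: it invokes Shelah's Lemma 4.2 from \cite{Sh3}, which supplies a first order formula $\vartheta(x;\avy)$ in the semigroup language and a parameter tuple ${\av\f}^*$ of endomorphisms such that the realizations of $\vartheta$ are exactly the endomorphisms sending a fixed basis vector $a^\beta_0$ to $t(a^\beta_{\alpha_1},\ldots,a^\beta_{\alpha_n})$ for some beautiful term $t$ and ordinals $\alpha_1,\ldots,\alpha_n;$ since in the variety of $D$-vector spaces the only beautiful reduced terms are the variables $x_k$ (a short computation with Shelah's identities (B) and (C)), the set $\{\f a^\beta_0:\vartheta(\f;{\av\f}^*)\}$ is a linearly independent set of power and copower $\vk,$ which is then completed to a full basis with one further invertible parameter $\pi^*.$ Without this (or some genuine substitute) your proof covers only half of the proposition.
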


\begin{Th} \label{EPV>(vk,D)}
$\TH(\cEPV) \ge \TH(\str{\vk,D},\LII(\vk^+)).$
\end{Th}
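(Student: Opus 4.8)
We must show that the first-order theory of the three-sorted structure $\cEPV$ syntactically interprets the second-order theory $\TH(\str{\vk,D},\LII(\vk^+))$, uniformly in $\vk$ and $D$. The crux is to simulate, inside a first-order theory of $\cEPV$, quantification over arbitrary relations of power $<\vk^+$, i.e. of power $\le\vk$, on a two-sorted domain built from $\vk$ and $D$.

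\medskip

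\textbf{The plan.} The entire strategy rests on Proposition~\ref{BasisWeAreLookingFor}, which supplies $\varnothing$-definable-with-parameters formulae $\chi(\avX)$ and $B(x;\avX)$ such that, whenever $\avA$ satisfies $\chi$, the set $\{a:\cEPV\models B[a;\avA]\}$ is a basis of $V$. Fix such a basis $\mathcal{B}=\{a_\alpha:\alpha<\vk\}$. The point is that a basis of cardinality $\vk$ gives a first-order handle on \emph{both} sorts of $\str{\vk,D}$ at once: the basis vectors serve as names for the index set $\vk$, and, via Claim~\ref{PV>D} (which reconstructs $\str{D;+,\cdot}$ inside $\cPV$, hence inside $\cEPV$), we recover the scalar sort $D$. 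First I would use the parameters $\avA$ together with the division-ring reconstruction to exhibit a $\varnothing$-definable-with-parameters copy of the two-sorted domain $\vk\sqcup D$ inside $\cEPV$: the first sort as (lines spanned by) basis vectors, the second sort as the reconstructed $\str{D;+,\cdot}$. The basic relations of $\str{\vk,D}$ are then interpretable, since $\vk$ carries \emph{no} relations and $D$ carries only its ring operations, which Claim~\ref{PV>D} already delivers.

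\medskip

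\textbf{Simulating the strong logic.} The heart of the proof is the interpretation of the quantifiers of $\LII(\vk^+)$, i.e. quantification over arbitrary $n$-ary relations $\varrho\subseteq (\vk\sqcup D)^n$ of power $\le\vk$. The idea is that a single endomorphism of $V$ can encode such a relation, because $\dim V=\vk$ and a relation of power $\le\vk$ has $\le\vk$ tuples, each of which is a finite string of basis indices and scalars. Concretely, using the fixed basis $\mathcal{B}$ and the action of $\End V$ on $V$, one can set up a $\varnothing$-definable (with parameters $\avA$) correspondence assigning to each relation $\varrho$ of power $\le\vk$ an endomorphism $\varphi_\varrho\in\End V$ that ``lists'' the tuples of $\varrho$ by spreading them across disjoint coordinate blocks of the basis: the image of a block records one tuple as a linear combination whose support of basis vectors names the indices and whose coefficients name the scalars. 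Since $\End V$ is a sort of $\cEPV$, a first-order quantifier $\exists\varphi$ over endomorphisms then simulates the second-order quantifier $\exists\varrho$ over relations of power $\le\vk$. I would then translate each second-order atomic formula $\varrho(t_1,\dots,t_n)$ into the first-order assertion that the tuple named by $t_1,\dots,t_n$ appears among the blocks listed by $\varphi_\varrho$, and push the translation through connectives and quantifiers by induction, yielding the required interpretation map $\theta\mapsto\theta^*$ and invoking the Reduction Theorem~\ref{0.2.1} in its parametrized form.

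\medskip

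\textbf{The main obstacle.} The genuinely delicate step is the encoding just sketched: making the ``every relation of power $\le\vk$ corresponds to some endomorphism, and conversely every endomorphism decodes to \emph{some} such relation'' correspondence actually $\varnothing$-definable with the parameters $\avA$, and surjective onto all relations of power $\le\vk$. One must carve out, first-order and with parameters, a $\vk$-indexed family of pairwise disjoint finite coordinate blocks of $\mathcal{B}$ (so that $\vk$ blocks of bounded finite width suffice, using $\vk\cdot\aleph_0=\vk$), define when a block ``codes a legitimate tuple,'' and verify that \emph{every} relation of power $\le\vk$ is realized by some endomorphism while decoding is well-defined. This requires care because endomorphisms are unconstrained, so the decoding formula must tolerate arbitrary $\varphi$ and extract from it a well-defined relation (e.g. by reading off only the blocks on which $\varphi$ behaves admissibly). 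Once this bookkeeping is in place the inductive translation and the appeal to Theorem~\ref{0.2.1} are routine.
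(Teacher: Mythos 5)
Your overall architecture matches the paper's: a definable basis from Proposition~\ref{BasisWeAreLookingFor}, the division ring from Claim~\ref{PV>D} placed on a line outside the basis, the basis itself standing for $\vk$, second-order objects coded by endomorphisms, and the parametrized form of Theorem~\ref{0.2.1} to finish. Where you diverge is in the coding of the second-order quantifiers. You code an $n$-ary relation of power $\le\vk$ directly, by a single endomorphism listing its tuples over a $\vk$-indexed family of disjoint $n$-element blocks of the basis; this forces you to carve out the block structure definably (extra parameters acting as position-selectors or a successor-like injection on $\cB$), and your phrase about reading a whole tuple off ``the support and coefficients of one linear combination'' needs to be replaced by the per-position reading (the image of the $j$-th vector of a block codes the $j$-th entry), since otherwise tuples with repeated entries collapse and positions are lost. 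The paper instead first invokes the standard mutual interpretability of $\LII(\vk^+)$ with quantification over partial functions of domain-power $\le\vk$, and then codes such a partial function on $\cB\cup\str{a^*}$ by a \emph{triple} $\str{\sigma_0,\sigma_1,\sigma_2}$ of endomorphisms: $\sigma_0$ marks (by sending basis vectors to $a^*$, to $b^*$, or fixing them) which basis elements serve as domain-tokens and which as range-tokens, $\sigma_1$ is a bijection pairing the two token sets, and $\sigma_2$ reads each token off to the actual argument or value. That marker scheme needs no block decomposition and no arity-dependent parameters, so it is lighter on bookkeeping; your version is more self-contained (no appeal to the relations-to-partial-functions reduction) but all of the delicacy you correctly identify as ``the main obstacle'' is exactly the work the paper's trick avoids. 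Both routes lead to a valid interpretation.
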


{\it Proof} (assuming \ref{BasisWeAreLookingFor}).
Let $\avA$ satisfy $\chi$ in $\cEPV,$ and
$\cB=B(\cEPV,\avA).$ Then $\cB$ is a basis of $V.$
Choose in $V$ linearly independent elements
$a^*,b^*$ which lie outside $\cB.$ Let $\avA'=\avA \cup \{a^*,b^*\}.$
We identify $\cB$ with the set $\vk,$ and, using
Claim \ref{PV>D}, introduce on the line $\str{a^*}$ a
structure which is isomorphic to $\cD.$
Put $\cB'= \cB  \cup \str{a^*};$ it will be a copy of the domain
of the structure $\str{\vk,D}.$

It is a well-known fact that the logic with
quantification over arbitrary partial functions and
the full second order logic (which allows
quantification over arbitrary relations) are mutually syntactically
interpretable. Similarly, since in the case of the logic $\LII(\vk^+)$
quantification is allowed only over relations
of power $\le \vk,$ it suffices to
interpret in $\cEPV$ the set of all partial functions
from $\cB'$ to $\cB'$ whose domains are of power less
or equal to $\vk.$

We shall interpret these partial functions by triples
$\avs=\str{\s_0,\s_1,\s_2}$ of endomorphisms of $V$ such that
\begin{itemize}
\item[(a)] $\s_0$ sends each element $b$ of $\cB$ either
to $a^*,$ or to $b^*,$ or to $b;$
\item[(b)] $\s_1$ maps $\cB$ to $\cB$ and the set $\cB_d(\avs)=\{b \in \cB : \s_0 b=a^*\}$
(the preimage under $\s_2$ of the domain of a reconstructible
partial function) onto the set $\cB_r(\avs)=\{b \in \cB : \s_0 b=b^*\};$
\item[(c)] $\s_2(\cB) \subseteq \cB'$ and its restrictions
on $\cB_d$ and $\cB_r$ both are injective.
\end{itemize}

Then, if we substitute any triple $\s_0,\s_1,\s_2$ satisfying
(a,b,c) in the $\av A'$-definable scheme $PF$ below for $\f_0,\f_1,\f_2,$
we obtain a partial function $x_0 \mapsto x_1$ from $\cB'$
to $\cB'$ with the domain of power $\le \vk$:
\begin{align*}
PF(x_0,x_1;&\varphi_0,\varphi_1,\varphi_2) = \\
(\exists &y_0 \in \cB_d(\av\f))(\exists y_1 \in \cB_r(\av\f))
\{ (\varphi_1 y_0=y_1) \wedge (\varphi_2 y_0=x_0) \wedge (\varphi_2 y_1=x_1)\}.
\end{align*}
\qed

{\it Proof of} \ref{BasisWeAreLookingFor}.
We consider here two cases: $\vk > |D|$ and $\vk \le |D|.$

{\it I. $\vk > |D|.$} We shall
use results from the deep paper \cite{Sh3} by
Shelah, where he does, as the title of his paper
says, `interpretation of set theory in the
endomorphism semi-group of a free algebra'. Let $\mathcal
C$ be a variety of algebras in some language $\fL.$
Suppose that $\gamma$ is an infinite cardinal, and
$F_{\gamma}$ is a free algebra with $\gamma$ free
generators. Shelah builds a family
$\fL$-terms, which he calls beautiful terms, satisfying three
special conditions (we describe them below) on free
algebras of infinite rank in $\mathcal C$; we just note
that in some important cases (e.g. for the variety of
abelian groups) the only beautiful and reduced terms
are the terms $x_k,$ where $k \in \N.$ We formulate
one of the key technical results from \cite{Sh3} in
the following form.

\begin{Lem} \label{ShelahInClermont} \mbox{\rm
(\cite[Lemma 4.2]{Sh3}).}
Let $\gamma$ be an infinite cardinal, which is strictly
greater than the power of the language of $\mathcal C.$
Suppose that $\cB$ freely generates $F_\g,$ and
write $\cB$ in the form
$$
\{a^\beta_\alpha : \beta,\alpha < \gamma\} \cup \{b_i : i < \gamma\}.
$$
Then there are a first order formula $\vartheta[x;\avy]$
in the semi-group language and a tuple ${\av \f}^*$
of endomorphisms of $F_\gamma$ such that
$\End{F_\gamma} \models \vartheta[\f;\av{\f}^*]$
if and only if there exist a beautiful
term $t(x_1,\ldots,x_n)$ and ordinals
$\a_1,\ldots,\a_n < \gamma,$ so that
$$
\f(a^\beta_0)=t(a^\beta_{\a_1},\ldots,a^\beta_{\a_n})
$$
for every ordinal $\beta < \gamma.$
\end{Lem}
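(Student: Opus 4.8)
The plan is to choose the parameter tuple $\av\f^*$ so that a single first-order formula $\vartheta(x;\avy)$ in the composition language recognizes exactly those $\f$ that (i) carry each $a^\b_0$ into the subalgebra generated by its own row, (ii) do so \emph{uniformly} in $\b$ by one term applied to the same columns, and (iii) have that common term beautiful. Among the parameters I would first place the retraction $R$ of $F_\g$ onto the grid subalgebra $G=\str{a^\b_\a:\b,\a<\g}$ (sending every $b_i$ to one fixed $a^0_0$) and the retraction $\epsilon$ onto the $0$-th column $\str{a^\b_0:\b<\g}$. Working with $R\f\epsilon$ rather than $\f$ lets the formula speak only of the action of $\f$ on the generators $a^\b_0$ and only of the grid part of the resulting images, all via composition with fixed parameters. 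The hypothesis $\g>|\fL|$ is used here to keep $G$ free of rank $\g$ with $|G|=\g$, so that every permutation of the index set $\g\times\g$ preserving the two fibrations is realized by an automorphism of $G$; two further ``collapse'' parameters, one sending $a^\b_\a\mapsto a^\b_0$ and one sending $a^\b_\a\mapsto a^0_\a$, break the symmetry of $\g\times\g$ and let me tell rows from columns.

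The technical heart is to force conditions (i) and (ii) simultaneously, and this is where the grid, as opposed to a bare $\g$-indexed generating set, is indispensable. Using the two collapse parameters I would define the family $\Sigma$ of those automorphisms $\psi$ of $G$ that fix the $b_i$, fix every column setwise, and permute the rows, so that $\psi$ acts by $a^\b_\a\mapsto a^{p(\b)}_\a$ for a permutation $p$ of $\g$; definability of $\Sigma$ in the monoid is a routine exercise with $R,\epsilon$ and the collapses. The decisive clause of $\vartheta$ is then $(\forall\psi)(\psi\in\Sigma\to\f\psi=\psi\f)$ together with $R\f\epsilon=\f\epsilon$. The latter kills any dependence of $\f(a^\b_0)$ on the $b_i$. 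For the former, comparing $\psi\f$ and $\f\psi$ on $a^\b_0$ with a $p$ fixing $\b$ shows that $\f(a^\b_0)$ must be invariant under every such relabelling of the row index; since a single element of the free algebra involves only finitely many generators, this forbids any appearance of $a^{\b'}_\a$ with $\b'\neq\b$, giving row-confinement. Applying the same comparison with a $p$ carrying $\b$ to $\b'$ then forces the term and the set of columns occurring in $\f(a^\b_0)$ to agree with those in $\f(a^{\b'}_0)$; as the $p$ range over all permutations of $\g$ this yields one term $t$ and one tuple $\a_1,\ldots,\a_n$ with $\f(a^\b_0)=t(a^\b_{\a_1},\ldots,a^\b_{\a_n})$ for every $\b$.

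The remaining clause must assert that $t$ is \emph{beautiful}, and this is the step that genuinely carries the weight of the lemma. Here I would take Shelah's three defining conditions on beautiful terms one at a time and exhibit, for each, a first-order condition on $\f$ inside $(\End{F_\g},\circ)$ that holds precisely when the condition holds for $t$. The governing idea is that beauty is exactly the rigidity needed for the element $c=t(a^0_{\a_1},\ldots,a^0_{\a_n})$ to serve as a recoverable ``coordinate'': it makes the retraction onto the subalgebra spanned by the $c$'s behave like a retraction onto a \emph{free} subalgebra and lets $t$ be read back off the monoid, whereas a non-beautiful term admits a collapse witnessed by some endomorphism violating one of these retraction identities. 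Carrying out this translation faithfully, and verifying that the conjunction of the translated clauses admits no spurious solutions, is the main obstacle; it is precisely the content for which the notion of a beautiful term was engineered in \cite{Sh3}, and it is the part that cannot be compressed.

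It remains to check the two implications. For the backward direction, given a beautiful $t$ and columns $\a_1,\ldots,\a_n$, I define $\f$ by $\f(a^\b_0)=t(a^\b_{\a_1},\ldots,a^\b_{\a_n})$ and let $\f$ fix every other generator; then $\f$ is manifestly row-confined and $\b$-equivariant, so the commutation clause and $R\f\epsilon=\f\epsilon$ hold by inspection, and the beauty clause holds by the forward half of the translation above. For the forward direction, the clauses assemble as before: $R\f\epsilon=\f\epsilon$ and $\Sigma$-commutation deliver a single term and columns with the displayed equation for all $\b$, and the beauty clause upgrades that term to a beautiful one. The one point to keep in mind throughout is that the conclusion constrains $\f$ only on the $a^\b_0$; the retractions $R$ and $\epsilon$ are exactly what make $\vartheta$ blind to the behaviour of $\f$ on the $b_i$ and on the columns $\a\neq0$, so that no condition is imposed there.
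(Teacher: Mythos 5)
This lemma is not proved in the paper at all: it is imported, with attribution, from Shelah's paper \cite{Sh3} (Lemma 4.2); the surrounding text merely restates it ``in the following form'' and then applies it. So there is no in-paper argument to compare your proposal against, and what can be assessed is whether your sketch would stand as an independent proof. It would not.

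The decisive gap is the beauty clause. Your first two paragraphs, suitably repaired, would yield a formula whose solutions are exactly the $\f$ acting on the $a^\b_0$ by a single \emph{term} applied to one fixed tuple of columns; that is the easy half. The entire content of the lemma --- and the reason the notion of a beautiful term exists --- is the further first-order condition singling out those $\f$ whose common term is \emph{beautiful}, and at precisely this point you write that the translation ``cannot be compressed'' and stop. That is an acknowledgement that the hard step remains, not a proof of it. Two further concrete problems. First, your decisive clause $(\forall\psi)(\psi\in\Sigma\to\f\psi=\psi\f)$ is an identity of endomorphisms on all of $F_\g,$ hence constrains $\f$ on the $b_i$ and on the columns $\a\ne 0,$ whereas the right-hand side of the lemma constrains $\f$ only on the $a^\b_0$: an $\f$ with $\f(a^\b_0)=a^\b_0$ for all $\b$ but, say, $\f(b_0)=a^5_3$ satisfies the right-hand side yet violates your commutation clause, so the ``only if'' direction of your biconditional fails as written. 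You note the remedy (phrase everything in terms of $R\f\epsilon$) in your opening paragraph but do not apply it in the clause that matters, and applying it creates new constraints on the images $\epsilon(b_i)$ that you would have to check. Second, the first-order definability of $\Sigma$ with finitely many parameters is asserted to be ``routine,'' but expressing ``$\psi$ permutes the basis, fixing each column setwise'' inside the bare composition monoid is itself a nontrivial piece of the construction and cannot simply be waved through.
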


One deduces from the latter Lemma that

\begin{Cor} \label{AllBeautiesAreX's}
If the only beautiful and reduced terms are
the terms $x_k$ then for any $\beta < \gamma$ the set
$$
\{\f(a^\beta_0) : \End{F_\gamma} \models \vartheta[\f;\av\f^*] \}\quad
(= \{a^\beta_\alpha : \alpha < \gamma \})
$$
is a subset of the basis $\cB$ of power and
copower $\gamma.$
\end{Cor}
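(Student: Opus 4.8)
The plan is to read the corollary off directly from Lemma~\ref{ShelahInClermont} once the hypothesis on beautiful terms is used to pin down the possible values of $\f(a^\beta_0)$. By that lemma, $\End{F_\gamma} \models \vartheta[\f;\av\f^*]$ holds precisely when there are a beautiful term $t(x_1,\dots,x_n)$ and ordinals $\alpha_1,\dots,\alpha_n < \gamma$ with $\f(a^\beta_0) = t(a^\beta_{\alpha_1},\dots,a^\beta_{\alpha_n})$ for every $\beta < \gamma$. So the whole task is to identify, under the stated hypothesis, which elements of $F_\gamma$ arise as such values $t(a^\beta_{\alpha_1},\dots,a^\beta_{\alpha_n})$, and then to count them.

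First I would establish the inclusion $\subseteq$. Here I would invoke the reduction procedure for beautiful terms supplied by Shelah's framework: every beautiful term is equal, modulo the identities of the variety $\mathcal C$, to a beautiful reduced term. Under the hypothesis that the only beautiful reduced terms are the projections $x_k$, this forces each beautiful term $t(x_1,\dots,x_n)$ to compute, as a function on $F_\gamma$, the projection onto some coordinate $x_k$. Consequently $t(a^\beta_{\alpha_1},\dots,a^\beta_{\alpha_n}) = a^\beta_{\alpha_k}$, and therefore $\f(a^\beta_0) = a^\beta_{\alpha_k} \in \{a^\beta_\alpha : \alpha < \gamma\}$ for every $\f$ satisfying $\vartheta$.

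For the reverse inclusion I would exhibit, for each fixed $\alpha < \gamma$, an endomorphism witnessing membership. By hypothesis the variable $x_1$ is itself a beautiful (reduced) term, so it is an admissible choice of $t$ in Lemma~\ref{ShelahInClermont}. Since $\cB$ freely generates $F_\gamma$, the assignment $a^\beta_0 \mapsto a^\beta_\alpha$ (for all $\beta < \gamma$) extends to an endomorphism $\f$ of $F_\gamma$; taking $\alpha_1 = \alpha$ then gives $\End{F_\gamma} \models \vartheta[\f;\av\f^*]$ together with $\f(a^\beta_0) = a^\beta_\alpha$. This yields the displayed equality of sets.

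It remains to check the cardinality claims, which are routine: the set $\{a^\beta_\alpha : \alpha < \gamma\}$ is indexed by $\alpha < \gamma$ and hence has power $\gamma$, while its complement in $\cB$ contains $\{b_i : i < \gamma\}$ and so also has power $\gamma$ (recall that $|\cB| = \gamma \cdot \gamma + \gamma = \gamma$). I expect the only genuinely delicate point to be the first inclusion, namely the appeal to the normal-form reduction of beautiful terms: one must be sure that the reduction is valid as an identity of $\mathcal C$, so that it still holds after substituting the specific elements $a^\beta_{\alpha_i}$, and that the hypothesis ``the only beautiful reduced terms are the $x_k$'' is exactly what forces every beautiful term to act as a coordinate projection on the free algebra.
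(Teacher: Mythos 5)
Your proposal is correct and follows the same route the paper intends: the paper offers no separate proof, presenting the corollary as a direct deduction from Lemma~\ref{ShelahInClermont}, exactly as you do (reduction of beautiful terms to reduced ones forces each $t$ to act as a projection, the term $x_1$ together with the free extension of $a^\beta_0\mapsto a^\beta_\alpha$ gives the reverse inclusion, and the cardinality count is immediate). No gaps.
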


Fortunately, we have such a very nice situation for
the variety of vector spaces over the division ring
$D.$ Here the language consists of a two-placed
function symbol $+$ and one-placed function symbols
$\{h_\mu : \mu \in D\}.$

By Shelah's definition a term  $t(x_1,x_2,\ldots,x_n)$ is
said to be {\it beautiful}, if

(A) for every term $q(x_1,x_2,..,x_m)$
\begin{align*}
& t(q(x^1_1,x^1_2,\ldots,x^1_m),
q(x^2_1,x^2_2,\ldots,x^2_m),
\ldots,q(x^n_1,x^n_2,\ldots,x^n_m)) = \\
& q(t(x^1_1, x^2_1,\ldots,x^n_1),t(x^1_2,
x^2_2,\ldots,x^n_2),\ldots,
t(x^1_m, x^2_m,\ldots,x^n_m))
\end{align*}
is an identity of every free algebra $F_\gamma$ of infinite
rank in ${\mathcal C};$

(B)
\begin{align*}
t&(t(x^1_1,x^1_2,\ldots,x^1_n),
t(x^2_1,x^2_2,\ldots,x^2_n),\ldots,
t(x^n_1,x^n_2,\ldots,x^n_n))\\
 &= t(x^1_1,x^2_2,\ldots,x^n_n)
\end{align*}
is an identity of $F_\gamma;$

(C) $t(x,x,\ldots,x) = x$ is an identity of $F_\gamma.$

In the variety of vector spaces over $D$ every term is
equivalent to a term of the form $\sum_{i=1}^n
\mu_ix_i,$ where $\mu_i\in D.$

Clearly, (C) is satisfied only by non-zero terms.
Consider a linearly independent set
$\{e^j_i: i,j \in \N \}$ of power
$\aleph_0$  in  $V.$ If for some non-zero term
$t(x_1,x_2,\ldots,x_n)$ (B) is true, we have
$$
\sum_{i=1}^n \mu_i \sum_{j=1}^n \mu_j e^i_j
=\sum_{i=1}^n \mu_ie^i_i.
$$
Therefore  $\mu_i\mu_j=0$ for $i\ne j$ and
$\mu^2_i= \mu_i$ for each $i=1,\ldots,n.$ Hence the
term $t(x_1,x_2,\ldots,x_n)$ is $x_k$ for some $k.$

Thus, Corollary \ref{AllBeautiesAreX's} and the above
arguments imply that for suitable $a^* \in V$ and
$\av\f^*=(\f^*_1,\ldots,\f^*_m) \in \End V$ we have
that the set of all realizations of the formula
$B_1(x)=B_1(x;a^*,\av \f^*)=\exists \f (\vartheta(\f;\av \f^*) \wedge x=\f a^*)$
is a linearly independent set of power $\vk=\dim V$
such that the linear span of this set has dimension
and codimension $\vk.$ To explain that the set $B_1(\cEPV)$
is linearly independent we write that
$$
(\forall x)\{ B_1(x) \to (\exists M) [\codim M=1 \wedge
x \notin M \wedge (\forall y)((B_1(y) \wedge y\ne x) \to y \in M)]\}
$$
The linear span $L^*$ of $B_1(\cEPV)$ is the unique
realization of the formula
$$
(\forall x)(B_1(x) \to x \in L) \wedge
(\forall L_1)\{(\forall x)(B_1(x) \to x \in L_1) \to (L \subseteq L_1) \}
$$
To explain further that $\dim L^*=\codim L^*=\vk$ we need one parameter.
It can be an invertible endomorphism $\pi^* \in \End V$ such
that
$$
(\pi^* L^* \cap L^*=\{0\}) \wedge (V=\pi^* L^* +L^*).
$$
Therefore the set of all realizations of the formula
$$
B(x;a^*,\av \f^*,\pi^*)=B_1(x) \lor (\exists y)(B_1(y) \wedge x=\pi^* y)
$$
form a basis of $V.$

Finally, the tuple of parameters $\avA=(a^*,\av\f^*,\pi^*)$
can be replaced by any tuple $(b^*,\av{\psi}^*,\rho^*)$ in $\cEPV$ of the same
length which satisfies the $\varnothing$-definable condition
$$
\chi(\avX)=B(\cEPV;\avX) \text{ is a basis of $V.$}
$$

{\it II. $\vk \le |D|.$} Suppose that $\{\lambda_i : i
< \vk\}$ is a set of pairwise non-conjugate elements
of $D^*,$ the multiplicative group of $D,$ and $\mathcal B=\{a_i
: i < \vk\}$ is a basis of $V$ (recall that
$D$ satisfies the condition (10.1)). Let us consider a
diagonalizable transformation $\f^* \in \End V$ such
that
$$
\f^* a_i = \lambda_i a_i, \quad i < \vk.
$$
It is easy to see that $\f^*$ preserves a line
$N=\str{\sum_{i \in I} \mu_i a_i}$ if and only if the
elements $\mu_i \lambda_i \mu_i\inv$ are all equal.
Hence, by the choice of the elements $\l,$ the line
$N$ is of the form $\str{a_j}$ for a suitable $j \in I.$

Taking an endomorphism $\rho^*$ and a
non-zero element $a^* \in V$ such
that
$$
\rho^* a_i =a^*, \quad \forall\ i \in I,
$$
we obtain that the realizations
of the formula
$$
(\exists N)\{ (\f^* N =N) \wedge (x \in N) \wedge (\rho^* x = a^*)\}
$$
are exactly elements of the basis $\mathcal B.$
The proof of the case II can be now
completed as the proof of the previous
case. $\qed$

\section{Theorems on mutual interpretability} \label{3.3}

In this section we give a long list of pairwise mutually
syntactically interpretable  $V$-theories. At first
we make an effort to close the `chain' of $V$-theories,
begun in Sections \ref{3.1}--\ref{3.2} and then add to the
constructed `chain' new elements.

\begin{Th}
\begin{equation}\label{eq3.3.1}
\TH(\str{\vk,D},\LII(\vk^+)) \ge
\TH(\cVD,\LII(\vk^+)) \ge
\TH(\cVD,\Mon(\vk^+)) \ge
\TH(\cP).
\end{equation}
\end{Th}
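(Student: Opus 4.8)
The plan is to treat the three $\ge$ in the displayed chain one at a time, each time exhibiting an interpretation and appealing to the Reduction Theorem (Theorem~\ref{0.2.1}). For the first relation I would interpret $\cVD=\str{V,D}$ inside $\str{\vk,D}$ by means of $\LII(\vk^+)$. Fixing the identification of $V$ with $D^{(\vk)}$, I code a vector by the functional relation on $\vk\times D$ recording its finitely many nonzero coordinates. Among the binary relations of power $\le\vk$ available in $\LII(\vk^+)$---this is precisely where the non-monadic strength of the logic is used---the finite functional ones are singled out by an $\LII(\vk^+)$-formula, finiteness being phrased in Dedekind fashion by a quantifier over bijections (again binary relations of power $\le\vk$). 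Addition of vectors and the action of $D$ are then read off pointwise on the codes, while the $D$-sort is carried over unchanged. This is a parameter-free interpretation, uniform in $\vk$ and $D$, so Theorem~\ref{0.2.1} gives $\TH(\str{\vk,D},\LII(\vk^+))\ge\TH(\cVD,\LII(\vk^+))$.

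The middle relation is immediate. Since $\Mon(\vk^+)$ is by definition the fragment of $\LII(\vk^+)$ using only unary second order quantifiers, every $\Mon(\vk^+)$-sentence $\chi$ is literally an $\LII(\vk^+)$-sentence with the same truth value in $\cVD$. Hence the identity map on sentences is an interpretation: $\TH(\cVD,\Mon(\vk^+))\vdash\chi$ iff $\cVD\models\chi$ iff $\TH(\cVD,\LII(\vk^+))\vdash\chi$, which is exactly $\TH(\cVD,\LII(\vk^+))\ge\TH(\cVD,\Mon(\vk^+))$.

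For the last relation I would interpret $\cP=\str{P(V),\subseteq}$ in $\cVD$ using $\Mon(\vk^+)$. A subspace $L$ is coded by an arbitrary generating subset $Y\subseteq V$ of power $\le\vk$ (such a $Y$ exists since $\dim L\le\vk$), and such a $Y$ is a legitimate monadic variable. Both the equality and the inclusion of coded subspaces reduce to the span-membership predicate ``$a\in\str{Y}$'', so the whole interpretation rests on defining this predicate in $\Mon(\vk^+)$. When $|D|\le\vk$ this is routine: then $|V|=\vk$, every subspace is itself a monadic variable, and $a\in\str{Y}$ is equivalent to ``$a$ lies in every subspace containing $Y$''. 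The substantial case is $|D|>\vk$: now $\str{Y}$ has power $|D|>\vk$ and is \emph{not} a monadic variable, so the quantification over containing subspaces is unavailable (indeed there are no subspaces of power $\le\vk$ through a nonzero vector). Here I would reduce to finite linear combinations, using that $a\in\str{Y}$ iff $a$ is a finite sum of elements of the cone $\{\lambda y:\lambda\in D,\ y\in Y\}$, and then define, for a finite $F\subseteq V$, the relation ``$a=\sum_{z\in F}z$'' monadically by fixing auxiliary parameters (handled by the parametric form of the Reduction Theorem discussed after Theorem~\ref{0.2.1}) that provide a definable shift carrying a $(\Z;<)$-indexed system of slots, and by expressing the accumulation of partial sums as a monadic reachability condition along that shift. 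With span-membership in hand the inclusion relation and the translation of the first order $\cP$-sentences into $\Mon(\vk^+)$ are immediate, and Theorem~\ref{0.2.1} yields $\TH(\cVD,\Mon(\vk^+))\ge\TH(\cP)$.

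I expect the genuinely delicate point to be this last one: the $\Mon(\vk^+)$-definability of span-membership (equivalently, of summation over a finite set of vectors) in the regime $|D|>\vk$, where subspaces are too large to serve as monadic parameters and one must avoid any hidden counting of multiplicities. The remaining work---the interpretations underlying the first and middle relations and the bookkeeping for the Reduction Theorem---is straightforward by comparison.
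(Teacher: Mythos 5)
Your handling of the first two inequalities is essentially the paper's: the middle one is the trivial inclusion of logics, and for the first one the paper likewise codes a vector as a finite partial function from $\vk$ to $D$, isolating finiteness Dedekind-style. One point you gloss over there: the Reduction Theorem \ref{0.2.1} as stated does not cover interpretations in which the elements of one sort of $\cVD$ are coded by \emph{second-order} objects of $\str{\vk,D}$, nor the translation of the $\LII(\vk^+)$-quantifiers of the target theory (a relation of power $\le\vk$ on $V$ is a set of $\le\vk$ codes, each code itself a relation). The paper does this bookkeeping explicitly, passing to the first order structures $\str{\vk,D}^{\vk^+}_{\text{II}}$ and coding functions from $\vk$ to $\Dvkom$ by ternary relations $Q\subseteq\vk\times\vk\times D$ all of whose slices are finite partial functions. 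That part of your sketch is repairable.

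The genuine gap is in the third inequality, exactly where you say the delicacy lies, and your proposed mechanism does not close it. In the regime $|D|>\vk$ you want to define ``$a=\sum_{z\in F}z$'' for finite $F$ by a ``definable shift'' on a $(\Z;<)$-indexed system of slots together with a ``monadic reachability'' condition. The obstacles are structural: a shift is a functional binary relation, and so is the assignment of a partial sum to each slot, and $\Mon(\vk^+)$ provides neither as a quantifiable object nor as a parameter; any attempt to code such data monadically by sets of vectors (say, the successor relation by $\{e_i+e_{i+1}\}$ and the partial sums by $\{e_i+s_i\}$) needs the auxiliary ``tape'' to be linearly independent from $\str{Y}$ and from the partial sums themselves, which cannot be arranged when $\str{Y}=V$ --- a case your predicate must nevertheless decide --- and even ``$F$ is finite'' is not obviously expressible here. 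The paper avoids all of this with a single observation you are missing: quantify over division subrings $K\subseteq D$ of power $\le\vk$. These are subsets of the $D$-sort, hence legitimate monadic variables, and ``$K$ is a division subring'' is first order. For $|Y|\le\vk$ and $|K|\le\vk$ the $K$-span $\str{Y}_K$ has power $\le\vk$, so ``$a\in\str{Y}_K$'' is expressible as ``$a$ lies in every $K$-closed subset of power $\le\vk$ containing $Y$''; and $a\in\str{Y}$ iff $a\in\str{Y}_K$ for \emph{some} such $K$, because the finitely many scalars occurring in one representation of $a$ generate a countable division subring. Replacing your finite-sum machinery by this argument completes the step.
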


{\it Proof.} The second $\ge$-statement in
(\ref{eq3.3.1}) is obvious.

\begin{Prop} \label{(vk,D)_II>VD_II}
$\TH(\str{\vk,D},\LII(\vk^+)) \ge \TH(\cVD,\LII(\vk^+)).$
\end{Prop}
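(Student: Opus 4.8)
The plan is to reconstruct $\cVD$ inside $\str{\vk,D}$ by means of $\LII(\vk^+)$ and then to invoke the Reduction Theorem (Theorem \ref{0.2.1}). Since $V$ has dimension $\vk$ over $D$, its group of vectors $\cV$ is isomorphic to the space $D^{(\vk)}$ of finitely supported functions from $\vk$ to $D$, and I would take this description as the blueprint. The first sort of $\str{\vk,D}$ already furnishes a bare index set of power $\vk$, while the second sort furnishes $\cD$ verbatim; thus the entire task reduces to manufacturing $\cV$ together with the $D$-action out of these two ingredients.

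Concretely, I would code a vector by a binary relation $R\subseteq\vk\times D$ that is the graph of a function $i\mapsto R(i)$ of \emph{finite support}. Each such $R$ has power $\le\vk$, so it is precisely the kind of object over which $\LII(\vk^+)$ is allowed to quantify; hence vectors can be introduced as bound variables of the interpreting logic. The basic relations of $\cVD$ then translate coordinatewise: equality of vectors is equality of the two graphs; $R_1+R_2=R_3$ says $R_3(i)=R_1(i)+R_2(i)$ for all $i\in\vk$; and the action of $\lambda\in D$ carries $R$ to the relation with $i$-th value $\lambda\cdot R(i)$ (left multiplication, since $V$ is a left space). The sort $\cD$ is simply the second sort of $\str{\vk,D}$. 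None of these defining formulas mentions a particular value of $\vk$ or $D$, so the interpretation is uniform in $\vk$ and $D$.

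The one genuinely delicate ingredient, and the step I expect to be the crux, is the finite-support requirement; this is exactly where the second-order strength of $\LII(\vk^+)$ is needed rather than a first-order apparatus. The support $S=\{i\in\vk:R(i)\neq 0\}$ is a subset of $\vk$, of power $\le\vk$, and every function $g\colon S\to S$ is likewise a relation of power $\le\vk$, hence available to the quantifiers of $\LII(\vk^+)$. I would therefore express finiteness of $S$ by the Dedekind condition
$$
(\forall g)\bigl(g\ \text{is an injective function}\ S\to S\ \rightarrow\ g\ \text{maps}\ S\ \text{onto}\ S\bigr);
$$
in the ambient set theory this captures genuine finiteness, and since \emph{all} self-maps of $S$ lie within reach of the logic, the characterization is faithful.

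With finiteness in hand, the class of coding relations, together with the $\LII(\vk^+)$-preimages of equality, addition and the action, provides a $\varnothing$-reconstruction of $\cVD$ in $\str{\vk,D}$ by means of $\LII(\vk^+)$ (the coding objects being relations of power $\le\vk$, exactly those the logic quantifies over). A quantifier of an $\LII(\vk^+)$-sentence about $\cVD$ ranging over a relation of power $\le\vk$ on vectors translates into a quantifier over a relation of power $\le\vk$ on their codes, so the translation stays inside $\LII(\vk^+)$; the Reduction Theorem then yields $\TH(\str{\vk,D},\LII(\vk^+))\ge\TH(\cVD,\LII(\vk^+))$.
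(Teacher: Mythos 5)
Your plan --- code vectors as finitely supported functions $\vk\to D$, characterize finiteness of the support by the Dedekind condition that every injection of it into itself is onto, read off the $\cVD$-operations coordinatewise, and conclude by reduction --- has the same skeleton as the paper's proof, which works with the set $\Dvkom$ of finite partial functions from $\vk$ to $D$ and uses exactly the same finiteness trick. The first-order part of your interpretation is fine and uniform in $\vk$ and $D.$

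The gap is in the last step, which you dispose of in one sentence and which is precisely where the paper spends most of its effort. The target theory is $\TH(\cVD,\LII(\vk^+)),$ so you must translate quantifiers over relations of power $\le\vk$ \emph{on $V$}. Such a relation is a set of $\le\vk$ tuples of vectors, hence a set of $\le\vk$ tuples of \emph{codes}; since each code is itself a relation on $\vk\times D,$ this object is prima facie third-order over $\str{\vk,D}$ and is not something $\LII(\vk^+)$ can quantify over as it stands. One must show it can be flattened into a single relation of power $\le\vk$ on the base domain --- e.g.\ a set $Q\subseteq\vk\times\vk\times D,$ indexed by its first coordinate $\alpha<\vk,$ each of whose $\alpha$-slices is a finite partial function, together with the definable condition saying so. This is exactly the content of the paper's detour through $\str{\vk,D,\Dvkom},$ the auxiliary structures $\cM^{\vk^+}_{\text{II}},$ and the function sets $G(\vk)$ and $G_3(\vk).$ Relatedly, the Reduction Theorem \ref{0.2.1} as stated applies to interpretations in which elements of $\cM$ are coded by \emph{tuples of elements} of $\cN$; your codes are second-order objects, so you cannot invoke it directly --- you need the passage to the first-order structures $\cM^{\vk^+}_{\text{II}}$ (or an explicit second-order analogue of the theorem). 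The flattening is routine once stated, but it is the actual mathematical content of the proposition and should not be left implicit.
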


\begin{proof} $D^{\vk}_{< \omega}$ is the standard
notation for the set of all finite partial functions
from $\vk$ to  $D.$ Consider the structure
$\str{\vk,D,D^{\vk}_{<\omega}}$ in the language
of $\str{\vk,D}$ expanded by an additional predicate
symbol to distinguish $D^{\vk}_{<\omega}$ and a ternary symbol
$R$ such that $R(\alpha,\mu,f)$ iff $\alpha  \in \vk,$
$\mu  \in D,$ $f \in \Dvkom$  and  $f(\alpha)=\mu.$

Taking into account $\cV \cong \bigoplus_{i < \vk}
D,$ it is easy to reconstruct  $\cVD$
in  $\str{\vk,D,D^{\vk}_{<\omega}}$ by means of
first order logic. Hence $\TH(\str{\vk,D,\Dvkom},\LII(\vk^+)) \ge
\TH(\cVD,\LII(\vk^+)).$

Let $\cM$  be a structure. $\cM_{\text{\rm II}}$
is (quite standard) notation for the structure, with
the domain $\bigcup_{n \in \omega} {\mathcal R}_n(M),$
where $M$ is the domain of $\cM,$ and ${\mathcal R}_n(M), n \in \omega,$ is
the set of all $n$-placed relations on $M;$ here 0-placed
relations represent the elements of $M.$ The unique
$n$-placed $(n \ge 1)$ basic relation on $\cM_{\text{\rm II}}$ says
whether $R(a_1,\ldots,a_{n-1})$ is true or false for any
tuple $a_1,\ldots,a_{n-1} \in M$
and an arbitrary element $R \in {\mathcal R}_{n-1}(M).$
When we require that our ${\mathcal R}_n$ are formed
from the relations of power $\le \vk,$ we obtain the
structure $\cM^{\vk^+}_{\text{\rm II}}.$ The
elementary theory of the structure
$\cM^{\vk^+}_{\text{\rm II}}$ and the theory of the
structure $\cM$ in the logic $\LII(\vk^+)$ are
obviously mutually syntactically interpretable.

We prove now that the $\LII(\vk^+)$-theory
of $\str{\vk,D,\Dvkom}$ is syntactically interpretable in
$\TH(\str{\vk,D}, \LII(\vk^+)).$
Let $\cM^*$ denote the structure
$\cM^{\vk^+}_{\text{\rm II}}.$

We first show that  $\str{\vk,D,\Dvkom}^*$ can be reconstructed
in $\str{\vk,D}^*.$ Since the conditions `$\cA$ is a
finite set'  and `every injection from $\cA$  into
itself is bijective' are equivalent, then it is
possible to reconstruct the set $\Dvkom$ in
$\str{\vk,D}^*.$  Every relation of power $\le  \vk$
on $\vk \cup D \cup  \Dvkom$ can be represented as the
image of a function with the domain in $\vk.$ Hence
the structure $\str{\vk,D,\Dvkom}^*$  is
mutually interpretable with the structure
$\str{\vk,D,\Dvkom,G(\vk)},$ where  $G(\vk)$ is the
set of all functions of the form
$$
g: \vk  \to A_1 \times \ldots  \times A_n,
$$
and $A_i$ is $\vk,$ or $D,$ or $\Dvkom.$ It can be shown
quite easily that the latter
structure is bi-interpretable with the structure
$\str{\vk,D,\Dvkom,G_1(\vk),G_2(\vk),G_3(\vk)},$ where
$G_1(\vk),$ $G_2(\vk),$ $G_3(\vk)$ are the sets of all
functions from $\vk$ to  $\vk,$ from  $\vk$  to  $D,$
and from  $\vk$ to $\Dvkom,$ respectively.

So we have only to interpret the set $G_3(\vk)$ in
the structure $\str{\vk,D}^*.$ This can be done as
follows.  To every function in $G_3(\vk)$  there
corresponds the set $Q$ in
$\vk~\times~\vk~\times~ D$ satisfying the definable condition
$$
\text{\rm `}\{(\beta,\mu) : (\alpha,\beta,\mu) \in Q\} \in
\Dvkom \text{\rm\ for all }  \alpha  \in \vk\text{\rm '}.
$$
in $\str{\vk,D}^*.$ Hence  $\TH(\str{\vk,D},\LII(\vk^+))
\ge \TH(\str{\vk,D,\Dvkom},\LII(\vk^+)).$
\end{proof}

\begin{Prop}
$\TH(\cVD,\Mon(\vk^+)) \ge  \TH(\cP).$
\end{Prop}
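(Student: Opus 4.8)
The plan is to reconstruct the lattice $\cP=\str{P(V);\subseteq}$ inside the monadic structure over $\cVD$ by coding each subspace $L$ of $V$ by an arbitrary \emph{spanning set} of power $\le\vk$. Since a basis of $L$ has power $\dim L\le\vk$, every element of $P(V)$ is so coded; and every subset $X\subseteq V$ of power $\le\vk$ spans a subspace $\str X$. Thus the domain of the interpretation may be taken to be \emph{all} such $X$ with no further restriction, and the only thing to check is that the preimages of equality and of $\subseteq$ are $\varnothing$-definable. Both reduce to a single predicate: writing $\mathrm{Sp}(a,X)$ for ``$a\in\str X$'', one has $\str{X_1}=\str{X_2}$ iff $(\forall a\in X_1)\,\mathrm{Sp}(a,X_2)\wedge(\forall a\in X_2)\,\mathrm{Sp}(a,X_1)$, and $\str{X_1}\subseteq\str{X_2}$ iff $(\forall a\in X_1)\,\mathrm{Sp}(a,X_2)$. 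So the whole proposition comes down to expressing $\mathrm{Sp}(a,X)$ in $\Mon(\vk^+)$ over $\cVD$, after which the Reduction Theorem delivers the syntactic interpretation.

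First I would dispose of the case $|D|\le\vk$. Here $|V|=\vk$, so every subspace is itself a subset of power $\le\vk$ and may be quantified over directly; ``$W$ is a subspace'' is first order ($0\in W$, closure under $+$, and $\forall\mu\,\forall a(a\in W\to\mu a\in W)$ via the action relation), and then $\mathrm{Sp}(a,X)$ is simply ``$a$ lies in every subspace $W$ with $X\subseteq W$'', the usual intersection-of-closed-supersets definition. In this regime the interpretation is immediate and one could even bypass spanning sets and code subspaces by themselves.

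The real work is the case $|D|>\vk$, which I expect to be the main obstacle. Now no nonzero subspace has power $\le\vk$ (a single line already has power $|D|>\vk$), so the closed-superset definition is unavailable: subspaces cannot be quantified over at all, and one is forced to work with the $\le\vk$-sized spanning sets. The point is that membership in $\str X$ is a \emph{finitary} closure: $\mathrm{Sp}(a,X)$ holds iff $a=\sum_{g\in G}g$ for some finite $G\subseteq V$ each of whose elements is a scalar multiple of an element of $X$. The condition on $G$ is first order ($(\forall g\in G)(\exists x\in X)(\exists\l)(g=\l x)$), and $G$, being finite, is of power $\le\vk$ and so quantifiable; hence $\mathrm{Sp}(a,X)$ is $\Mon(\vk^+)$-definable \emph{provided} one can express, for a finite set $G$, the predicate $\sum_{g\in G}g=a$. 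This finite-summation predicate is the crux, and it is exactly where monadic (as opposed to full second-order) quantification bites, since the naive ``running sum'' is a sequence, i.e. a binary object. I would try to pin it down by quantifying over the auxiliary \emph{finite} set $P\subseteq V$ of partial sums, required to contain $0$ and $a$ and to be traversed from $0$ to $a$ by adding each member of $G$ exactly once; making ``not yet used'' precise without a binary relation is the delicate step. A char-robust variant is to quantify instead over the set of all subset-sums of $G$, reducing first to a linearly independent $G$ so that distinct subsets yield distinct sums, characterising that set by closure under adjoining or deleting a single member of $G$, and singling out $a$ as the sum obtained by adjoining every member.

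Finally, to obtain one interpretation uniform in $D$ and $\vk$, note that the regime $|D|\le\vk$ is itself picked out by the $\Mon(\vk^+)$-sentence $(\exists S)(\forall\mu)(\mu\in S)$, where $S$ ranges over subsets of the scalar sort (a set of power $\le\vk$ exhausting $D$ exists iff $|D|\le\vk$); so the two definitions of $\mathrm{Sp}$ amalgamate into a single formula by this definable case split. With $\mathrm{Sp}$ in hand, the coding map $X\mapsto\str X$ from the set of all $\le\vk$-subsets of $V$ onto $P(V)$ has $\varnothing$-definable preimages of $=$ and $\subseteq$, and the Reduction Theorem yields $\TH(\cVD,\Mon(\vk^+))\ge\TH(\cP)$, closing the chain $(\ref{eq3.3.1})$. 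I expect the monadic definition of the finite sum $\sum_{g\in G}g=a$ to be, by a wide margin, the hardest point of the argument.
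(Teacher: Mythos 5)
Your overall frame is the paper's: code a subspace by a spanning set of power $\le\vk$, reduce everything to the single predicate $\mathrm{Sp}(a,X)$ (that is, ``$a\in\str X$''), and invoke the Reduction Theorem; and your treatment of the regime $|D|\le\vk$, where subspaces themselves have power $\le\vk$ and the intersection-of-closed-supersets definition applies, is fine. But in the regime $|D|>\vk$ the whole argument rests on a $\Mon(\vk^+)$-definition of the predicate ``$\sum_{g\in G}g=a$'' for finite $G\subseteq V$, and this you only sketch; as it stands it is a genuine gap. The ``running sum'' version needs, at each stage, to know which elements of $G$ have already been consumed --- a correspondence between partial sums and subsets of $G$, i.e.\ a binary object that monadic quantification does not supply; a mere closure condition on the set $P$ of partial sums does not force each $g$ to be used exactly once. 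The subset-sum variant fails outright in characteristic $2$: there $s+g=s-g$, so for linearly independent $G$ the set $\Sigma$ of subset sums is just the span of $G$ over the prime field, it is closed under adding \emph{any} $g\in G$ to \emph{any} of its elements, and the move ``adjoin or delete a single member'' can no longer detect whether a given $g$ already occurs in the subset realizing $s$; hence ``the sum obtained by adjoining every member'' is not singled out by the conditions you impose. Even in characteristic $\ne2$ the closure property alone does not determine $\Sigma$ (for $G=\{g\}$ the set $\{0,-g\}$ satisfies it as well as $\{0,g\}$), so additional orientation conditions would be needed.

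The paper sidesteps the finite-sum predicate entirely by localizing the scalars rather than the vectors: any element of $\str X$ is a linear combination with finitely many coefficients, and these generate a division subring $K$ of $D$ of power $\le\aleph_0\le\vk$. Such a $K$ is a subset of the scalar sort, hence a legitimate monadic variable, and ``$K$ is a division subring'' is first order; moreover every $K$-subspace $K$-spanned by a set of power $\le\vk$ has power $\le\vk\cdot|K|\cdot\aleph_0=\vk$ and so is itself quantifiable. Thus $a\in\str{X}_K$ iff $a$ lies in every $\le\vk$-sized $K$-subspace containing $X$, and $a\in\str X$ iff this holds for \emph{some} $\le\vk$-sized division subring $K.$ This single definition is uniform in $\vk$ and $D,$ requires no case split, and needs no arithmetic on finite sets. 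To salvage your route you would have to produce a characteristic-robust monadic definition of the finite sum; the localization trick is the cleaner repair.
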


\begin{proof} Let this time  $\cVD^*$ be the structure
$\str{\cVD,Pow_{\le \vk}(\cVD)},$ where  $Pow_{\le
\vk}(X)$ is the family of all subsets of $X$ of power
$\le  \vk.$ Clearly, the theories
$\TH(\cVD,\Mon(\vk^+))$ and $\TH(\cVD^*)$ are mutually
syntactically interpretable. Let us reconstruct $\cP$
in the structure $\cVD^*.$

For every subset $\cA \subseteq  V$ of cardinality
$\le \vk,$ there naturally corresponds the subspace
$\str{\cA},$ the linear span of $\cA.$ Since  $\dim V
\le  \vk,$ all subspaces of $V$ can be constructed in
a such way.  Thus, we should prove that the relation
`$a \in \str\cA $' is definable in the
structure $\cVD^*.$

Fix a division subring $K$ of power $\le \vk$ in $D.$
One can consider $V$ as a vector space over $K$ and,
moreover, all the $K$-subspaces of $V$ of dimension
$\le \vk$ are definable in $\cVD^*$ with the parameter
$K.$ This therefore implies that the relation `$a \in
\str\cA{}_K$' is definable with the parameter $K.$
Hence the relation `$a \in \str\cA$'  is definable,
too, because
$$
a \in \str\cA \iff
(\exists K)(K \text{ is a division subring of } D \wedge
|K| \le  \vk \wedge a \in \str\cA_K).
$$
\end{proof}

Let $\frak V$ denote the structure $\cVD$ and
$\TH({\frak V},\text{\rm End}), \TH({\frak V},\text{\rm Sub})$
be the theories of this structure in the logics
with quantifier over endomorphisms and subspaces of
$V,$ respectively.

\begin{Th} \label{MainTheorem}   The following theories
are pairwise mutually syntactically interpretable:  $\TH(\cP),$
$\TH(\cE),$ $\TH(H(V)),$ $\TH({\frak V},\text{\rm Sub}),$
$\TH({\frak V},\text{\rm End}),$ $\TH({\frak V},\Mon(\vk^+)),$
$\TH({\frak V},\LII(\vk^+)),$ $\TH(\str{\vk,D},\LII(\vk^+)),$
where  $H =$ {\rm GL, PGL, End, PEnd.}
\end{Th}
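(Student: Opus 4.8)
The plan is to prove the theorem by closing a single cycle of uniform syntactic interpretations, anchored at $\TH(\cP)$ and $\TH(\cEPV)$, so that all the listed theories become pairwise mutually interpretable by reflexivity and transitivity of $\le$. Most of the backbone is already available: Theorem~\ref{P>PV>EPV} gives $\TH(\cP) \ge \TH(\cPV) \ge \TH(\cEPV)$, Theorem~\ref{EPV>(vk,D)} gives $\TH(\cEPV) \ge \TH(\str{\vk,D},\LII(\vk^+))$, and the chain~\eqref{eq3.3.1} returns $\TH(\str{\vk,D},\LII(\vk^+)) \ge \TH(\cVD,\LII(\vk^+)) \ge \TH(\cVD,\Mon(\vk^+)) \ge \TH(\cP)$. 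Thus (recalling $\cVD=\fV$) the six theories $\TH(\cP)$, $\TH(\cPV)$, $\TH(\cEPV)$, $\TH(\str{\vk,D},\LII(\vk^+))$, $\TH(\fV,\LII(\vk^+))$, $\TH(\fV,\Mon(\vk^+))$ are already mutually interpretable, uniformly in $\vk$ and $D$; it remains to attach $\TH(\cE)$, the four theories $\TH(H(V))$, and the two hybrid theories $\TH(\fV,\text{\rm Sub})$ and $\TH(\fV,\text{\rm End})$. The single passage through Theorem~\ref{EPV>(vk,D)} is the only place where the hypothesis $(*)$ on $D$ (via Proposition~\ref{BasisWeAreLookingFor}) enters; all the remaining arrows will hold for arbitrary $D$.

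For the endomorphism ring $\cE$ and the group and semigroup theories I would use $\cEPV$ as the upper hub and $\cP$ as the lower one. Each of $\cE=\End V$, the general linear group $\gl V$, its projective image $\pgl V$, the multiplicative endomorphism semigroup $\End V$, and its projective quotient $\pEnd V$ lies below $\cEPV$: the endomorphism sort of $\cEPV$ carries both composition and addition, its group of units is $\varnothing$-definable (giving $\gl V$), and $\pgl V$, $\pEnd V$ are quotients by $\varnothing$-definable congruences; hence $\TH(\cEPV)$ syntactically interprets each of $\TH(\cE)$, $\TH(\gl V)$, $\TH(\pgl V)$, $\TH(\End V)$, $\TH(\pEnd V)$. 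For the return arrows I would route every one of these back to $\TH(\cP)$. For $\pgl V$ this is precisely Theorem~\ref{PrjSpaceInPrjGroup}, and for $\gl V$ one composes $\TH(\gl V) \ge \TH(\pgl V)$ (definable quotient) with it. For $\cE$, $\End V$, and $\pEnd V$ the point is the idempotents: they are definable (by $e^2=e$), and after identifying two idempotents of equal image via the purely multiplicative condition $ef=f \wedge fe=e$, the induced partial order reconstructs $\str{P(V);\subseteq}=\cP$; the idempotents survive into $\pEnd V$ because $\lambda e$ and $e$ represent the same projective element. Since $\TH(\cP)$ belongs to the backbone cycle, each of these algebra theories is squeezed between $\TH(\cEPV)$ and $\TH(\cP)$ and so joins the cycle.

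It remains to place the two hybrid theories. For $\TH(\fV,\text{\rm End})$ I claim mutual interpretability with $\TH(\cEPV)$: from $\cEPV$ one recovers $D$ (Claim~\ref{PV>D}) and reads the first-order endomorphism sort as the range of the End-quantifier, so $\TH(\cEPV) \ge \TH(\fV,\text{\rm End})$; conversely, in $\fV$ with quantification over endomorphisms one reads the endomorphisms as a first-order sort, codes each subspace as the image of an idempotent endomorphism, and defines $\in$ and $\subseteq$ accordingly, so $\TH(\fV,\text{\rm End}) \ge \TH(\cEPV)$. For $\TH(\fV,\text{\rm Sub})$ I would use the sandwich $\TH(\fV,\Mon(\vk^+)) \ge \TH(\fV,\text{\rm Sub}) \ge \TH(\cPV)$. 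The lower bound is immediate, since the Sub-sort is literally $P(V)$ with definable membership and inclusion. The upper bound is the only mildly delicate point: a subspace may have power $|D|>\vk$ and so cannot be quantified over directly in $\Mon(\vk^+)$; instead I code each subspace by a basis, a set of power $\le\vk$, and express span-membership by the small division subring $K$ device already used in the proof that $\TH(\cVD,\Mon(\vk^+)) \ge \TH(\cP)$ in Section~\ref{3.3}. With both hybrids attached, transitivity of $\le$ closes the argument.

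I expect the principal difficulty to be the return arrows from the purely multiplicative structures $\TH(\End V)$ and $\TH(\pEnd V)$ to $\TH(\cP)$ — recovering the subspace lattice, and through it $D$, from multiplication alone — together with the size bookkeeping for $\TH(\fV,\text{\rm Sub})$ when $|D|>\vk$. Everything else is the assembly of interpretations already in hand; and since all the arrows other than the one through Theorem~\ref{EPV>(vk,D)} are valid for arbitrary $D$, the argument also yields the parenthetical remark that $\TH(\cP)$, $\TH(\cE)$, and the various $\TH(H(V))$ are mutually interpretable without the hypothesis $(*)$.
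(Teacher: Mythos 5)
Your proposal is correct, and its global architecture coincides with the paper's: everything is hung on the cycle through $\TH(\cP)$ assembled from Theorem \ref{P>PV>EPV}, Theorem \ref{EPV>(vk,D)} and the chain \eqref{eq3.3.1}, with $(*)$ entering only through Theorem \ref{EPV>(vk,D)} — exactly as the paper's parenthetical remark records. Two of your legs, however, genuinely differ from the paper's. For the return arrows from the semigroup theories, the paper routes $\TH(\pEnd V) \ge \TH(\pgl V) \ge \TH(\cP)$ (and $\TH(\End V) \ge \TH(\gl V) \ge \TH(\pgl V) \ge \TH(\cP)$), i.e.\ it passes to the $\varnothing$-definable group of units and then invokes the heavy Theorem \ref{PrjSpaceInPrjGroup}; your idempotent device (define idempotents by $e^2=e$, identify those of equal image by $ef=f \wedge fe=e$, and read off inclusion of images from $ef=f$) recovers $\str{P(V);\subseteq}$ directly from multiplication alone. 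This is more elementary and bypasses the involution machinery of Sections 1--5 for the semigroup cases entirely — though that machinery remains indispensable for $H=\mathrm{GL},\mathrm{PGL}$, so nothing is saved globally; you do need the small check that projective idempotents of $\pEnd V$ are induced by genuine (and in fact unique) idempotents, which is routine. For the hybrids, the paper uses the chain $\TH(\cP) \le \TH({\frak V},\text{\rm Sub}) \le \TH({\frak V},\text{\rm End}) \le \TH(\cEPV)$, coding subspaces by idempotent endomorphisms in the middle step and thereby sidestepping any cardinality issue; you instead bound $\TH({\frak V},\text{\rm Sub})$ above by $\TH({\frak V},\Mon(\vk^+))$ via bases of power $\le \vk$ together with the division-subring span-definability trick already present in Section \ref{3.3}. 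Both sandwiches close, the paper's being marginally shorter and yours making the cardinality bookkeeping (the case $|D|>\vk$) explicit.
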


\begin{proof}
One readily checks that
$$
\TH(\cP) \le  \TH({\frak V},\text{\rm Sub}) \le
\TH({\frak V},\text{\rm End}) \le
\TH(\cEPV).
$$
On the other hand, by Theorem \ref{P>PV>EPV}
$$
\TH(\cP) \ge \TH(\cEPV) \ge  \TH(\End V) \ge \TH(\pEnd V);
$$
using the fact that $\pgl V$ is the group of all invertible elements
of $\pEnd V,$ and applying then Theorem \ref{PrjSpaceInPrjGroup}
we have that
$$
\TH(\pEnd V) \ge  \TH(\pgl V) \ge \TH(\cP).
$$
Finally,
$$
\TH(\cP) \ge  \TH(\End V) \ge \TH(\gl V) \ge
\TH(\pgl V) \ge  \TH(\cP).
$$
Therefore each theory mentioned in the theorem
and the elementary theory of the projective
space $\cP$ are mutually syntactically interpretable
and the result follows.
\end{proof}

Consider the logic  $\cL(\vk^+)$ the only difference
of  which from the logic $\LII(\vk^+)$ is an
additional quantifier over arbitrary automorphisms of
the division ring $D.$

\begin{Th} \label{MnThForGls}  The theories
$\TH(\Gl V),$ $\TH(\pGl V),$ $\TH(\str{\vk,D},\cL(\vk^+))$
are pairwise mutually syntactically interpretable.
\end{Th}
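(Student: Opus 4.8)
The plan is to prove the three theories pairwise mutually interpretable by first disposing of the easy equivalence and then isolating the genuinely new ingredient, the quantifier over $\aut D$. The equivalence of $\TH(\Gl V)$ and $\TH(\pGl V)$ requires nothing new: it is exactly the content of Theorem \ref{PH<>H} applied to $H(V)=\Gl V$. Thus it suffices to show that $\TH(\Gl V)$ and $\TH(\str{\vk,D},\cL(\vk^+))$ are mutually syntactically interpretable, uniformly in $\vk$ and $D$. Everything below the level of the automorphism quantifier is already available: by Theorem \ref{MainTheorem} the theories $\TH(\gl V)$ and $\TH(\str{\vk,D},\LII(\vk^+))$ are mutually interpretable, and by the proof of Theorem \ref{Gl>pGl} the linear group $\gl V$ is a $\varnothing$-definable subgroup of $\Gl V$ (it is the centralizer of the definable subgroup $\rl V$). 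So the only thing to add, in each direction, is a translation between the semi-linear part of $\Gl V$ and the $\aut D$-quantifier of $\cL(\vk^+)$.

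For $\TH(\str{\vk,D},\cL(\vk^+)) \ge \TH(\Gl V)$ I would extend the matrix interpretation underlying $\TH(\str{\vk,D},\LII(\vk^+)) \ge \TH(\gl V)$. Fixing an interpreted basis indexed by $\vk$, an element of $\gl V$ is coded by a column-finite invertible $\vk \times \vk$ matrix over $D$, a relation of power $\le \vk$ quantifiable in $\LII(\vk^+)$. A semi-linear transformation is then coded by a pair $(f,A)$, where $A$ is such a matrix recording the images of the basis vectors and $f \in \aut D$ is supplied by the automorphism quantifier of $\cL(\vk^+)$, serving as the associated automorphism. The group operation is the twisted matrix product: if $(f,A)$ and $(g,B)$ code $\sigma,\tau$, then $\sigma\tau$ is coded by $(f\circ g,\; A\cdot f(B))$, where $f(B)$ is the entrywise image of $B$ under $f$; both the composite automorphism and the twisted product are definable in $\cL(\vk^+)$. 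Invertibility of $A$ and well-definedness of the code are expressible as before, and passing to the quotient by the definable subgroup of radiations yields $\pGl V$ as well. Hence $\TH(\str{\vk,D},\cL(\vk^+)) \ge \TH(\Gl V) \ge \TH(\pGl V)$.

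The harder direction is $\TH(\Gl V) \ge \TH(\str{\vk,D},\cL(\vk^+))$. Working inside the $\varnothing$-definable subgroup $\gl V$ and invoking Theorem \ref{MainTheorem}, I first interpret $\str{\vk,D}$ together with its full $\LII(\vk^+)$ apparatus of relations of power $\le \vk$; in particular I interpret a concrete copy of $D$ by reconstructing the pointed projective space as in Claim \ref{PV>D}. It remains to interpret the extra quantifier ``there is $f \in \aut D$''. The key point is that $\Gl V$ acts on the reconstructed projective space (its action on $P(V)$ is interpretable via the projective image, by Proposition \ref{pGl>(pGl,P)} and the Fundamental Theorem of Projective Geometry), so each $g \in \Gl V$ induces, in a \emph{definable} way, a map on the interpreted copy of $D$; for $g$ respecting the parameter configuration coding $0$, $1$ and the field operations, this induced map is precisely a ring automorphism of the interpreted $D$, namely the associated automorphism of $g$ up to the inner normalization coming from the action of $g$ on the distinguished line. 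I would then translate $\exists f\,\Phi(f)$ as: there is $g \in \Gl V$ whose induced map on interpreted $D$ is an automorphism for which $\Phi$ holds. For this to be faithful I must check (a) definability of the induced action, which follows because the interpretation of $D$ is built by first-order operations on $P(V)$ on which $\Gl V$ acts definably, and (b) surjectivity onto all of $\aut D$: given any $f \in \aut D$, the diagonal semi-linear map $\sum \lambda_i a_i \mapsto \sum f(\lambda_i) a_i$ in a basis containing the parameter configuration induces $f$, using that $\Gl V \to \aut D$ is onto. Uniqueness of $g$ is not needed, which conveniently sidesteps the inner-automorphism ambiguity. The main obstacle is exactly (a) and (b): pinning the induced action down as a genuine automorphism of the \emph{specific} interpreted copy of $D$ (not merely up to inner automorphism) and verifying that the elements of $\Gl V$ realize every automorphism of $D$ on that copy. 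Once this is established, combining the two directions with Theorem \ref{PH<>H} yields the pairwise mutual interpretability asserted in Theorem \ref{MnThForGls}.
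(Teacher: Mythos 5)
Your proposal is correct and follows essentially the same route as the paper: the reduction to the two directions between $\TH(\Gl V)$ and $\TH(\str{\vk,D},\cL(\vk^+))$ via Theorem \ref{PH<>H} and Theorem \ref{MainTheorem}, the coding of a collineation as a linear part twisted by an associated automorphism supplied by the new quantifier (the paper packages this as the decomposition $\Gl V=\gl V\cdot\Phi,$ where $\Phi$ is the pointwise stabilizer of an interpreted basis, isomorphic to $\aut D$), and, conversely, the realization of the $\aut D$-quantifier by the induced action of configuration-preserving elements of $\Gl V$ on the interpreted copy of $D$ sitting on a line, using surjectivity of the associated-automorphism map. The obstacles you flag in (a) and (b) are resolved exactly as in the paper by taking $g$ to fix the parameter vectors (the paper takes $g$ in the pointwise stabilizer of the whole basis), so that the induced map is literally $\mu\mapsto\mu^{g}$.
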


\begin{proof} By Theorem \ref{PH<>H}  $\TH(\Gl V)$  and $\TH(\pGl V)$
are mutually interpretable. Let  $\cB$ be some basis of
$V,$ and $a$  a non-zero element of $V.$
By Theorems \ref{PrjSpaceInPrjGroup},
\ref{Gl>pGl>pgl>gl}, and \ref{BasisWeAreLookingFor}
the elementary theory of the structure
$\str{\Gl V,V,\str{a},\cB}$ (with natural
relations) is syntactically interpretable in
$\TH(\Gl V).$ The subgroup $\Phi$ of $\Gl V,$
consisting of all elements of $\Gl V,$ which satisfies
the definable condition
$$
(\forall b \in \cB)(\varphi b = b),
$$
is isomorphic to the group  $\aut D.$
As in Theorems \ref{PV>D} and \ref{EPV>(vk,D)} we once
more identify the set $\str{\cB,\str a}$ with $\str{\vk,D}$
and introduce on $\str a$ a structure
isomorphic to $\cD.$ The subgroup $\Phi$ acting on $\str{a},$
interprets the action of the group $\aut D$ on  $D.$ We
then use Theorem \ref{EPV>(vk,D)} to interpret all the
relations on the structure $\str{\vk,D}$ of power $\le \vk.$

Conversely, using \ref{(vk,D)_II>VD_II} we can interpret
in the theory $\TH(\str{\vk,D},\cL(\vk^+))$
the elementary theory of the structure
$\str{\cVD^{\vk^+}_{\text{II}}, \Phi},$ where
$\Phi  \cong  \aut D$ and the action of $\Phi $ on
$V$ is defined. Namely, we define an action of
$\aut D$ on  $\Dvkom,$ and the method of the proof of
Proposition \ref{(vk,D)_II>VD_II}
gives an action of $\aut D$ on  $\bigoplus_{i < \vk} D.$
Hence we get (a faithful) action of $\aut D$ on $V.$
Thus, the group $\aut D$ is now embedded into
$\Gl V$ and we denote the image under this embedding by
$\Phi.$ It is easy to see that $\Phi$ acts
trivially on some basis $\{e_i:i < \vk\}$  of  $V.$
The group $\gl V$ with its action on $V$ may be also
reconstructed in
$\str{\cVD^{\vk^+}_{\text{II}},\Phi},$  so we can work
with the structure $\str{\cVD^{\vk^+}_{\text{II}},\gl
V,\Phi}.$ One can build an arbitrary collineation as
follows: let $\{b_i: i < \vk\}$ be any basis of $V$
and $\sigma \in \aut D,$ then the transformation
$$
\avs(\sum \mu_ie_i) = \sum \mu_i^{\sigma} b_i
$$
is an element of  $\Gl V.$ Clearly,  $\avs$ is the
composition of some element of $\Phi$  and a
transformation of $\gl V,$ taking the basis
$\{e_i: i < \vk\}$ to the basis $\{b_i: i < \vk\}.$
\end{proof}

Let Mon denote the monadic logic (with quantification
over arbitrary subsets).

\begin{Cor} Let  $\vk \ge  |D|.$ Then
the following theories are pairwise mutually syntactically interpretable:
$\TH(\cP),$ $\TH(\cE),$ $\TH(H(V)),$
$\TH({\frak V},\text{\rm Sub}),$ $\TH({\frak V},\text{\rm End}),$
$\TH({\frak V},\text{\rm Mon}),$
$\TH({\frak V},\LII),\TH(\str{\vk,D},\LII),$ where
{\rm $H = \Gamma$L, P$\Gamma$L, GL, PGL, End, PEnd.}
\end{Cor}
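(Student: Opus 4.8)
The plan is to read the Corollary off from Theorem~\ref{MainTheorem} and Theorem~\ref{MnThForGls}, invoking the hypothesis $\vk \ge |D|$ twice: once to collapse each cardinal-bounded logic $\LII(\vk^+)$, $\Mon(\vk^+)$ into its unrestricted counterpart $\LII$, $\Mon$, and once to absorb the extra automorphism quantifier of $\cL(\vk^+)$ into $\LII(\vk^+)$. Essentially no new structural interpretation between structures needs to be built; the whole content lies in these two logic collapses.

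First I would record the cardinality bookkeeping. When $\vk \ge |D|$ the combined domain of $\str{\vk,D}$ has power $\vk + |D| = \vk$, and the same holds for $\cVD$, since a basis of $V$ has size $\vk$ and hence $|V| = \vk \cdot \sup_{n<\omega}|D|^n = \vk$. Consequently every $n$-ary relation on either structure has power $\le \vk^n = \vk < \vk^+$. Therefore on $\str{\vk,D}$ and on $\cVD$ the logic $\LII(\vk^+)$ quantifies over \emph{all} relations, i.e. it coincides with full second order logic $\LII$, and likewise $\Mon(\vk^+)$ coincides with full monadic logic $\Mon$. This already upgrades the list of Theorem~\ref{MainTheorem} to the form demanded by the Corollary: the theories $\TH(\cP)$, $\TH(\cE)$, $\TH(H(V))$ for $H=$ GL, PGL, End, PEnd, together with $\TH(\cVD,\mathrm{Sub})$, $\TH(\cVD,\mathrm{End})$, $\TH(\cVD,\Mon)$, $\TH(\cVD,\LII)$ and $\TH(\str{\vk,D},\LII)$, are pairwise mutually syntactically interpretable.

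It remains to attach the semi-linear groups $\Gamma$L and P$\Gamma$L. Theorem~\ref{MnThForGls} already makes $\TH(\Gl V)$, $\TH(\pGl V)$ and $\TH(\str{\vk,D},\cL(\vk^+))$ pairwise mutually interpretable, so I only have to identify $\TH(\str{\vk,D},\cL(\vk^+))$ with $\TH(\str{\vk,D},\LII)$. This is the one genuinely new point, and it is again a cardinality matter, which I expect to be the main (indeed only) obstacle. The logic $\cL(\vk^+)$ differs from $\LII(\vk^+)$ only by a quantifier ranging over automorphisms of $D$; but an automorphism of $D$ is the graph of a bijection of $D$, that is a binary relation on the second sort of power $|D|^2 = |D| \le \vk$ (finite, hence $\le \vk$, when $D$ is finite). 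Such relations are exactly among those $\LII(\vk^+)$ may already quantify over, and the conditions that a given $\phi$ be a bijection of $D$ respecting $+$ and $\cdot$ are first order once $\phi$ is supplied. Hence every $\cL(\vk^+)$-quantifier over $\aut D$ is simulated by an $\LII(\vk^+)$-quantifier over binary relations of power $\le \vk$ guarded by these first-order conditions, so $\TH(\str{\vk,D},\cL(\vk^+)) = \TH(\str{\vk,D},\LII(\vk^+)) = \TH(\str{\vk,D},\LII)$.

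Chaining the two theorems then finishes the argument: $\TH(\Gl V)$ and $\TH(\pGl V)$ are mutually interpretable with $\TH(\str{\vk,D},\cL(\vk^+)) = \TH(\str{\vk,D},\LII)$, which by the first step is mutually interpretable with every other theory in the Corollary's list. Transitivity of $\le$ (noted in Section~\ref{0.12}) then yields pairwise mutual interpretability of the entire list. To summarize, the only substantive step is the automorphism-subsumption of the previous paragraph; everything else is the remark that, for $\vk \ge |D|$, the superscripts $\vk^+$ on the logics are vacuous.
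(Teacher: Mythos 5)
Your proposal is correct and follows essentially the same route as the paper: the paper's own proof consists precisely of the observation that for $\vk\ge|D|$ the structure $\str{\vk,D}$ has power $\vk$, so that $\LII(\vk^+)$ becomes full second order logic, together with the chain $\TH(\str{\vk,D},\cL(\vk^+)) \ge \TH(\str{\vk,D},\LII(\vk^+)) = \TH(\str{\vk,D},\LII) \ge \TH(\str{\vk,D},\cL(\vk^+))$, the last step being exactly your subsumption of the automorphism quantifier by a second order quantifier over binary relations. Your write-up merely makes explicit the analogous cardinality collapse for $\cVD$ and the transitivity bookkeeping, which the paper leaves implicit.
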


\begin{proof} If  $\vk \ge  |D|,$ then the power of
$\str{\vk,D}$ is the cardinal $\vk.$
Hence
\begin{align*}
\TH(\str{\vk,D},\cL(\vk^+)) \ge
& \TH(\str{\vk,D},\LII(\vk^+)) =\\
& \TH(\str{\vk,D},\LII) \ge
\TH(\str{\vk,D},\cL(\vk^+)).
\end{align*}
\end{proof}

\begin{Cor} \label{3.3.7}  All first order
theories mentioned in Theorems  {\rm \ref{MainTheorem}} and
{\rm \ref{MnThForGls}} are unstable and undecidable.
\end{Cor}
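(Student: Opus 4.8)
The plan is to reduce everything to a single representative theory---say $\TH(\cP)$, the first order theory of the projective space $\cP=\str{P(V);\subseteq}$---and then propagate both properties along the web of mutual interpretations furnished by Theorems \ref{MainTheorem} and \ref{MnThForGls}. The essential point is that all the $\ge$-relations occurring there are realized by genuine first order interpretations of structures (via the Reduction Theorem \ref{0.2.1} and the reconstructions of Sections \ref{1.2}--\ref{3.2}), so that definable configurations, and not merely provability, transfer. In particular each first order structure in question---$\cP$, $\cE$, $\End V$, $\pEnd V$, $\gl V$, $\pgl V$, $\Gl V$, $\pGl V$---interprets $\cP$: indeed $\cP$ is reconstructible in $\pgl V$ (Theorem \ref{PrjSpaceInPrjGroup}), $\pgl V$ is interpretable in each of the four groups, and the cyclic chain $\TH(\cP)\ge\TH(\End V)\ge\TH(\gl V)\ge\TH(\pgl V)\ge\TH(\cP)$ from the proof of Theorem \ref{MainTheorem} shows $\cP$ is interpretable in $\cE$, $\End V$ and $\pEnd V$ as well.

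For \textbf{instability} I would first observe that $\TH(\cP)$ has the order property. Since $\dim V=\vk$ is infinite, fix a strictly increasing $\omega$-chain of subspaces $L_0\subset L_1\subset L_2\subset\cdots$ in $P(V)$; the formula $\varphi(x,y):=(x\subseteq y)$ then satisfies $\varphi(L_i,L_j)$ iff $i\le j$, so for every $n$ the structure $\cP$ contains a $\varphi$-chain of length $n$, and $\TH(\cP)$ is unstable. Because each theory $T$ in the two theorems interprets $\cP$ as a structure, the pullback of $\varphi$ through the interpreting formulae is a \emph{single} first order formula witnessing the order property in any model of $T$; hence every such $T$ is unstable.

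For \textbf{undecidability} I would use that, by Theorem \ref{MainTheorem}, $\TH(\cP)$ is mutually syntactically interpretable with $\TH(\str{\vk,D},\LII(\vk^+))$. As $\vk$ is infinite and every relation on $\vk$ has power $\le\vk<\vk^+$, the logic $\LII(\vk^+)$ quantifies over all relations of the pure infinite set $\vk$; the standard second order coding (finiteness being $\LII$-definable, so the genuine natural-number structure can be pinned down) then interprets true first order arithmetic $\TH(\str{\N;+,\cdot})$---equivalently, as noted in the Introduction, these theories interpret set theory. Since $\le$ transports undecidability upward (a decision procedure for $T^1$ composed with the sentence-translation decides any $T^0\le T^1$), and $\TH(\str{\N;+,\cdot})$ is undecidable by the G\"odel--Tarski theorem, we obtain $\TH(\str{\N;+,\cdot})\le\TH(\cP)\le T$ for every first order theory $T$ listed, whence each such $T$ is undecidable.

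The routine part is the undecidability transfer, which needs only the theory-level relation $\le$. The one point deserving care---and the main obstacle---is the instability transfer: it must rest on the interpretations being actual (possibly parametric) interpretations of structures, so that a single formula with the order property is produced in the host, rather than on the weaker provability-preserving translations; this is precisely what the constructions of the preceding sections guarantee. A secondary subtlety, also already handled by the second order machinery over the infinite set $\vk$, is to interpret the \emph{standard} model of arithmetic rather than a nonstandard one.
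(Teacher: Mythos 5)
Your proof is correct and is essentially the argument the paper intends: Corollary \ref{3.3.7} is stated there without proof, as an immediate consequence of the fact that each listed first order theory interprets (as a structure via the reconstructions of Sections 1--8, not merely at the level of sentence-translations) the projective space $\str{P(V);\subseteq}$, which is unstable by exactly your chain argument, and syntactically interprets $\TH(\str{\vk,D},\LII(\vk^+))$, hence the second order theory of an infinite set, which is undecidable. The two caveats you flag are the right ones, and both are indeed discharged by the paper's constructions: instability needs genuine (possibly parametric) interpretations of structures so that a single order-property formula pulls back, and undecidability transfer along $\le$ needs the translation maps to be computable, which holds because every instance of $\le$ here comes from the Reduction Theorem applied to explicitly defined interpretations.
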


Consider two infinite-dimensional vector spaces $V_1$
and $V_2$ over division rings $D_1$ and $D_2,$
respectively.  Assume also that $\vk_1=\dim V_1$ and
$\vk_2=\dim V_2.$

\begin{Th} \label{3.3.8}  \mbox{\rm (a)} Let  $H =$
{\rm GL, PGL, End}, or {\rm PEnd}. Then the following
conditions are equivalent:

{\rm (i)}  $H(V_1) \equiv  H(V_2);$

{\rm (ii)}  $\str{P(V_1);\subseteq} \equiv  \str{P(V_2);\subseteq};$

{\rm (iii)}  $\cE(V_1) \equiv  \cE(V_2);$

{\rm (iv)}  $\TH(\str{\vk_1,D_1},\LII(\vk^+_1))
=\TH(\str{\vk_2,D_2},\LII(\vk^+_2)).$

{\rm (b)}  Let  $H = \Gamma${\rm L, P$\Gamma$L.}
Then the condition  $H(V_1) \equiv  H(V_2)$ is equivalent to
$$
\TH(\str{\vk_1,D_1},\cL(\vk^+_1))
=\TH(\str{\vk_2,D_2},\cL(\vk^+_2));
$$
in particular, the condition $H(V_1) \equiv  H(V_2)$
implies
$$
\TH(\str{\vk_1,D_1},\LII(\vk^+_1))
=\TH(\str{\vk_2,D_2},\LII(\vk^+_2))
$$
\end{Th}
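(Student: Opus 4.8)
The plan is to deduce the whole statement from the uniform mutual interpretability results already in hand: Theorem \ref{MainTheorem} for part (a) and Theorem \ref{MnThForGls} for part (b). The single thing I would isolate first is a general \emph{transfer principle}: if two families $\{\TH(\cM_i,\cL_0)\}$ and $\{\TH(\cN_i,\cL_1)\}$ are mutually syntactically interpretable uniformly in $i$, and each member is complete (being the full $\cL_0$- resp.\ $\cL_1$-theory of a single structure), then for any two indices $i,j$ one has $\TH(\cM_i,\cL_0)=\TH(\cM_j,\cL_0)$ if and only if $\TH(\cN_i,\cL_1)=\TH(\cN_j,\cL_1)$.

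To prove this principle I would fix the two translations $*\colon \cL_0 \to \cL_1$ and $\#\colon \cL_1 \to \cL_0$ supplied by uniform interpretability. Completeness converts the provability biconditionals of the definition into satisfaction biconditionals: $\cM_i\models\chi$ iff $\cN_i\models\chi^*$ for every $\cL_0$-sentence $\chi$, and $\cN_i\models\psi$ iff $\cM_i\models\psi^\#$ for every $\cL_1$-sentence $\psi$. Then, assuming $\TH(\cM_i,\cL_0)=\TH(\cM_j,\cL_0)$, for an arbitrary $\cL_1$-sentence $\psi$ I get $\cN_i\models\psi$ iff $\cM_i\models\psi^\#$ iff $\cM_j\models\psi^\#$ iff $\cN_j\models\psi$, so the $\cN$-theories coincide; the reverse implication is symmetric using $*$.

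With this in hand part (a) is immediate. By Theorem \ref{MainTheorem} the first-order theories $\TH(H(V))$ for $H=$ GL, PGL, End, PEnd, together with $\TH(\cP)$, $\TH(\cE)$ and the second-order theory $\TH(\str{\vk,D},\LII(\vk^+))$, all belong to one family of pairwise uniformly mutually interpretable theories. Indexing by $(\vk,D)$ and applying the transfer principle with $\cN_i=\str{\vk_i,D_i}$ in the logic $\LII(\vk_i^+)$ against each first-order structure on the $\cM$-side, the three elementary-equivalence statements (i)--(iii) and the $\LII$-equality (iv) all become the same condition, which gives the asserted equivalences. Part (b) follows in exactly the same way from Theorem \ref{MnThForGls}, now taking $\cN_i=\str{\vk_i,D_i}$ in the richer logic $\cL(\vk_i^+)$ and $\cM_i=\Gl V_i$ or $\pGl V_i$: the principle yields $H(V_1)\equiv H(V_2)$ iff $\TH(\str{\vk_1,D_1},\cL(\vk_1^+))=\TH(\str{\vk_2,D_2},\cL(\vk_2^+))$. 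For the final ``in particular'' clause I would simply note that $\LII(\vk^+)$ is a fragment of $\cL(\vk^+)$, so every $\LII(\vk^+)$-sentence is an $\cL(\vk^+)$-sentence with the same meaning; hence equality of the $\cL(\vk^+)$-theories restricts to equality of the $\LII(\vk^+)$-theories.

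There is no serious obstacle: all the substance was absorbed into Theorems \ref{MainTheorem} and \ref{MnThForGls}. The only point requiring care is the explicit appeal to completeness, which is what turns the purely syntactic biconditional $T\vdash\chi \Leftrightarrow T'\vdash\chi^*$ of the interpretability definition into a statement about satisfaction in the actual structures; since every theory in sight is the full theory of one structure this is automatic, but it must be invoked. One must also keep straight that the logic on the $\str{\vk,D}$-side changes between (a) and (b), from $\LII(\vk^+)$ to $\cL(\vk^+)$, matching the passage from the linear groups to the semi-linear groups.
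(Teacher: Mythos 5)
Your proposal is correct and follows exactly the paper's route: the paper's entire proof is ``Use Theorem \ref{MainTheorem} for (a) and Theorem \ref{MnThForGls} for (b)'', and your transfer principle (completeness turning the provability biconditional into a satisfaction biconditional, then chaining through the two translations) is precisely the reasoning the paper leaves implicit. The handling of the ``in particular'' clause via $\LII(\vk^+)$ being a fragment of $\cL(\vk^+)$ is also the intended argument.
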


\begin{proof} Use Theorem \ref{MainTheorem} for (a) and
Theorem \ref{MnThForGls} for (b).  \end{proof}

\section{Examples} \label{3.4}

Throughout this section $\vk,\vk'$ are infinite cardinals,
and  $D,D'$ are division rings. Let $T(\vk,D)$ denote by
the theory $\TH(\str{\vk,D},\LII(\vk^+)).$
In this section we discuss a number of natural
conditions, necessary/sufficient for
\begin{equation}  \label{eqT=T'}
T(\vk,D) = T(\vk',D').
\end{equation}
We shall also investigate the logical strength of the
elementary theories of infinite-dimensional
semi-linear groups over algebraically closed fields. This will
enable us to prove that the condition  $T(\vk,D) =
T(\vk',D')$ -- necessary and sufficient for
the elementary equivalence of groups of types  GL and PGL --
is not sufficient for the elementary equivalence for
groups of types  $\Gamma$L  and P$\Gamma$L.

\begin{Clm} \label{T=T'=>...}  The following conditions
are necessary for $T(\vk,D) = T(\vk',D')$:

{\rm (a)}  $\vk = |D| \leftrightarrow \vk' = |D'|;$

{\rm (b)}  $\vk > |D| \leftrightarrow \vk' > |D'|;$

{\rm (c)}  $\vk < |D| \leftrightarrow \vk' < |D'|;$

{\rm (d)}  $\vk \equiv_{\LII} \vk';$

{\rm (e)}  $\TH(D,\LII(\vk^+)) = \TH(D',\LII(\vk'{}^+)).$
\end{Clm}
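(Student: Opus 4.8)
The plan is to read off each of the five conditions inside $\str{\vk,D}$ by means of an $\LII$-sentence whose truth under the semantics $\LII(\vk^+)$ expresses exactly the property in question, and then to transport its truth value to $\str{\vk',D'}$ under $\LII(\vk'{}^+)$ using the hypothesis $T(\vk,D)=T(\vk',D')$. Two devices suffice throughout: quantifying over injections between the two sorts, which lets one compare the cardinals $\vk$ and $|D|$; and relativizing a sentence to a single sort, which recovers the appropriate theory of that sort in isolation. In each case the same syntactic sentence is evaluated over $\str{\vk,D}$ with bound $\vk^+$ and over $\str{\vk',D'}$ with bound $\vk'{}^+$, so the whole argument rests on checking that relativization renders the differing bounds immaterial.

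For conditions (a)--(c) I would introduce two sentences. Let $\phi_\le$ assert the existence of a binary relation that is the graph of an injection from the first sort into the second; since such a graph has power $\vk$, it is an admissible $\LII(\vk^+)$-witness, and $\str{\vk,D}\models\phi_\le$ iff $\vk\le|D|$. Dually, let $\psi_\le$ assert the existence of an injection from the second sort into the first; the graph of such a map has power $|D|$, so an admissible witness of power $\le\vk$ exists precisely when $|D|\le\vk$, whence $\str{\vk,D}\models\psi_\le$ iff $|D|\le\vk$. Because cardinals are linearly ordered, $\vk>|D|$ is the failure of $\phi_\le$, $\vk<|D|$ is the failure of $\psi_\le$, and $\vk=|D|$ is $\phi_\le\wedge\psi_\le$. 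Applying the hypothesis to $\phi_\le$ and $\psi_\le$ delivers (a), (b) and (c) simultaneously.

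For (d) and (e) I would use relativization. The key remark for (d) is that on a bare set of infinite power $\vk$ every finitary relation has power $\le\vk$ (as $\vk^n=\vk$), so on the first sort the logic $\LII(\vk^+)$ already coincides with full second order logic. Thus, relativizing a full second-order sentence $\chi$ about a bare set to the first sort (first-order quantifiers to sort $1$, each $n$-ary second-order quantifier to $n$-ary relations on sort $1$) produces $\chi^{(1)}$ with $\str{\vk,D}\models_{\LII(\vk^+)}\chi^{(1)}$ iff the bare set $\vk$ satisfies $\chi$ in full second order logic, and the same sentence has the analogous meaning over the primed structure; the hypothesis then yields $\vk\equiv_{\LII}\vk'$. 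Symmetrically, for (e) I relativize an $\LII$ ring-sentence $\theta$ to the second sort, obtaining $\theta^{(2)}$ whose second-order quantifiers, under $\LII(\vk^+)$, range exactly over the relations on $D$ of power $\le\vk$; hence $\str{\vk,D}\models_{\LII(\vk^+)}\theta^{(2)}$ iff $D\models_{\LII(\vk^+)}\theta$, and likewise over the primed structure with bound $\vk'{}^+$, so the hypothesis gives $\TH(D,\LII(\vk^+))=\TH(D',\LII(\vk'{}^+))$.

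The one point that genuinely needs care---and the only real obstacle---is the change of quantifier bound from $\vk^+$ to $\vk'{}^+$ between the two structures, since the transported sentence must be syntactically identical on both sides. Relativization is exactly what absorbs this: after relativizing to the first sort the admissible second-order range becomes, in each structure, all finitary relations on that sort (by $\lambda^n=\lambda$), and after relativizing to the second sort it becomes the relations on the ring of power $\le$ the ambient bound, which is by definition the range used in $\TH(D,\LII(\vk^+))$ and $\TH(D',\LII(\vk'{}^+))$. A minor secondary subtlety, already exploited above, is the asymmetry in (a)--(c): the witness for $\psi_\le$ has power $|D|$ rather than $\vk$, so its mere admissibility encodes the inequality $|D|\le\vk$, which is what makes $\psi_\le$ detect that inequality on the nose.
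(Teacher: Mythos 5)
Your proof is correct and follows essentially the same route as the paper's (much terser) argument: the paper expresses (a)--(c) via the existence of a bijection/injection between the two sorts as an $\LII(\vk^+)$-sentence, and obtains (d) by relativizing to the first sort, where $\LII(\vk^+)$ coincides with full second order logic. Your write-up merely fills in the details the paper leaves implicit --- in particular the admissibility analysis of the witnessing graphs and the relativization argument for (e), which the paper does not spell out at all.
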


\begin{proof}
(a) There is a sentence in the logic $\LII(\vk^+),$
stating the existence of a bijection between $\vk$ and  $D.$
Similar arguments prove (b) and (c).
(d) It follows from  $T(\vk,D) = T(\vk',D')$ that
$\TH(\vk,\LII(\vk^+))=\TH(\vk',\LII(\vk'{}^+)),$ that is $\TH_2(\vk)
=\TH_2(\vk').$
\end{proof}

Under the condition  $\vk \ge  |D|$  the theory  $T(\vk,D)$  becomes
the theory $\TH_2(\str{\vk,D}).$ Hence if
$\vk \ge  |D|$  and $\vk' \ge  |D'|,$  then  (\ref{eqT=T'})
is equivalent to  $\str{\vk,D}
\equiv_{\LII} \str{\vk',D'},$ whence we obtain  $\vk
\equiv_{\LII} \vk'$ and $D \equiv_{\LII} D'.$
The converse is not true. Indeed,
consider a couple of distinct  $\LII$-equivalent
cardinals  $\vk,\vk'.$  Let  $D$  be a division
ring of power  $\vk.$ Then  $\str{\vk,D}
\not\equiv_{\LII} \str{\vk',D}$ by \ref{T=T'=>...}(a).

Some simplification can be also obtained in the case $\vk \le  |D|.$
We claim that
\begin{align*}
\TH(D,\LII(\vk^+)) \le & \TH(\str{\vk,D},\LII(\vk^+)) \le  \\
&\TH(\str{|D|,D},\LII(\vk^+)) \le \TH(D,\LII(\vk^+)).
\end{align*}
The first sort of the structure $\str{|D|,D}$  can be
identified in $D$  with the set  $D \times \{1\},$  and the
second one with the set $D \times \{0\}.$ Hence if
(\ref{eqT=T'}) is true and  $\vk =|D|,$ then $\vk' =|D'|$ and
$D \equiv_{\LII}D'.$ The condition (\ref{eqT=T'}), along
with the condition $\vk < |D|,$ is equivalent by Claim \ref{T=T'=>...}(c)
and the above arguments to the conditions  $\vk' < |D'|$  and
$\TH(D,\LII (\vk^+)) =\TH(D',\LII(\vk'{}^+)).$

Note one important particular case: if a division ring
$D$ is characterized up to isomorphism by a single
sentence of the full second order logic, then under
the condition $\vk \ge  |D|,$ (\ref{eqT=T'}) is
equivalent to conditions $\vk' \ge |D'|,$ $\vk
\equiv_{\LII} \vk',$  and $D \cong D'.$ Examples of
such $D$ are the fields $\Q,\R$  and  $\C,$
countable algebraically closed fields and finite
fields.

It can be deduced from \ref{T=T'=>...}(d) that if one
of the cardinals $\vk,\vk'$ is $\LII$-definable, then
$\eqref{eqT=T'} \Rightarrow \vk = \vk'.$ Obvious
examples are all cardinals $\aleph_n,$ where $n \in \omega.$

Since the cardinal $\aleph_0$ is $\LII$-definable,
and the field of reals $\R$ can be described up to isomorphism
by a single $\LII(\aleph_1)$-sentence (the field  $\Q$
is $\LII(\aleph_1)$-definable, and $\R$ can be reconstructed
from $\Q,$ using Dedekind cuts), then
$$
T(\aleph_0,\R) = T(\vk,D) \Leftrightarrow
 \vk = \aleph_0  \text{ and }  D \cong  \R.
$$
In contrast, $\C$ cannot be determined in a such way:
$T(\aleph_0,\C) = T(\vk,D)$ iff
$\vk = \aleph_0$  and  $D$ is an uncountable algebraically
closed field of  characteric zero. Necessity: as
$|\C| > \aleph_0,$ then  $|D| > \aleph_0$  by
Claim \ref{T=T'=>...}(b). It follows from  $T(\aleph_0,\C) = T(\vk,D),$
that $\TH(\C) = \TH(D);$ therefore
$D$ is an algebraically closed field of characteristic
zero. The sufficiency is an immediate consequence
of the following lemma from the joint paper
by Belegradek and the author \cite{BT}.

\begin{Lem} \label{LemFromBT} Let  $T$ be an uncountably
categorical first order theory and $\vk \ge \aleph_0.$
Then all models of $T$  of power $> \vk$ are
$\LII(\vk^+)$-equivalent.
\end{Lem}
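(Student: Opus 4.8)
The plan is to reduce $\LII(\vk^+)$-equivalence to a saturation property of the models and then run a back-and-forth argument adapted to the second-order quantifiers of $\LII(\vk^+)$. First I would note that since $\vk \ge \aleph_0$, every model of power $> \vk$ is uncountable, so by Morley's theorem $T$ is categorical in each uncountable power and is $\omega$-stable. A totally transcendental theory has a saturated model in every infinite cardinality; hence for each uncountable $\lambda$ the unique model of power $\lambda$ is saturated. Consequently, any two models $M, N$ of $T$ with $|M|, |N| > \vk$ are saturated, and in particular both are $\vk^+$-saturated: every type over a subset of power $\le \vk$ is realized.

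The core of the argument is a back-and-forth system witnessing a winning strategy for Duplicator in the Ehrenfeucht--Fra\"iss\'e game for $\LII(\vk^+)$, in which at each round Spoiler either picks an element or picks a relation of power $\le \vk$ in one of the two structures and Duplicator must respond in kind in the other. Since $M \equiv N$ (both being models of the complete theory $T$), the empty map is elementary, and I would maintain the following invariant after each round: there are sets $A \subseteq M$ and $B \subseteq N$ of power $\le \vk$ containing all pebbled elements and the supports of all relations chosen so far, together with a bijection $g \colon A \to B$ which is elementary as a map $M \to N$, which sends each chosen relation on the $M$-side to its counterpart on the $N$-side, and which respects the pebbling.

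Maintaining the invariant uses only $\vk^+$-saturation. For an element move $a \in M$, set $A' = A \cup \{a\}$ and extend $g$ to an elementary $g' \colon A' \to N$ by realizing $\mathrm{tp}(a/A)$ over $g(A)$, then put $b = g'(a)$. For a relation move $R$ on $M$ of power $\le \vk$, set $A' = A \cup \mathrm{supp}(R)$ and extend $g$ to an elementary $g' \colon A' \to N$ by realizing the at most $\vk$ new elements one at a time over a domain of power $\le \vk$ (a transfinite recursion of length $\le \vk$, legitimate because $\vk^+$-saturation realizes types over $\le \vk$-sized sets), and respond with $R' = g'[R]$, again of power $\le \vk$. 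Moves in $N$ are handled symmetrically using the $\vk^+$-saturation of $M$. Because every $\LII(\vk^+)$-sentence has finite quantifier depth, Duplicator wins every finite game, whence $M \equiv_{\LII(\vk^+)} N$.

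The main obstacle I anticipate is the relation-move step: one must check that $\vk^+$-saturation \emph{alone} (rather than a stronger $\vk^+$-homogeneity assumption) suffices to transport an adversarially chosen relation of support up to $\vk$, which is exactly why it is essential to first pin down that the models really are $\vk^+$-saturated. The passage between $\LII(\vk^+)$-truth and the two-sorted back-and-forth game is routine once relation moves are built in, but it is precisely the point where the bound $\le \vk$ on quantified relations must be matched to the saturation degree $\vk^+$; getting these two cardinals aligned is the whole content of the reduction.
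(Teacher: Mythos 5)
Your proof is correct, but it follows a genuinely different route from the one sketched in the paper. The paper reduces the lemma (following \cite{BT}) to the single fact that if $\cM\prec\cN$ are models of $T$ of power $>\vk$, then $\cM^{\vk^+}_{\text{II}}\prec\cN^{\vk^+}_{\text{II}}$; two arbitrary models of power $>\vk$ are then compared by embedding each elementarily into a model of a common larger power and identifying the two extensions by categoricity. You instead argue directly between the two given models: Morley's theorem gives $\omega$-stability, hence a saturated model in each uncountable power, hence by categoricity every model of power $>\vk$ is saturated and in particular $\vk^+$-saturated; a system of partial elementary maps of power $\le\vk$ with the $\le\vk$-extension property (supplied by $\vk^+$-saturation via your transfinite recursion) then wins every finite round of the game with element moves and $\le\vk$-relation moves, and since every $\LII(\vk^+)$-sentence has finite quantifier rank this yields $\LII(\vk^+)$-equivalence. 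Your worry about homogeneity versus saturation resolves the right way: saturation of the target alone suffices to extend a partial elementary map of size $\le\vk$ over any further set of size $\le\vk$. What your route buys is a self-contained argument going straight between the two models; what the paper's route buys is a preservation statement for arbitrary elementary pairs that does not require saturation of the smaller model and is the form in which the fact is quoted from \cite{BT}. The only hypotheses worth making explicit in your write-up are the completeness of $T$ (so that $M\equiv N$ is available at the start) and the countability of the language, under which Morley's theorem applies.
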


Proof of \ref{LemFromBT} (using standard model-theoretic
techniques) is based on the fact that if
$\cM,\cN$ are two models of  $T$ of power  $> \vk$  and
$\cM \prec  \cN,$ then $\cM^{\vk^+}_{\text{II}}
\prec \cN^{\vk^+}_{\text{II}}.$

In particular, if $V, V'$ are vector spaces of
dimension $\aleph_0$  over uncountable algebraically
closed fields of the same characteristic, then $\gl V
\equiv  \gl{V'}.$ We shall see now that this result is
not true for groups of type $\Gamma$L.

Consider the vector space $\bigoplus_{i < \vk} D$ over
a division ring $D.$ The group of type  $H$ over this
space is denoted by  $H(\vk,D).$ It follows from
Theorem \ref{MnThForGls} that the logical power of the
theory $\TH(\Gl{\vk,D})$ grows with the growth of
$\vk.$ We shall now demonstrate that the growth of
logical power of $\TH(\Gl{\vk,D})$ can be also
achieved by exploiting the second `parameter', the
underlying division ring.

\begin{Prop} \label{GlsOverACFs}   Let  $K$ be
an algebraically closed field of infinite
transcendence degree over the prime field.
Assume that  $\vk$ is an infinite cardinal. Then
$\TH_2(K),$ the full second order
theory of the field  $K,$
is syntactically interpretable in $\TH(\Gl{\vk,K})$
{\rm(}in $\TH(\pGl{\vk,K}),$ uniformly in $\vk.$
\end{Prop}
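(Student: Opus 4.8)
The plan is to reduce everything to Theorem \ref{MnThForGls}. That theorem already gives, uniformly in $\vk$, that $\TH(\Gl{\vk,K}) \ge \TH(\str{\vk,K},\cL(\vk^+))$ and $\TH(\pGl{\vk,K}) \ge \TH(\str{\vk,K},\cL(\vk^+))$, where $\cL(\vk^+)$ is $\LII(\vk^+)$ enriched with quantification over $\aut{K}$. Since $\ge$ is transitive, it therefore suffices to interpret $\TH_2(K)$ in $\TH(\str{\vk,K},\cL(\vk^+))$ uniformly in $\vk$; this handles the $\Gl$- and $\pGl$-cases simultaneously. I would split the argument according to whether $\vk\ge|K|$ or $\vk<|K|$.

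The case $\vk\ge|K|$ is immediate and does not even use the transcendence-degree hypothesis: every finitary relation on $K$ has power $\le|K|^n=|K|\le\vk$, so the restriction of $\LII(\vk^+)$ to the sort $K$ is simply full second order logic. Hence $\TH(\str{\vk,K},\cL(\vk^+))\ge\TH(\str{\vk,K},\LII(\vk^+))\ge\TH_2(K)$. So the real content is the case $\vk<|K|$, where $\LII(\vk^+)$ can only reach relations of power $\le\vk<|K|$ and the automorphism quantifier must supply the missing second-order strength. Here I would exploit that $K$ is algebraically closed of infinite transcendence degree over its prime field $P$, so that $|K|$ equals the transcendence degree and there is a transcendence basis $B$ with $|B|=|K|$, while $P$ is fixed and countable.

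The interpretation in this case rests on two steps. First, $K=\overline{P(B)}$ is interpretable over the \emph{pure set} $B$ together with the countable prime field $P$ by Steinitz coordinates, using only finite subsets of $B$ and finite polynomial data; consequently subsets of $K$ correspond definably to subsets of the code set $B^{<\omega}\times P^{<\omega}$, which biject with subsets of $B$, so full second order logic on the set $B$ transfers to full second order logic on the sort $K$ (whose field operations are already first order available in $\str{\vk,K}$). Second, the algebraically closed subfields $\overline{P(A)}$ for $A\subseteq B$ form a Steinitz-exchange lattice whose first order theory interprets full second order logic on $B$ by the method of \cite{MRRS}; and these subfields and the attendant dependence relations are expressible through the automorphism quantifier, since $a\in\overline{C}$ iff the $\aut{K/C}$-orbit of $a$ is finite and $a\in\operatorname{dcl}(C)$ iff $a$ is fixed by $\aut{K/C}$. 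Combining the two steps, $\cL(\vk^+)$ on $\str{\vk,K}$ interprets full second order logic on $B$, hence on $K$, yielding $\TH(\str{\vk,K},\cL(\vk^+))\ge\TH_2(K)$.

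The hard part will be the coding lemma underlying the second step: producing a uniform $\cL$-formula together with a scheme of automorphism parameters that realizes quantification over all subsets of $B$ (equivalently, recovers the lattice of the $\overline{P(A)}$) from quantification over $\aut{K}$. The two delicate points are (i) pinning an algebraically closed subfield $\overline{P(A)}$ down from automorphism data — the fixed field of a single automorphism is in general strictly larger, so one must code $A$ by a small tuple of automorphisms, or by the pointwise stabilizer accessed through a universal quantifier $\forall g$, exploiting that $K/\overline{P(A)}$ again has infinite transcendence degree; and (ii) ensuring that the passage from subsets of $B$ to subsets of $K$ uses only intrinsic first order field notions (finite algebraic data over finite tuples) rather than an external bijection $B\to K$ of power $>\vk$, which $\LII(\vk^+)$ cannot quantify over. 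Checking uniformity in $\vk$ and $K$ of all these codings, following the patterns of Shelah \cite{Sh3} and \cite{MRRS}, is the substance of the proof; once the coding lemma is in hand, full second order quantification over $K$ follows and the proposition is established.
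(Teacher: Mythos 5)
Your global architecture does match the paper's: reduce via Theorem \ref{MnThForGls} to the theory of $\str{\vk,K}$ in the logic with the automorphism quantifier, interpret there the lattice of algebraically closed subfields of $K,$ invoke \cite{MRRS} to obtain $\TH_2(|K|),$ and finally recover the field $K$ from the set $|K|.$ But what you defer as ``the coding lemma'' is precisely the content of the proof, and your sketch of it contains two real errors. First, the lattice you propose to feed into the method of \cite{MRRS} --- the subfields $\overline{P(A)}$ for $A$ ranging over subsets of a \emph{fixed} transcendence basis $B$ --- is a sublattice isomorphic to the powerset lattice $2^{B}$ (meet and join are $\overline{P(A_1\cap A_2)}$ and $\overline{P(A_1\cup A_2)}$); it is a distributive, indeed Boolean, lattice whose first order theory is decidable and therefore cannot interpret full second order logic on $B.$ Theorems 1.6 and 3.1 of \cite{MRRS} are about the lattice of \emph{all} algebraically closed subfields (which is not distributive), and it is that lattice one must interpret. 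Second, your obstruction (i) --- that the fixed field of a single automorphism is ``in general strictly larger,'' forcing a detour through pointwise stabilizers $\aut{K/C}$ --- is a misdiagnosis, and the stabilizer route is circular: to quantify over $\aut{K/C}$ you must already have coded $C,$ which may have power $>\vk.$

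The fact that unlocks the proof is the opposite of your worry: \emph{every} algebraically closed subfield $k\subseteq K$ is exactly the fixed field of a \emph{single} automorphism (shift a $\Z$-indexed transcendence basis of $K$ over $k$; by the exchange property any extension of this shift to $K$ fixes only $k$). So single automorphisms already enumerate all lattice elements, inclusion of fixed fields is first order, and the only remaining definability issue is to cut out those $\sigma$ for which $\Fix(\sigma)$ is algebraically closed. This is expressible in $\LII(\vk^+)$ over $\str{\vk,K}$ because ``$\mu$ is a root of $\lambda_0+\lambda_1x+\dots+\lambda_nx^n$ with all $\lambda_i\in\Fix(\sigma)$'' can be stated by quantifying over the finite coefficient set, a successor partial function on it, and a Horner-scheme function --- all relations of power $<\aleph_0\le\vk.$ Your final transfer from $\TH_2(|K|)$ to $\TH_2(K)$ via explicit Steinitz coordinates also carries avoidable delicacies (choosing among conjugate roots); it is cleaner to existentially quantify, inside $\TH_2(|K|),$ a field structure of the correct characteristic and of transcendence degree $|K|$ over its prime field on a subset of full power, which is then isomorphic to $K$ by Steinitz's theorem. (Your case split is harmless but unnecessary: the argument just described is uniform in $\vk.$)
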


\begin{proof}
According to Theorems 1.6 and 3.1 from the paper
\cite{MRRS}, if a field $K$ satisfies the conditions
of Theorem \ref{GlsOverACFs}, then the full second
order theory of the set $|K|$ can be syntactically
interpreted in the elementary theory of the lattice of
all algebraically closed subfields of $K.$

We need the following well-known fact.

\begin{Lem}
Every algebraically closed subfield $k$ of
$K$ is the fixed field of some automorphism of $K.$
\end{Lem}

By Theorem \ref{MnThForGls}  we can
interpret in $\TH(\Gl{\vk,K})$ the elementary theory of the structure
${\mathcal K}_{\vk}=\str{\,\str{\vk,K}^{\vk^+}_{\text{II}}, \aut K\,}.$
Let us build an interpretation of the lattice
of algebraically closed subfields of $K$ in ${\mathcal K}_{\vk}.$
The above remarks reduce our task to
finding a definable condition $\chi$ such that
$\models  \chi[\sigma]$ iff $\sigma \in
\aut K$  and the fixed field of $\sigma$ is
algebraically closed. For this purpose, it is
enough to model the situation `$\mu$ is a
root of a polynomial $f(x) = \lambda_0+ \lambda_1x
+\ldots + \lambda_n x^n$'.

The ordered tuple
$\str{\lambda_0,\lambda_1,\ldots,\lambda_n}$  can be
coded by the quadruple  $\str{\lambda_0,\lambda_n,A,g},$
where  $A = \{\lambda_0,\lambda_1,\ldots,\lambda_n\},$
and $g$ is a partial function such that $g(\lambda_i)=
\lambda_{i-1},i = 1,\ldots,n$  (the condition `a finite
set $A$ is an orbit of $g$' can be described by a
single sentence in the language of the structure
$\str{\vk,D}^{\vk^+}_{\text{II}}$).

An element  $\mu \in K$  is a root of  $f(x)$ iff
the following definable condition holds
\begin{align*}
(\exists h)& (h \text{ is a function from }  K  \text{ to }  K \wedge
h(\lambda_n) = \lambda_n \wedge \\
& (\forall \lambda',\lambda''\in A)
[\lambda'\ne \lambda_0 \wedge g(\lambda')=\lambda''  \rightarrow
h(\lambda'') =h(\lambda')\mu +\lambda''] \wedge
h(\lambda_0) =0).
\end{align*}
Indeed, if the latter condition is satisfied,
$h(\lambda_{n-1}) = \lambda_n\mu +\lambda_{n-1},
h(\lambda_{n-2}) = \lambda_n\mu ^2+\lambda_{n-1}\mu
+\lambda_{n-2},$ and so on. Hence $h(\lambda_0) =
f(\mu) =0.$

So we can work with the theory  $\TH_2(|K|).$
We now interpret $\TH_2(K)$ in this theory. Choose
subsets $X_1,X_2$ of $|K|$ with $X_1 \subseteq  X_2,$
so that  $X_2$ has the power $|K|.$
Choose further binary relations $R_i,S_i$ on
$X_i,$ where $i=1,2$ such that

(i) the structure $\str{X_1;R_1,S_1}$ is isomorphic
to the prime field of $K;$

(ii) the structure $\str{X_2;R_2,S_2}$ is an algebraically
closed field;

(iii)  $\str{X_1;R_1,S_1}$ is a substructure of
$\str{X_2;R_2,S_2};$

(iv)  the transcendence degree of $\str{X_2;R_2,S_2}$
over $\str{X_1;R_1,S_1}$ is $|K|.$

Clearly,  if  ($\LII$-definable) conditions
(i-iv) are true, then the structure  $\str{X_2;\ldots}$
is isomorphic to $K.$
\end{proof}

Thus, we see that the logical power of the theory
$\TH(\Gl V)$ can be significantly higher than the
logical power of $\TH(\gl V).$

We summarize some of our results.

\begin{Prop}

\mbox{\rm (a)} $\gl{\aleph_0,\R} \equiv  \gl{\vk,D}$
if and only if $\vk = \aleph_0$ and $D \cong  \R;$

{\rm (b)} $\gl{\aleph_0,\C} \equiv  \gl{\vk,D}$ if and only if
$\vk = \aleph_0$ and  $D$ is an uncountable algebraically
closed field of characteristic zero;

{\rm (c)} $\Gl{\aleph_0,\C} \equiv  \Gl{\vk,D}$ if and
only if $\vk = \aleph_0$ and  $D \cong  \C;$
\end{Prop}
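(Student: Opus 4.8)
The plan is to read all three equivalences off the criteria for elementary equivalence already established, namely Theorem \ref{3.3.8}, and then to analyse the resulting equalities of second order theories of $\str{\vk,D}$ by the methods of the present section. Parts (a) and (b) concern the type $\mathrm{GL}$ and will be handled by Theorem \ref{3.3.8}(a), while part (c) concerns the type $\Gamma\mathrm{L}$ and will require the stronger Theorem \ref{3.3.8}(b) together with Proposition \ref{GlsOverACFs}. In each part the converse (``if'') direction is the trivial observation that an isomorphism or $\LII(\aleph_1)$-equivalence of the data yields elementary equivalence of the groups, so the substance lies in the forward directions.

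For (a) and (b) I would first invoke Theorem \ref{3.3.8}(a) to replace $\gl{\aleph_0,\R}\equiv\gl{\vk,D}$ (resp. $\gl{\aleph_0,\C}\equiv\gl{\vk,D}$) by the equality $T(\aleph_0,\R)=T(\vk,D)$ (resp. $T(\aleph_0,\C)=T(\vk,D)$). Since $\aleph_0$ is $\LII$-definable, Claim \ref{T=T'=>...}(d) forces $\vk=\aleph_0$ in either case. For (a) I would then use that $\R$ is characterised up to isomorphism by a single $\LII(\aleph_1)$-sentence (via the $\LII(\aleph_1)$-definability of $\Q$ and Dedekind cuts), which pins down $D\cong\R$. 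For (b), observing $\aleph_0<|\C|$ and hence $\aleph_0<|D|$ by Claim \ref{T=T'=>...}, the equality reduces to $\TH(\C,\LII(\aleph_1))=\TH(D,\LII(\aleph_1))$, whose first order part already forces $D$ to be an uncountable algebraically closed field of characteristic zero; conversely any such $D$ is $\LII(\aleph_1)$-equivalent to $\C$ by Lemma \ref{LemFromBT}, since $\mathrm{ACF}_0$ is uncountably categorical.

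For (c), assuming $\Gl{\aleph_0,\C}\equiv\Gl{\vk,D}$, I would first apply Theorem \ref{3.3.8}(b), which yields $T(\aleph_0,\C)=T(\vk,D)$ and hence, exactly as in (b), that $\vk=\aleph_0$ and $D$ is an uncountable algebraically closed field of characteristic zero. In particular $D$ has transcendence degree $|D|>\aleph_0$ over its prime field, so Proposition \ref{GlsOverACFs} applies verbatim to both $\C$ and $D$: one fixed uniform interpretation scheme $\chi\mapsto\chi^*$ satisfies $\TH_2(\C)\vdash\chi\iff\TH(\Gl{\aleph_0,\C})\vdash\chi^*$ and $\TH_2(D)\vdash\chi\iff\TH(\Gl{\aleph_0,D})\vdash\chi^*$. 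Since the two group theories coincide, I would conclude $\TH_2(\C)=\TH_2(D)$.

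The main obstacle, and the heart of (c), is to upgrade $\TH_2(\C)=\TH_2(D)$ to $D\cong\C$. Here I would exhibit a single full second order sentence expressing ``the field has cardinality $2^{\aleph_0}$'': inside the field one second order defines the prime field and within it the copy of $\N$, and then asserts the existence of a bijection between the whole domain and the power set of that copy of $\N$. This sentence holds in $\C$, hence, by $\TH_2(\C)=\TH_2(D)$, in $D$; thus $|D|=2^{\aleph_0}=|\C|$, and two algebraically closed fields of characteristic zero of the same uncountable cardinality are isomorphic, so $D\cong\C$. It is worth stressing why this step needs $\Gamma\mathrm{L}$ rather than $\mathrm{GL}$: the theory of $\gl{\aleph_0,\C}$ reaches only $\LII(\aleph_1)$ on the underlying field and so cannot separate $\C$ from other uncountable fields in $\mathrm{ACF}_0$ (this is precisely the content of (b)), whereas the quantification over $\aut{\C}$ furnished by Theorem \ref{MnThForGls} and exploited in Proposition \ref{GlsOverACFs} recovers the full second order theory of $\C$, and with it the cardinality.
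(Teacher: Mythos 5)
Your proposal is correct and follows essentially the same route as the paper: parts (a) and (b) are read off from Theorem \ref{3.3.8}(a) together with the analysis of $T(\vk,D)=T(\vk',D')$ via Claim \ref{T=T'=>...}, the $\LII(\aleph_1)$-characterizability of $\R$, and Lemma \ref{LemFromBT}, while part (c) uses Theorem \ref{3.3.8}(b) and Proposition \ref{GlsOverACFs}. Your only addition is to make explicit the last step of (c) --- passing from $\TH_2(\C)=\TH_2(D)$ to $D\cong\C$ via the full second order characterizability of the cardinal $2^{\aleph_0}$ and Steinitz's theorem --- which the paper leaves implicit and which you justify correctly.
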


\begin{proof}
By Lemma \ref{LemFromBT} and Proposition \ref{GlsOverACFs}.
\end{proof}

So the condition  $T(\vk,D) = T(\vk',D')$ is not
sufficient for the elementary equivalence of groups
$\Gl{\vk,D}$ and $\Gl{\vk',D'}.$

\section*{Acknowledgments}

This paper is a part of my Cand. Sci. thesis
\cite{To2}, written under the direction of Professor
Oleg Belegradek. I would like to express my deep
appreciation and thanks to him for his advice,
encouragement and fruitful co-operation. I would also
like to thank the referee for a detailed and
stimulating report.\\[2mm]

\end{document}